\numberwithin{equation}{subsection}
\newtheorem{lemma}{Lemma}[section]
\newtheorem{proposition}{Proposition}[section]
\newtheorem{theorem}{Theorem}[section]
\newtheorem{corollary}{Corollary}[section]
\newtheorem{Definition}{Definition}[section]
\newtheorem{Remark}{Remark}[section]
\newcommand{\mQ}{\mathbb{Q}}
\newcommand{\mR}{\mathbb{R}}
\newcommand{\mZ}{\mathbb{Z}}
\newcommand{\mA}{\mathbb{A}}
\newcommand{\mC}{\mathbb{C}}
\newcommand{\gl}{\mathrm{GL}}
\begin{document}

\title[Gross-Prasad conjecture
and B\"ocherer conjecture]
  {On the Gross-Prasad conjecture with its refinement
   for $\left(\mathrm{SO}\left(5\right), \mathrm{SO}\left(2\right)\right)$
   and the generalized B\"ocherer conjecture
   } 
%
%
%
\author{Masaaki Furusawa}
\address[Masaaki Furusawa]{
Department of Mathematics, Graduate School of Science,
              Osaka Metropolitan University,
         Sugimoto 3-3-138, Sumiyoshi-ku, Osaka 558-8585, Japan
}
\email{furusawa@omu.ac.jp}
\thanks{The research of the first author was supported in part by 
JSPS KAKENHI Grant Number
19K03407, 
22K03235
and by Osaka Central  Advanced Mathematical
Institute (MEXT Promotion of Distinctive Joint Research Center Program JPMXP0723833165).
The first author would like to dedicate this paper to the memory 
of his brother, Akira Furusawa (1952--2020).}
%
%
%
\author{Kazuki Morimoto}
\address[Kazuki Morimoto]{
Department of Mathematics, Graduate School of Science, 
Kobe University, 1-1 Rokkodai-cho, Nada-ku, Kobe 657-8501, Japan
}
\email{morimoto@math.kobe-u.ac.jp}
\thanks{The research of the second author was supported in part by
JSPS KAKENHI Grant Number 17K14166, 21K03164
and by
Kobe University Long Term Overseas Visiting Program for Young Researchers Fund.}
\subjclass[2020]{Primary: 11F55, 11F67; Secondary: 11F27, 11F46}
\keywords{B\"ocherer conjecture, central $L$-values, Gross-Prasad conjecture, periods of automorphic forms}
%
%
%
\begin{abstract}
We investigate the Gross-Prasad conjecture and its refinement
for the  
Bessel periods
in the case of $(\mathrm{SO}(5), \mathrm{SO}(2))$.
In particular, by combining several theta correspondences, 
we prove the Ichino-Ikeda type formula for any 
tempered irreducible 
cuspidal automorphic representation.
As a corollary of our formula, we prove 
an explicit formula relating  
certain weighted averages of Fourier coefficients of 
 holomorphic Siegel cusp forms of degree two
which are Hecke eigenforms
to central special values of $L$-functions.
The formula is regarded as a natural generalization of
the B\"{o}cherer conjecture to the non-trivial toroidal character case.
\end{abstract}
%
%
%
\maketitle
%
%
%
%
\section{Introduction}
%
To investigate relations between periods of automorphic forms
and special values of $L$-functions is one of the focal research subjects
in number theory.
The central special values are  of keen interest
in light of the Birch and Swinnerton-Dyer conjecture and its generalizations.
%

Gross and Prasad~\cite{GP1,GP2} proclaimed a global
conjecture relating non-vanishing
of certain period integrals on special orthogonal groups to non-vanishing
of central special values of certain tensor product $L$-functions,
together with the local counterpart conjecture in the early 1990s.
Later with Gan~\cite{GGP}, they extended the conjecture to classical groups
and metaplectic groups.
Meanwhile a refinement of the Gross-Prasad conjecture,
which is a precise formula for the central special values of the tensor
product
$L$-functions for tempered cuspidal automorphic representations,
was formulated by Ichino and Ikeda~\cite{II}
in the co-dimension one special orthogonal case.
Subsequently Harris~\cite{Ha} formulated a refinement of 
the Gan-Gross-Prasad conjecture in the co-dimension one unitary case.
Later an  extension of  the work of Ichino-Ikeda and Harris 
to  the general  Bessel period case was formulated by Liu~\cite{Liu2}
and the one to the general Fourier-Jacobi period case for symplectic-metaplectic groups was formulated by
Xue~\cite{Xue2}.
%

In \cite{FM1} we investigated the Gross-Prasad conjecture
for Bessel periods for $\mathrm{SO}\left(2n+1\right)\times
\mathrm{SO}\left(2\right)$ when the character on $\mathrm{SO}\left(2\right)$
is trivial, i.e. the special Bessel periods case
and then, in the sequel~\cite{FM2},
we proved its refinement, i.e
the Ichino-Ikeda type precise $L$-value formula
 under the condition that the base field is totally real
and all components at archimedean places are discrete series representations.
As a corollary of our special value formula in \cite{FM2}, we obtained a proof of the long-standing
conjecture by B\"ocherer in \cite{Bo}, concerning
central critical values of imaginary quadratic twists
of spinor $L$-functions for holomorphic Siegel cusp forms
of degree two which are Hecke eigenforms, thanks to
the explicit calculations of the local integrals by
Dickson, Pitale, Saha and Schmidt~\cite{DPSS}.

In this paper, 
for $\left(\mathrm{SO}(5), \mathrm{SO}(2)\right)$,
we  vastly generalize the main results in \cite{FM1} and \cite{FM2}.
Namely we prove the Gross-Prasad conjecture and its refinement
for any Bessel period in the case of $(\mathrm{SO}(5), \mathrm{SO}(2))$.
As a corollary, we prove the generalized B\"ocherer conjecture
in the square-free case formulated in \cite{DPSS}.

Let us introduce some notation
and then state our main results precisely.
%
\subsection{Notation}\label{ss: notation}
Let $F$ be a number field. 
We denote its ring of adeles by $\mA_F$, which is mostly abbreviated as $\mA$ for simplicity. 
Let $\psi$ be a non-trivial character of $\mA \slash F$. 
For $a \in F^\times$, we write by $\psi^a$ the character of $\mA \slash F$ defined 
by $\psi^a(x) = \psi(ax)$. 
For a place $v$ of $F$, we denote by $F_v$ the completion of $F$ at $v$.
When $v$ is non-archimedean, we write 
by $\varpi_v$ and $q_v$ an uniformizer of $F_v$ 
and the cardinality of the residue field of $F_v$, respectively.
%

Let $E$ be a quadratic extension of $F$ and $\mA_E$ be its ring of adeles.
We denote by $x\mapsto x^\sigma$ the unique non-trivial automorphism of $E$ over $F$.
Let us denote by $\mathrm{N}_{E \slash F}$ the norm map from $E$ to $F$.
We choose $\eta\in E^\times$ such that $\eta^\sigma=-\eta$ and fix.
Let $d=\eta^2$.
We denote by  $\chi_E$ the quadratic character of $\mA^\times$ corresponding to the quadratic extension $E \slash F$.
We fix a character $\Lambda$ of $\mA_E^\times \slash E^\times$ whose restriction 
to $\mA^\times$ is trivial once and for all.
%
\subsection{Measures}\label{ss: measures}
Throughout the paper, for an algebraic group $\mathbf G$ 
defined over $F$, we write $\mathbf G_v$ for $\mathbf G\left(F_v\right)$,
the group of rational points of $\mathbf G$ over $F_v$,
and
we always take the measure $dg$ on $\mathbf G\left(\mA\right)$ to
be the Tamagawa measure
unless specified otherwise.
For each $v$, we take the self-dual measure with respect to $\psi_v$
on $F_v$.
Then recall that the product measure on $\mA$ is the self-dual measure
with respect to $\psi$ and  is also the Tamagawa measure since 
$\mathrm{Vol}\left(\mA\slash F\right)=1$.
For a unipotent algebraic group $\mathbf U$ defined over $F$,
we also specify the local measure $du_v$ on $\mathbf{U}(F_v)$
to be the measure corresponding to the gauge form defined over $F$, 
together with our choice of the measure on $F_v$,
at each place $v$ of $F$.
Thus in particular we have 
\[
du=\prod_v\, du_v\qquad\text{and}\qquad
\mathrm{Vol}\left(\mathbf U\left(F\right)\backslash
\mathbf U\left(\mA\right), du\right)=1.
\]
%
%
%
\subsection{Similitudes}
Various similitude groups appear in this article.
Unless there exists a fear of confusion,
we denote by  $\lambda\left(g\right)$
the similitude of an element $g$ of a similitude group
for simplicity.
%
%
%
%
%
%
%
%
%
%
%
%
%
%
\subsection{Bessel periods}\label{ss: bessel periods}
First we recall that when $V$ is a five dimensional vector space over $F$ equipped
with a non-degenerate symmetric bilinear form whose Witt index is 
at least one, 
there exists a quaternion algebra $D$ over $F$ such that
\begin{equation}\label{e: PG_D}
\mathrm{SO}\left(V\right)=\mathbb G_D
\end{equation}
where $\mathbb G_D=G_D\slash Z_D$,
$G_D$ is a similitude quaternionic unitary group  over $F$ defined by
\begin{equation}\label{e: G_D}
G_D(F): = \left\{g \in \mathrm{GL}_2(D) : {}^{t}\overline{g} \,\begin{pmatrix}0&1\\ 1& 0\end{pmatrix}\, g= \lambda(g ) \begin{pmatrix}0&1\\ 1&0 \end{pmatrix} , 
\, \lambda(g) \in F^\times \right\}
\end{equation}
and $Z_D$ is the center of $G_D$.
Here
\[
\overline{g}:=\begin{pmatrix}\overline{t}&\overline{u}\\
\overline{w}&\overline{v}\end{pmatrix}
\quad\text{for}\quad
g=\begin{pmatrix}t&u\\ w&v\end{pmatrix}
\in\mathrm{GL}_2\left(D\right)
\]
where denoted by
$x\mapsto\overline{x}$ for $x\in D$ is 
the canonical involution of $D$.
Also, we define a quaternionic unitary group  $G_D^1$ over $F$ by 
\[
\label{G_D^1}
G_D^1 := \left\{ g \in G_D : \lambda(g) = 1\right\}.
\]
Let 
\[
D^-:=\left\{x\in D: \mathrm{tr}_D\left(x\right)=0\right\}
\]
where $\mathrm{tr}_D$ denotes the reduced trace of $D$ over $F$.
We recall that when $D\simeq \mathrm{Mat}_{2\times 2}\left(F\right)$, 
$G_D$ is isomorphic to the similitude symplectic group $\mathrm{GSp}_2$ which we denote by $G$,
i.e.
\begin{equation}\label{gsp}
G\left(F\right):=\left\{g\in\mathrm{GL}_4\left(F\right):
{}^tg\begin{pmatrix}0&1_2\\-1_2&0\end{pmatrix}g=
\lambda\left(g\right)\begin{pmatrix}0&1_2\\-1_2&0\end{pmatrix},\,
\lambda\left(g\right)\in F^\times\right\}.
\end{equation}
Also, we define the symplectic group $\mathrm{Sp}_2$, which we denote by $G^1$, as
\[
\label{G^1}
G^1 := \left\{ g \in G : \lambda(g) =1 \right\}.
\] 
We denote $\mathrm{PGSp}_2 = G \slash Z_G$ by $\mathbb G$, where $Z_G$ denotes the center of $G$.
Thus when $D$ is split, $G_D\simeq G=\mathrm{GSp}_2$, $G_D^1 \simeq G^1 = \mathrm{Sp}_2$ and $\mathbb G_D\simeq \mathbb G=\mathrm{PGSp}_2$.

The Siegel parabolic subgroup $P_D$ of $G_D$ has
the Levi decomposition $P_D = M_{D} N_{D}$
where 
\[
\label{d:N_D}
M_{D}(F): = \left\{ \begin{pmatrix} x&0\\ 0&\mu \cdot x\end{pmatrix} : x \in D^\times, \mu \in F^\times \right\},
\quad 
N_{D}(F): =  \left\{ \begin{pmatrix} 1&u\\ 0&1\end{pmatrix} : u \in D^-\right\}.
\]

For $\xi\in D^-\left(F\right)$, let us define a character $\psi_\xi$ on $N_D\left(\mathbb A\right)$
by
\begin{equation}\label{e: psi_xi}
\psi_\xi\begin{pmatrix}1&u\\0&1\end{pmatrix}:=\psi\left(\mathrm{tr}_D\left(\xi u\right)
\right).
\end{equation}
We note that for $\begin{pmatrix}x&0\\0&\mu\cdot x\end{pmatrix}\in M_D\left(F\right)$, we have
\begin{equation}\label{e: psi-conjugation}
\psi_\xi
\left[
\begin{pmatrix}x&0\\0&\mu\cdot x\end{pmatrix}
\begin{pmatrix}1&u\\0&1\end{pmatrix}
\begin{pmatrix}x&0\\0&\mu\cdot x\end{pmatrix}^{-1}\right]
=\psi_{\mu^{-1}\cdot x^{-1}\xi x}\begin{pmatrix}1&u\\0&1\end{pmatrix}.
\end{equation}
%

Suppose that  
$F\left(\xi\right)\simeq E$.
Let us define a subgroup $T_\xi$ of $D^\times$ by
\begin{equation}\label{e: T_xi}
T_\xi:=\left\{x\in D^\times: x\,\xi \,x^{-1}=\xi\right\}.
\end{equation}
Then since $F\left(\xi\right)$ is a maximal commutative subfield of $D$, we have
\begin{equation}\label{e: maximal}
T_\xi(F)=F\left(\xi\right)^\times\simeq E^\times.
\end{equation}
We identify $T_\xi$ with the subgroup of $M_D$ given by
\begin{equation}\label{e: identify}
\left\{\begin{pmatrix}x&0\\0&x\end{pmatrix}:
x\in T_\xi\right\}.
\end{equation}
We note that by \eqref{e: psi-conjugation}, we have
\[
\psi_\xi\left(tnt^{-1}\right)=\psi_\xi\left(n\right)\quad
\text{for $t\in T_\xi\left(\mathbb A\right)$ and $n\in N_D\left(\mathbb A\right)$}.
\]
We define the Bessel subgroup $R_{\xi}$ of $G_D$ by
\begin{equation}\label{d: Bessel}
R_\xi:=T_\xi N_D.
\end{equation}
Then the Bessel periods defined below are indeed the periods in question in
the Gross-Prasad conjecture for $\left(\mathrm{SO}\left(5\right),
\mathrm{SO}\left(2\right)\right)$.
%
%
%
\begin{Definition}\label{d: bessel model}
Let $\pi$ be an irreducible cuspidal automorphic representation of 
$G_D\left(\mathbb A\right)$ whose central character is trivial
and $V_\pi$ its space of automorphic forms.
Let $\Lambda$ be a character of $\mathbb A_E^\times\slash E^\times$ whose restriction to $\mA^\times$ is trivial.
Let $\xi\in D^-\left(F\right)$ 
such that $F\left(\xi\right)\simeq E$.
Fix an $F$-isomorphism $T_\xi\simeq E^\times$
and regard $\Lambda$ as a character of $T_\xi\left(\mathbb A\right)
\slash T_\xi\left(F\right)$.
We define a character $\chi^{\xi,\Lambda}$ on 
$R_\xi\left(\mathbb A\right)$ by
\begin{equation}\label{d: Bessel char}
\chi^{\xi,\Lambda}
\left(tn\right):
=
\Lambda\left(t\right)\psi_\xi\left(n\right)
\quad
\text{for $t\in T_\xi\left(\mathbb A\right)$
and $n\in N_D\left(\mathbb A\right)$}.
\end{equation}

Then for $f\in V_\pi$,
we define $B_{\xi, \Lambda,\psi}\left(f\right)$,
the $\left(\xi, \Lambda,\psi\right)$-Bessel period of $f$, by
\begin{equation}\label{e: def of bessel period}
B_{\xi, \Lambda,\psi}\left(f\right):=
\int_{\mathbb A^\times R_\xi\left(F\right)
\backslash R_\xi\left(\mathbb A\right)}
f\left(r\right)\chi^{\xi,\Lambda}\left(r\right)^{-1}\,
dr.
\end{equation}

We say that $\pi$ has the $\left(\xi, \Lambda, \psi\right)$-Bessel 
period when the linear form $B_{\xi,\Lambda, \psi}$ is not 
identically zero on $V_\pi$.
\end{Definition}
%
\begin{Remark}\label{r: dependency}
Here we record the dependency of $B_{\xi,\Lambda,\psi}$
on the choices of $\xi$ and $\psi$.
First we note that 
for $\xi^\prime\in D^-\left(F\right)$, 
we have $F\left(\xi^\prime\right)\simeq E$ if and only if
\begin{equation}\label{e: s-n}
\xi^\prime=\mu\cdot  \alpha^{-1} \xi\alpha
\quad\text{for some  $\alpha\in D^\times\left(F\right)$ and $\mu\in F^\times$}
\end{equation}
by the Skolem-Noether theorem.
Suppose that $\xi^\prime\in D^-\left(F\right)$ satisfies \eqref{e: s-n} and
$\psi^\prime=\psi^a$ where $a\in F^\times$.
Let $m_0=\begin{pmatrix}\alpha&0\\ 0&a^{-1}\mu\cdot \alpha\end{pmatrix}
\in M_D\left(F\right)$.
Then by \eqref{e: psi-conjugation}, we have
\begin{align}\label{e: dependency}
B_{\xi, \Lambda, \psi}\left(\pi\left(m_0\right)f\right)
&=\int_{\mathbb A^\times
T_{\xi^\prime}\left(F\right)\backslash T_{\xi^\prime}\left(\mathbb A\right)}
\int_{N_D\left(F\right)\backslash N_D\left(\mathbb A\right)}
\\
\notag
&\qquad\qquad\qquad
f\left(t^\prime n^\prime\right)
\Lambda\left(t^{\prime}\right)^{-1}
\psi^\prime_{\xi^\prime}\left(n^\prime\right)\, dt^\prime\, dn^\prime
\\
\notag
&=
B_{\xi^\prime,\Lambda, \psi^\prime}\left(f\right)
\end{align}
where we identify
$T_{\xi^\prime}(F)$ with $E^\times$ via the $F$-isomorphism 
$F\left(\xi^\prime\right)\ni x\mapsto \alpha x\alpha^{-1}\in F\left(\xi\right)
\simeq E$.
\end{Remark}
%
\begin{Definition}\label{def of E,Lambda-Bessel period}
Let $\left(\pi, V_\pi\right)$ be an irreducible cuspidal automorphic representation of $G_D(\mA)$ whose central character is trivial.
Let $\Lambda$ be a character of $\mathbb A_E^\times\slash E^\times$
whose restriction to $\mathbb A^\times$ is trivial.
Then we say that $\pi$ has  the 
$\left(E,\Lambda\right)$-Bessel period
if there exist $\xi\in D^-\left(F\right)$
such that $F\left(\xi\right)\simeq E$
and a non-trivial character $\psi$ of $\mathbb A\slash F$
so that $\pi$ has the $\left(\xi,\Lambda,\psi\right)$-Bessel period.
This terminology is well-defined because of 
the relation \eqref{e: dependency}.
\end{Definition}
%
%
\subsection{Gross-Prasad conjecture}
\label{s:Gross-Prasad conjecture}
First we introduce the following definition which is inspired by
the notion of \emph{locally $G$-equivalence} in
Hiraga and Saito~\cite[p.23]{HiSa}.
%
\begin{Definition}
Let $\left(\pi, V_\pi\right)$ be an irreducible cuspidal automorphic representation of $G_D(\mA)$ whose central character is trivial.
Let $D^\prime$ be a quaternion algebra over $F$ and $(\pi^\prime, V_{\pi^\prime})$ an irreducible cuspidal automorphic representation 
of $G_{D^\prime}(\mA)$.
Then we say that $\pi$ is locally $G^+$-equivalent to $\pi^\prime$
if at almost all places  $v$ of $F$ where $D\left(F_v\right)\simeq D^\prime
\left(F_v\right)$, 
there exists a character $\chi_v$ of $G_D\left(F_v\right) \slash G_D\left(F_v\right)^+$ 
such that $\pi_v \otimes \chi_v \simeq \pi_v^\prime$.
Here 
\begin{equation}\label{e: G^+_D}
G_D\left(F\right)^+:=\left\{g\in G_D\left(F\right)
: \lambda\left(g\right)\in \mathrm{N}_{E\slash F}\left(E^\times\right)\right\}.
\end{equation}
\end{Definition}
%
\begin{Remark}
\label{rem loc ne}
When $\pi$ and $\pi^\prime$ have weak functorial lifts
to $\mathrm{GL}_4\left(\mA\right)$, say $\Pi$ and $\Pi^\prime$,
respectively,  the notion of locally $G^+$-equivalence is 
described simply
as the following.
Suppose that $\pi$ and $\pi^\prime$ are locally $G^+$-equivalent.
Then there exists a character $\omega$ of $G_D\left(\mA\right)$
such that $\pi\otimes\omega$ is nearly equivalent to $\pi^\prime$,
where $\omega$ may not be automorphic.
Since $\omega_v$ is either $\chi_{E_v}$ or trivial at almost all places $v$ of $F$,
we have $\mathrm{BC}_{E\slash F}\left(\Pi\right)\simeq
\mathrm{BC}_{E\slash F}\left(\Pi^\prime\right)$ where
$\mathrm{BC}_{E\slash F}$ denotes the base change lift to
$\mathrm{GL}_4\left(\mathbb A_E\right)$.
Then by Arthur-Clozel~\cite[Theorem~3.1]{AC}, we have
$\Pi\simeq \Pi^\prime$ or $\Pi^\prime\otimes\chi_E$.
Hence $\pi$ is nearly  equivalent to either $\pi^\prime$ or $\pi^\prime\otimes\chi_E$.
The converse is clear.
\end{Remark}
%
Then our first   main result is on the 
Gross-Prasad conjecture for $(\mathrm{SO}(5), \mathrm{SO}(2))$.
%
%
%
%
\begin{theorem}
\label{ggp SO}
Let $E$ be a quadratic extension of $F$.
Let $\left(\pi, V_\pi\right)$ be an irreducible cuspidal automorphic representation of $G_D\left(\mA\right)$ with a
trivial central character
and $\Lambda$ a character of $\mathbb A_E^\times\slash E^\times$
whose restriction to $\mathbb A^\times$ is trivial.
%
\begin{enumerate}
\item\label{theorem1-1-(1)}
Suppose that $\pi$ has the $\left(E,\Lambda\right)$-Bessel period.
Moreover assume that:
\begin{multline}\label{genericity}
\text{there exists a finite place $w$ of $F$ such that}
\\
\text{
$\pi_w$ and its local theta lift 
to $\mathrm{GSO}_{4,2}\left(F_w\right)$
are generic.}
\end{multline}
Here $\mathrm{GSO}_{4,2}$ denote the identity component of 
$\mathrm{GO}_{4,2}$, 
the similitude orthogonal group associated to
 the six dimensional orthogonal space 
 $(E, \mathrm{N}_{E \slash F}) \oplus \mathbb{H}^2$ over $F$
where $\mathbb H$ denotes the hyperbolic plane  over $F$.

Then  there exists a finite set $S_0$ of places of $F$ containing all
archimedean places of $F$ such that the partial $L$-function
\begin{equation}\label{e: partial non-vanishing}
L^S \left(\frac{1}{2}, \pi \times \mathcal{AI} \left(\Lambda \right) \right) \ne 0
\end{equation}
for any finite set $S$ of places of $F$ with $S\supset S_0$.
Here, $\mathcal{AI}\left(\Lambda \right)$ denotes the automorphic induction of $\Lambda$ from $\mathrm{GL}_1(\mA_E)$ to 
$\mathrm{GL}_2(\mA)$.
Moreover there exists a globally generic irreducible 
cuspidal automorphic representation $\pi^\circ$
of $G(\mA)$ which is locally $G^+$-equivalent to $\pi$.
%
\item\label{theorem1-1-(2)}
Assume that:
\begin{multline}\label{arthur classification}
\text{
the endoscopic classification of Arthur,}
\\
\text{ i.e. \cite[Conjecture~9.4.2, Conjecture~9.5.4]{Ar}
holds for $\mathbb G_{D_\circ}$ .
}
\end{multline}
Here $D_\circ$ denotes
an arbitrary quaternion algebra over $F$.

Suppose that $\pi$ has a generic Arthur parameter, namely the parameter 
is of the form $\Pi_0$ or $\Pi_1 \boxplus \Pi_2$
where 
$\Pi_i$ is an irreducible cuspidal automorphic representation 
of $\mathrm{GL}_4\left(\mA\right)$ for $i=0$
and of $\mathrm{GL}_2\left(\mA\right)$ for $i=1,2$, respectively,
such that $L(s, \Pi_i, \wedge^2)$ has a pole at $s=1$.

Then we have 
\begin{equation}
\label{e:L non-zero}
L \left(\frac{1}{2}, \pi \times \mathcal{AI} \left(\Lambda \right) \right) \ne 0
\end{equation}
if and only if there exists a pair $\left(D^\prime, \pi^\prime\right)$
where $D^\prime$ is a
quaternion algebra over $F$ containing $E$ and $\pi^\prime$
an irreducible cuspidal automorphic representation of $G_{D^\prime}$
which is nearly equivalent to $\pi$ such that 
$\pi^\prime$ has the $\left(E,\Lambda\right)$-Bessel period.

Moreover, when $\pi$ is tempered, the pair $\left(D^\prime, \pi^\prime\right)$
is uniquely determined.
\end{enumerate}
\end{theorem}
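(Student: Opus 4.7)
My plan is to prove both parts through a cascade of global and local theta correspondences, supplemented in part (2) by Arthur's endoscopic classification, generalizing the approach of the authors' earlier work \cite{FM1,FM2} but now with the toroidal character $\Lambda$ playing a genuinely non-trivial role. The central auxiliary group is the similitude orthogonal group $\mathrm{GSO}_{4,2}$ of the six dimensional orthogonal space $V^\circ := (E,\mathrm{N}_{E/F})\oplus \mathbb H^2$ introduced in the statement of the theorem; the summand $(E,\mathrm{N}_{E/F})$ encodes the pair $(E,\Lambda)$ via the natural embedding $\mathrm{Res}_{E/F}\mathbb G_m \hookrightarrow \mathrm{O}(E,\mathrm{N}_{E/F})$, with $\Lambda$ picking out a one dimensional summand.

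For part (1), I would first invoke the seesaw pair consisting of $G_D$ and $\mathrm{O}(V^\circ)\supset \mathrm{O}(E,\mathrm{N}_{E/F})\times \mathrm{O}(\mathbb H^2)$ to unfold the $(E,\Lambda)$-Bessel period of $\pi$ into a generalized Whittaker-Shalika coefficient of the global theta lift $\theta(\pi)$ of $\pi$ to $\mathrm{GSO}(V^\circ)(\mathbb A)$, with the $\Lambda$-eigenspace on $(E,\mathrm{N}_{E/F})$ extracted by the $E^\times$-integration coming from $T_\xi$. The non-vanishing of the Bessel period then forces $\theta(\pi)$ to be a non-zero, globally generic cuspidal automorphic representation of $\mathrm{GSO}_{4,2}(\mathbb A)$; the local hypothesis \eqref{genericity} at $w$ is used precisely to exclude residual contributions and to ensure that the local theta lift does not vanish at $w$. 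Applying the transfer of globally generic cuspidal representations of $\mathrm{GSO}_{4,2}$ to $\mathrm{GL}_4\times \mathrm{GL}_2$ via the Cogdell-Kim-Piatetski-Shapiro-Shahidi converse theorem machinery identifies the partial standard $L$-function of $\theta(\pi)$ with $L^S(s,\pi\times \mathcal{AI}(\Lambda))$ up to Euler factors, so that the Jacquet-Shalika non-vanishing theorem together with the functional equation yields \eqref{e: partial non-vanishing}. Running a further theta correspondence from $\mathrm{GSO}(V^\circ)$ back to the split group $G(\mathbb A)=\mathrm{GSp}_2(\mathbb A)$, compatible with Langlands parameters, produces the globally generic $\pi^\circ$ that is locally $G^+$-equivalent to $\pi$.

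For part (2), one direction essentially reduces to part (1) applied to each inner form $G_{D'}$ on which a nearly equivalent $\pi'$ carries an $(E,\Lambda)$-Bessel period, augmented by an analytic argument that upgrades non-vanishing of the partial $L$-function to that of the completed $L$-function at $s=1/2$. The non-trivial direction runs as follows: given \eqref{e:L non-zero} and the generic Arthur parameter, the Vogan packet over all inner forms of $\mathbb G$ is endowed by Arthur's multiplicity formula under hypothesis \eqref{arthur classification} with a distinguished character of the component group; matching this global character against the product of local $\epsilon$-factors, which by the local Gross-Prasad conjecture for $(E,\Lambda)$-Bessel models control existence of the local Bessel models, singles out exactly one pair $(D',\pi')$ in the tempered case, yielding both existence and uniqueness. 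The constraint $E\subset D'$ emerges automatically from the sign calculation on the quaternionic side: only when $E$ splits $D'$ is the relevant local $\epsilon$-factor at a non-split place compatible with the existence of a local Bessel model.

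The main obstacle, I expect, will be precisely this matching of signs. On the one hand, it requires the local Gross-Prasad conjecture for Bessel models at every place, in particular at archimedean places, with the toroidal character $\Lambda$, which is considerably more delicate than the special Bessel case ($\Lambda$ trivial) treated in \cite{FM1}. On the other hand, the global multiplicity formula must be unfolded packet-member by packet-member across the inner forms, and in the non-tempered case $\Pi_1\boxplus \Pi_2$ the multiplicity formula forces cancellations between local Bessel characters that must be analyzed case by case to establish the dichotomy.
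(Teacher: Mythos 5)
Your overall philosophy (cascade of theta correspondences plus, for part (2), endoscopic input) is in the right spirit, but there is a fatal gap at the heart of part (1). You propose to deduce the central-value non-vanishing \eqref{e: partial non-vanishing} from the genericity of the theta lift, the transfer to $\mathrm{GL}_4\times\mathrm{GL}_2$, and ``the Jacquet--Shalika non-vanishing theorem together with the functional equation.'' Jacquet--Shalika controls Rankin--Selberg $L$-functions only on the edge of the critical strip ($\mathrm{Re}(s)\geq 1$), and the functional equation reflects this to $\mathrm{Re}(s)\leq 0$; neither says anything about $s=\tfrac{1}{2}$, and indeed central values of perfectly generic representations vanish all the time. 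The implication ``Bessel period $\neq 0$ $\Rightarrow$ $L^S(\tfrac12,\pi\times\mathcal{AI}(\Lambda))\neq 0$'' genuinely requires period-to-$L$-value machinery: in the paper it is obtained by pulling the $(\xi,\Lambda,\psi)$-Bessel period back to a \emph{Bessel} (not Whittaker--Shalika) period on the theta lift to $\mathrm{GSU}_{3,D}$, identifying that group modulo center with $\mathrm{GU}_{4,\varepsilon}$ via the accidental isomorphism, and invoking the Gan--Gross--Prasad conjecture for $(\mathrm{U}(4),\mathrm{U}(1))$ proved in \cite{FM3} (which itself rests on a further theta lift to $\mathrm{GU}_{2,2}$, Rallis inner product formulas and Whittaker-period formulas). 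Two further problems in your part (1): for non-split $D$ the pair $(G_D,\mathrm{GSO}_{4,2})$ is not a dual pair at all, so your seesaw is undefined exactly in the inner-form case covered by the theorem (one must use the quaternionic dual pair $(G_D^+,\mathrm{GSU}_{3,D})$); and your one-step ``theta back to $G$'' cannot produce $\pi^\circ$ --- the paper needs the second, $\Lambda$-dependent lift $\Theta_{\psi,(\Lambda^{-1},\Lambda^{-1})}$ from $\mathrm{GU}_{4,\varepsilon}$ to $\mathrm{GU}_{2,2}\simeq\mathrm{PGSO}_{4,2}$ before descending to $\mathrm{GSp}_2^+$, with Takeda's pole criterion and a cuspidality/genericity analysis.

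In part (2) you also elide the main difficulty. Arthur's multiplicity formula plus the local Gross--Prasad dichotomy (matching the distinguished character of the component group against local $\epsilon$-factors) only tells you \emph{which} member of the Vogan packet can support local Bessel functionals and that it is automorphic; it does not produce the non-vanishing of the \emph{global} Bessel period. That local-to-global step is precisely the content of the refined formula (Liu's conjecture, Theorem~\ref{ref ggp} here, whence Corollary~\ref{ggp alpha ver}) or of the twisted automorphic descent of Jiang--Zhang, and assuming it at this stage would be circular. The paper instead constructs $(D',\pi')$ by a concrete chain: from $L(\tfrac12)\neq 0$ and the generic member $\pi^\circ$, lift to $\mathrm{GSO}_{4,2}\simeq\mathrm{GU}_{2,2}$, apply \cite[Proposition~A.2]{FM3} to find $\Sigma'$ on some $\mathrm{GU}(V)$ carrying the Bessel period, and theta-lift back to $G$ or $G_{D'}$, checking cuspidality and the period via the pull-back formulas; uniqueness in the tempered case then comes from local GGP for unitary groups (Beuzart-Plessis, Xue) together with strong multiplicity one and the uniqueness of the generic member of an $L$-packet. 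So while your plan identifies several of the right ingredients, as written it is missing the two essential engines: a Rallis-inner-product/period identity giving the central value in part (1), and a genuine local-to-global period construction in part (2).
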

%
%
%
\begin{Remark}
\label{L-fct def rem}
In \eqref{e:L non-zero}, 
$L \left(s, \pi \times \mathcal{AI} \left(\Lambda\right) \right)$
 denotes the complete 
 $L$-function defined as the following.

 When $\mathcal{AI} \left(\Lambda \right)$ is not cuspidal, i.e. $\Lambda=\Lambda_0 \circ \mathrm{N}_{E \slash F}$ for a character $\Lambda_0$ of $\mA^\times \slash F^\times$,
we define 
\[
L\left(s,\pi \times \mathcal{AI} \left(\Lambda \right) \right) : = L\left(s,\pi \times \Lambda_0 \right) L\left(s,\pi \times \Lambda_0 \chi_E \right)
\]
where each factor on the right hand side is 
defined by the doubling method as in Lapid-Rallis~\cite{LR} or Yamana  \cite{Yam}.

When $\mathcal{AI} \left(\Lambda \right)$ is cuspidal, 
the partial $L$-function $L^S\left(s,\pi\times\mathcal{AI}\left(\Lambda \right)\right)$ may be defined by
Theorem~\ref{thm mero} in Appendix~\ref{appendix c} for a finite set $S$ of places of $F$ such that 
$\pi_v$ and $\Pi \left(\Lambda \right)_v$ are unramified at $v \not \in S$.
Further, we define the local $L$-factor at each place $v \in S$ by the 
local Langlands parameters for $\pi_v$ and $\Pi \left(\Lambda \right)_v$, where 
the local Langlands parameters are given by Gan-Takeda~\cite{GT11} for $\mathbb{G}(F_v)$ (also Arthur~\cite{Ar}), 
Gan-Tantono~\cite{GaTan} for $\mathbb{G}_D(F_v)$ 
and  Kutzko~\cite{Kutz} for $\mathrm{GL}_2(F_v)$ at finite places,  and,
 by Langlands~\cite{Lan} at archimedean places.

We note that the condition~\eqref{e: partial non-vanishing}
and   the condition~\eqref{e:L non-zero} are equivalent
from the definition of local $L$-factors when $\pi$ is tempered.
\end{Remark}
%
%
%
\begin{Remark}
Suppose that at a finite place $w$ of $F$,
the group $G_D\left(F_w\right)$ is split
and the representation $\pi_w$ is generic and tempered.
Then by Gan and Ichino~\cite[Proposition~C.4]{GI1},
the big theta lift of $\pi_w$ and the local theta lift of $\pi_w$ coincide.
Thus the genericity of the local theta lift of $\pi_w$
follows from Gan and Takeda~\cite[Corollary~4.4]{GT0}
for the dual pair $\left(G, \mathrm{GSO}_{3,3}\right)$
and from a local analogue of the computations in \cite[Section~3.1]{Mo}
for the dual pair $\left(G^+,\mathrm{GSO}_{4,2}\right)$, respectively.
Here 
\begin{equation}\label{e: G+}
G\left(F\right)^+:=\left\{g\in G: \lambda\left(g\right)\in
\mathrm{N}_{E\slash F}\left(E^\times\right)\right\}.
\end{equation}

When a local representation $\pi_w$ is unramified and tempered, $\pi_w$ is
generic as remarked in \cite[Remark~2]{FM1}. 
Hence the assumption~\eqref{genericity} is fulfilled  when 
$\pi$ is tempered.
\end{Remark}
In our previous paper~\cite{FM1}, 
Theorem~\ref{ggp SO} for the pair
$\left(\mathrm{SO}\left(2n+1\right),\mathrm{SO}\left(2\right)
\right)$ was proved when $\Lambda$ is trivial.
Meanwhile Jiang and Zhang~\cite{JZ} studied the Gross-Prasad
conjecture in a very general setting assuming 
the endoscopic classification of Arthur in general
by using  the twisted automorphic descent.
Though Theorem~\ref{ggp SO} is subsumed in \cite{JZ} as a special case,
we believe that our method, which is  different from theirs, has its own merits
because of its concreteness. 
We also  note that because of the temperedness of $\pi$, the uniqueness of the pair 
$\left(D^\prime,\pi^\prime\right)$ in Theorem~\ref{ggp SO} (2)
follows from the local Gan-Gross-Prasad conjecture for $(\mathrm{SO}(5), \mathrm{SO}(2))$ by 
Prasad-Takloo--Bighash~\cite[Theorem~2]{PT} (see also Waldspurger~\cite{Wal} in general case) at finite places and 
by Luo~\cite{Luo} at archimedean places. 
We shall give another proof of this uniqueness by reducing it 
to a similar assertion  in the unitary group case.
%
%
%
%
\subsection{Refined Gross-Prasad conjecture}
Let $(\pi, V_\pi)$ be an irreducible cuspidal tempered automorphic representation of $G_D\left(\mA\right)$ with trivial central character.
For $\phi_1, \phi_2 \in V_{\pi}$, we define the Petersson inner product
$\left(\phi_1,\phi_2\right)_{\pi}$ on $V_\pi$ by
\[
(\phi_1, \phi_2)_{\pi} = \int_{Z_D\left(\mathbb A\right) G_D(F) 
\backslash G_D\left(\mA\right)} \phi_1(g) \overline{\phi_2(g)} \,dg
\]
where $dg$ denotes the Tamagawa measure. 
Then at each place $v$ of $F$, we take a 
$G_D\left(F_v\right)$-invariant 
hermitian inner product on $V_{\pi_v}$ so that we have a decomposition
$\left(\,\, ,\,\,\right)_\pi=\prod_v\left(\,\,,\,\,\right)_{\pi_v}$.
In the definition of the Bessel period~\eqref{e: def of bessel period},
we take $dr=dt\,du$ where $dt$ and $du$ are the Tamagawa measures on $T_\xi\left(\mathbb A\right)$
and $N_D\left(\mathbb Z\right)$, respectively.
We take and fix the local measures $du_v$ and $dt_v$ so that
$du=\prod_v du_v$ and
\begin{equation}
\label{C_{S_D}}
dt = C_\xi \prod dt_v
\end{equation} 
where $C_\xi$ is a constant called the Haar measure constant
in \cite{II}.
Then  the local Bessel period
$\alpha^{\xi,\Lambda}_v:V_{\pi_v}\times V_{\pi_v}\to\mathbb C$ 
and the local hermitian inner product $\left(\,\,,\,\,\right)_{\pi_v}$
are defined
as in Section~\ref{s:def local bessel}.
%

Suppose that $D$ is not split.
Then by Li~\cite{JSLi},
there exists a pair $\left(\xi^\prime, \Lambda^\prime\right)$
such that $\pi$ has the $\left(\xi^\prime,\Lambda^\prime,
\psi\right)$-Bessel period. Here $\xi^\prime\in D^-\left(F\right)$
such that 
$E^\prime:=F\left(\xi^\prime\right)$ is a quadratic extension of $F$ and 
$\Lambda^\prime$ is a character on $\mathbb A_{E^\prime}^\times\slash
\mathbb A^\times {E^\prime}^\times$.
Then by Proposition~\ref{exist gen prp}, which is a consequence
of the proof of Theorem~\ref{ggp SO} (\ref{theorem1-1-(1)}), there exists an
irreducible cuspidal automorphic representation $\pi^\circ$ of
$G\left(\mathbb A\right)$ which is generic and locally
$G^+$-equivalent to $\pi$.
We take the functorial lift of $\pi^\circ$ to $\mathrm{GL}_4
\left(\mathbb A\right)$ by
Cogdell, Kim, Piatetski-Shapiro and Shahidi~\cite{CKPSS},
which is of the form $\Pi_1 \boxplus \cdots \boxplus \Pi_{\ell_0}$
with $\Pi_i$ an irreducible cuspidal automorphic representation
of $\mathrm{GL}_{m_i}\left(\mA\right)$ for each $i$.
Then we define an integer $\ell\left(\pi\right)$
by $\ell\left(\pi\right)=\ell_0$.
We note that $\pi^\circ$ may not be unique, but 
$\ell\left(\pi\right)$ does not depend 
on the choice of the pair $\left(\xi^\prime,\Lambda^\prime\right)$
by Proposition~\ref{exist gen prp} and Lemma~\ref{compo number}, \ref{comp number not dep on S},
and thus it depends only on $\left(\pi, V_\pi\right)$.
When $D$ is split, then $\pi$ has the functorial lift
to $\mathrm{GL}_4\left(\mathbb A\right)$ by
Arthur~\cite{Ar} (see also Cai-Friedberg-Kaplan~\cite{CFK}) and we define $\ell\left(\pi\right)$
in a similar way.

Our second main result is the refined Gross-Prasad conjecture
formulated by Liu~\cite{Liu2},
i.e.
the Ichino-Ikeda type explicit central value formula,
in the case of $\left(\mathrm{SO}\left(5\right),
\mathrm{SO}\left(2\right)\right)$.
%
%
%
%
%
\begin{theorem}
\label{ref ggp}
Let $\left(\pi, V_\pi\right)$ be an irreducible cuspidal tempered automorphic representation of $G_D(\mA)$ with a trivial central character.

Then 
 %
 %
  for any non-zero decomposable cusp form 
 $\phi=\otimes_v\,\phi_v\in V_\pi$, we have
\begin{multline}
\label{e: main identity}
 \frac{\left|B_{\xi, \Lambda,\psi}\left(\phi\right)\right|^2}{
( \phi,\phi )_\pi}
\\
 =2^{-\ell(\pi)}\,C_{\xi}
 \cdot
 \left(\prod_{j=1}^2\zeta_F\left(2j\right)\right)
 \frac{L\left(\frac{1}{2},\pi \times \mathcal{AI} \left(\Lambda \right) \right)
 }{
 L\left(1,\pi,\mathrm{Ad}\right)L\left(1,\chi_E\right)}
 \cdot\prod_v
 \frac{\alpha_v^\natural\left(\phi_v\right)}{
(\phi_v,\phi_v)_{\pi_v}}.
\end{multline}
Here $\zeta_F(s)$ denotes the complete zeta function of $F$ and
$\alpha_v^\natural\left(\phi_v\right)$ is defined by
\[
\alpha_v^\natural\left(\phi_v\right) =
 \frac{L\left(1,\pi_v,\mathrm{Ad}\right)L\left(1,\chi_{E,v}\right)}{L\left(1/2,\pi_v \times \Pi \left(\Lambda \right)_v \right) \prod_{j=1}^2\zeta_{F_v}\left(2j\right)
 }
\cdot  \alpha_{\Lambda_v, \psi_{\xi, v}}\left(\phi_v,\phi_v\right).
 \]
 We note that 
 $\displaystyle{\frac{\alpha_v^\natural\left(\phi_v\right)}{\left(\phi_v,\phi_v\right)_{\pi_v}}=1}$
for almost all places $v$ of $F$ by \cite{Liu2}.
\end{theorem}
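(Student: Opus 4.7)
The plan is to reduce the identity \eqref{e: main identity} to an Ichino-Ikeda type formula on a simpler group via theta correspondences, generalizing the approach of \cite{FM2} in which $\Lambda$ was trivial. The key dual pair is $(G_D^+,\mathrm{GSO}_{4,2})$ for the six-dimensional split orthogonal space $V=(E,\mathrm{N}_{E \slash F})\oplus\mathbb{H}^2$ already appearing in Theorem~\ref{ggp SO}. Under the accidental isomorphism, $\mathrm{GSO}(V)$ is a quotient of $\mathrm{GL}_2\times\mathrm{Res}_{E \slash F}\mathrm{GL}_2$, so cuspidal automorphic representations of $\mathrm{GSO}(V)$ correspond to pairs $(\sigma,\tau)$ with $\sigma$ on $\mathrm{GL}_2(\mA)$ and $\tau$ on $\mathrm{GL}_2(\mA_E)$; the automorphic induction $\mathcal{AI}(\Lambda)$ is tailored to live in the $\tau$-slot once the Schwartz-Bruhat data are arranged compatibly with the Bessel character $\chi^{\xi,\Lambda}$.

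The global step is a pullback identity obtained via the seesaw dual pair whose vertical pieces are $(G_D^+,\mathrm{GO}(V_1))$ and $(G_D^+,\mathrm{GO}(V_2))$ and whose horizontal pair is $(G_D^+\times G_D^+,\mathrm{GO}(V))$, with $V_1=(E,\mathrm{N}_{E \slash F})$ and $V_2=\mathbb{H}^2$. Applied to $\phi\otimes\overline{\phi}$, it should express
\[
\frac{\left|B_{\xi,\Lambda,\psi}(\phi)\right|^2}{(\phi,\phi)_\pi}
\]
as an integral involving the theta lift of $\phi$ which splits into a toric period on $\mathrm{GL}_2(\mA_E)$ twisted by $\Lambda$, producing $L(1/2,\sigma\times\mathcal{AI}(\Lambda))$ via Waldspurger's formula, and a doubling integral on $\mathrm{GL}_2(\mA)$. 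Combining these with the Rankin-Selberg factorization attached to the functorial lift of the generic representation $\pi^\circ$ furnished by Theorem~\ref{ggp SO}~(\ref{theorem1-1-(1)}) should yield $L(1/2,\pi\times\mathcal{AI}(\Lambda))$, while the Petersson norm identities on $\mathrm{GL}_2$ and $\mathrm{GL}_2(\mA_E)$ supply $L(1,\pi,\mathrm{Ad})$ and $L(1,\chi_E)$ in the denominator together with the zeta factors $\zeta_F(2)\zeta_F(4)$.

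At each place $v$, the local analogue of the seesaw identifies $\alpha_v^{\xi,\Lambda}$ with the product of a local toric functional on $\mathrm{GL}_2(E_v)$ and a local Whittaker-type functional on $\mathrm{GL}_2(F_v)$ after transfer through the local theta correspondence. The standard unramified computation, combined with Waldspurger's local toric calculation and the local doubling zeta computations, should then extract the normalized factor $\alpha_v^\natural(\phi_v)$ of \cite{Liu2}. The constant $2^{-\ell(\pi)}$ arises from the comparison of $L^2$-inner products under the global transfer: $\ell(\pi)$ equals the number of cuspidal constituents of the functorial lift and thus governs the size of the global near-equivalence class, in agreement with the constants predicted by \cite{Liu2}.

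The principal obstacle will be the first step, the global pullback identity. One must choose Schwartz-Bruhat data on the Weil representation compatibly with $\chi^{\xi,\Lambda}$, unfold the resulting theta integral without losing absolute convergence, and identify precisely which irreducible constituent of the theta lift is reached; the possible vanishing of $\sigma$ or $\tau$ must be excluded using the genericity of $\pi^\circ$ from Theorem~\ref{ggp SO}~(\ref{theorem1-1-(1)}). A secondary technical point in the non-split $D$ case is that the pair $(\xi^\prime,\Lambda^\prime)$ supplied by \cite{JSLi} need not coincide with $(\xi,\Lambda)$, so one must first transfer between different Bessel periods of the same $\pi$, using Remark~\ref{r: dependency} together with an auxiliary identity controlled by the local Gan-Gross-Prasad uniqueness, before invoking the theta-lift reduction.
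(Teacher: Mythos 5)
There is a genuine gap, and it lies at the very first structural step. You assert that for $V=(E,\mathrm{N}_{E\slash F})\oplus\mathbb H^2$ the group $\mathrm{GSO}(V)=\mathrm{GSO}_{4,2}$ is a quotient of $\mathrm{GL}_2\times\mathrm{Res}_{E\slash F}\mathrm{GL}_2$, so that its cuspidal representations are pairs $(\sigma,\tau)$ with $\tau$ on $\mathrm{GL}_2(\mA_E)$. That identification is false: it holds only for \emph{four}-dimensional quadratic spaces with discriminant algebra $E$ (e.g.\ $\mathrm{GSO}_{3,1}(F)\simeq \mathrm{GL}_2(E)\times F^\times/\{(z,\mathrm{N}_{E/F}(z))\}$), whereas the six-dimensional group here satisfies $\mathrm{PGSO}_{4,2}\simeq\mathrm{PGU}_{2,2}$. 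Consequently the core of your plan --- splitting the pulled-back period into a toric period on $\mathrm{GL}_2(\mA_E)$ twisted by $\Lambda$ (Waldspurger) and a doubling integral on $\mathrm{GL}_2(\mA)$, and factoring $L(\tfrac12,\pi\times\mathcal{AI}(\Lambda))$ through $\mathrm{GL}_2\times\mathrm{GL}_2$ data --- can only be carried out when $\pi$ participates in the theta correspondence with a four-dimensional orthogonal group, i.e.\ in the Yoshida-type cases (type I-A/I-B) already settled by Liu and Corbett. For a general tempered $\pi$ the lift $\Pi_\pi$ to $\mathrm{GL}_4$ may be cuspidal, the theta lifts to the four-dimensional spaces vanish, and no pair $(\sigma,\tau)$ exists; your seesaw then has nothing to unfold onto, so the argument collapses exactly in the cases the theorem is really about.

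The paper's proof goes through a different and longer chain that cannot be shortcut to $\mathrm{GL}_2$-level inputs: one lifts $\pi$ to $\mathrm{GSU}_{3,D}$ for the quaternionic pair $(G_D^+,\mathrm{GSU}_{3,D})$, computes the pull-back of Bessel periods (Propositions~\ref{pullback Bessel gsp} and \ref{pullback Bessel gsp innerD}), transports the problem through $\mathrm{PGSU}_{3,D}\simeq\mathrm{PGU}_{4,\varepsilon}$ to the Ichino--Ikeda formula for Bessel periods on $\mathrm{GU}(4)$ from \cite{FM3}, which in turn requires the explicit Whittaker formula on $\mathrm{GU}_{2,2}\simeq\mathrm{GSO}_{4,2}$ (Theorem~\ref{gso whittaker}), itself deduced from the Whittaker formula on $G=\mathrm{GSp}_2$ (Appendix~\ref{appendix A}, ultimately Lapid--Mao on $\mathrm{GL}_2\times\mathrm{GL}_2$ or $\mathrm{GL}_4$); the Petersson norms are compared via the Rallis inner product formulas of Section~5, and the global statement is reduced to local pull-back identities proved by stable/regularized integration at every place. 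Your proposal also omits the case $B_{\xi,\Lambda,\psi}\equiv 0$, where one must show the right-hand side of \eqref{e: main identity} vanishes; the paper does this in Section~7.2 using the local Gan--Gross--Prasad uniqueness results, Mok's classification, and Yamana's nonvanishing criterion for theta lifts, none of which appear in your outline. Finally, the transfer between $(\xi',\Lambda')$ and $(\xi,\Lambda)$ you flag at the end is not actually needed for the main identity: when $B_{\xi,\Lambda,\psi}\not\equiv 0$ the argument proceeds directly with the given pair, and the auxiliary Bessel period enters only in defining $\ell(\pi)$.
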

%
\begin{Remark}
\label{rem Arthur + ishimoto}
Under the assumption~\eqref{arthur classification}, 
we have $|\mathcal{S}(\phi_\pi)| =2^{\ell(\pi)}$,
where $\phi_\pi$ denotes the Arthur parameter of $\pi$
and $\mathcal{S}\left(\phi_\pi\right)$  the centralizer of 
$\phi_\pi$ in the complex dual group $\hat{G}$.
Hence \eqref{e: main identity} coincides with the conjectural formula
in Liu~\cite[Conjecture~2.5 (3)]{Liu2}.
Thus when $D$ is split, i.e. $G_D\simeq G$, our theorem proves Liu's conjecture
since the assumption~\eqref{arthur classification} is indeed
fulfilled. 
After submitting this paper, Ishimoto posted a preprint~\cite{Ishimoto} on arXiv, in which he gives
the endoscopic classification of representations of non-quasi split orthogonal groups for generic Arthur parameters. 
Hence, our theorem proves  \cite[Conjecture~2.5 (3)]{Liu2} completely
in the case of $(\mathrm{SO}(5), \mathrm{SO}(2))$.
\end{Remark}
%
\begin{Remark}
Let $\pi_{\rm gen}$ denote the irreducible cuspidal globally generic automorphic representation of $G\left(\mathbb A\right)$
which has the same $L$-parameter as $\pi$.
When $\pi_v$ is unramified at any finite place $v$ of $F$,
Chen and Ichino~\cite{CI} proved
an explicit formula of the ratio  $L\left(1,\pi,\mathrm{Ad}\right)\slash
\left(\Phi_{\rm gen}, \Phi_{\rm gen} \right)$
for a suitably normalized  cusp form $\Phi_{\rm gen}$
in the space of $\pi_{\rm gen}$.
\end{Remark}
%
%
%
\begin{Remark}
In the unitary case,  a remarkable progress has been made
in the Gan-Gross-Prasad conjecture and its refinement for Bessel periods,
by studying the Jacquet-Rallis relative trace formula.
In the striking paper~\cite{BPLZZ} by Beuzart-Plessis, Liu, Zhang and Zhu,
a proof
in the co-dimension one case
for irreducible cuspidal tempered automorphic representations of unitary groups such that  their base change lifts are cuspidal was given
by establishing an ingenious  method to isolate the cuspidal spectrum.
In yet another striking paper by Beuzart-Plessis, Chaudouard and Zydor~\cite{BPCZ}, 
a proof  for all endoscopic cases in the co-dimension one setting was given
by a precise study of the relative trace formula.
Very recently, in a remarkable preprint by Beuzart-Plessis and Chaudouard ~\cite{BPC}, 
the above results are extended to arbitrary co-dimension cases.
 Thus the Gan-Gross-Prasad conjecture and its refinement for Bessel periods on unitary groups are now proved in general.
%

On the contrary, the orthogonal case in general is still open.
We note that, in the $\left(\mathrm{SO}\left(5\right),\mathrm{SO}\left(2\right)\right)$
case, the first author has formulated relative trace formulas 
to approach the formula~\eqref{e: main identity}
and proved the fundamental lemmas
in his joint work with Shalika~\cite{FS}, 
Martin~\cite{FuMa} and Matrin-Shalika~\cite{FuMaS}.
In order to deduce the $L$-value formula
from these relative trace formulas, 
several issues such as smooth transfer of test functions
must be overcome.
In the above mentioned co-dimension one unitary group case,
reductions to Lie algebras played crucial roles to solve similar
issues.
However Bessel periods in our case
involves integration over unipotent subgroups
and it is not clear, at least to the first author, how to make
the reduction to Lie algebras
work.
\end{Remark}
%
%
%
%
\begin{Remark}
In the co-dimension one orthogonal group case,
the refined Gross-Prasad conjecture  has been deduced
from the Waldspurger formula~\cite{Wal} in the 
$\left(\mathrm{SO}\left(3\right), \mathrm{SO}\left(2\right)\right)$ case
and from the Ichino formula~\cite{Ich2} in the $\left(\mathrm{SO}\left(4\right), \mathrm{SO}\left(3\right)\right)$ case,
respectively.
Gan and Ichino~\cite{GI0} studied the $\left(\mathrm{SO}\left(5\right), \mathrm{SO}\left(4\right)\right)$-case 
when the representation of $\mathrm{SO}\left(5\right)$ is a theta lift from $\mathrm{GSO}(4)$
by reduction to the $\left(\mathrm{SO}\left(4\right), \mathrm{SO}\left(3\right)\right)$ case.

Liu~\cite{Liu2} proved
Theorem~\ref{ref ggp} when $D$ is split
and $\pi$ is an endoscopic lift, i.e. a Yoshida lift,
by reducing it to the Waldspurger formula~\cite{Wal}.
The case when $\pi$ is a non-endoscopic
Yoshida lift was proved later by
Corbett~\cite{Co}  in a similar manner.
\end{Remark}
%
%
%
As a corollary of Theorem~\ref{ref ggp},
we prove the $\left(\mathrm{SO}(5), \mathrm{SO}(2)\right)$
case of the Gan-Gross-Prasad conjecture
in the form as stated in \cite[Conjecture~24.1]{GGP}.
%
\begin{corollary}
\label{ggp alpha ver}
Let $(\pi, V_\pi)$ be an irreducible cuspidal tempered automorphic 
representation of $G_D\left(\mA\right)$ with a
trivial central character.
Then the following three conditions are equivalent.
\begin{enumerate}
\item\label{1-ggp alpha ver}
The $\left(\xi,\Lambda, \psi \right)$-Bessel period does not
vanish on $\pi$.
\item\label{2-ggp alpha ver} 
$L \left(\frac{1}{2}, \pi \times \mathcal{AI}\left(\Lambda \right) \right) \ne 0$ 
and  the local Bessel period $\alpha_{\Lambda_v, \psi_{\xi, v}}\not\equiv 0$ on $\pi_v$
at any place $v$ of $F$.
\item
$L \left(\frac{1}{2}, \pi \times \mathcal{AI} \left(\Lambda \right) \right) \ne 0$ 
and  $\mathrm{Hom}_{R_{\xi,v}}\left(\pi_v,\chi^{\xi,\Lambda}_v\right)
\ne\left\{0\right\}$
at any place $v$ of $F$.
\end{enumerate}
\end{corollary}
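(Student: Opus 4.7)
The plan is to reduce everything to Theorem~\ref{ref ggp} together with standard facts about the local Bessel period for tempered representations. I will treat the three implications (1)$\Leftrightarrow$(2) and (2)$\Leftrightarrow$(3) separately.

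First I would establish the equivalence (2)$\Leftrightarrow$(3). The forward direction is immediate: a non-identically-zero local Bessel period $\alpha_{\Lambda_v,\psi_{\xi,v}}$ furnishes, via one of its arguments, a non-zero element of $\mathrm{Hom}_{R_{\xi,v}}(\pi_v,\chi^{\xi,\Lambda}_v)$. The converse rests on the tempered local Gan-Gross-Prasad conjecture for $(\mathrm{SO}(5),\mathrm{SO}(2))$ (due to Prasad-Takloo-Bighash \cite{PT} and Waldspurger \cite{Wal} at finite places, and Luo \cite{Luo} at archimedean places), which asserts that $\dim\mathrm{Hom}_{R_{\xi,v}}(\pi_v,\chi^{\xi,\Lambda}_v)\le 1$, together with the fact that, for tempered $\pi_v$, the regularized local Bessel integral defining $\alpha_{\Lambda_v,\psi_{\xi,v}}$ converges absolutely and realizes the unique (up to scalar) element of that Hom space whenever it is non-zero; this latter property is established in the local theory used to formulate Theorem~\ref{ref ggp} (see Section~\ref{s:def local bessel}). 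Thus non-vanishing of the Hom space is equivalent to non-vanishing of $\alpha_{\Lambda_v,\psi_{\xi,v}}$.

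Next, I would prove (1)$\Rightarrow$(2) directly from Theorem~\ref{ref ggp}. If the $(\xi,\Lambda,\psi)$-Bessel period is non-zero on $\pi$, then there exists a non-zero decomposable $\phi=\otimes_v\phi_v\in V_\pi$ with $B_{\xi,\Lambda,\psi}(\phi)\neq 0$, so the left-hand side of \eqref{e: main identity} is non-zero. Every factor on the right-hand side is then non-zero: in particular $L(1/2,\pi\times\mathcal{AI}(\Lambda))\neq 0$ and $\alpha_v^\natural(\phi_v)\neq 0$ for every $v$, which forces $\alpha_{\Lambda_v,\psi_{\xi,v}}\not\equiv 0$ on $V_{\pi_v}\times V_{\pi_v}$ at each place $v$.

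For (2)$\Rightarrow$(1), I would again invoke Theorem~\ref{ref ggp} in the opposite direction. By hypothesis $\alpha_{\Lambda_v,\psi_{\xi,v}}\not\equiv 0$ at each $v$, so we may select $\phi_v\in V_{\pi_v}$ with $\alpha_{\Lambda_v,\psi_{\xi,v}}(\phi_v,\phi_v)\neq 0$ (using that, as a non-trivial sesquilinear form on an irreducible module, the associated quadratic form cannot vanish identically); at almost every $v$ we may and do take $\phi_v$ to be the unramified vector normalized so that $\alpha_v^\natural(\phi_v)/(\phi_v,\phi_v)_{\pi_v}=1$. The resulting decomposable vector $\phi=\otimes_v\phi_v$ then makes every factor on the right of \eqref{e: main identity} non-zero, given the non-vanishing of $L(1/2,\pi\times\mathcal{AI}(\Lambda))$ and the standard non-vanishing of the adjoint $L$-value at $s=1$ for tempered representations. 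Hence $B_{\xi,\Lambda,\psi}(\phi)\neq 0$, establishing (1).

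The main obstacle is the converse in (2)$\Leftrightarrow$(3): one needs to know that for tempered $\pi_v$ the abstract non-vanishing of the Hom space is detected by the concrete integral defining $\alpha_{\Lambda_v,\psi_{\xi,v}}$. This depends on absolute convergence and non-vanishing of the regularized local Bessel integral for tempered representations combined with the local multiplicity-one statement, and it is precisely the place where the tempered hypothesis on $\pi$ is essential; outside the tempered range either convergence or uniqueness can fail.
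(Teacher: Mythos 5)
Your overall strategy coincides with the paper's: the equivalence (\ref{1-ggp alpha ver})$\Leftrightarrow$(\ref{2-ggp alpha ver}) is read off from the explicit formula \eqref{e: main identity} of Theorem~\ref{ref ggp} (your use of polarization/hermitian-ness of $\alpha_{\Lambda_v,\psi_{\xi,v}}$ to pass from ``$\alpha\not\equiv 0$'' to a vector with $\alpha(\phi_v,\phi_v)\neq 0$, and the choice of normalized unramified vectors at almost all places, is exactly what is needed and is fine), and the equivalence with condition (3) is reduced to the local statement \eqref{equiv wald}.

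The gap is in your treatment of the implication $\mathrm{Hom}_{R_{\xi,v}}(\pi_v,\chi^{\xi,\Lambda}_v)\neq\{0\}\Rightarrow\alpha_{\Lambda_v,\psi_{\xi,v}}\not\equiv 0$. What you offer — multiplicity one for the Hom space together with ``the regularized integral converges and realizes the unique element of the Hom space whenever it is non-zero, as established in Section~\ref{s:def local bessel}'' — does not prove this implication: Section~\ref{s:def local bessel} (following Liu and Waldspurger's stable-integral formalism) only \emph{defines} $\alpha_{\Lambda_v,\psi_{\xi,v}}$ and establishes convergence/well-definedness, and multiplicity $\le 1$ plus convergence is a priori compatible with the integral vanishing identically on a representation whose Hom space is non-zero. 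The statement you need is a genuine theorem: for tempered $\pi_v$, non-vanishing of the Hom space is \emph{equivalent} to non-vanishing of the explicit matrix-coefficient integral $\alpha_{\Lambda_v,\psi_{\xi,v}}$. This is precisely what the paper invokes, citing Waldspurger~\cite{Wal12} at non-archimedean places and Luo~\cite{Luo} at archimedean places for \eqref{equiv wald}; your citation of Luo (and Prasad--Takloo-Bighash/Waldspurger) is only for the multiplicity-one/local Gan--Gross--Prasad statement, which is not the required input. To close the argument you should replace your appeal to Section~\ref{s:def local bessel} by these non-vanishing results; with that substitution the rest of your proof stands and agrees with the paper's.
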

\begin{Remark}
The equivalence between the conditions
(\ref{1-ggp alpha ver}) and (\ref{2-ggp alpha ver}) is immediate
from Theorem~\ref{ref ggp}.
The equivalence
%
\begin{equation}\label{equiv wald}
\alpha_{\Lambda_v, \psi_{\xi, v}}\not\equiv 0
\Longleftrightarrow
\mathrm{Hom}_{R_{\xi,v}}\left(\pi_v,\chi^{\xi,\Lambda}_v\right)
\ne\left\{0\right\}
\end{equation}
is proved by Waldspurger~\cite{Wal12}
at any non-archimedean place $v$
and by Luo~\cite{Luo} recently
at any archimedean place $v$, respectively.
\end{Remark}
\subsection{Method}\label{ss:method}
In \cite{FM1} and \cite{FM2} we used
the theta correspondence for the dual pair
$\left(\mathrm{SO}\left(2n+1\right),
\mathrm{Mp}_n\right)$.

The main tool in \cite{FM1} was the 
pull-back formula by the first author~\cite{Fu}
for the Whittaker period on $\mathrm{Mp}_n$,
which is expressed by a certain integral
involving the \emph{Special} Bessel period on $\mathrm{SO}\left(2n+1
\right)$.
This forced us the restriction that the character
$\Lambda$ on $\mathrm{SO}\left(2\right)$ is trivial.

In \cite{FM2},
to prove  the refined Gross-Prasad conjecture for
$\left(\mathrm{SO}\left(2n+1\right),\mathrm{SO}\left(2\right)\right)$
when $\Lambda$ is trivial, 
the following additional restrictions were necessary:
\begin{enumerate}
\item\label{c2} The base field $F$ is totally real
and at every archimedean place $v$ of $F$,
the representation $\pi_v$ is a discrete series representation.
\item\label{c3}
The assumption \eqref{arthur classification}.
\end{enumerate}
Additional main tool needed in \cite{FM2} was the Ichino-Ikeda type 
formula for the Whittaker periods on $\mathrm{Mp}_n$
by Lapid and Mao~\cite{LM17},
which imposed on us the  condition~(\ref{c2}).
In fact, their proof was to reduce the global identity
to certain local identities.
They proved the local identities in general at 
non-archimedean places.
On the other hand, at archimedean places, 
their proof was to note the
equivalence between their local identities
and the formal degree conjecture
by Hiraga-Ichino-Ikeda~\cite{HII1, HII2}
and then to prove the latter when $\pi$ is 
a discrete series representation.
Our proof in \cite{FM2} was to reduce to the case when $\pi$ has the special
Bessel period by the assumption~\eqref{arthur classification} and to combine
these two main tools with the Siegel-Weil formula.

It does not seem plausible that  a straightforward generalization of
the method of \cite{FM1} and \cite{FM2} would allow us to remove
these restrictions.
Thus  we need to adopt a new strategy in this paper.

Our main method here is again theta correspondence but we use it differently
and in a more intricate way.
First we consider the quaternionic dual pair $\left(G^+_D,\mathrm{GSU}_{3,D}\right)$
where $\mathrm{GSU}_{3,D}$ denotes the identity component
of the similitude quaternion unitary group $\mathrm{GU}_{3,D}$
defined by \eqref{def of gu_3,D} and
$G^+_D$ defined by \eqref{e: G^+_D}.
Then we recall the accidental isomorphism
\begin{equation}\label{e: accidental 3,D}
\mathrm{PGSU}_{3,D}\simeq \mathrm{PGU}_{4,\varepsilon}
\end{equation}
when $D\simeq D_\varepsilon$ given by \eqref{d: quaternion}
and $\mathrm{GU}_{4,\varepsilon}$
is the 
similitude unitary group defined by \eqref{d: unitary GD}.
Hence we have
\begin{equation}\label{e: accidental U_4}
\mathrm{GU}_{4,\varepsilon}\simeq
\begin{cases}\mathrm{GU}_{2,2},&\text{when $D$ is split,
i.e. $\varepsilon\in\mathrm{N}_{E\slash F}
\left(E^\times\right)$};
\\
\mathrm{GU}_{3,1},&\text{when $D$ is non-split,
i.e. $\varepsilon\notin\mathrm{N}_{E\slash F}
\left(E^\times\right)$ }.
\end{cases}
\end{equation}
Thus  our theta correspondence  for $\left(G^+_D,\mathrm{GSU}_{3,D}\right)$
induces
a correspondence for the pair $\left(\mathbb G_D,\mathrm{PGU}_{4,\varepsilon}\right)$.
Then we note that the pull-back of a certain Bessel period on
$\mathrm{PGU}_{4,\varepsilon}$ is an integral involving the 
$\left(\xi,\Lambda, \psi\right)$-Bessel period on $G_D$.

Theorem~\ref{ggp SO} is reduced essentially to the 
Gan-Gross-Prasad conjecture for the Bessel periods on $\mathrm{GU}_{4,\varepsilon}$,
which we proved in \cite{FM3} using the theta correspondence for the pair 
$\left(\mathrm{GU}_{4,\varepsilon},\mathrm{GU}_{2,2}\right)$.

Similarly Theorem~\ref{ref ggp} is reduced to the
refined Gan-Gross-Prasad conjecture for 
the Bessel periods on $\mathrm{GU}_{4,\varepsilon}$.
For the reader's sake,
here we  present an outline of the proof
when the $\left(\xi,\Lambda,\psi\right)$-Bessel period
does not vanish.
Note that in the following paragraph the notation used  is provisionally 
and the argument is not rigorous since 
our intention here is to 
present a rough sketch of the main idea.

Let $\left(\pi, V_\pi\right)$ be an irreducible cuspidal tempered automorphic representation of $G_D(\mA)$ with a trivial central character.
Suppose that the $\left(\xi,\Lambda,\psi\right)$-Bessel period,
which we denote by $B$,
does not vanish on $\pi$.
Let $\theta\left(\pi\right)$ be the theta lift of $\pi$
to $\mathrm{GSU}_{3,D}$.
When $G_D=G$ and the theta lift of $\pi$ to 
$\mathrm{GSO}_{3,1}$ is non-zero, $\theta\left(\pi\right)$
is not cuspidal but
the explicit formula \eqref{e: main identity} has been already
proved by
Corbett~\cite{Co}.
Thus suppose otherwise. Then $\theta\left(\pi\right)$ is a non-zero
irreducible cuspidal tempered automorphic representation.
The pull-back of a certain Bessel period, which we denote by
$\mathcal B$ on $\mathrm{GSU}_{3,D}$ is written
as an integral involving $B$.
As in our previous paper~\cite{FM2}, the explicit formula
for $B$ is reduced to the one for $\mathcal B$,
which we obtain in the following steps.
\begin{enumerate}
\item
Via the isomorphism~\eqref{e: accidental 3,D},
regard $\theta\left(\pi\right)$ as an automorphic 
representation of $\mathrm{GU}_{4,\varepsilon}$
and then consider its theta lift $\theta_\Lambda\left(\theta\left(
\pi\right)\right)$, which depends on $\Lambda$, 
to $\mathrm{GU}_{2,2}$.
The temperedness of $\pi$ implies that 
$\theta_\Lambda\left(\theta\left(
\pi\right)\right)$ is an irreducible cuspidal automorphic representation
of $\mathrm{GU}_{2,2}$.
Then the pull-back of a certain Whittaker period
$\mathcal W$ on $\mathrm{GU}_{2,2}$
is written as an integral involving the Bessel period $\mathcal B$.
Then in \cite{FM3}, it is shown that the explicit formula
for $\mathcal B$ follows from the one for $\mathcal W$.
Thus we are reduced to show the explicit formula for $\mathcal W$.
\item
Via the isomorphism $\mathrm{PGU}_{2,2}\simeq\mathrm{PGSO}_{4,2}$, regard $\theta_\Lambda\left(\theta\left(
\pi\right)\right)$ as an automorphic representation of $\mathrm{GSO}_{4,2}$.
Let $\pi^\prime$ be the theta lift of $\theta_\Lambda\left(\theta\left(
\pi\right)\right)$ to $G=\mathrm{GSp}_2$.
Then it is shown that $\pi^\prime$ is a globally generic cuspidal automorphic
representation of $G$ and indeed the pull-back of the
Whittaker period $W$ on $G$
is expressed as an integral involving $\mathcal W$.
Hence we are  reduced to the explicit formula for $W$.
\item
Since the theta lift of the globally generic cuspidal automorphic
representation $\pi^\prime$ of $G$ 
to either $\mathrm{GSO}_{2,2}$
or $\mathrm{GSO}_{3,3}$ is non-zero and cuspidal,
we are further reduced to the explicit formulas
for the Whittaker periods on $\mathrm{PGSO}_{2,2}$
and $\mathrm{PGSO}_{3,3}$
by the pull-back computation.
\item
Recall the accidental isomorphisms
$\mathrm{PGSO}_{2,2}\simeq \mathrm{PGL}_2\times
\mathrm{PGL}_2$,
$\mathrm{PGSO}_{3,3}\simeq\mathrm{PGL}_4$.
Since the explicit formula for the Whittaker period
on $\mathrm{PGL}_n$ is already proved by Lapid
and Mao~\cite{LM}, we are done.
\end{enumerate}
%
%
%
%
%
%
\begin{Remark}
Though we only consider the case when $\mathrm{SO}\left(2\right)$
is non-split in this paper,  the split case is proved by a similar argument
as follows.
First we note that $D$ is necessarily split when $\mathrm{SO}\left(2\right)$
is split and hence $G_D\simeq G$.
If the theta lift to $\mathrm{GSO}_{2,2}$ is non-zero,
it is a Yoshida lift and Liu~\cite{Liu2} proved the explicit formula.
Suppose otherwise. Then the theta lift to $\mathrm{GSO}_{3,3}$
is non-zero and cuspidal.
The pull-back of a certain Bessel period on $\mathrm{GSO}_{3,3}$
is an integral involving the split Bessel period on $G$  (see Section~\ref{sp4 so42}).
We recall the accidental isomorphism $\mathrm{PGSO}_{3,3}
\simeq \mathrm{PGL}_4$.
We consider the theta correspondence for the pair
$\left(\mathrm{GL}_4,\mathrm{GL}_4\right)$
instead of $\left(\mathrm{GU}_{4,\varepsilon},\mathrm{GU}_{4,\varepsilon}\right)$
in the non-split case.
Then the  pull-back computation may be interpreted
as expressing the pull-back of the Whittaker period on $\mathrm{GL}_4$
as an integral involving the Bessel period on $\mathrm{GSO}_{3,3}$,
which is given in \cite{FM3}.
Thus as in the non-split case, we are  reduced to the Ichino-Ikeda type
explicit formula for the Whittaker period on $\mathrm{GL}_4$.
\end{Remark}
%
%
%
Here is the statement of the theorem in the split case.
\begin{theorem}
\label{main thm split}
Let $(\pi, V_\pi)$ be an irreducible cuspidal automorphic representation of $G(\mA)$ 
with trivial central character.
Suppose that $D$ is split and  the  Arthur parameter of $\pi$ is generic.

Let $\xi\in D^-\left(F\right)$ such that $F\left(\xi\right)\simeq F\oplus F$ and
fix an $F$-isomorphism $T_\xi\simeq F^\times\times F^\times$.
For a character $\Lambda$ of $\mathbb A^\times\slash F^\times$,
we also denote by $\Lambda$ the character of $T_\xi\left(\mathbb A\right)$
defined by $\Lambda\left(a,b\right):=\Lambda\left(ab^{-1}\right)$.

The following assertions hold.
\begin{enumerate}
\item
The $\left(\xi, \Lambda,\psi\right)$-Bessel period does not vanish on $V_\pi$
if and only if $\pi$ is generic and $L\left(\frac{1}{2},\pi\times\Lambda\right)\ne 0$.
Here we note that 
$L\left(\frac{1}{2},\pi\times\Lambda^{-1}\right)$ is the complex
conjugate of $L\left(\frac{1}{2},\pi\times\Lambda\right)$
since $\pi$ is self-dual.
\item
Further assume that $\pi$ is tempered.
Then for any non-zero decomposable cusp form 
 $\phi=\otimes_v\,\phi_v\in V_\pi$, we have
\begin{multline*}
 \frac{\left|B_{\xi, \Lambda,\psi}\left(\phi\right)\right|^2}{
( \phi,\phi )_\pi}
 =2^{-\ell(\pi)}\,C_{\xi}
 \cdot
 \left(\prod_{j=1}^2\zeta_F\left(2j\right)\right)
 \\
 \times
 \frac{L\left(\frac{1}{2},\pi \times \Lambda \right)L\left(\frac{1}{2},\pi \times \Lambda^{-1} \right)
 }{
 L\left(1,\pi,\mathrm{Ad}\right)\zeta_F(1)}
 \cdot\prod_v
 \frac{\alpha_v^\natural\left(\phi_v\right)}{
(\phi_v,\phi_v)_{\pi_v}}
\end{multline*}
where $\zeta_F\left(1\right)$ stands for $\mathrm{Res}_{s=1}\, \zeta_F(s)$.
\end{enumerate}
\end{theorem}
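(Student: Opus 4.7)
The plan is to follow the strategy sketched in Subsection~\ref{ss:method}, adapted to the split setting. First I would dichotomize on whether the global theta lift $\theta_{2,2}(\pi)$ of $\pi$ to $\mathrm{GSO}_{2,2}(\mA)$ is non-zero. If $\theta_{2,2}(\pi)\ne 0$, then $\pi$ is a Yoshida lift, and the Ichino--Ikeda type formula~\eqref{e: main identity} in this case has already been established by Liu~\cite{Liu2} (and extended to the non-endoscopic Yoshida case by Corbett~\cite{Co}); Part~(1) in this branch follows from the standard non-vanishing criteria for the theta correspondence together with the Rankin--Selberg integrals representing $L(1/2,\pi\times\Lambda)$.

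Next, assume $\theta_{2,2}(\pi)=0$. By the tower property of theta correspondence, the theta lift $\Pi := \theta_{3,3}(\pi)$ to $\mathrm{GSO}_{3,3}(\mA)$ is a non-zero cuspidal automorphic representation, remains tempered, and its Arthur parameter stays generic. The first key computation is the seesaw/pull-back identity for the dual pair $(G, \mathrm{GSO}_{3,3})$ which expresses a suitable Bessel period $\mathcal{B}$ of $\Pi$ on $\mathrm{GSO}_{3,3}(\mA)$ as an integral involving the split $(\xi,\Lambda,\psi)$-Bessel period of $\pi$ on $G(\mA)$, as indicated in the remark preceding the theorem. This reduces the explicit formula~\eqref{e: main identity} for $\pi$, and the non-vanishing equivalence in Part~(1), to analogous statements for $\mathcal{B}$.

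Using the accidental isomorphism $\mathrm{PGSO}_{3,3}\simeq\mathrm{PGL}_4$, I would then view $\Pi$ as a cuspidal automorphic representation of $\mathrm{PGL}_4(\mA)$. Replacing the theta correspondence $(\mathrm{GU}_{4,\varepsilon},\mathrm{GU}_{2,2})$ used in the non-split case by the correspondence $(\mathrm{GL}_4,\mathrm{GL}_4)$, twisted by an auxiliary character built from $\Lambda$, the pull-back computation of \cite{FM3} expresses a Whittaker period $W$ on $\mathrm{GL}_4$ as an integral involving $\mathcal{B}$. At this final stage, the Ichino--Ikeda type formula for Whittaker periods on $\mathrm{GL}_n$ proved by Lapid and Mao~\cite{LM} supplies the required identity for $W$. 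Unwinding the two pull-back identities, and computing the unramified local factors contributed at each stage, yields the explicit formula in Part~(2); the non-vanishing equivalence in Part~(1) for the non-Yoshida branch then follows by choosing local test vectors on which every local Bessel/Whittaker period is non-zero.

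The main obstacle will be to carry out the two pull-back identities cleanly in the split setting and to keep track of the auxiliary constants: the Haar measure constant $C_\xi$, the factor $2^{-\ell(\pi)}$, the $\zeta_F$ factors, and the local matching factors $\alpha_v^\natural$. Matching $\ell(\pi)$ on both sides requires identifying the number of isobaric components of the functorial lift of $\pi$ to $\mathrm{GL}_4(\mA)$ at each rung of the tower; in the Yoshida branch one must check that Liu's formula produces exactly the $2^{-\ell(\pi)}$ predicted here (which is straightforward since $\ell(\pi)=2$ in that case), whereas in the non-Yoshida branch one must ensure that the $(\mathrm{GL}_4,\mathrm{GL}_4)$ theta correspondence preserves the number of isobaric components on the $\mathrm{GL}_4$ side. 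A secondary technical point is the appearance of $\zeta_F(1) = \mathrm{Res}_{s=1}\zeta_F(s)$ in the split denominator, which reflects the degeneration of $L(s,\chi_E)$ as $E\to F\oplus F$; tracing this factor through the chain of identities requires careful regularization of the split analogue of $L(s,\chi_E)$ on the way down the tower.
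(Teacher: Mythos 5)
Your proposal is correct and follows essentially the same route as the paper, which itself only sketches the split case in the remark preceding the theorem: dichotomize on the theta lift to $\mathrm{GSO}_{2,2}$ (the Yoshida branch being covered by Liu~\cite{Liu2}), and otherwise pass to the non-zero cuspidal lift on $\mathrm{GSO}_{3,3}\,$, use the pull-back identity of Section~\ref{sp4 so42} to relate its Bessel period to the split Bessel period on $G$, transfer via $\mathrm{PGSO}_{3,3}\simeq\mathrm{PGL}_4$ and the $(\mathrm{GL}_4,\mathrm{GL}_4)$ correspondence with the pull-back computation of \cite{FM3}, and conclude with the Lapid--Mao formula~\cite{LM}. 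The bookkeeping issues you flag (the constants $C_\xi$, $2^{-\ell(\pi)}$, the degeneration of $L(s,\chi_E)$ to $\zeta_F(s)$) are exactly the points one must track when carrying out the analogue of Sections~6--7 in this setting, so no essential idea is missing.
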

%
%
%

%
%
%
%
%
%
%
%
%
\subsection{Generalized B\"ocherer conjecture}
Thanks to the meticulous local computation by Dickson, Pitale, Saha and Schmidt~\cite{DPSS},
Theorem~\ref{ref ggp} implies the generalized B\"ocherer conjecture.
For brevity we only state the scalar valued full modular case here
in the introduction.
Indeed  a  more general version shall be proved in \ref{generalized boecherer statement}
   as Theorem~\ref{t: vector valued boecherer}.
%
%
%
%
\begin{theorem}
\label{Boecherer:scalar}
Let $\varPhi$ be a holomorphic Siegel cusp form of degree two and weight $k$
with respect to $\mathrm{Sp}_2\left(\mathbb Z\right)$
which is a Hecke eigenform and $\pi\left(\varPhi\right)$
the associated automorphic representation 
of $\mathbb{G}\left(\mathbb A_\mQ\right)$.
Let 
\begin{equation}\label{e: Fourier}
\varPhi\left(Z\right)=\sum_{T>0}a\left(\varPhi, T\right)
\exp\left[2\pi\sqrt{-1}\operatorname{tr}\left(TZ\right)\right],
\,\, Z\in\mathfrak H_2,
\end{equation}
be the Fourier expansion of $\varPhi$ where $T$ runs over semi-integral positive definite 
two by two symmetric matrices 
and $\mathfrak H_2$ denotes the Siegel upper half space of degree two.

Let $E$ be an imaginary quadratic extension of $\mathbb Q$.
We denote by $-D_E$ its  discriminant, $\mathrm{Cl}_E$ its ideal class group
and $w\left(E\right)$ the number of distinct roots of unity in $E$.
In \eqref{e: Fourier}, when $T^\prime={}^t\gamma T\gamma$
for some $\gamma\in\mathrm{SL}_2\left(\mathbb Z\right)$,
we have 
$a\left(\varPhi, T^\prime\right)=a\left(\varPhi,T\right)$.
By the Gauss composition law, we may naturally identify the 
$\mathrm{SL}_2\left(\mathbb Z\right)$-equivalence classes of binary quadratic forms
of discriminant $-D_E$ with the elements of $\mathrm{Cl}_E$.
Thus the notation  $a(\varPhi, c)$ for $c \in \mathrm{Cl}_E$ makes sense.
For a character $\Lambda$ of $\mathrm{Cl}_E$,
we define $\mathcal{B}_\Lambda\left(\varPhi , E\right)$ by
\[
\label{mathcal B Phi E}
\mathcal{B}_\Lambda\left(\varPhi , E\right): = w\left(E\right)^{-1} \cdot \sum_{c \in \mathrm{Cl}_E} a\left(\varPhi, c\right) \Lambda^{-1} \left(c\right).
\]

Suppose that $\varPhi$ is not a Saito-Kurokawa lift.
Then we have
\begin{equation}
\label{intro B conj}
\frac{|\mathcal{B}_\Lambda(\varPhi , E)|^2}{\langle \varPhi, \varPhi \rangle} =2^{2k-4} \cdot D_E^{k-1} \cdot \frac{L\left(\frac{1}{2},\pi \left( \varPhi\right) 
\times \mathcal{AI} \left(\Lambda \right) \right)
 }{
 L\left(1,\pi \left(\varPhi \right),\mathrm{Ad}\right)}.
\end{equation}
Here 
\[
\langle\varPhi,\varPhi\rangle=\int_{\mathrm{Sp}_2\left(\mathbb Z\right)
\backslash \mathfrak H_2}
\left|\varPhi\left(Z\right)\right|^2\det\left(Y\right)^{k-3}\,dX\, dY
\quad\text{where $Z=X+\sqrt{-1}\,Y$.}
\]
\end{theorem}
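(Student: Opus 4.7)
The plan is to derive Theorem~\ref{Boecherer:scalar} as a specialization of Theorem~\ref{ref ggp} applied to the adelic lift $\phi_\varPhi \in V_{\pi(\varPhi)}$ of the Siegel cusp form $\varPhi$. Since $\varPhi$ is a Hecke eigenform of full level weight $k$ which is not a Saito-Kurokawa lift, $\pi(\varPhi)$ is an irreducible cuspidal automorphic representation of $\mathbb{G}(\mathbb{A}_\mathbb{Q}) = \mathrm{PGSp}_2(\mathbb{A}_\mathbb{Q})$ with generic Arthur parameter. By Weissauer's result, together with the Ramanujan bound at the archimedean place (Harish-Chandra parameter), $\pi(\varPhi)$ is tempered, so the hypotheses of Theorem~\ref{ref ggp} are met with $D = M_2(\mathbb{Q})$, i.e., $G_D = G = \mathrm{GSp}_2$.

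First I would translate the classical sum $\mathcal{B}_\Lambda(\varPhi, E)$ into the adelic Bessel period $B_{\xi, \Lambda, \psi}(\phi_\varPhi)$ of Definition~\ref{d: bessel model}. Choosing $\xi \in D^-(\mathbb{Q})$ so that $F(\xi) \simeq E$, the Fourier expansion \eqref{e: Fourier} identifies $a(\varPhi, T)$ with the Fourier coefficient with respect to $\psi_\xi$ on $N_D(\mathbb{A})$, while the Gauss composition identification of $\mathrm{SL}_2(\mathbb{Z})$-classes of binary quadratic forms with $\mathrm{Cl}_E$ rewrites the sum $\sum_{c \in \mathrm{Cl}_E} a(\varPhi, c)\Lambda^{-1}(c)$ as the integral over $\mathbb{A}^\times T_\xi(\mathbb{Q}) \backslash T_\xi(\mathbb{A})$ against $\Lambda^{-1}$. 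The fact that $E$ is imaginary quadratic makes $T_\xi(\mathbb{R}) \slash \mathbb{R}^\times$ compact and supplies, together with $w(E)^{-1}$, exactly the arithmetic volume factors required. This is a by now standard computation and is exactly the starting point used in \cite{FM2} and \cite{DPSS}; I would carry it out keeping explicit track of the measure-normalization constants, the discriminant factor $D_E$, and the archimedean weight-$k$ normalization, thereby expressing $|\mathcal{B}_\Lambda(\varPhi, E)|^2 \slash \langle \varPhi, \varPhi \rangle$ as an explicit multiple of $|B_{\xi,\Lambda,\psi}(\phi_\varPhi)|^2 \slash (\phi_\varPhi, \phi_\varPhi)_\pi$.

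Next I would insert Theorem~\ref{ref ggp} for the cusp form $\phi_\varPhi$. Since $\pi(\varPhi)$ has a generic Arthur parameter of the form $\Pi_0$ (non-endoscopic case, as $\varPhi$ is not a Yoshida/Saito-Kurokawa lift), we have $\ell(\pi(\varPhi)) = 1$, which produces the factor $2^{-1}$ that combines with the classical constants. The resulting identity becomes
\[
\frac{|B_{\xi,\Lambda,\psi}(\phi_\varPhi)|^2}{(\phi_\varPhi,\phi_\varPhi)_\pi}
= \frac{1}{2}\, C_\xi \cdot \left(\prod_{j=1}^2 \zeta_\mathbb{Q}(2j)\right) \cdot
\frac{L\left(\tfrac{1}{2},\pi(\varPhi) \times \mathcal{AI}(\Lambda)\right)}{L(1,\pi(\varPhi),\mathrm{Ad})\,L(1,\chi_E)}
\cdot \prod_v \frac{\alpha_v^\natural(\phi_v)}{(\phi_v,\phi_v)_{\pi_v}},
\]
and the main task becomes evaluating the local ratio $\alpha_v^\natural(\phi_v) \slash (\phi_v,\phi_v)_{\pi_v}$ at each place on the distinguished local vectors corresponding to $\varPhi$.

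The main obstacle is precisely this local computation, but it is not really an obstacle for us: at every finite place $v$ the newform at squarefree level (here, hyperspecial) is the setting of the explicit formulas proved by Dickson, Pitale, Saha and Schmidt \cite{DPSS}, which give $\alpha_v^\natural(\phi_v) \slash (\phi_v,\phi_v)_{\pi_v} = 1$ for $v$ unramified in $E$ and the precise local factor at ramified places. At the archimedean place, $\phi_\infty$ is the lowest weight vector of a holomorphic discrete series of scalar weight $k$, and the archimedean local Bessel integral for this vector was computed in \cite{DPSS} as well; plugging in yields the factor $2^{2k-4} \cdot D_E^{k-1}$, where the $D_E^{k-1}$ comes from the archimedean matrix coefficient evaluated at the positive definite form of discriminant $-D_E$, and the power of $2$ comes from the combination with $C_\xi$, the Tamagawa measure normalizations, $\zeta_\mathbb{Q}(2)\zeta_\mathbb{Q}(4)$ and $L(1,\chi_E)$ via the class number formula. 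After assembling these local inputs and multiplying by the classical-to-adelic Petersson normalization, the identity \eqref{intro B conj} drops out. The remaining routine bookkeeping — ensuring the measure constant $C_\xi$ of \eqref{C_{S_D}} cancels correctly with the archimedean and finite volume factors — will be performed in detail in Section~\ref{generalized boecherer statement}, where the vector-valued and higher-level version is also established.
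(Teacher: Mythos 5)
Your proposal follows essentially the paper's own route: Theorem~\ref{Boecherer:scalar} is obtained by translating $\mathcal{B}_\Lambda(\varPhi,E)$ into the adelic Bessel period of $\phi_\varPhi$, applying Theorem~\ref{ref ggp} with $\ell(\pi(\varPhi))=1$ (no Yoshida lifts at full level), and inserting the local Bessel-period evaluations of \cite{DPSS} at all places — which is exactly how the paper deduces it in Section~\ref{GBC} as the $N=1$, $r=0$ case of Theorem~\ref{t: vector valued boecherer}, the archimedean factor in the scalar case being the one already computed in \cite{DPSS}. The only minor divergence is the temperedness input (you invoke Weissauer at the unramified finite places together with the discrete series at infinity, while the paper's Proposition~\ref{temp prp} argues via the functorial lift to $\mathrm{GL}_4$), which for full level and $k\ge 3$ comes to the same thing.
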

\begin{Remark}
In Theorem~\ref{t: vector valued boecherer},
we prove \eqref{intro B conj} allowing $\varPhi$ to have a square-free level
and to be vector-valued.
Moreover, assuming the temperedness of  $\pi\left(\varPhi\right)$,
the weight $2$ case, which is of significant interest because of 
the modularity conjecture for abelian surfaces,  is also included.

The formula~\eqref{intro B conj} and 
its generalization~\eqref{e: vector valued boecherer}
are expected to have a broad spectrum
of interesting applications both arithmetic and analytic.
Some of the examples are
 \cite{Blo}, \cite[Section~3]{DPSS}, \cite{Dummigan},\cite{HY},
 \cite{Saha} and \cite{Waibel}.
\end{Remark}
%
%
%
%
%
\subsection{Organization of the paper}
This paper is organized as follows.
In Section~2, we introduce some more notation
and define local and global Bessel periods.
In Section~3, we carry out the pull-back computation of Bessel periods.
In Section~4, we shall prove Theorem~\ref{ggp SO}
using the results in Section~3.
We also note
 some consequences of our proof of Theorem~\ref{ggp SO} (\ref{theorem1-1-(1)}), 
 which will be used in the 
proof of Theorem~\ref{ref ggp} later.
In Section~5, we  recall the 
Rallis inner product formula for similitude groups.
In Section~6, we will give an explicit formula for
 Bessel periods on $\mathrm{GU}_{4,\varepsilon}$ in certain cases
as explained in our strategy for the proof of Theorem~\ref{ref ggp}.
In Section~7, we complete our proof of Theorem~\ref{ref ggp}.
In Section~8, we 
prove the generalized B\"ocherer conjecture,
including the vector valued case.
In Appendix~\ref{appendix A}, we will give an explicit formula of Whittaker periods for irreducible cuspidal tempered automorphic
representations of $G$.
In Appendix~\ref{s:e comp}, we compute the local Bessel periods explicitly
for representation of $G\left(\mR\right)$
corresponding to vector valued
holomorphic Siegel modular forms. This result is used in Section~8.
In Appendix~\ref{appendix c}, we consider  the meromorphic continuation of the $L$-function for 
$\mathrm{SO}\left(5\right) \times \mathrm{SO}\left(2\right)$.
\subsection*{Acknowledgement}
This paper was partly written while the second author stayed at National University of Singapore. He would like to thank the people at
NUS for their warm hospitality. 
The authors would like to thank the anonymous referee for his/her careful reading of the earlier version of the manuscript
and providing many helpful comments and suggestions.
%
%
%
%
%
%
%
%
%
%
%
%
%
%
%
%
%
%
%
%
\section{Preliminaries}
%
%
%
%
%
%
%
%
%
%
%
%
%
%
\subsection{Groups}
\label{ss:groups}
%
\subsubsection{Quaternion algebras}\label{ss: quaternion}
Let $X(E : F)$ denote the set of $F$-isomorphism classes of central simple algebras over $F$
containing $E$. 
Then we recall that the map $\varepsilon \mapsto D_{\varepsilon}$ gives a 
bijection between $F^\times \slash \mathrm{N}_{E \slash F}(E^\times)$
and $X(E : F)$ (see \cite[Lemma~1.3]{FS}) where
\begin{equation}\label{d: quaternion}
D_\varepsilon := \left\{ \begin{pmatrix}a&\varepsilon b\\ b^\sigma&a^\sigma \end{pmatrix} : a, b \in E \right\}
\quad\text{for $\varepsilon\in F^\times$}.
\end{equation}
Here we regard $E$ as a subalgebra of $D_\varepsilon$ by
\[
E \ni a \mapsto \begin{pmatrix}a&0\\ 0&a^\sigma \end{pmatrix} \in D_\varepsilon.
\]
We also note that  $D_\varepsilon\simeq \mathrm{Mat}_{2 \times 2}\left(F\right)$
when $\varepsilon\in \mathrm{N}_{E\slash F}\left(E^\times\right)$.
The canonical involution $D_\varepsilon\ni x\mapsto \bar{x}\in D_\varepsilon$
is given by
\[
\bar{x}=
\begin{pmatrix}a^\sigma&-\varepsilon b\\ -b^\sigma&a\end{pmatrix}
\quad\text{for $x=\begin{pmatrix}a&\varepsilon b\\ b^\sigma&a^\sigma\end{pmatrix}$}.
\]
We denote  the reduced  trace of $D$ by 
$\mathrm{tr}_D$.
%
%
%
\subsubsection{Orthogonal groups}\label{ss: orthogonal}
For a non-negative integer $n$, a symmetric matrix $S_n\in
\mathrm{Mat}_{(2n+2) \times (2n+2)}\left(F\right)$ 
is defined inductively by
\begin{equation}\label{d: symmetric}
S_0: = \begin{pmatrix}2&0\\ 0&-2d \end{pmatrix} \quad \text{and} \quad S_n: = \begin{pmatrix}0&0&1\\ 0&S_{n-1}&0\\ 1&0& 0\end{pmatrix}\quad
\text{for $n\ge 1$}.
\end{equation}
We recall that $E=F\left(\eta\right)$ where $\eta^2=d$.
Then we write 
the corresponding orthogonal group,
the special orthogonal group
and the similitude orthogonal group by
\begin{equation}\label{d: orthogonal groups}
\mathrm{O}\left(S_n\right)=
\mathrm{O}_{n+2,n},\quad
\mathrm{SO}\left(S_n\right)=
\mathrm{SO}_{n+2,n}
\quad
\text{and}\quad
\mathrm{GO}\left(S_n\right)=\mathrm{GO}_{n+2,n},
\end{equation}
respectively.
Let $\mathrm{GSO}_{n+2,n}$ denote
the identity component of $\mathrm{GO}_{n+2,n}$.
Thus
\begin{equation}\label{d: GSO_n+2,n}
\mathrm{GSO}_{n+2,n}\left(F\right) = \{ g \in \mathrm{GO}_{n+2,n}\left(F\right)
 : \det (g) = \lambda(g)^{n+1} \}
\end{equation}
where
\begin{equation}\label{d: GO_n+2,n}
\mathrm{GO}_{n+2,n}(F) = \left\{ g \in \mathrm{GL}_{2n+2}(F) : 
{}^{t}g\, S_{n} \,g
= \lambda(g)S_n, \,\lambda(g) \in F^\times \right\}.
\end{equation}

For a positive integer $n$, we denote by $J_{2n}$ the $2n\times 2n$ symmetric matrix
with ones on the non-principal diagonal and zeros elsewhere,
i.e.
\begin{equation}\label{d: J_m}
J_2=\begin{pmatrix}0&1\\1&0\end{pmatrix}
\quad\text{and}\quad
J_{2\left(n+1\right)}=
\begin{pmatrix}0&0&1\\0&J_{2n}&0\\1&0&0\end{pmatrix}
\quad\text{for $n\ge 1$}.
\end{equation}
Then the similitude orthogonal group $\mathrm{GO}_{n,n}$ is defined by
\begin{equation}\label{d: GO_n,n}
\mathrm{GO}_{n,n}\left(F\right):=
\left\{g \in\mathrm{GL}_{2n}\left(F\right): {}^{t}g\, J_{2n}\, g = \lambda(g) J_{2n},
\,\lambda\left(g\right)\in F^\times \right\}
\end{equation}
and we denote by $\mathrm{GSO}_{n,n}$
its identity component, which is given by 
\begin{equation}\label{d: GSO_n,n}
\mathrm{GSO}_{n,n}\left(F\right) = \{ g \in \mathrm{GO}_{n,n}\left(F\right) : \det (g) = \lambda(g)^{n} \}.
\end{equation}
%
%
%
\subsubsection{Quaternionic unitary groups}\label{ss: quaternionic unitary groups}
Let $D$ be a quaternion algebra over $F$ containing $E$.
Recall that $G_D$ denotes the similitude quaternionic unitary group of degree $2$ defined
by \eqref{e: G_D}.

We  define a similitude quaternionic unitary group $\mathrm{GU}_{3,D}$ of degree $3$  by
\begin{equation}\label{def of gu_3,D}
\mathrm{GU}_{3,D}(F):= \left\{g \in \mathrm{GL}_3(D) : {}^t \bar{g} \,{\bf J}_\eta\, g = \lambda(g) {\bf J}_\eta, \,
\lambda\left(g\right)\in F^\times\right\}
\end{equation}
where we define a skew-hermitian matrix $ {\bf J}_\eta$ by 
\begin{equation}\label{d: j_eta}
 {\bf J}_\eta := \begin{pmatrix} 0&0&\eta\\0 &\eta&0\\ \eta&0&0\end{pmatrix}.
\end{equation}
Here $\bar{A}=\left(\bar{a}_{ij}\right)$ for $A=\left(a_{ij}\right)\in\mathrm{Mat}_{m \times n}\left(D\right)$.
Let us denote by $\mathrm{GSU}_{3,D}$ the identity component of $\mathrm{GU}_{3,D}$.
Then unlike the orthogonal case, as noted in
 \cite[p.21--22]{MVW},
we have 
\[
\mathrm{GSU}_{3,D}(F) = \mathrm{GU}_{3,D}(F)
\]
and
\[
\text{$\mathrm{GSU}_{3,D}(F_v) = \mathrm{GU}_{3,D}(F_v)$
when $D \otimes_F F_v$ is not split.}
\]
Moreover when $D\otimes_F F_v$ is split at a place $v$ of $F$, we have
\begin{equation}\label{e: GU3D cases}
\mathrm{GU}_{3, D}(F_v)
\simeq
\begin{cases}
\mathrm{GO}_{4,2}(F_v)
&\text{if $E\otimes F_v$ is a quadratic extension of $F_v$};
\\
\mathrm{GO}_{3,3}(F_v)&\text{if $E\otimes F_v\simeq F_v\oplus F_v$.}
\end{cases}
\end{equation}

We also define $\mathrm{GU}_{1,D}$ by
\begin{equation}\label{d: gu_1,d}
\mathrm{GU}_{1,D}(F):= \left\{\alpha\in D^\times :\bar{\alpha} \eta \alpha
 = \lambda\left(\alpha\right) \eta,\,\lambda\left(\alpha\right)
 \in F^\times \right\}
\end{equation}
and denote its identity component by $\mathrm{GSU}_{1,D}$.
Then we note that
\begin{align}\label{d: gsu1,d}
\mathrm{GSU}_{1,D}\left(F\right)&=\left\{\alpha\in D^\times :\bar{\alpha} \eta \alpha
 = \mathrm{n}_D\left(\alpha\right) \eta \right\}
 \\
 \notag
 &=\left\{x\in D^\times\mid x\eta=\eta x\right\}=T_\eta
 \end{align}
 where $T_\eta$ is defined by \eqref{e: T_xi} with $\xi=\eta$
 and $\mathrm{n}_D$ denotes the reduced norm of $D$.
%
%
%
%
\subsubsection{Unitary groups}\label{ss: unitary}
Suppose that  $D=D_\varepsilon$ defined by \eqref{d: quaternion}.
Then we define $\mathrm{GU}_{4,\varepsilon}$ a similitude unitary group of degree $4$ by
\begin{equation}\label{d: unitary GD}
\mathrm{GU}_{4,\varepsilon}\left(F\right):=
\left\{ g \in \mathrm{GL}_4(E) : {}^{t}g^\sigma \mathcal J_\varepsilon g = \lambda(g) \mathcal J_\varepsilon,\,
\lambda\left(g\right)\in F^\times \right\}
\end{equation}
where we define a hermitian matrix $\mathcal J_\varepsilon$ by
\[
\mathcal J_\varepsilon := \begin{pmatrix} 0&0&0&1\\ 0&-1&0&0\\ 0&0&\varepsilon&0\\ 1&0&0&0\end{pmatrix}.
\]
Here $A^\sigma=\left(a_{ij}^\sigma\right)$
for $A=\left(a_{ij}\right)\in\mathrm{Mat}_{m \times n}\left(E\right)$.
Then we have
\begin{equation}\label{e: gu(2,2) or gu(3,1)}
\mathrm{GU}_{4,\varepsilon}\simeq
\begin{cases}\mathrm{GU}_{2,2},&\text{when $D$ is split,
i.e. $\varepsilon\in \mathrm{N}_{E\slash F}\left(E^\times\right)$};
\\
\mathrm{GU}_{3,1},&\text{when $D$ is non-split,
i.e. $\varepsilon\notin \mathrm{N}_{E\slash F}\left(E^\times\right)$}.
\end{cases}
\end{equation}

We also define $\mathrm{GU}_{2,\varepsilon}$ a similitude unitary group
of degree $2$ by
\begin{multline}\label{d: GU2D}
\mathrm{GU}_{2,\varepsilon}(F) := \left\{ g \in \mathrm{GL}_{2}(E) : {}^{t}g^\sigma J_\varepsilon g
=\lambda(g)  J_\varepsilon, \,\lambda(g) \in F^\times \right\}
\\
\text{where}\quad
J_\varepsilon=\begin{pmatrix}-1&0\\0&\varepsilon  \end{pmatrix}.
\end{multline}
%
%
%
%
%
%
%
\subsection{Accidental isomorphisms}
\label{acc isom}
We need to explicate the accidental isomorphisms of our concern,
since we use  them in a crucial way to transfer an automorphic  period on one group to 
the one on the other group. 
The reader may consult, for example,  Satake~\cite{Satake} and Tsukamoto~\cite{Tsukamoto}
about the details of  the material here.
%
%
%
\subsubsection{$\mathrm{PGSU}_{3,D}\simeq \mathrm{PGU}_{4,\varepsilon}$}
Suppose that $D=D_\varepsilon$.
Then we may naturally realize $\mathrm{GSU}_{3,D}(F)$ as a subgroup of $\mathrm{GL}_6(E)$.
We note that
\[
\begin{pmatrix} 1&0&0&0&0&0\\ 0&-\varepsilon&0&0&0&0\\ 0&0&1&0&0&0\\ 0&0&0&-\varepsilon&0&0\\0 &0&0&0&1&0\\ 0&0&0&0&0&-\varepsilon \end{pmatrix}
\,{}^{t}\bar{g}\,
 \begin{pmatrix} 1&0&0&0&0&0\\ 0&-\varepsilon&0&0&0&0\\ 0&0&1&0&0&0\\ 0&0&0&-\varepsilon&0&0\\0 &0&0&0&1&0\\ 0&0&0&0&0&-\varepsilon \end{pmatrix}^{-1}= {}^{t}g^\sigma
\]
and 
\[
\begin{pmatrix}0&1&0&0&0&0\\ -1&0&0&0&0&0\\ 
0&0&0&1&0&0\\ 0&0&-1&0&0&0\\ 0&0&0&0&0&1\\ 0&0&0&0&-1& 0\end{pmatrix}
\,{}^{t}\bar{g}\,
 \begin{pmatrix}0&1&0&0&0&0\\ -1&0&0&0&0&0\\ 
0&0&0&1&0&0\\ 0&0&-1&0&0&0\\ 0&0&0&0&0&1\\ 0&0&0&0&-1& 0\end{pmatrix}^{-1}
= {}^{t}g.
\]
Thus in this realization, we have
\begin{multline}\label{d: gsu_3,D}
\mathrm{GSU}_{3,D}(F)=
\{ g \in \mathrm{GSO}_{3,3}(E) : {}^{t} g^\sigma\, \mathcal J_\varepsilon^\circ \,g 
=\lambda(g)\mathcal J_{\varepsilon}^\circ,\,\lambda\left(g\right)\in F^\times \}
\\
\text{where}\quad
\mathcal J_{\varepsilon}^\circ =-  \begin{pmatrix} 0&0&0&0&1&0\\ 0&0&0&0&0&\varepsilon\\0&0&1&0&0&0
\\0&0&0&\varepsilon&0&0\\1&0&0&0&0&0\\0&\varepsilon&0&0&0&0\end{pmatrix}.
\end{multline}
Here we recall that 
\begin{equation}\label{iso: GSO(3,3)}
\mathrm{GSO}_{3,3}(E) \simeq \mathrm{GL}_4(E) \times \mathrm{GL}_1(E) \slash \{ (z, z^{-2} ) :z \in E^\times \}.
\end{equation}
In fact  the isomorphism \eqref{iso: GSO(3,3)} is realized as follows.
Let us take the standard basis 
\[
b_1 ={}^{t}(1,0,0,0),\quad
b_2 ={}^{t}(0,1,0,0), 
\quad
b_3 ={}^{t}(0,0,1,0), 
\quad
b_4 ={}^{t}(0,0,0,1),
\]
of $E^4$. 
Then we may consider $V:=\wedge^2 E^4$ as an orthogonal space
over $E$ with a quadratic form 
$\left(\,\, ,\,\,\right)_V$ defined by
\[
v_1\wedge v_2=(v_1 ,v_2)_V \cdot  b_1 \wedge  b_2 \wedge  b_3 \wedge  b_4
\]
for $v_1, v_2 \in V$.
As a basis of $V$ over $E$, we take $\{ \varepsilon_i: 1\le i\le 6 \}$ given by 
\[
\varepsilon_1 =b_1 \wedge b_2,  \varepsilon_2 =b_1 \wedge b_3, \varepsilon_3 = b_1 \wedge b_4,
\varepsilon_4 = b_2 \wedge b_3,  \varepsilon_5 = b_4 \wedge b_2,  \varepsilon_6 = b_3 \wedge b_4.
\]
Let the group $\mathrm{GL}_4(E) \times \mathrm{GL}_1(E)$ act on $V$ by
$(g, a)(w_1 \wedge w_2) = a \cdot \left( gw_1 \wedge gw_2 \right)$
where $w_1,w_2\in E^4$.
This action defines a homomorphism
\begin{equation}\label{e: hom from GL4xGL1}
\mathrm{GL}_4(E) \times \mathrm{GL}_1(E)\to
\mathrm{GSO}_{3,3}\left(E\right)
\end{equation}
where we take 
$\left\{\varepsilon_i: 1\le i\le 6\right\}$ as a basis of $V$
and the homomorphism \eqref{e: hom from GL4xGL1} induces
the isomorphism~\eqref{iso: GSO(3,3)}.
By a direct computation we observe that $\left(-\mathcal J_\varepsilon, 1\right)$ is mapped
to $\mathcal J_\varepsilon^\circ$ under \eqref{e: hom from GL4xGL1}
and the restriction of the homomorphism~\eqref{e: hom from GL4xGL1} gives a
homomorphism
\begin{equation}\label{e: hom from GU4}
\mathrm{GU}_{4,\varepsilon}\left(F\right)\to
\mathrm{GSU}_{3,D}\left(F\right).
\end{equation}
Then it is easily seen that the isomorphism
\begin{equation}
\label{acc isom1}
\Phi_D : \mathrm{PGU}_{4,\varepsilon}(F) 
\overset{\sim}{\to} \mathrm{PGSU}_{3,D}(F)
\end{equation}
is induced.
%
%
%
%
%
%
\subsubsection{$\mathrm{PGU}_{2,2}
\simeq\mathrm{PGSO}_{4,2}$}
When $\varepsilon \in \mathrm{N}_{E \slash F}(E^\times)$, 
the quaternion algebra $D=D_\varepsilon$ is split and 
the isomorphism~\eqref{acc isom1} gives an isomorphism
$\mathrm{PGU}_{2,2}\simeq \mathrm{PGSO}_{4,2}$.
We recall the concrete realization of this isomorphism.
First we define $\mathrm{GU}_{2,2}$ by
\begin{multline*}
\mathrm{GU}_{2,2}:=
\left\{
g\in\mathrm{GL}_4\left(E\right):
{}^tg^\sigma\,
J_4\,g
=\lambda\left(g\right)J_4,
\,\lambda\left(g\right)\in F^\times\right\}
\\
\text{where $J_4=\begin{pmatrix}0&0&0&1\\0&0&1&0\\
0&1&0&0\\1&0&0&0\end{pmatrix}$}
\end{multline*}
as \eqref{d: J_m}.
Let 
\begin{equation*}
{\mathcal V}: = \left\{
B\left(\left(x_i\right)_{1\le i\le 6}\right):=
\left(\begin{smallmatrix}
0&\eta x_1& x_3 + \eta x_4 &x_2\\
-\eta x_1 &0 &x_5& -x_3 +\eta x_4\\
-x_3 -\eta x_4 &-x_5& 0 &\eta^{-1} x_6\\
-x_2 &x_3 -\eta x_4 &-\eta^{-1} x_6 &0\\
\end{smallmatrix}\right)
: x_i \in F \, \left(1\le i\le6\right)
\right\}.
\end{equation*}
We define $\Psi : {\mathcal V} \rightarrow F$ by
\[
\Psi \left(B\right): = {\rm Tr} \left(B\, \begin{pmatrix} 0&1_2\\ 1_2&0\end{pmatrix}
\,{}^{t}B^\sigma\,
  \begin{pmatrix}0 &1_2\\ 1_2&0\end{pmatrix}\right).
\]
Then  we have
\[
\Psi\left(B\left(\left(x_i\right)_{1\le i\le 6}\right)\right)=
-4\left\{x_1x_6+x_2x_5-\left(x_3^2-dx_4^2\right)\right\}.
\]

Let $\mathrm{GSU}_{2,2}$ denote the identity component
of $\mathrm{GU}_{2,2}$, i.e.
\[
\mathrm{GSU}_{2,2} = \{ g \in \mathrm{GU}_{2,2}: \det(g) = \lambda(g)^2 \}.
\]
We let $\mathrm{GSU}_{2,2}$ act on $\mathcal V$ by
\[
\mathrm{GSU}_{2,2}\times \mathcal V\ni
\left(g,B\right)\mapsto
\left(wg w \right)B\left(w\,{}^tg w \right)\in\mathcal V
\quad\text{where 
$w=\begin{pmatrix} 1&0&0&0\\ 0&1&0&0 \\ 0&0&0&1\\ 0&0&1& 0\end{pmatrix}$.}
\]
Then this action induces a homomorphism 
$\phi : \mathrm{GSU}_{2,2} \rightarrow \mathrm{GO}({\mathcal V})$.
We note that
\[
\lambda(\phi(g)) = \det\left(g\right)\quad
\text{for $g\in \mathrm{GSU}_{2,2}$}
\]
and this implies that the image of $\phi$ is contained
in $\mathrm{GSO}\left(\mathcal V\right)$.
As a basis of ${\mathcal V}$, we may take
\begin{align*}
f_1&=\begin{pmatrix}
0&\eta&0&0\\
-\eta&0&0&0\\
0&0&0&0\\
0&0&0&0\\
\end{pmatrix}, \quad
&f_2&=\begin{pmatrix}
0&0&0&1\\
0&0&0&0\\
0&0&0&0\\
-1&0&0&0\\
\end{pmatrix}, \quad
&f_3&=\begin{pmatrix}
0&0&1&0\\
0&0&0&-1\\
-1&0&0&0\\
0&1&0&0\\
\end{pmatrix}, 
\\
f_4&=
\begin{pmatrix}
0&0&\eta &0\\
0&0&0&\eta\\
-\eta &0&0&0\\
0&-\eta &0&0\\
\end{pmatrix},  \quad
&f_5&=\begin{pmatrix}
0&0&0&0\\
0&0&1&0\\
0&-1&0&0\\
0&0&0&0\\
\end{pmatrix}, \quad
&f_6&=\begin{pmatrix}
0&0&0&0\\
0&0&0&0\\
0&0&0&\eta^{-1}\\
0&0&-\eta^{-1}&0\\
\end{pmatrix}.
\end{align*}
With respect to this basis, we may regard
$\phi$ as a homomorphism from $\mathrm{GSU}_{2,2}$
to $\mathrm{GO}_{4,2}$,
where the group $\mathrm{GO}_{4,2}$ is  given by \eqref{d: GO_n+2,n} for $n=2$.
Let us consider $\mathrm{GSU}_{2,2}\rtimes E^\times$
where the action of $\alpha\in E^\times$ on $g\in\mathrm{GSU}_{2,2}$
is given by
\[
\alpha \cdot g = \begin{pmatrix} \alpha&0&0&0\\0 &1&0&0\\0 &0&1&0\\ 0&0&0&\left(\alpha^{\sigma}
\right)^{-1}\end{pmatrix}
\,g \,\begin{pmatrix} \alpha&0&0&0\\0 &1&0&0\\0 &0&1&0\\ 0&0&0&\left(\alpha^{\sigma}
\right)^{-1}\end{pmatrix}^{-1}.
\]
Then as in \cite[p.32--34]{Mo},
$\phi$ may be extended to $\mathrm{GSU}_{2,2}\rtimes E^\times$
and we have a homomorphism
$\mathrm{GSU}_{2,2}\rtimes E^\times\to
\mathrm{PGSO}_{4,2}$
which induces the isomorphism
\begin{equation}\label{acc isom2}
\Phi : \mathrm{PGU}_{2,2} \overset{\sim}{\to} \mathrm{PGSO}_{4,2}.
\end{equation}
%
%
%
%
%
%
%
\subsection{Bessel periods}
Let us introduce Bessel periods on various groups.
%
%
%
%
%
%
%
\subsubsection{Bessel periods on $G=\mathrm{GSp}_2$}
\label{s:def bessel G}
Though we already introduced Bessel periods on $G_D$ in general
as \eqref{e: def of bessel period},
we would like to describe them concretely in the case of $G$ here for our 
explicit pull-back computations in the next section.

Let $P$ be the Siegel parabolic subgroup of $G$ with the Levi decomposition $P = MN$
where 
\[
\label{d:N}
M(F) = \left\{\begin{pmatrix} g&0\\ 0&\lambda \cdot {}^tg^{-1}\end{pmatrix} : 
\begin{aligned}
&g \in \mathrm{GL}_2(F), 
\\
&\lambda \in F^\times 
\end{aligned}
\right\},\,\,
N(F) =  \left\{ \begin{pmatrix} 1&X\\ 0&1\end{pmatrix} : X \in \mathrm{Sym}_2(F) \right\}.
\]
Here $\mathrm{Sym}_n(F)$ denotes the set of $n$ by $n$ symmetric matrices with entries
in $F$ for a positive integer $n$.
For $S \in \mathrm{Sym}_2(F)$, let us define a character $\psi_{S}$ of $N(\mA)$ by 
\[
\psi_{S}  \begin{pmatrix} 1&X\\ 0&1\end{pmatrix}  = \psi
\left[\mathrm{tr}(SX)\right].
\]
For $S\in\mathrm{Sym}_2\left(F\right)$ such that  $\det S\ne 0$,
let
\[
\label{T_S}
T_S := \left\{ g \in\mathrm{GL}_2 : {}^{t}gSg = \det(g)S \right\}.
\]
We identify $T_S$ with the subgroup of $G$ given by
\[
 \left\{ \begin{pmatrix}g&0\\ 0&\det (g) \cdot {}^{t}g^{-1} \end{pmatrix} : g \in T_S \right\}.
\]
%
\begin{Definition}
Let us take $S\in\mathrm{Sym}_2\left(F\right)$
such that $T_S\left(F\right)$ is isomorphic to $E^\times$.
Let $\pi$ be an irreducible cuspidal automorphic representation of $G\left(\mathbb A\right)$
whose central character is trivial
and $V_\pi$ its space of automorphic forms.
Fix an $F$-isomorphism $T_S\left(F\right)\simeq E^\times$.
Let $\Lambda$ be a character of $\mathbb A_E^\times\slash E^\times$
such that $\Lambda\mid_{\mathbb A^\times}$ is trivial.
We
regard $\Lambda$ 
as a character of $T_S\left(\mathbb A\right) \slash \mathbb A^\times\,
T_S\left(F\right)$.

Then for $\varphi\in V_\pi$, we define $ B_{S,\Lambda,\psi}\left(\varphi\right)$,
the $\left(S,\Lambda,\psi\right)$-Bessel period of $\varphi$ by
\begin{equation}
\label{Beesel def gsp}
B_{S,  \Lambda,\psi}(\varphi) = \int_{\mA^\times \,T_{S}(F) \backslash T_{S}(\mA)} \int_{N(F) \backslash N(\mA)}
\varphi(uh) \Lambda^{-1}(h) \psi_{S}^{-1}(u) \, du\, dh.
\end{equation}
We say that $\pi$ has the $\left(S,\Lambda,\psi\right)$-Bessel period
when $B_{S,  \Lambda,\psi}\not\equiv 0$ on $V_\pi$.
Then we also say that $\pi$ has the $\left(E,\Lambda\right)$-Bessel period as in Definition~\ref{def of E,Lambda-Bessel period}.
\end{Definition}
%
%
%
%
%
%
%
%
%
%
%
%
%
%
\subsubsection{Bessel periods on $\mathrm{GSU}_{3,D}$}
Let us introduce Bessel periods on the group $\mathrm{GSU}_{3,D}$
defined in   \ref{ss: quaternionic unitary groups}.
Let $P_{3,D}$ be a maximal parabolic subgroup of $\mathrm{GSU}_{3,D}$
with the Levi decomposition $P_{3,D}=M_{3,D}N_{3,D}$ where
\[
\label{d:N3,D}
M_{3,D} = \left\{ \begin{pmatrix}g&0&0\\ 0&h&0\\ 0&0&g \end{pmatrix} : 
\begin{aligned}
&g\in D^\times,
\\
&h \in T_\eta,  
\\
& \mathrm{n}_D\left(g\right)=\mathrm{n}_D\left(h\right)
\end{aligned}
\right\},
\,\,
N_{3,D} = \left\{ \begin{pmatrix} 1&A^\prime&B\\ 0&1&A\\0 &0&1 \end{pmatrix} \in \mathrm{GSU}_{3,D} \right\}.
\]
As for $T_\eta$, we recall \eqref{d: gsu1,d} and 
$T_\eta \simeq E^\times$.
For $X \in D^\times$, we define a character $\psi_{X, D}$ of $N_{3, D}(\mA)$ by 
\[
\psi_{X, D}  \begin{pmatrix} 1&A^\prime&B\\ 0&1&A\\0 &0&1 \end{pmatrix} = \psi \left[\mathrm{tr}_D(XA) \right].
\]
Then the identity component of the stabilizer of $\psi_{X, D}$ in $M_{3,D}$ is 
\[
\label{M_X D}
M_{X, D} =  \left\{ \begin{pmatrix} h^{X}&0&0\\ 0&h&0\\0 &0&h^{X}\end{pmatrix} : h \in T_\eta \right\}
\quad
\text{where}\quad h^X = XhX^{-1}.
\]
We identify $M_X$ with $T_\eta$ by
\begin{equation}\label{e: identify M_X with T_eta}
M_{X, D}\ni \begin{pmatrix} h^{X}&0&0\\ 0&h&0\\ 0&0&h^{X}\end{pmatrix}
\mapsto h\in T_\eta
\end{equation}
and we fix an $F$-isomorphism $T_\eta\simeq E^\times$.
%
\begin{Definition}\label{d: bessel for GSU_3,D}
Let $\sigma_D$ be an irreducible cuspidal automorphic representation of $\mathrm{GSU}_{3,D}\left(\mA \right)$ 
and $V_{\sigma_D}$ its space of automorphic forms.
Let $\chi$ be a character of $\mathbb A_E^\times\slash E^\times$
and we regard $\chi$ as a character of $M_{X,D}\left(\mathbb A\right) \slash M_{X,D}\left(F\right) $.
Suppose that $\chi|_{\mA^\times} = \omega_{\sigma_D}$, the central character of $\sigma_D$.

Then for $\varphi\in V_{\sigma_D}$, we define $\mathcal{B}^D_{X, \chi, \psi} (\varphi)$,
the $\left(X,\chi, \psi \right)$-Bessel period of $\varphi$ by
\begin{multline}\label{Besse def gsud}
\mathcal{B}^D_{X, \chi, \psi} (\varphi) =\int_{\mA^\times M_{X,D}(F) \backslash M_{X,D}(\mA)} \int_{N_{3,D}(F) \backslash N_{3,D}(\mA)} \varphi(uh)
\\
\times \chi (h)^{-1} \psi_{X, D}(u)^{-1} \, du \, dh.
\end{multline}
\end{Definition}
%
%
%
%
%
%
\subsubsection{Bessel periods on $\mathrm{GU}_{4,\varepsilon}$}
In light of the accidental isomorphism \eqref{acc isom1},
Bessel periods on the group $\mathrm{GU}_{4,\varepsilon}$
is defined as follows.

Let $P_{4,\varepsilon}$ be a maximal parabolic subgroup of $\mathrm{GU}_{4,\varepsilon}$
with the Levi decomposition $M_{4,\varepsilon}N_{4,\varepsilon}$ 
where
\begin{align*}
M_{4,\varepsilon}(F) &= \left\{ \begin{pmatrix} a&0&0\\ 0&g&0\\ 0&0&\lambda(g) (a^\sigma)^{-1} \end{pmatrix} : a \in E^\times, g \in \mathrm{GU}_{2,\varepsilon}\left(F\right) \right\}
,
\\
N_{4,\varepsilon}(F) &= \left\{  \begin{pmatrix} 1&A&B\\ 0&1_2&A^\prime\\ 0&0&1\end{pmatrix} \in \mathrm{GU}_{4,\varepsilon}\left(F\right) \right\}.
\end{align*}
Let us take an anisotropic vector $e\in E^4$ of the form ${}^{t}(0, \ast, \ast, 0)$.
Then we define a character $\chi_e$ of $N_{4,\varepsilon}(\mA)$ by
\[
\chi_e \left(u\right)
= \psi( (ue, b_1)_\varepsilon)
\quad\text{where
$\left(x,y\right)_\varepsilon={}^tx^\sigma J_{\varepsilon}y$}.
\]
Here we recall that $J_\varepsilon$ is
as given in \eqref{d: GU2D} 
and $b_1={}^t\left(1,0,0,0\right)$.
Let $D_e$ denote the subgroup of $M_{4,\varepsilon}$
given by
\[
D_{e}  := \left\{ \begin{pmatrix}1&0&0\\ 0&h&0\\ 0&0&1\end{pmatrix} :  h \in \mathrm{U}_{2,\varepsilon}, \, h e = e \right\}.
\]
Then the group $D_e\left(\mA\right)$ stabilizes the character $\chi_e$ by conjugation.
We note that 
\[
D_e(F) \simeq \mathrm{U}_1(F):= \{ a \in E^\times : \bar{a}a =1 \}.
\]
Hence for a character $\Lambda$ of $\mathbb A_E^\times$ which is trivial on $\mA^\times$,
we may regard $\Lambda$ as a character of $D_e(\mA)$ by $d \mapsto \Lambda(\det d)$.
Then we define a character $\chi_{e, \Lambda}$ of $R_e(\mA)$ where $R_e:=D_eN_{4,\varepsilon}$ by
\begin{equation}\label{d of chi_e,Lambda}
\chi_{e, \Lambda}(ts) := \Lambda(t) \chi_e(s) \quad \text{for}
\quad t \in D_e(\mA), \,
s \in N_{4,\varepsilon}(\mA).
\end{equation}
%
\begin{Definition}\label{d: Bessel on G_4,varepsilon}
For a cusp form $\varphi$ on $\mathrm{GU}_{4,\varepsilon}(\mA_F)$
with a trivial central character, we define $B_{e, \Lambda,\psi}(\varphi)$,
the $(e, \Lambda,\psi)$-Bessel period of $\varphi$, by
\begin{equation}\label{d bessel period on GU_4,varepsilon}
B_{e, \Lambda,\psi}(\varphi) = \int_{D_e(F) \backslash D_e(\mA_F)} \int_{N_{4,\varepsilon}(F) \backslash N_{4,\varepsilon}(\mA_F)}
\chi_{e, \Lambda}(ts)^{-1}\, \varphi(ts) \, ds \, dt.
\end{equation}
\end{Definition}
%
%
%
%
%
%
%
\subsubsection{Bessel periods on $\mathrm{GSO}_{4,2}$ and $\mathrm{GSO}_{3,3}$}
By combining
 the accidental isomorphisms \eqref{acc isom1} and \eqref{acc isom2}
in the split case, we shall define Bessel periods on $\mathrm{GSO}_{4,2}$ and $\mathrm{GSO}_{3,3}$
as the following.

Let $P_{4,2}$ denote a maximal parabolic subgroup of $\mathrm{GSO}_{4,2}$ with the Levi decomposition 
$P_{4,2}= M_{4,2}N_{4,2}$ where
 \[
 \label{d:N4,2}
 M_{4,2} = \left\{ \begin{pmatrix} g&0&0\\0 &h&0\\0 &0&
 g^\ast\cdot \det h\end{pmatrix} : 
 \begin{aligned}
 &g \in \mathrm{GL}_2, 
 \\
 &h \in \mathrm{GSO}_{2 ,0}
 \end{aligned}
 \right\}, \,\,
 N_{4,2}= \left\{ \begin{pmatrix}1_2&A^\prime&B\\ 0&1_2&A\\ 0&0&1_2 \end{pmatrix} \in \mathrm{GSO}_{4,2}\right\}.
 \]
 Here 
 \[
 g^\ast = \begin{pmatrix}0&1\\1&0\end{pmatrix} {}^{t}g^{-1} \begin{pmatrix}0&1\\1&0\end{pmatrix}\quad\text{
 for}\quad
  g\in\mathrm{GL}_2. 
 \]
 Then for $X\in\mathrm{Mat}_{2\times 2}\left(F\right)$,  we define
a character $\psi_X$ of $N_{4,2}\left(\mA\right)$ by
\[
\psi_{X} \begin{pmatrix}1_2&A^\prime&B\\ 0&1_2&A\\0 &0&1_2 \end{pmatrix} 
 = \psi \left[\mathrm{tr}(XA) \right].
 \]
 Suppose that $\det X \ne 0$ and let
 \[
 \label{M_X}
M_{X} :=  \left\{ \begin{pmatrix}  (\det h) \cdot (h^X)^\ast&0&0\\ 0&h&0\\ 0&0&h^X \end{pmatrix} : h \in \mathrm{GSO}_{2,0} \right\}
 \]
 where $h^X = XhX^{-1}$ . 
 Then $M_X\left(\mA\right)$ stabilizes the character $\psi_X$ and
 $M_X$ is isomorphic to $\mathrm{GSO}_{2,0}$.
 We fix an isomorphism $\mathrm{GSO}_{2,0}(F)\simeq E^\times$ and
 we regard a character of $\mathbb A_E^\times$ as a character of $M_X\left(\mA\right)$.
 %
 \begin{Definition}\label{def of Bessel on GS)_4,2}
  Let $\sigma$ be an irreducible cuspidal automorphic representation of 
  $\mathrm{GSO}_{4,2}(\mA)$ with its space of automorphic forms $V_\sigma$
  and the central character $\omega_\sigma$.
For a character $\chi$ of $\mA_E^\times$ such that $\chi |_{\mA^\times} = \omega_{\sigma}$, we define $\mathcal B_{X,\chi, \psi}\left(\varphi\right)$, 
the $(X, \chi, \psi)$-Bessel period of $\varphi \in V_\sigma$ by
\begin{equation}\label{e: Bessel GSO(4,2)}
\mathcal{B}_{X, \chi, \psi}(\varphi) = \int_{N_{4,2}(F) \backslash N_{4,2}(\mA)} \int_{M_{X}(F) \mA^\times \backslash M_{X}(\mA)}
\varphi(u h) \chi(h)^{-1} \psi_{X}(u)^{-1} \, du \, dh.
\end{equation}
\end{Definition}
When $d \in (F^\times)^2$, we know that $\mathrm{GSO}(S_2) \simeq \mathrm{GSO}_{3,3}$.
Hence, as above, for a cusp form $\varphi$ on $\mathrm{GSO}_{3,3}$ with central character $\omega$  and characters 
$\Lambda_1, \Lambda_2$ of $\mA^\times \slash F^\times$ such that $\Lambda_1 \Lambda_2=\omega$, 
we define $(X, \Lambda_1, \Lambda_2, \psi)$-Bessel period by
\[
\mathcal{B}_{X, \Lambda, \psi}(\varphi) = \int_{N_{4,2}(F) \backslash N_{4,2}(\mA)} \int_{M_{X}(F) \mA^\times \backslash M_{X}(\mA)}
\varphi(u h) \chi_{\Lambda_1, \Lambda_2}(h)^{-1} \psi_{X}(u)^{-1} \, du \, dh.
\]
Here, since $M_{4,2} \simeq \mathrm{GL}_2 \times \mathrm{GSO}_{1,1}$ and 
$\mathrm{GSO}_{1,1}(F) = \left\{ \left( \begin{smallmatrix} a&\\ &b\end{smallmatrix} \right) : a, b \in F^\times \right\}$, 
we define a character $\chi_{\Lambda_1, \Lambda_2}$ of $\mathrm{GSO}_{1,1}(\mA)$ by 
\[
\chi_{\Lambda_1, \Lambda_2} \begin{pmatrix} a&\\ &b\end{pmatrix} =\Lambda_1(a) \Lambda_2(b).
\]
When $\omega$ is trivial, we have $\Lambda_2 = \Lambda_1^{-1}$. In this case, we simply call
$(X, \Lambda_1, \Lambda_1^{-1}, \psi)$-Bessel period as $(X, \Lambda_1, \psi)$-Bessel period
and simply write $\chi_{\Lambda_1, \Lambda_1^{-1}} = \Lambda_1$.
%
%
%
%
%
%
%
%
%
\subsection{Local Bessel periods}
\label{s:def local bessel}
Let us introduce local counterparts to the global Bessel periods.
Let $k$ be a local field of characteristic zero and $D$ a quaternion algebra over $k$. 

Since the local Bessel periods are deduced from the global ones in a uniform way,
by abuse of notation,
let a quintuple $\left(H, T, N,\chi,\psi_N\right)$ stand for one of
\begin{align*}
&\text{$\left(G_D, T_\xi, N_D, \Lambda,\psi_\xi\right)$  in \eqref{e: def of bessel period},}
\\
&\text{$\left(\mathrm{GSp}_2, T_S, N, \Lambda,\psi_S\right)$ in \eqref{Beesel def gsp},
or,}
\\
&\text{
$\left(\mathrm{GSU}_{3,D}, M_X, N_{4,2},\chi,\psi_{X}\right)$ in \eqref{Besse def gsud}.}
\end{align*}
Let $(\pi, V_\pi)$ be an irreducible tempered representation of $H=H\left(k\right)$ with trivial central 
character 
and $[\,,\,]$ a $H$-invariant hermitian pairing  on $V_\pi$, the space of $\pi$.
Let us denote by $V_\pi^\infty$ the space of smooth vectors in $V_\pi$. When $k$ is non-archimedean, 
clearly $V_\pi^\infty = V_\pi$.
Let $\chi$ be a character of $T=T\left(k\right)$ which is trivial on $Z_H=Z_H\left(k\right)$,
where $Z_H$ denotes the center of $H$.

Suppose that $k$ is non-archimedean.
Then for $\phi,\phi^\prime\in V_\pi$, we define the local Bessel period
$\alpha_{\chi, \psi_N}^H(\phi, \phi^\prime) = \alpha_{\chi, \psi_N}(\phi, \phi^\prime) = \alpha\left(\phi,\phi^\prime\right)$ by
\begin{equation}\label{e: local integral 1}
\alpha\left(\phi,\phi^\prime\right)
:=
\int_{T \slash Z_H }\int_{N}^{\mathrm{st}}
[\pi \left(ut \right)\phi,\phi^\prime]\,
\chi\left(t\right)^{-1} \psi_N(u)^{-1}\, du\,dt.
\end{equation}
Here the inner integral of \eqref{e: local integral 1} is the stable integral
in the sense of Lapid and Mao~\cite[Definition~2.1, Remark~2.2]{LM}.
Indeed it is shown that for any $t\in T$ the inner integral stabilizes at 
a certain compact open subgroup of $N=N\left(k\right)$ and the outer integral
converges by Liu~\cite[Proposition~3.1, Theorem~2.1]{Liu2}.
We note that it is also shown in Waldspurger~\cite[Section~5.1, Lemme]{Wa2} that \eqref{e: local integral 1}
is well-defined. We often simply write $\alpha(\phi) = \alpha(\phi, \phi)$.

Now suppose that $k$ is archimedean.
Then the local Bessel period is defined 
as a regularized integral whose regularization is achieved by the Fourier transform
as in Liu~\cite[3.4]{Liu2}.
Let us briefly recall  the definition.
We define a subgroup $N_{-\infty}$ of $N=N\left(k\right)$ by:
\begin{align*}
N_{-\infty}:&=
\left\{\begin{pmatrix}1&u\\0&1\end{pmatrix}\in N_D:
\mathrm{tr}_D\left(\xi u\right)=0\right\}
\quad\text{in the $G_D$-case};
\\
N_{-\infty}:&=\left\{\begin{pmatrix}1&Y\\0&1\end{pmatrix}\in N:
\mathrm{tr}\left(SY\right)=0\right\}
\quad\text{in the $\mathrm{GSp}_2$-case};
\\
N_{-\infty}:&=\left\{\begin{pmatrix}1&A^\prime&B\\0&1&A\\0&0&1\end{pmatrix}\in N_{3,D}:
\mathrm{tr}_D\left(XA\right)=0\right\}
\quad\text{in the $\mathrm{GSU}_{3,D}$-case},
\end{align*}
respectively.
Then it is shown in Liu~\cite[Corollary~3.13]{Liu2} that for $u \in N$,
\[
\alpha_{\phi, \phi^\prime}(u):=
\int_{T \slash Z_G} \int_{N_{-\infty}} [ \pi\left(us t \right)\phi,\phi^\prime] \,\chi(t)^{-1} \, ds \, dt
\]
converges absolutely for $\varphi,\varphi^\prime\in V_\pi^\infty$ 
and it gives a tempered distribution on $N \slash N_{-\infty}$.

For an abelian Lie group $\mathcal{N}$, we denote by $\mathcal{D}(\mathcal{N})$ (resp. $\mathcal{S}(\mathcal{N})$)
the space of tempered distributions (resp. Schwartz functions) on $\mathcal{N}$.
Then we recall that the Fourier transform 
$\hat{} : \mathcal{D}(\mathcal{N})  \rightarrow \mathcal{D}(\mathcal{N}) $
is defined by the formula
\[
\left(\hat{\mathfrak{a}}, \phi\right) = \left(\mathfrak{a}, \hat{\phi}\right) 
\quad\text{for $\mathfrak a\in \mathcal D\left(\mathcal N\right)$
and $\phi\in \mathcal S\left(\mathcal N\right)$},
\]
where $\left(\, ,\right)$ denotes the natural pairing 
$\mathcal{D}(\mathcal{N}) \times \mathcal{S}(\mathcal{N}) \rightarrow \mC$
and $\hat{\phi}$ is the Fourier transform of $\phi\in \mathcal S\left(\mathcal N\right)$.

Then by Liu~\cite[Proposition~3.14]{Liu2},
the Fourier transform $\widehat{\alpha_{\phi, \phi^\prime}}$
is smooth on the regular locus 
$(\widehat{N \slash N_{-\infty}})^{\rm reg}$ of the 
Pontryagin dual $\widehat{N \slash N_{-\infty}}$
and we define the local Bessel period $\alpha\left(\phi,\phi^\prime\right)$ by
\begin{equation}\label{local archimedean Bessel}
\alpha_{\chi, \psi_N}^H(\phi, \phi^\prime) = \alpha_{\chi, \psi_N}(\phi, \phi^\prime) =
\alpha\left(\phi,\phi^\prime\right) := \widehat{\alpha_{\phi, \phi^\prime}} \left( \psi_N \right).
\end{equation}
As in the non-archimedean case, we often simply write $\alpha(\phi) = \alpha(\phi, \phi)$.
%
%
%
%
%
%
%
%
%
%
%
%
%
%
%
%
%
%
%
%
%
\section{Pull-back of Bessel periods}
In this section, we establish the  pull-back formulas of the global 
Bessel periods with respect to the dual pairs, 
$\left(\mathrm{GSp}_2,\mathrm{GSO}_{4,2}\right)$, $\left(\mathrm{GSp}_2,\mathrm{GSO}_{3,3}\right)$
and $\left(G_D,\mathrm{GSU}_{3,D}\right)$.
We recall that the first two cases may be regarded as the special case 
when $D$ is split of the last one,
by the accidental isomorphisms explained in \ref{acc isom}.
%
%
%
%
%
%
%
%
%
\subsection{$\left(\mathrm{GSp}_2,\mathrm{GSO}_{4,2}\right)$ and $\left(\mathrm{GSp}_2,\mathrm{GSO}_{3,3}\right)$ case}
%
%
%
%
%
%
%
%
%
%
%
%
%
%
%
%
%
%
%
%
%
%
%
%
%
%
%
\subsubsection{Symplectic-orthogonal theta correspondence with similitudes}
\label{def theta}
Let $X$ (resp. $Y$) be a finite dimensional vector space over $F$
equipped with a non-degenerate alternating (resp. symmetric) bilinear
form. Assume that $\dim_F Y$ is even.
We denote their similitude groups by $\mathrm{GSp}(X)$ and $\mathrm{GO}(Y)$,
and,
 their isometry groups by $\mathrm{Sp}(X)$ and $\mathrm{O}(Y)$,
 respectively.
We denote the identity component of $\mathrm{GO}(Y)$ and $\mathrm{O}(Y)$
by $\mathrm{GSO}(Y)$ and $\mathrm{SO}(Y)$, respectively.
We let $\mathrm{GSp}(X)$ (resp. $\mathrm{GO}(Y)$) act on $X$ from right (resp. left).
The space $Z = X \otimes Y$ has a natural non-degenerate alternating form $\langle \,, \, \rangle$,
and we have an embedding $\mathrm{Sp}(X) \times \mathrm{O}(Y) \rightarrow \mathrm{Sp}(Z)$
defined by 
\begin{equation}
\label{SP times O to SP}
(x \otimes y)(g, h) =  x g \otimes h^{-1}y , \quad \text{for } x \in X, y \in Y, h \in \mathrm{O}(Y), g \in \mathrm{Sp}(X).
\end{equation}
Fix a polarization $Z = Z_{+} \oplus Z_{-}$. 
Let us denote by $(\omega_{\psi}, \mathcal{S}(Z_{+}(\mA)))$ the Schr\"{o}dinger model of 
the Weil representation of $\widetilde{\mathrm{Sp}}(Z)$
corresponding to this polarization with the Schwartz-Bruhat space $\mathcal{S}(Z_{+})$ on $Z_{+}$. 
We write a typical element of $\mathrm{Sp}(Z)$ by
\[
\begin{pmatrix}
A&B\\
C&D\\
\end{pmatrix}
\quad\text{where}
\quad
\begin{cases}
A \in {\rm Hom}(Z_{+} , Z_{+}),\,\,
B \in {\rm Hom}(Z_{+} , Z_{-}),
\\
C \in {\rm Hom}(Z_{-} , Z_{+}),\,\,
D \in {\rm Hom}(Z_{-} , Z_{-}).
\end{cases}
\]
Then the action of $\omega_{\psi}$ on $\phi\in \mathcal{S}(Z_+)$ is given by the
following formulas:
\begin{equation}
\label{weil act 1}
\omega_{\psi}
\left(
\begin{pmatrix}
A&B\\
0&^{t}A^{-1}\\
\end{pmatrix},\varepsilon
\right)\phi(z_{+})
= \varepsilon 
\frac{\gamma _{\psi }(1)}{\gamma _{\psi }({\rm det}A)} |{\rm det}(A)|^{\frac{1}{2}} \psi 
\left(\frac{1}{2} \langle z_{+}A ,z_{+}B \rangle\right)\phi(z_{+}A)
\end{equation}
\begin{equation}
\label{weil act 2}
\omega_{\psi}
\left(
\begin{pmatrix}
0&I\\
-I&0\\
\end{pmatrix},\varepsilon
\right)\phi(z_{+})
=\varepsilon (\gamma _{\psi }(1))^{-\dim Z_{+}} \int _{Z_{+}}
\psi 
\left(
\langle z^{\prime} , z
\begin{pmatrix}
0&I\\
-I&0\\
\end{pmatrix}
\rangle \right)
\phi(z^{\prime}) \, d z^{\prime},
\end{equation}
where $\gamma_{\psi}(t)$ is a certain eighth root of unity called 
the Weil factor.
Moreover, since the embedding given by \eqref{SP times O to SP} splits in the metaplectic group $\mathrm{Mp}(Z)$,
we obtain the Weil representation of $\mathrm{Sp}(X, \mA) \times \mathrm{O}(Y, \mA)$
by restriction.
We also denote this representation by $\omega_\psi$.

We have a natural homomorphism
\[
 i : \mathrm{GSp}(X) \times  \mathrm{GO}(Y)  \rightarrow \mathrm{GSp}(Z)
\]
given by the action \eqref{SP times O to SP}.
Then we note that $\lambda(i(g,h)) = \lambda(g)\lambda(h)^{-1}$. Let 
\[
R:=\{(g,h) \in \mathrm{GSp}(X) \times  \mathrm{GO}(Y) \, | \, \lambda(g) = \lambda(h) \}
\supset
\mathrm{Sp}(X) \times  \mathrm{O}(Y).
\]
We may define an extension of the Weil representation 
of $\mathrm{Sp}(X, \mA) \times  \mathrm{O}(Y, \mA)$ 
to $R(\mA)$ as follows.
Let $X= X_+ \oplus X_-$ be a polarization of $X$
and use the polarization $Z_{\pm} = X_{\pm} \otimes Y$
of $Z$ to realize the Weil representation $\omega_\psi$.
Then we note that
\[
\omega_{\psi}(1, h)\phi(z) = \phi\left( i \left(h \right)^{-1} z \right) \quad \text{for $h\in\mathrm{O}\left(
\mA\right)$ and $\phi\in \mathcal{S}(Z_+(\mA))$}.
\]
Thus we define an action $L$
of $\mathrm{GO}\left(Y,\mA\right)$ on $\mathcal{S}(Z_+(\mA))$
by
\[
L\left(h\right)\phi\left(z\right)=|\lambda(h)|^{-\frac{1}{8} \dim~X \cdot \dim~Y }
\phi\left(i \left( h \right)^{-1}z\right).
\]
Then we may extend the Weil representation  $\omega_\psi$ 
of $\mathrm{Sp}(X, \mA) \times  \mathrm{O}(Y, \mA)$
to $R(\mA)$ by 
\[
\omega_{\psi}(g, h) \phi = \omega_\psi(g_1, 1) L\left(h\right)\phi
\quad\text{for $\phi\in\mathcal{S}(Z_+(\mA))$ and $\left(g,h\right)\in R\left(\mA\right)$},
\]
where
\[
g_1 = g \begin{pmatrix}\lambda(g)^{-1}&0\\ 0&1 \end{pmatrix} \in \mathrm{Sp}(X,\mA).
\]

In general, for any polarization $Z= Z_+^\prime \oplus Z_-^\prime$, there exists an
$\mathrm{Sp}(X, \mA) \times \mathrm{O}(Y)(\mA)$-isomorphism 
$p : \mathcal{S}(Z_+(\mA)) \to \mathcal{S}(Z_+^\prime(\mA))$ given by an 
integral transform (see Ichino-Prasanna~\cite[Lemma~3.3]{IP}).
Let us denote the realization of the Weil representation
of $\mathrm{Sp}(X, \mA) \times \mathrm{O}(Y)(\mA)$
on $\mathcal{S}(Z_+^\prime(\mA))$ by $\omega_\psi^\prime$.
Then we may extend $\omega_\psi^\prime$
to $R\left(\mA\right)$ by
\[
\omega_\psi^\prime\left(g,h\right)=p\circ \omega_\psi\left(g,h\right)\circ
p^{-1}\quad\text{for $(g, h) \in R(\mA)$}.
\]

For $\phi \in \mathcal{S}(Z_{+}(\mA))$, we define the theta kernel $\theta^\phi$ by 
\[
\theta_{\psi}^{\phi}(g , h) = \theta^{\phi}(g , h) := \sum _{z_{+} \in Z_{+}(F) } 
\omega_{\psi} \left(g,h\right)
 \phi(z_{+}) \quad 
\text{for $(g, h) \in R(\mA)$}.
\]
Let
\begin{equation}
\label{gl def gap+}
\mathrm{GSp}(X, \mA)^{+}= \left\{ g \in \mathrm{GSp}(X, \mA) \mid \, \lambda(g) = \lambda(h) \text{ for some } 
h \in \mathrm{GO}(Y, \mA) \right\}
\end{equation}
and $\mathrm{GSp}(X, F)^{+} = \mathrm{GSp}(X, \mA)^{+} \cap \mathrm{GSp}(X, F)$.

As in \cite[Section~5.1]{HK}, for a cusp form $f$ on $\mathrm{GSp}(X, \mA)^{+}$, 
we define its theta lift to $\mathrm{GO}(Y, \mA)$ by 
\[ 
\Theta_\psi^{X, Y} (f, \phi)(h) = \Theta(f, \phi)(h) := \int _{\mathrm{Sp}(X, F) \backslash \mathrm{Sp}(X, \mA)} \theta ^{\phi}(g_1g , h)f(g_1 g) \, dg_1
\]
for $h \in \mathrm{GO}(Y, \mA)$,
where $g \in \mathrm{GSp}(X, \mA)^+$ is chosen so that $\lambda(g) = \lambda(h)$.
It defines an automorphic form on $\mathrm{GO}(Y, \mA)$. 
For a cuspidal automorphic representation $(\pi_+, V_{\pi_+})$ of $\mathrm{GSp}(X, \mA)^+$, 
we denote by $\Theta_\psi(\pi_+)$ the theta lift of $\pi_+$ to $\mathrm{GO}(Y, \mA)$. Namely
\[
\Theta_\psi^{X, Y}(\pi_+) = \Theta_\psi(\pi_+): = \left\{ \Theta (f, \phi) : f \in V_{\pi_+}, \,
\phi \in \mathcal{S}(Z_+(\mA)) \right\}.
\]
Furthermore, for an irreducible cuspidal automorphic representation $(\pi, V_\pi)$ of $\mathrm{GSp}(X, \mA)$, we define 
\[
\Theta_\psi(\pi): = \Theta_\psi(\pi|_{\mathrm{GSp}(X, \mA)^+})
\]
where $\pi|_{\mathrm{GSp}(X, \mA)^+}$ denotes the automorphic representation of $\mathrm{GSp}(X, \mA)^+$
with its space of automorphic forms
$\left\{  \varphi |_{\mathrm{GSp}(X, \mA)^+} : \varphi \in V_\pi \right\}$.

As for the opposite direction,
 for a cusp form $f^\prime$ on $\mathrm{GO}(Y, \mA)$, we define its theta lift
$\Theta (f^\prime, \phi)$
to $\mathrm{GSp}(X, \mA)^+$ by 
\[ 
\Theta (f^\prime, \phi)(g): = \int _{\mathrm{O}(Y, F) \backslash \mathrm{O}(Y, \mA)} \theta ^{\phi}(g , h_1h)f(h_1 h) \, dh_1
\quad\text{for $g\in\mathrm{GSp}\left(X,\mA\right)^+$},
\]
where $h \in \mathrm{GO}(Y,\mA)$ is chosen so that $\lambda(g) = \lambda(h)$.
For an irreducible cuspidal automorphic representation $(\sigma, V_\sigma)$ of $\mathrm{GO}(Y, \mA)$, 
we define the theta lift $\Theta_\psi(\sigma)$ of $\sigma$ to 
$\mathrm{GSp}(X, \mA)^+$ by
\[
\Theta_\psi(\sigma):=
\left\{\Theta (f^\prime, \phi): f^\prime\in V_\sigma,\, \phi \in \mathcal{S}(Z_+(\mA)) \right\}.
\]
Moreover we extend $\theta (f^\prime, \phi)$ to an automorphic form on $\mathrm{GSp}(X, \mA)$ by the natural embedding 
\[
\mathrm{GSp}(X, F)^+ \backslash \mathrm{GSp}(X, \mA)^+ \rightarrow \mathrm{GSp}(X, F) \backslash \mathrm{GSp}(X, \mA)
\]
and extension by zero. Then we define the theta lift $\Theta_\psi(\sigma)$
of $\sigma$ to $\mathrm{GSp}(X, \mA)$ 
as the $\mathrm{GSp}\left(X,\mA\right)$ representation generated by such
$\theta\left(f^\prime,\phi\right)$ for $f^\prime\in V_\sigma$ 
and $\phi\in \mathcal S\left(Z_+\left(\mA\right)\right)$.

For some $X$ and $Y$, theta correspondence for the dual pair $(\mathrm{GSp}(X)^+, \mathrm{GO}(Y))$ 
gives  theta correspondence between $\mathrm{GSp}(X)^+$ and $\mathrm{GSO}(Y)$ by the restriction
of representations of $ \mathrm{GO}(Y)$ to $\mathrm{GSO}(Y)$.
Indeed, when $\dim X=4$ and $\dim Y = 6$, we may consider theta correspondence for the pair 
$(\mathrm{GSp}(X)^+, \mathrm{GSO}(Y))$. 
In Gan-Takeda~\cite{GT10, GT0}, they study the case when $\mathrm{GSO}(Y) \simeq \mathrm{GSO_{3,3}}$ or  $\mathrm{GSO}_{5,1}$,
and, in \cite{Mo}, the case when $\mathrm{GSO}(Y) \simeq \mathrm{GSO_{4,2}}$ is studied.
In these cases, for a cusp form $f$ on $\mathrm{GSp}(X, \mA)^{+}$, we denote by $\theta (f, \phi)$ the restriction of $\Theta (f, \phi)$ 
to $\mathrm{GSO}(Y, \mA)$.
Moreover, for a cuspidal automorphic representation $(\pi_+, V_{\pi_+})$ of $\mathrm{GSp}(X, \mA)^+$, we define the theta lift $\theta_\psi\left(\pi_+\right)$ of $\pi_+$ 
to $\mathrm{GSO}(Y, \mA)$ by
\[
\theta_\psi^{X, Y}(\pi_+) = \theta_\psi(\pi_+) := \left\{ \theta (f, \phi) : f \in V_{\pi_+}, \phi \in \mathcal{S}(Z_+(\mA)) \right\}.
\]
Similarly, for a cusp form $f^\prime$ on $\mathrm{GSO}(Y, \mA)$, we define its theta lift
$\theta (f^\prime, \phi)$
to $\mathrm{GSp}(X, \mA)^+$ by 
\[ 
\theta (f^\prime, \phi)(g): = \int _{\mathrm{SO}(Y, F) \backslash \mathrm{SO}(Y, \mA)} \theta ^{\phi}(g , h_1h)f(h_1 h) \, dh_1
\quad\text{for $g\in\mathrm{GSp}\left(X,\mA\right)^+$},
\]
where $h \in \mathrm{GSO}(Y,\mA)$ is chosen so that $\lambda(g) = \lambda(h)$. We extend it to an automorphic from on 
$\mathrm{GSp}(X, \mA)$ as above.
For a cuspidal automorphic representation $(\sigma, V_\sigma)$ of $\mathrm{GSO}(Y, \mA)$, 
we define the theta lift $\theta_\psi(\sigma)$ of $\sigma$ to 
$\mathrm{GSp}(X, \mA)^+$ by
\[
\theta_\psi(\sigma):=
\left\{\theta (f^\prime, \phi): f^\prime\in V_\sigma,\, \phi \in \mathcal{S}(Z_+(\mA)) \right\}.
\]
\color{black}
\begin{Remark}
\label{theta irr rem}
Suppose that $\Theta_\psi(\pi_+)$ (resp. $\theta_\psi(\sigma)$)
is non-zero and cuspidal
where 
$(\pi_+, V_{\pi_+})$ (resp. $(\sigma, V_\sigma)$) is an irreducible cuspidal automorphic 
representation of $\mathrm{GSp}(X, \mA)^+$ (resp. $\mathrm{GO}(Y, \mA)$).
Then Gan~\cite[Proposition~2.12]{Gan} has shown that 
the Howe duality, which was proved by Howe~\cite{Ho1} at archimedean places, by Waldspurger~\cite{Wa}
at odd finite places and finally by Gan and Takeda~\cite{GT}
at all finite places, implies that $\Theta_\psi(\pi_+)$ (resp.  $\theta_\psi(\sigma)$)
is  irreducible and cuspidal.
Moreover in the case of our concern, namely when $\dim_FX=4$ and $\dim_FY=6$,
the irreducibility of $\Theta_\psi(\pi_+)$
implies that of $\theta_\psi(\pi_+)$
by the conservation relation due to Sun and Zhu~\cite{SZ}.
\end{Remark}
%
%
%
%
%
%
%
%

\subsubsection{
Pull-back of the global
Bessel periods for the dual pairs 
$\left(\mathrm{GSp}_2,\mathrm{GSO}_{4,2}\right)$ and $\left(\mathrm{GSp}_2,\mathrm{GSO}_{3,3}\right)$ 
}
\label{sp4 so42}
Our goal here is to prove the pull-back formula~\eqref{f: first pull-back formula}.

First we introduce the set-up.
Let  $X$ be the space of $4$ dimensional
row vectors over $F$ equipped with the symplectic form
\[
\langle w_1,w_2\rangle=w_1\begin{pmatrix}0&1_2\\-1_2&0\end{pmatrix}
\,{}^tw_2.
\]
Let us take the standard basis of $X$ and name the basis vectors as
\begin{equation}\label{sb for X}
x_1 = (1, 0, 0, 0), \quad x_2 = (0, 1, 0, 0),
\quad x_{-1} = (0, 0, 1, 0), \quad x_{-2} = (0, 0, 0, 1).
\end{equation}
Then the matrix representation of $\mathrm{GSp}\left(X\right)$ with
respect to the standard  basis is $G=\mathrm{GSp}_2$ defined by
\eqref{gsp}.
We let $G$ act on $X$ from the right.

Let $Y$ be the space of $6$ dimensional column vectors over $F$
equipped with the non-degenerate symmetric bilinear form
\[
\left(v_1,v_2\right)={}^tv_1 S_2 v_2
\]
where the symmetric matrix $S_2$ is given by \eqref{d: symmetric}.
Let us take the standard basis of $Y$ and name the basis vectors as
\begin{align*}
y_{-2} &= {}^{t}(1, 0, 0, 0, 0, 0), \quad y_{-1} = {}^{t}(0, 1, 0, 0, 0, 0),\\
e_1&= {}^{t}(0, 0, 1, 0, 0, 0), \quad e_2 = {}^{t}(0, 0, 0, 1, 0, 0),\\
y_{1} &= {}^{t}(0, 0, 0, 0, 1, 0), \quad y_{2} = {}^{t}(0, 0, 0, 0, 0, 1).
\end{align*}
We note that $( y_i , y_j )=\delta_{ij}, ( e_1 , e_1)=2$ and $( e_2 , e_2) =-2d$. 
Since $d \in F^\times \setminus (F^\times)^2$, with respect to the standard basis, the matrix representations
of $\mathrm{GO}\left(Y\right)$ and $\mathrm{GSO}\left(Y\right)$
are $\mathrm{GO}_{4,2}$ defined by \eqref{d: GO_n+2,n}
and $\mathrm{GSO}_{4,2}$ defined by \eqref{d: GSO_n+2,n},
respectively.
In this section, we also study the theta correspondence for the dual pair $(\mathrm{GSp}(X), \mathrm{GSO}_{3,3})$, for which,
we may use the above matrix representation with $d \in (F^\times)^2$.
Hence, in the remaining of this section, we study theta correspondence for $(\mathrm{GSp}(X), \mathrm{GSO}(Y))$ for an arbitrary $d \in F^\times$.

We shall denote $\mathrm{GSp}(X, \mA)^+$ as $G(\mA)^+$
and also $\mathrm{GSp}(X, F)^+ $ as $G(F)^+$.
We note that when $d \in (F^\times)^2$, $\mathrm{GSp}(X)^+ = \mathrm{GSp}(X)$.

Let $Z=X\otimes Y$ and we take a polarization 
$Z=Z_+\oplus Z_-$ as follows.
First we take $X=X_+\oplus X_-$ where
\[
X_{+} = F \cdot x_1 + F \cdot x_2 \quad \text{and} \quad X_{-}= F \cdot x_{-1} + F \cdot x_{-2}
\] 
as the polarization of $X$.
Then we decompose $Y$ as $Y=Y_+\oplus Y_0\oplus Y_-$
where
\[
Y_{+} =F \cdot y_1+ F \cdot y_2, \quad
Y_0 = F \cdot e_1 + F \cdot e_2\quad
 \text{and $ Y_{-} =F \cdot y_{-1} + F \cdot y_{-2}$}.
\]
Then let
\[
Z_{\pm} = \left(X \otimes Y_{\pm} \right)\oplus
\left(X_{\pm} \otimes Y_0\right)
\]
where the double sign corresponds.
To simplify the notation,
we sometimes write $z_+\in Z_+$ as
$z_+=\left(a_1,a_2;b_1,b_2\right)$ when
\[
z_{+} = a_1 \otimes y_1 + a_2 \otimes y_2 + b_1 \otimes e_1 + b_2 \otimes e_2\in Z_+,
\quad\text{where $ a_i \in X, \, b_i \in X_{+}\, \left(i=1,2\right)$}.
\]


Let us compute the pull-back of $(X, \chi, \psi)$-Bessel periods on $\mathrm{GSO}(Y)$ defined by \eqref{e: Bessel GSO(4,2)}
with respect to the theta lift
from $G$.
%
\begin{proposition}
\label{pullback Bessel gsp}
Let $\left(\pi ,V_\pi \right)$ be an irreducible cuspidal
automorphic representation of $G\left(\mA\right)$
whose central character is $\omega_\pi$ and 
$\chi$ a character of 
$\mathbb A_E^\times$ such that $\chi\mid_{\mathbb A^\times}=\omega_\pi^{-1}$. 
Let $X\in\mathrm{Mat}_{2\times 2}\left(F\right)$
such that $\det X\ne 0$.

Then for 
$f \in V_\pi$ and $\phi \in \mathcal{S}(Z_+(\mA))$, we have 
\begin{equation}\label{f: first pull-back formula}
\mathcal{B}_{X, \chi, \psi}(\theta(f: \phi)) = \int_{N(\mA) \backslash G^1(\mA)} B_{S_X, \chi^{-1},\psi}(\pi(g)f)\left( \omega_\psi(g, 1) \phi\right)(v_X) \, dg
\end{equation}
where $B_{S_X, \chi^{-1},\psi}$ is the 
$\left(S_X,\chi^{-1},\psi\right)$-Bessel period on $G$ 
defined by \eqref{Beesel def gsp}.

Here, for $X= \begin{pmatrix}x_{11}&x_{12}\\x_{21}&x_{22} \end{pmatrix}$, we define
a vector $v_X\in Z_+$ by
 \begin{equation}\label{e: def of v_X}
v_X := \left(x_{-2}, x_{-1};\frac{x_{21}}{2}x_1+\frac{x_{11}}{2} x_2,  -\frac{x_{22}}{2d}x_1-\frac{x_{12}}{2d} x_2\right)
\end{equation}
 and a $2$ by $2$ symmetric matrix $S_X$ by
\begin{equation}\label{e: def of S_X}
S_X: = \frac{1}{4d}{}^{t}(J_2 \,{}^{t}X J_2)S_0 ( J_2\,{}^{t}X J_2).
\end{equation}
We regard $\chi$ as a character of $\mathrm{GSO}(S_X)(\mA)$ by 
\begin{equation}\label{chi as a character}
\mathrm{GSO}(S_X) \ni k \mapsto \chi((J_2{}^{t}XJ_2) k (J_2{}^{t}XJ_2)^{-1})\in\mathbb C^\times.
\end{equation}

In particular, 
the $(S_X, \chi^{-1},\psi)$-Bessel period does not vanish on $V_\pi$
if and only if the $\left(X,\chi, \psi\right)$-Bessel period does not vanish
on $\theta_\psi\left(\pi\right)$.
\end{proposition}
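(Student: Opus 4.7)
The plan is to carry out a standard unfolding of the theta integral, following the general template for pull-back identities in the theta correspondence literature (cf.\ the arguments of Ichino--Prasanna or Gan--Takeda), but with careful book-keeping to extract the explicit vector $v_X$ and the explicit symmetric matrix $S_X$. First I would substitute the definition of the theta lift,
\[
\theta(f:\phi)(h)=\int_{\mathrm{Sp}(X,F)\backslash \mathrm{Sp}(X,\mA)}
\theta^{\phi}(g_1 g,h)\,f(g_1 g)\,dg_1,
\]
into the Bessel integral \eqref{e: Bessel GSO(4,2)} defining $\mathcal{B}_{X,\chi,\psi}(\theta(f:\phi))$, interchange the order of integration, and collapse the adelic quotient $N(F)\backslash N(\mA)$ using the cuspidality of $f$ to reduce the outer $\mathrm{Sp}(X)$-integral to one over $N(\mA)\backslash G^1(\mA)$.

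Next I would perform the Fourier expansion of the theta kernel $\theta^\phi$ along the unipotent radical $N_{4,2}$ and evaluate the $N_{4,2}(F)\backslash N_{4,2}(\mA)$-integral against the character $\psi_X^{-1}$. Writing a typical element of $Z_+$ as $z_+=(a_1,a_2;b_1,b_2)$, the action of $N_{4,2}$ on the Schwartz space via $\omega_\psi$ (using the formulas \eqref{weil act 1}--\eqref{weil act 2}) shows that this integration cuts out precisely the orbit of those lattice points $z_+$ whose $(a_1,a_2)$-component equals $(x_{-2},x_{-1})$ and whose $(b_1,b_2)$-component is determined by $X$. A direct computation of the pairings $\mathrm{tr}(XA)$ against the linear form $\langle z_+ A,z_+ A\rangle$ forces $b_1,b_2$ to be the combinations appearing in \eqref{e: def of v_X}, which is how the vector $v_X$ emerges.

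I would then use the stabilizer of this distinguished $v_X$: by Witt's theorem (applied to the hyperbolic components $X$ and $Y_+\oplus Y_-$) the stabilizer meets the Siegel Levi $M_{4,2}$ exactly in the torus $M_X$, and on the $G$-side its centralizer is identified with $T_{S_X}$, where $S_X$ records the restriction of the orthogonal form to a two-dimensional anisotropic subspace determined by the $Y_0$-components of $v_X$. The explicit formula \eqref{e: def of S_X} comes out by computing $\bigl({}^t v_X^{(0)}\bigr)S_0\,v_X^{(0)}$ on the relevant piece, with the factor $(4d)^{-1}$ arising from the normalization of $S_0$ and the coefficients in \eqref{e: def of v_X}. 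The conjugation by $J_2\,{}^tX J_2$ in \eqref{chi as a character} is precisely the matrix conjugation that intertwines the isomorphism $M_X\simeq \mathrm{GSO}(S_X)$ with the fixed identification of both tori with $E^\times$ in Section~\ref{ss: bessel periods}.

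Finally, the integration over $M_X(F)\backslash M_X(\mA)$ against $\chi^{-1}$, after this identification, becomes the integration over $T_{S_X}(F)\backslash T_{S_X}(\mA)$ against $\chi$; combined with the remaining $N(F)\backslash N(\mA)$-integral against $\psi_{S_X}^{-1}$ (whose character data emerges from the $\mathrm{Sp}(X)$-action on $v_X$ via \eqref{weil act 1}), this reconstitutes $B_{S_X,\chi^{-1},\psi}(\pi(g)f)$. The leftover $g$-integral is $N(\mA)\backslash G^1(\mA)$ and carries the factor $(\omega_\psi(g,1)\phi)(v_X)$, giving \eqref{f: first pull-back formula}. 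The main obstacle I anticipate is the bookkeeping in the character computation: matching the constants and the conjugation by $J_2\,{}^tX J_2$ so that the characters on $M_X$ and $T_{S_X}$ agree exactly as $\chi$ and $\chi^{-1}$, and verifying that the formula for $S_X$ produces the correct discriminant-class (i.e.\ that $F(\xi_{S_X})\simeq E$) so that the Bessel period on $G$ is really of the claimed type. The non-vanishing equivalence in the last sentence is immediate from \eqref{f: first pull-back formula} once the pull-back formula and Howe duality (Remark~\ref{theta irr rem}) are in hand.
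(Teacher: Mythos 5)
Your plan is essentially the paper's own proof: unfold the theta lift inside the Bessel integral, integrate in stages over the unipotent subgroups of $N_{4,2}$ so that only the orbit of the distinguished vector $v_X$ survives (Witt's theorem, with stabilizer $N$), read off $\psi_{S_X}$ from the Weil-representation action of the Siegel unipotent of $G$ on $v_X$, and identify $M_X$ with $T_{S_X}$ (and $\chi$ with the character \eqref{chi as a character}) via conjugation by $J_2\,{}^tX J_2$. Two minor corrections: the reduction of the outer integral to $N(\mA)\backslash G^1(\mA)$ comes from this orbit/stabilizer unfolding rather than from cuspidality of $f$, and the final non-vanishing equivalence is deduced from \eqref{f: first pull-back formula} by varying $\phi$ (as in Proposition~2 of \cite{FM1}), with no appeal to Howe duality needed.
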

%
%
%
\begin{proof}
We compute the $\left(X,\chi, \psi\right)$-Bessel period defined by \eqref{e: Bessel GSO(4,2)}
in stages.
We consider subgroups of $N_{4,2}$ given by:
\begin{align}
N_0(F) &= \left\{ u_0(x) :=
\begin{pmatrix}
1&-{}^{t}X_0 S_1& 0\\
0&1_4&X_0\\
0&0&1
\end{pmatrix}
\mid X_0 = \begin{pmatrix} x\\0\\0\\0\end{pmatrix}
\right\};\label{d: N_0}
\\
N_1(F) &= \left\{ u_1(s_1, t_1) :=
\begin{pmatrix}
1&-{}^{t}X_1 S_1&-\frac{1}{2}{}^{t}X_1 S_1 X_1\\
0&1_4&X_1\\
0&0&1
\end{pmatrix}
\mid X_1 = \begin{pmatrix} 0\\s_1\\t_1\\0 \end{pmatrix}
\right\};\label{d: N_1}
\\
N_2(F) &= \left\{ u_2(s_2, t_2) :=
\begin{pmatrix}
1&0&0&0&0\\
0&1&-{}^{t}X_2 S_0&-\frac{1}{2}{}^{t}X_2 S_0 X_2&0\\
0&0&1_2&X_2&0\\
0&0&0&1&0\\
0&0&0&0&1
\end{pmatrix}
\mid X_2 = \begin{pmatrix} s_2\\t_2\end{pmatrix}
\right\}\label{d: N_2}
\end{align}
where $S_0$ and $S_1$ are given by \eqref{d: symmetric}.
Then we have
\[
N_0 \lhd N_0 N_1 \lhd N_0 N_1 N_2 =N_{4.2}.
\]
Thus we may write 
\begin{multline}
\label{bessel u0 u1 u2}
\mathcal{B}_{X, \chi, \psi}(\theta(f: \phi))
=
\int_{\mA^\times M_{X}(F) \backslash M_X(\mA)}
 \int_{(F  \backslash \mA_{F})^2} 
\int_{(F  \backslash \mA_{F})^2} \int_{F  \backslash \mA_{F}}
\\
 \theta(f, \phi)(u_0(x) u_1(s_1, t_1) u_2(s_2, t_2) h) 
 \\
\times \psi(x_{21}s_1+x_{22}t_1+x_{11}s_2+x_{12}t_2)^{-1}\chi (h)^{-1} \, dx \,ds_1 \,dt_1 \,ds_2 \,dt_2 \,dh.
\end{multline}
For $h \in \mathrm{GSO}(Y, \mA)$, let us define 
\[
W_0(\theta(f: \phi))(h) 
:= \int_{F \backslash \mA_F} \theta(f, \phi)(u_0(x)h) \, dx.
\]
From the definition of the theta lift, we have
\begin{multline}
\label{W_0 1}
W_0(\theta(f, \phi))(h) \\
=  \int_{F \backslash \mA_F} \int_{G^1(F) \backslash G^1(\mA_F)} \sum_{a_i \in X, b_i \in X_{+}} \left(\omega_\psi
(g_1 \lambda_s(\nu(h)), u_0(x) h) \phi\right)(a_1, a_2; b_1, b_2)
\\
\times
f(g_1 \lambda_s(\lambda(h)))\, dg_1 \, dx.
\end{multline}
Here, for $a \in \mA^\times$, we write
\[
\lambda_s(a) = \begin{pmatrix} 1_2&0\\0 &a \cdot 1_2 \end{pmatrix}.
\]
Since $Z_{-}\,(1, u_0(x))= Z_{-}$ and we have
\[
z_{+}\,(1, u_0(x)) = z_{+} + (x \cdot a_1 \otimes y_{-2} -x \cdot a_2 \otimes y_{-1}),
\]
we observe that
\begin{align}
\label{global comp1}
\left(\omega_\psi(1, u_0(x)) \phi\right)(z_+) 
&=\psi \left(\frac{1}{2} \langle z_+, x \cdot a_1 \otimes y_{-2} -x \cdot a_2 \otimes y_{-1} \rangle \right)  \phi(z_+)
\\
\notag
&= \psi \left( -x \langle a_1, a_2 \rangle \right) \phi(z_+).
\end{align}
Thus in the  summation of the right-hand side of \eqref{W_0 1},  
only $a_i$ such that  $\langle a_1, a_2 \rangle = 0$ contributes to the integral 
$W_0(\theta(f, \phi))$, and we obtain
\begin{multline*}
W_0(\theta(f, \phi))(h) 
= \int_{G^1(F) \backslash G^1(\mA_F)}
\\
 \sum_{\substack{a_i \in X, \langle a_1, a_2 \rangle =0,\\ b_i \in X_{+}}} \left(\omega_\psi(g_1 \lambda_s(\lambda(h)), h) \phi\right)(a_1, a_2; b_1, b_2)
f(g_1 \lambda_s(\lambda(h)))\, dg_1.
\end{multline*}
Since the space spanned by $a_1$ and $a_2$ is isotropic,
there exists $\gamma \in G^1(F)$ such that $a_1 \gamma^{-1}, a_2 \gamma^{-1} \in X_{-}$.
Let us define an equivalence relation $\sim$ on $(X_{-})^{2}$ by
\[
(a_1 , a_2) \sim (a_1^\prime , a_2^\prime)
\underset{\text{def.}}{\Longleftrightarrow}
\text{there exists $\gamma\in G^1\left(F\right)$
such that $a_i^\prime = a_i \gamma$ for $i=1,2$}.
\]
Let us denote by $\mathcal{X}_{-}$ the set of equivalence classes
$\left(X_{-}\right)^2 \slash \sim$
and by $\overline{(a_1, a_2)}$ the equivalence class 
containing $\left(a_1,a_2\right)\in\left(X_{-}\right)^2$.
Then we may write $W_0 (\theta (f, \phi)) (h)$ as
\begin{multline*}
\int_{G^1(F) \backslash G^1(\mA_F)} 
\\
\sum _{ \overline{(a_1 , a_2)} \in \mathcal{X}_{-}}\, \sum_{\gamma \in V(a_1 , a_2) \backslash G^1(F)}\, \sum_{b_i \in X_{+}}
\left(\omega_\psi (g_1 \lambda_s(\lambda(h)) , h) \phi\right)(a_1 \gamma, a_2 \gamma; b_1, b_2) 
\\
\times f(g_1  \lambda_s(\lambda(h))) \, dg_1.
\end{multline*}
Here 
\[
V(a_1, a_2) = \{ g \in G^1(F) \mid \text{$a_ig =a_i$ for $i=1,2$}\}.  
\]
\begin{lemma}
For any $g \in G(\mA)^+$ and $h \in \mathrm{GSO}(Y, \mA)$
such that $\lambda(g) = \lambda(h)$,
\[
\sum_{b_i \in X_{+}}
\left(\omega_\psi (g, h) \phi\right)(a_1 \gamma, a_2 \gamma, b_1, b_2)
=\sum_{b_i \in X_{+}}
\left(\omega_\psi (\gamma g, h) \phi\right)(a_1 , a_2 , b_1, b_2).
\]
\end{lemma}
\begin{proof}
This is proved by an argument
 similar to the one for  \cite[Lemma~2]{Fu}.
\end{proof}
%
Further, by an argument similar to the one for
 $W_0 (\theta (f, \phi)) (h)$, we shall prove the following lemma.
%
\begin{lemma}
\label{pull comp 2}
For any $g \in G(\mA)^+$ and $h \in \mathrm{GSO}(Y, \mA)$
such that $\lambda(g) = \lambda (h)$,
\begin{align*}
&\int_{(F \backslash \mA_F)^{2}} \psi^{-1}(x_{21}s_1+x_{22}t_1) \left(\omega_\psi (g , u_{1}(s_1, t_1) h) \phi\right)(a_1 , a_2 , b_1, b_2)
 \, ds_1 \, dt_1\\
=& \left\{
\begin{array}{ll}
\left(\omega_\psi (g , h)\phi\right)(a_1 , a_2 , b_1, b_2) & \text{if $ \langle a_2 , b_1 \rangle =-\frac{x_{21}}{2}$ and $\langle a_2, b_2 \rangle= \frac{x_{22}}{2d}$};\\
& \\
0 & \text{otherwise}\\
\end{array}
\right.
\end{align*}
and
%
%
%
\begin{align*}
&\int_{(F \backslash \mA_F)^{2}} \psi^{-1}(x_{11} s_2+x_{12}t_2) 
\left(\omega_\psi(g , u_{2}(s_2, t_2) h) \phi\right)(a_1 , a_2 , b_1, b_2)
 \, ds_2\, dt_2\\
=& \left\{
\begin{array}{ll}
\left(\omega_\psi (g , h)\phi\right)(a_1 , a_2 , b_1, b_2) & \text{if $ \langle a_1 , b_1 \rangle =-\frac{x_{11}}{2}$ and $\langle a_1, b_2 \rangle =\frac{x_{12}}{2d}$};\\
& \\
0 & \text{otherwise}.\\
\end{array}
\right.
\end{align*}
\end{lemma}
%
%
%
%
\begin{proof}
Since $Z_-\, (1, u_1(s_1, t_1)) = Z_-$ and we have 
\begin{multline*}
z_+\,(1, u_1(s_1, t_1)) = z_{+} +2s_1 (b_1 \otimes y_{-2}) - 2dt_1(b_2 \otimes y_{-2}) 
\\
+
(-s_1^2+2dt_1^2)a_2 \otimes y_{-2}-s_1a_2 \otimes e_1-t_1 a_2 \otimes e_2,
\end{multline*}
we obtain
\begin{align*}
&\left(\omega_\psi(1, u_1(s_1, t_1)) \phi\right)(z_+) =
\psi \left(\frac{1}{2} \left( 2s_1\langle a_2, b_1 \rangle -2dt_1 \langle a_2, b_2 \rangle   \right) \right) 
\\
&\qquad\quad\times
\psi \left(\frac{1}{2} \left( (-s_1^2+2dt_1^2) \langle a_2, a_2 \rangle -2s_1 \langle b_1, a_2 \rangle +2dt_1 \langle b_2, a_2 \rangle \right) \right) 
\phi(z_+)
\\
=&\psi \left(2s_1\langle a_2, b_1 \rangle -2dt_1  \langle a_2, b_2  \rangle \right)\phi(z_+).
\end{align*}
Then the first assertion readily follows.

Similarly, since $Z_- \,(1, u_2(s_2, t_2)) = Z_-$ and we have 
\[
z_+\,(1, u_2(s_2, t_2)) = z_+ + a_1 \otimes ((s_2^2-dt_2^2)y_{-1}-s_2 e_1-t_2e_2 ) + 2s_2 b_1 \otimes y_{-1} -2dt_2 b_2 \otimes  y_{-1},
\]
we obtain
\begin{align*}
&\omega(1, u_2(s_2, t_2)) \phi(z_+) 
 = \psi \left(\frac{1}{2} \left( -2s_2 \langle b_1, a_1 \rangle +2dt_2 \langle b_2, a_1 \rangle   \right) \right)
\\
&\qquad\qquad\qquad\times
\psi \left(\frac{1}{2} \left( 2s_2 \langle a_1, b_1 \rangle
-2dt_2 \langle a_1, b_2 \rangle   \right) \right)\phi(z_+)
\\
=
&\psi \left(2s_2 \langle a_1, b_1 \rangle -2dt_2 \langle a_1, b_2 \rangle \right) \phi(z_+)
\end{align*}
and the second assertion follows.
\end{proof}
%
%
%
Lemma~\ref{pull comp 2} implies that
%
\begin{multline*}
\mathcal{B}_{X, \chi, \psi}(\theta(f: \phi)) 
= \int_{\mA^\times M_{X}(F) \backslash M_X(\mA)}
\int_{G^1(F) \backslash G^1(\mA_F)}  \chi (h)^{-1}
\\
\times
\sum _{ \overline{(a_1 , a_2)} \in \mathcal{X}_{-}}\, \sum_{\gamma \in V(a_1 , a_2) \backslash G^1(F)}\, \sum_{\substack{
b_i \in X_{+}, \langle a_i, b_1 \rangle = \frac{x_{i1}}{2},\\
 \langle a_i, b_2 \rangle =- \frac{x_{i2}}{2d} }}
 \\
\left(\omega_\psi (\gamma g_1 \lambda_s(\lambda(h)) , h) \phi\right)(a_1, a_2, b_1, b_2) \,
 f(g_1  \lambda_s(\lambda(h))) \, dg_1\,dh.
\end{multline*}
We note that $a_1$ and $a_2$ are linearly independent from the conditions on $a_i$ and $\det (X) \ne 0$.
Since $a_i \in X_-$ and $\dim X_- =2$,  we may take 
$\left(a_1,a_2\right)=\left(x_{-2},x_{-1}\right)$ as 
a representative.
Then we should have 
\[
b_1 =\frac{x_{21}}{2}x_1+\frac{x_{11}}{2} x_2, \quad  b_2= -\frac{x_{22}}{2d}x_1-\frac{x_{12}}{2d} x_2.
\]
Hence we get
\begin{align}
\label{MX to TS}
& \mathcal{B}_{X, \chi, \psi}(\theta(f: \phi)) 
=
 \int_{\mA^\times M_{X}(F) \backslash M_X(\mA)}
\int_{G^1(F) \backslash G^1(\mA_F)} 
\chi\left(h\right)^{-1}
\\
\notag
&\times
 \sum_{\gamma \in N(F) \backslash G^1(F)}
\left(\omega_\psi (\gamma g_1 \lambda_s(\lambda(h)) , h) \phi\right)(v_X) 
\, f(g_1  \lambda_s(\lambda(h))) \, dg_1\,dh
\\
=&  \notag
\int_{N(\mA) \backslash G^1(\mA_F)}  \int_{\mA^\times M_{X}(F) \backslash M_X(\mA)} \int_{N(F) \backslash N(\mA)}
\\
\notag  &\qquad\qquad\chi(h)^{-1}
\omega (v g_1 \lambda_s(\lambda(h)) , h) \phi(v_X) f(v g_1  \lambda_s(\lambda(h))) \, dv\,dg_1\,dh
\end{align}
where we put $v_X = (x_{-2}, x_{-1}; \frac{x_{21}}{2}x_1+\frac{x_{11}}{2} x_2,  -\frac{x_{22}}{2d}x_1-\frac{x_{12}}{2d} x_2)$.

For $u=\begin{pmatrix}1_2&A\\0&1_2\end{pmatrix}$ where
$A= \begin{pmatrix}a&b\\ b&c \end{pmatrix}
 \in \mathrm{Sym}^2$, we have 
\begin{align*}
&\left( x_{-2} \otimes y_1 + x_{-1} \otimes y_2+ \left(\frac{x_{21}}{2}x_1+\frac{x_{11}}{2} x_2 \right) \otimes e_1 +
\left( -\frac{x_{22}}{2d}x_1-\frac{x_{12}}{2d} x_2 \right)
\otimes e_2  \right) \left(u,1\right)
\\
=&
x_{-2} \otimes y_1 + x_{-1} \otimes y_2+
 \left(\frac{x_{21}}{2}(x_1 +a x_{-1}+b x_{-2} )+\frac{x_{11}}{2} (x_2+bx_{-1}  +c x_{-2}) \right) \otimes e_1 
 \\
 &\qquad\qquad+
\left( -\frac{x_{22}}{2d}(x_1 +a x_{-1}+b x_{-2} )-\frac{x_{12}}{2d} (x_2+bx_{-1}  +c x_{-2}) \right)
\otimes e_2. 
\end{align*}
Hence, when we put 
\begin{align*}
S_X&=\frac{1}{4d}{}^{t}(J_2{}^{t}XJ_2)S_0 (J_2{}^{t}XJ_2)
\\
& =\frac{1}{2d} \begin{pmatrix}x_{22}^2-dx_{21}^2&x_{22}x_{12}-dx_{21}x_{11} \\x_{22}x_{12}-dx_{21}x_{11} &x_{12}^2-dx_{11}^2 \end{pmatrix} 
\in \mathrm{Sym}^2(F),
\end{align*}
for $u\in N\left(\mA\right)$, 
we have 
\[
\left(\omega_\psi(ug\lambda_s(\lambda(h)), h) \phi\right)
(v_X) = \psi_{S_X} \left( u \right)^{-1} \omega_\psi(g\lambda_s(\lambda(h)), h) \phi(v_X).
\]
Therefore, we get
\begin{align*}
&\int_{N(\mA) \backslash G^1(\mA_F)}  \int_{\mA^\times M_{X}(F) \backslash M_X(\mA)} \int_{N(F) \backslash N(\mA)}  \chi(h)^{-1} 
\\
&\times
\left(\omega_\psi (g_1 \lambda_s(\lambda(h)) , h) \phi\right)(v_X) 
\,
f(u g_1  \lambda_s(\lambda(h)))  \psi_{S_X}\left(u\right)^{-1} \, du \, dh\, dg_1
\\
=&
\int_{N(\mA) \backslash G^1(\mA_F)}  \int_{\mA^\times M_{X}(F) \backslash M_X(\mA)} \int_{N(F) \backslash N(\mA)}  \chi (h)^{-1}
\\
&\times
\omega_\psi (\lambda_s(\lambda(h)) g_1, h) \phi(v_X) 
\,
|\lambda(h)|^3 f(u  \lambda_s(\lambda(h)) g_1) 
\psi_{S_X}\left(u\right)^{-1}
  \, du \, dh\, dg_1.
\end{align*}
By a direct computation, we see that
\[
\left(\omega_\psi (\lambda_s(\lambda(h)) g_1, h) \phi\right)(v_X) =|\lambda(h)|^{-3} 
\left(\omega_\psi (h_0 \lambda_s(\lambda(h)) g_1, 1) \phi\right)(v_X)
\]
when we write 
\[
h = \begin{pmatrix}(\det h) (h^X)^\ast &0&0\\ 0&h&0\\ 0&0&h^X\end{pmatrix},
\,
h_0 = \begin{pmatrix} ({}^{t}XJ_2)^{-1} {}^{t}h ({}^{t}XJ_2)&0\\ 0&(J_2X) h^{-1} (J_2X)^{-1}\end{pmatrix}.
\]
For $g \in \mathrm{GSO}(S_0)$, we have ${}^{t}g = wgw$ and we may write
\[
h_0 = \begin{pmatrix} (J_2{}^{t}XJ_2)^{-1} {}^{t}h (J_2{}^{t}XJ_2)&0\\ 0&{}^{t}\left(  (J_2{}^{t}XJ_2)^{-1} {}^{t}h (J_2{}^{t}XJ_2)\right)^{-1} \end{pmatrix}.
\]
Since we have
\[
\mathrm{GSO}(S_X) = (J_2{}^{t}XJ_2)^{-1} \mathrm{GSO}(S_0) (J_2{}^{t}XJ_2),
\]
we get
\begin{align}
\label{pull-back gsp complete}
&\int_{N(\mA) \backslash G^1(\mA_F)}  \int_{\mA^\times T_{S_X}(F) \backslash T_{S_X}(\mA)} \int_{N(F) \backslash N(\mA)}  \chi(h) 
\\
\notag
&
\qquad\qquad\times
\left(\omega_\psi (g_1, 1) \phi\right)(v_X) f(u h g_1) 
\psi_{S_X}\left(u\right)^{-1}  \, du \, dh\, dg_1
\\
=& \notag
\int_{N(\mA) \backslash G^1(\mA_F)} 
B_{S_X, \chi^{-1}}(\pi(g_1)f)   \left(\omega_\psi (g_1, 1) \phi\right)(v_X)  dg_1
\end{align}
where we regard $\chi$ as a character of $\mathrm{GSO}(S_X)(\mA)$ by \eqref{chi as a character}.

Finally the last statement concerning the equivalence 
of the non-vanishing conditions on
the $\left(S_X,\chi^{-1},\psi\right)$-Bessel period
and the $\left(X,\chi\right)$-Bessel period
follows from the pull-back formula~\eqref{f: first pull-back formula}
by an argument similar to the one in 
the proof of Proposition~ 2 in \cite{FM1}.
\end{proof}
%
%
%
%
%
%
%
%
%
%
\subsection{$\left(G_D, \mathrm{GSU}_{3,D}\right)$ case}
\subsubsection{Theta correspondence for quaternionic dual pair
with similitudes}
\label{def theta D}
Let $D$ be a quaternion division algebra over $F$.
Let $X_D$ (resp. $Y_D$) be a right (resp. left) $D$-vector space of finite rank
equipped with a non-degenerate hermitian bilinear form $(\,,\,)_{X_D}$ (resp. non-degenerate skew-hermitian bilinear form $\langle \, , \, \rangle_{Y_D}$).
Hence
 $(\,,\,)_{X_D}$ and  $\langle \, ,  \,\rangle_{Y_D}$ are $D$-valued
 $F$-bilinear form on $X_D$ and $Y_D$ satisfying:
\begin{gather*}
\overline{(x ,x^\prime)_{X_D}} = (x^\prime,x)_{X_D}, 
\quad (xa,x^\prime b)_{X_D} = \bar{a}(x,x^\prime)_{X_D} b,
\\
\overline{\langle y,y^\prime \rangle_{Y_D}} = -\langle y^\prime,y\rangle_{Y_D},  \quad
\langle ay,y^\prime b  \rangle_{Y_D} = a \langle y,y^\prime \rangle_{Y_D} \bar{b},
\end{gather*}
for $x, x^\prime \in X_D$, $y, y^\prime \in Y_D$ and $a,b\in D$.
We denote the isometry group of $X_D$ and $Y_D$ by $\mathrm{U}(X_D)$ and $\mathrm{U}(Y_D)$, respectively.
Then the space $Z_D = X_D \otimes_D Y_D$ is regarded as a symplectic space over $F$ with the non-degenerate alternating form $\langle \,, \, \rangle$
defined by 
\begin{equation}\label{d: quaternion dual pair}
\langle x_1 \otimes y_1, x_2 \otimes y_2 \rangle =   \mathrm{tr}_D ((x_1, x_2)_{X_D} \overline{\langle y_1, y_2 \rangle_{Y_D}}) \in F
\end{equation}
and we have a homomorphism $\mathrm{U}(X_D) \times \mathrm{U}(Y_D) \rightarrow \mathrm{Sp}(Z_D)$
defined by 
\begin{equation}
\label{DSP times DO to DSP}
(x \otimes y)(g, h) =  x g \otimes h^{-1}y 
 \quad \text{for $x \in X, y \in Y$, $h \in \mathrm{U}(Y_D)$
 and $g \in \mathrm{U}(X_D)$.}
\end{equation}
As in the case when $D \simeq \mathrm{Mat}_{2 \times 2}$, this mapping
 splits in the metaplectic group $\mathrm{Mp}(Z_D)$.
Hence we have the Weil representation $\omega_\psi$ of $\mathrm{U}(X_D, \mA) \times \mathrm{U}(Y_D, \mA)$ by restriction.

From now on, we suppose that the rank of $X_D$ is $2k$ and $X_D$ is maximally split, in the sense that its maximal isotropic subspace has rank $k$.

Let us denote by $\mathrm{GU}(X_D)$ (resp. $\mathrm{GU}(Y_D)$) the similitude unitary group of $X_D$ (resp. $Y_D$) with the similitude character $\lambda_D$
(resp. $\nu_D$).
Also we write the identity component of $\mathrm{GU}(Y_D)$ by $\mathrm{GSU}(Y_D)$.
Then the action \eqref{DSP times DO to DSP}
extends to a homomorphism
\[
i_D : \mathrm{GU}(X_D) \times \mathrm{GU}(Y_D) \rightarrow \mathrm{GSp}(Z_D)
\]
with the property
 $\lambda(i_D(g, h)) = \lambda_D(g) \nu_D(h)^{-1}$.
Let 
\[
R_D:=\{(g,h) \in \mathrm{GU}(X_D) \times  \mathrm{GU}(Y_D) \, | \, \lambda_D(g) = \nu_D(h) \}
\supset \mathrm{U}(X_D) \times  \mathrm{U}(Y_D).
\]
Since $X_D$ is maximally split, we have a Witt decomposition $X_D = X_D^+ \oplus X_D^-$ with 
maximal isotropic subspaces $X_{D}^{\pm}$. Then as in Section~\ref{SP times O to SP},
we may realize the Weil representation $\omega_\psi$ of $\mathrm{U}(X_D) \times  \mathrm{U}(Y_D)$ on 
$\mathcal{S}((X_D^+ \otimes Y_D)(\mA))$. 
In this realization,
for $h \in \mathrm{U}(Y_D)$ and
$\phi \in \mathcal{S}((X_D^+ \otimes Y_D)(\mA))$,
we have
\[
\omega_{\psi}(1, h)\phi(z) = \phi(i_D\left(h\right)^{-1} z).
\]
Hence, as in Section~\ref{SP times O to SP}, we may extend $\omega_\psi$ to $R_D(\mA)$ by 
\[
\omega_{\psi}(g, h) \phi\left(z\right) = |\lambda(h)|^{ -2 \mathrm{rank}~X_D \cdot \mathrm{rank}~Y_D} \omega_\psi(g_1, 1) \phi
\left(i_D\left(h\right)^{-1}z\right)
\]
for $\left(g,h\right)\in R_D\left(\mA\right)$,
where
\[
g_1 = g \begin{pmatrix}\lambda_D(g)^{-1}&0\\ 0&1 \end{pmatrix} \in \mathrm{U}(X_D).
\]

Then as  in Section~\ref{SP times O to SP},
we may extend the Weil representation  $\omega_\psi$
of $\mathrm{U}(X_D) \times  \mathrm{U}(Y_D)$ 
on $\mathcal S\left(
Z_+\left(\mathbb A_F\right)\right)$,
where $Z_D = Z_D^+ \oplus Z_D^-$ is an arbitrary polarization,
 to $R_D\left(\mA\right)$, 
by using the $\mathrm{U}(X_D) \times  \mathrm{U}(Y_D)$-isomorphism
$p:\mathcal{S}((X_D^+ \otimes Y_D)(\mA))\to\mathcal S\left(
Z_+\left(\mathbb A_F\right)\right)$.
Thus for $\phi\in\mathcal S\left(
Z_+\left(\mathbb A_F\right)\right)$, the theta kernel $\theta^\phi_\psi=\theta^\phi$
on $R_D(\mA)$ is defined by
\[
\theta_\psi^{\phi}(g , h) =\theta ^{\phi}(g , h) = \sum _{z_{+} \in Z_D^{+}(F) } \,
\omega_{\psi} (g , h) \phi(z_{+}) \quad 
\text{for $(g, h) \in R_D(\mA)$}.
\]
Let us define 
\[
\mathrm{GU}(X_D, \mA)^+ = \left\{ h \in \mathrm{GU}(X_D, \mA) : \lambda_D(h) \in \nu_D \left(\mathrm{GU}(Y_D, \mA) \right)\right\}.
\]
and
\[
\mathrm{GU}(X_D, F)^+ = \mathrm{GU}(X_D, \mA)^+ \cap \mathrm{GU}(X_D, F).
\]
We note that $\nu_D \left(\mathrm{GU}(Y_D, F_v) \right)$ contains $N_D(D(F_v)^\times)$ for any place $v$.
Thus, if $v$ is non-archimedean or complex, we have $\mathrm{GU}(X_D, F_v)^+ = \mathrm{GU}(X_D, F_v)$, and 
if $v$ is real, $|\mathrm{GU}(X_D, F_v) \slash \mathrm{GU}(X_D, F_v)^+| \leq 2$.

For a cusp form $f$ on $\mathrm{GU}(X_D, \mA)^+$, 
as in \ref{def theta}, we define the theta lift of $f$ to $\mathrm{GU}(Y_D, \mA)$ by 
\[ 
\Theta (f, \phi)(h) := \int _{\mathrm{U}(X_D, F) \backslash \mathrm{U}(X_D, \mA)} \theta ^{\phi}(g_1g , h)f(g_1 g) \, dg_1
\]
where $g \in \mathrm{GU}(X_D, \mA)^+$ is chosen so that $\lambda_D(g) = \nu_D(h)$.
It defines an automorphic form on $\mathrm{GU}(Y_D, \mA)$. 
When we regard $\Theta (f, \phi)(h)$ as an automorphic form on $\mathrm{GSU}(Y_D, \mA)$ by the restriction,
we denote it as $\theta (f, \phi)(h)$.
For an irreducible cuspidal automorphic representation $(\pi_+, V_{\pi_+})$ of $\mathrm{GU}(X_D, \mA)^+$, 
we denote by $\Theta_\psi(\pi_+)$ (resp. $\theta_\psi(\pi_+)$) the theta lift of $\pi_+$ to $\mathrm{GU}(Y_D, \mA)$
(resp. $\mathrm{GSU}(Y_D, \mA)$), namely
\begin{align*}
\Theta_\psi(\pi) &:= \left\{ \Theta (f, \phi) : f \in V_{\pi_+}, \phi \in \mathcal{S}(Z_D^+(\mA)) \right\},
\\
\theta_\psi(\pi) &:= \left\{ \theta (f, \phi) : f \in V_{\pi_+}, \phi \in \mathcal{S}(Z_D^+(\mA)) \right\},
\end{align*}
respectively.
Moreover, for an irreducible cuspidal automorphic representation $(\pi, V_\pi)$ of $\mathrm{GU}(X_D, \mA)$,
we define the theta lift $\Theta_\psi(\pi)$ (resp. $\theta_\psi(\pi)$) of $\pi$ to $\mathrm{GU}(Y_D, \mA)$ (resp. $\mathrm{GSU}(Y_D, \mA)$)
by $\Theta_\psi(\pi) := \Theta_\psi(\pi|_{\mathrm{GU}(X_D, \mA)^+})$ (resp.  $\theta_\psi(\pi) := \theta_\psi(\pi|_{\mathrm{GU}(X_D, \mA)^+})$).

As for the opposite direction, as in \ref{def theta},
for a cusp form  $f^\prime$ on $\mathrm{GSU}(Y_D, \mA)$, we define the theta lift of $f^\prime$ to $\mathrm{GU}(X_D, \mA)^+$ by 
\[ 
\theta (f^\prime, \phi)(g) := \int _{\mathrm{SU}(Y_D, F) \backslash \mathrm{SU}(Y_D, \mA)} \theta ^{\phi}(g , h_1h)f(h_1 h) \, dh_1
\]
where $h \in \mathrm{GSU}(Y_D, \mA)$ is chosen so that $\lambda_D(g) = \nu_D(h)$.
For an irreducible cuspidal automorphic representation $(\sigma, V_\sigma)$ of $\mathrm{GSU}(Y_D, \mA)$, we denote by
$\theta_\psi(\sigma)$ the theta lift of $\sigma$ to $\mathrm{GU}(X_D, \mA)^+$.
Moreover, we extend $\theta (f^\prime, \phi)$ to an automorphic form on $\mathrm{GU}(X_D, \mA)$ by the natural embedding 
\[
\mathrm{GU}(X_D, F)^+ \backslash \mathrm{GU}(X_D, \mA)^+ \rightarrow \mathrm{GU}(X_D, F) \backslash \mathrm{GU}(X_D, \mA)
\]
and extension by zero. Then we define the theta lift 
$\Theta_\psi\left(\sigma\right)$ of $\sigma$ to $\mathrm{GU}(X_D, \mA)$ 
as the $\mathrm{GU}(X_D, \mA)$ representation
generated by such $\theta\left(f^\prime,\phi\right)$
for $f^\prime\in V_\sigma$ and
$\phi\in\mathcal S\left(Z_+\left(\mA\right)\right)$.

\begin{Remark}
\label{theta irr rem D}
Suppose that $(\pi_+, V_{\pi_+})$ (resp. $(\sigma, V_\sigma)$) is
 an irreducible cuspidal automorphic representation of $\mathrm{GU}(X_D, \mA)^+$ 
(resp. $\mathrm{GSU}(Y_D, \mA)$).
Suppose moreover that the theta lift $\Theta_\psi(\pi_+)$ (resp. $\theta_\psi(\sigma)$) is non-zero and cuspidal.
Then by Gan~\cite[Proposition~2.12]{Gan},  $\Theta_\psi(\pi_+)$ (resp. $\theta_\psi(\sigma)$)
is an
irreducible cuspidal automorphic representation because of the Howe duality 
for quaternionic dual pairs proved 
by Gan and Sun~\cite{GSun} and Gan and Takeda~\cite{GT}.
We shall study the case $\dim_D~X_D=2$ and $\dim_D~Y_D=3$. In this case, by the conservation relation proved by Sun and Zhu~\cite{SZ},
the irreducibility of  $\Theta_\psi(\pi_+)$ implies
that of $\theta_\psi(\pi_+)$.
\end{Remark}

\subsubsection{Pull-back of the global Bessel periods for the
dual pair $\left(\mathrm{G}_D,\mathrm{GSU}_{3,D}\right)$}
\label{ss: second pull-back}

The set-up is as follows.

Let $X_D$ be the space of $2$ dimensional row vectors over $D$
equipped with the hermitian form
\[
\left(x,x^\prime\right)_{X_D}=\bar{x}\begin{pmatrix}0&1\\1&0\end{pmatrix}
\,{}^tx^\prime.
\]
Let us take the standard basis of $X_D$ and name the basis vectors
as
\[
x_+=\left(1,0\right),\quad x_-=\left(0,1\right).
\]
Then $G_D$ defined by \eqref{e: G_D} is the matrix representation of the
similitude unitary group $\mathrm{GU}\left(X_D\right)$ for $X_D$
with respect to the standard basis.

Let $Y_D$ be the space of $3$ dimensional column vectors over
$D$ equipped with the skew-hermitian form
\[
\langle y, y^\prime \rangle_{Y_D} = {}^{t} y\, \begin{pmatrix}0&0&\eta\\ 0&\eta&0 \\ \eta&0&0 \end{pmatrix}\, \overline{y^\prime}.
\]
Let us take the standard basis of $Y_D$ and name the basis vectors
as
\[
y_-={}^t\left(1,0,0\right),\quad
e={}^t\left(0,1,0\right),\quad
y_-={}^t\left(0,0,1\right).
\]
Then $\mathrm{GSU}_{3,D}$ defined in \ref{ss: quaternionic unitary groups} is the matrix representation of
the group $\mathrm{GSU}\left(Y_D\right)$ for $Y_D$
with respect to the standard basis.

We take  a polarization $Z_D=Z_{D,+}\oplus Z_{D,-}$
of $Z_D=X_D\otimes_D Y_D$ defined
as follows. Let
\[
X_{D,\pm}=x_{\pm}\cdot D
\]
where the double sign corresponds.
We decompose $Y_D$ as $Y_D=Y_{D,+}\oplus Y_{D,0}\oplus Y_{D,-}$
where
\[
Y_{D,+}=D\cdot y_+,\quad
Y_{D,0}=D\cdot y_0,
\quad
Y_{D,-}=D\cdot y_-.
\]
Then let
\begin{equation}\label{zD def}
Z_{D,\pm}=\left(X_D\otimes  Y_{D,\pm}\right)
\oplus \left(X_{D,\pm}\otimes Y_{D,0}\right)
\end{equation}
where the double sign corresponds.
To simplify the notation, 
we write $z_+ \in Z_{D,+}\left(\mA\right)$ as
$z_+=\left(a,b\right)$ when 
\[
z_+=a\otimes y_+ + b\otimes e
\quad\text{where $a\in X_D\left(\mA\right)$ and $b\in X_{D,+}
\left(\mA\right)$}
\]
and $\phi\left(z_+\right)$ as $\phi\left(a,b\right)$
for $\phi\in \mathcal S\left(Z_{D,+}\left(\mA\right)\right)$.
%
%
%
%
%
%

Let us compute the pull-back of the $\left(X,\chi, \psi\right)$-Bessel periods
on $\mathrm{GSU}_{3,D}$ defined by
\eqref{Besse def gsud} with respect to the theta lift
from $G_D$.
%
%
\begin{proposition}
\label{pullback Bessel gsp innerD}
Let $\left(\pi_D, V_{\pi_D}\right)$ be an irreducible cuspidal automorphic
representation of $G_D\left(\mA\right)$ whose central character is 
$\omega_\pi$ and $\chi$ a character of $\mathbb A_E^\times$ such that
$\chi\mid_{\mathbb A^\times}=\omega_\pi^{-1}$.
Let $X\in D^\times$.

Then for
 $f \in V_{\pi_D}$ and $\phi \in \mathcal{S}(Z_{D, +}(\mA))$, we have 
\begin{equation}\label{2: 2nd pull-back formula}
\mathcal{B}_{X, \chi, \psi}^D(\theta(f: \phi)) = \int_{N_{D}(\mA) \backslash G_D^1(\mA)} B_{\xi_X, \chi^{-1},\psi}(\pi(g)f)\left( \omega(g, 1) \phi\right)(v_{D, X}) \, dg
\end{equation}
where 
\begin{equation}\label{def of S_X}
\xi_X:=X\eta \bar{X}\in D^-\left(F\right),\quad
v_{D, X} := (x_-, -\eta^{-1}X x_+)\in Z_{D,+},
\end{equation}
and 
$B_{\xi_X, \chi^{-1},\psi}$ denotes the $\left(\xi_X, \chi^{-1},\psi\right)$-Bessel
period on $G_D$ defined by \eqref{e: def of bessel period}.

In particular, the $\left(\xi_X, \chi^{-1},\psi\right)$-Bessel period 
does not vanish on $V_{\pi_D}$ if and only if the 
$\left(X,\chi, \psi\right)$-Bessel period does not vanish on 
$\theta_\psi\left(\pi_D\right)$.
\end{proposition}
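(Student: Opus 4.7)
The plan is to parallel the argument for Proposition~\ref{pullback Bessel gsp} in the quaternionic setting. I will filter the unipotent radical $N_{3,D}$ by a chain of normal subgroups, unfold the $(X,\chi,\psi)$-Bessel integral against the theta kernel stage by stage, and use the resulting Fourier characters to pin the lattice sum appearing in $\theta^\phi(g_1 g, h)$ down to a single $G_D^1(F)$-orbit. Concretely, I will use the filtration $N_{3,D} \rhd Z_{3,D} \rhd 1$, where $Z_{3,D}$ is the center of $N_{3,D}$ consisting of elements with $A = A^\prime = 0$ (parametrized by $B \in D$ subject to the skew-hermitian constraint $\eta B + \bar B\eta = 0$); the quotient $N_{3,D}/Z_{3,D}$ is parametrized by $A \in D$, with $A^\prime = -\eta^{-1}\bar A\eta$ forced by the relation $g \in \mathrm{GSU}_{3,D}$.

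Using the explicit action \eqref{weil act 1}--\eqref{weil act 2} of the Weil representation on $\mathcal S(Z_{D,+}(\mA))$ together with the definition \eqref{d: quaternion dual pair} of the symplectic form via $\mathrm{tr}_D$, I will compute, for $z_+ = (a,b) \in Z_{D,+}(F)$, the twist coming from each stage of $N_{3,D}$: integration over $Z_{3,D}(F)\backslash Z_{3,D}(\mA)$ against the trivial character (since $\psi_{X,D}$ is trivial on $Z_{3,D}$) will force the hermitian pairing $(a,a)_{X_D}$ to vanish, i.e., $a$ spans an isotropic $D$-line; then integration over the remaining $A$-variable against $\psi_{X,D}$ will pin down the pairings of $a$ and $b$ in terms of $X$. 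Since $X_D$ is maximally split with the unique isotropic $D$-line $X_{D,-} = x_-\cdot D$, the $G_D^1(F)$-action transports $a$ into $X_{D,-}$; choosing $a = x_-$ as a representative then determines $b = -\eta^{-1}Xx_+$, which is exactly $v_{D,X}$ of \eqref{def of S_X}, and its stabilizer in $G_D^1(F)$ is $N_D(F)$, so the $F$-sum collapses.

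For the outer integration, I will use the map $T_\eta \ni h \mapsto h^X = XhX^{-1}$: since $\bar X X = \mathrm{n}_D(X) \in F^\times$ is central in $D$, the element $h^X$ commutes with $\xi_X = X\eta\bar X$, so this map yields an isomorphism $T_\eta \simeq T_{\xi_X}$ through which $\chi$ transfers to a character of $T_{\xi_X}(\mA)$. Combining the two stages, the inner integral over $N_D(F)\backslash N_D(\mA)$ and $\mA^\times T_{\xi_X}(F)\backslash T_{\xi_X}(\mA)$ becomes exactly $B_{\xi_X,\chi^{-1},\psi}(\pi(g)f)$, yielding \eqref{2: 2nd pull-back formula}; the non-vanishing equivalence then follows formally as at the end of the proof of Proposition~\ref{pullback Bessel gsp}, by the argument of \cite[Proposition~2]{FM1}. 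The principal obstacle is the Weil representation bookkeeping: the coupling between $A$ and $B$ via the skew-hermitian constraint and the appearance of $\mathrm{tr}_D$ in the symplectic form mean that the coordinates on the filtration must be chosen carefully so that each stage presents itself as a single Fourier transform paired cleanly against $\psi_{X,D}$.
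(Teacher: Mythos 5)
Your proposal follows the paper's proof essentially step for step: unfolding against the (one-dimensional) central subgroup of $N_{3,D}$ --- your $Z_{3,D}$, cut out by $\eta B+\bar B\eta=0$, is exactly the paper's $N_{0,D}=\{B=\eta x,\ x\in F\}$, since that constraint forces $\eta B\in F$ --- eliminates the non-isotropic $a$, the $A$-integration against $\psi_{X,D}$ pins $b$ so the sum collapses to the single vector $v_{D,X}$ with stabilizer $N_D(F)$, the Weil action of $N_D$ on $v_{D,X}$ produces $\psi_{\xi_X}$, the torus is handled via $h\mapsto h^X=XhX^{-1}$, and the non-vanishing equivalence is obtained as at the end of Proposition~\ref{pullback Bessel gsp}. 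The one correction needed is your justification for moving $a$ into $X_{D,-}$: the split hermitian plane over $D$ has infinitely many isotropic $D$-lines (e.g. the spans of $x_+ + x_- c$ with $\mathrm{tr}_D(c)=0$, besides $x_-D$), so the reduction to the representative $a=x_-$ rests on Witt's theorem (transitivity of $G_D^1(F)$ on isotropic generators), exactly as in the split case, not on uniqueness of the isotropic line.
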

%
%
\begin{proof}
The proof of this proposition is 
similar to the one for 
Proposition~\ref{pullback Bessel gsp}.

Let $N_{0,D}$ be a subgroup of $N_{3,D}$ given by
\[
N_{0, D}(F) = \left\{ u_D(x):=\begin{pmatrix} 1&0&\eta x\\ 0&1&0\\ 0&0&1 \end{pmatrix} : x \in F\right\}.
\]
Then we note that $N_{0, D}$ is a normal subgroup of $N_{3,D}$ and $\psi_{X, D}$ is trivial on  $N_{0, D}(\mA)$. 
Since 
\[
Z_{D,-}(\mA)(1, u_D(x)) = Z_{D,-}(\mA)\,\,\text{and}\,\,
z_+(1, u_D(x)) = z_+ + a \otimes (-\eta x) y_-\,\,\text{for $x \in \mA$}, 
\]
we have
\[
\left(\omega(1, u_D(x)) \phi\right)(z_+) = \psi \left(- \frac{1}{2} \,\mathrm{tr}_D \left( \langle a, a \rangle  \eta^2 x \right) \right) \phi(z_+).
\]
Thus by an argument similar to the one in the proof of Proposition~\ref{pullback Bessel gsp}, one may show that 
\begin{multline}\label{e: 2nd bessel step 1}
\int_{N_{3,D}(F) \backslash N_{3,D}(\mA)}  \theta(f; \phi)(hu)  \psi^{-1}_{X, D}(u) \, du
\\
=
\int_{N_{3,D}(F) \backslash N_{3,D}(\mA)}
\int_{G_D^1(F) \backslash G_D^1(\mA)}
\,\, \sum _{ \overline{a} \in \mathcal{X}_{D,-}}\, \sum_{\gamma \in V_D(a) \backslash G_D^1(F)}\,
\sum_{b \in X_{D,+}}
\\
\left(\omega (\gamma g_1 \lambda_s^D(\nu(h)) , uh) \phi\right)(a, b) 
f(g_1  \lambda_s(\nu(h))) \, dg_1 \, du.
\end{multline}
Here 
$\mathcal X_{D,-}$ is the set of equivalence classes 
$X_{D,-}\slash\sim$ where $a\sim a^\prime$ if and only if
there exists a $\gamma\in G_D^1\left(F\right)$ such that
$a^\prime=a\gamma$,
$\bar{a}$ denotes the equivalence class
of $\mathcal X_{D,-}$ containing $a\in X_{D,-}$,
and, $V\left(a\right)=\left\{\gamma\in G_D^1\left(F\right)\mid a\gamma=a
\right\}$.
Then we may rewrite \eqref{e: 2nd bessel step 1} as
\begin{multline}\label{e: 2nd bessel step 2}
\int_{N_{3,D}(F) \backslash N_{3,D}(\mA)}  \theta(f; \phi)(hu)  \psi^{-1}_{X, D}(u) \, du
=
\int_{N_{3,D}(F) \backslash N_{3,D}(\mA)}
\int_{G_D^1(F) \backslash G_D^1(\mA)}
\\
 \sum_{N_D\left(F\right) \backslash G_D^1(F)}\,
\sum_{b \in X_{D,+}}
\left(\omega (\gamma g_1 \lambda_s^D(\nu(h)) , uh) \phi\right)(x_{-}, b) 
f(g_1  \lambda_s(\nu(h))) \, dg_1 \, du.
\end{multline}
Since,
for $u =  \begin{pmatrix} 1&-\eta^{-1} \bar{A} \eta&B\\ 0&1&A\\ 0&0&1 \end{pmatrix} \in N_{3,D}(\mA)$, we have 
$Z_{D,-}(\mA)(1, u) = Z_{D,-}(\mA)$ and 
\begin{align*}
z_+ (1, u) &=z_++ x_- \otimes (B^\prime y_- - A e +y_+) +b \otimes (\eta^{-1} \bar{A} \eta y_- + e) 
\\
&= z_+ +  x_- \otimes (B^\prime y_- - A e) +b \otimes (\eta^{-1} \bar{A} \eta y_-) ,
\end{align*}
 we obtain
\[
\left(\omega(1, u) \phi\right)(z_+) = \psi \left( \mathrm{tr}_D \left( \langle b, x_- \rangle \overline{(e, -Ae)} \right) \right) \phi\left(z_+\right)
= \psi \left(\mathrm{tr}_D \left(  \eta \langle b, x_- \rangle A  \right) \right) 
\phi\left(z_+\right).
\]
Hence in \eqref{e: 2nd bessel step 2}, only $b\in X_{D,-}$ satisfying
$\eta \langle b, x_- \rangle= X$, i.e.
$b=x_+(-\overline{X}\eta^{-1} )$ contributes.
Thus our integral is equal to 
\begin{align*}
&\int_{N_{D}(F) \backslash G_D^1(\mA)}  
\left(\omega (g_1 \lambda_s^D(\nu(h)) , uh) \phi\right)(v_{D, X}) f(g_1  \lambda_s^D(\nu(h))) \, dg_1 \, du
\\
=&
\int_{N_{D}(\mA) \backslash G_D^1(\mA)}  \int_{N_{D}(F) \backslash N_{D}(\mA)}
\omega (u g_1 \lambda_s^D(\nu(h)) , uh) \phi(v_{D, X})
\\
&\qquad\qquad\times f( u g_1  \lambda_s^D(\nu(h))) \, dg_1 \, du
\end{align*}
where $v_{D, X} =( x_-,  x_+(-\overline{X}\eta^{-1} ))$. 
Further for $u=\begin{pmatrix}1&a\\0&1\end{pmatrix}
\in N_D\left(\mA\right)$, we have 
\[
\left(
\omega \left(  ug, h \right) \phi
\right)(v_{D, X}) 
= \psi_{\xi_{X}}\left( u\right)^{-1}\left(\omega\left(g,h\right)\phi\right)
\left(v_{D,X}\right)
\]
where we put $\xi_{X} = X \eta \overline{X}$.
Thus our integral becomes
\begin{multline*}
\int_{N_{D}(\mA) \backslash G_D^1(\mA)}  \int_{N_{D}(F) \backslash N_{D}(\mA)}  \psi_{\xi_{X}} (u)^{-1}
\omega (g_1 \lambda_s^D(\nu(h)) , h) \phi(v_{D, X})
\\
\times
 f( u g_1  \lambda_s^D(\nu(h))) \, du \, dg_1 .
\end{multline*}
As for the integration over $\mathbb A^\times M_{X, D}\left(F\right)
\backslash M_{X, D}\left(\mathbb A\right)$ in 
\eqref{Besse def gsud}, by a direct computation, we see that
\[
\omega (\lambda_s^D(\nu(h)) g_1, h) \phi(v_{D, X}) =|\nu(h)|^{-3} \omega (h_0 \lambda_s(\nu(h)) g_1, 1) \phi(v_{D, X})
\]
where
\[
h = \begin{pmatrix} n_D(h) \cdot (h^X)^\ast &0&0\\ 0&h&0\\ 0&0&h^X\end{pmatrix}
\quad
\text{and}
\quad
h_0 = \begin{pmatrix} \overline{h^X}&0\\ 0&(h^X)^{-1}\end{pmatrix}.
\]
Therefore, as in the previous case, we obtain
\[
\mathcal{B}_{X, \chi, \psi}^D(\theta(f: \phi))=
\int_{N_{D}(\mA) \backslash G_D^1(\mA)} 
B_{\xi_X, \chi^{-1},\psi}(\pi(g_1)f)  \left( \omega (g_1, 1) \phi\right)(v_{D, X})  dg_1.
\]

The equivalence of the non-vanishing conditions follows
from the pull-back formula~\eqref{2: 2nd pull-back formula}
as Proposition~\ref{pullback Bessel gsp}.
\end{proof}
%
%
%
%
%
%
%
%
%
%
%
%
\subsection{Theta correspondence for similitude unitary groups}
In our proof of Theorem~\ref{ggp SO} and \ref{ref ggp}, we shall use theta correspondence for similitude unitary groups
besides theta correspondences for dual pairs $(\mathrm{GSp}_2, \mathrm{GSO}_{4,2})$ and $(G_D, \mathrm{GSU}_{3,D})$.
Let us recall the definition of the theta lifts in this case.

Let $(X, (\,,\,)_X)$ be an $m$-dimensional hermitian space over $E$,
and let  $(Y, (\,, \,)_Y)$ be an $n$-dimensional skew-hermitian space over $E$.
Then we may define the quadratic space 
\[
(W_{X, Y}, (\,, \,)_{X, Y}) := \left(\mathrm{Res}_{E \slash F} X \otimes Y, \mathrm{Tr}_{E \slash F}\left((\,, \,)_X \otimes \overline{(\,,\,)_Y} \right) \right).
\]
This is a $2mn$-dimensional symplectic space over $F$.
Then we denote its isometry group by $\mathrm{Sp}\left( W_{X, Y}\right)$.
For each place $v$ of $F$, we denote the metaplectic extension of $\mathrm{Sp}\left(W_{X, Y}\right)(F_v)$
by $\mathrm{Mp}\left( W_{X, Y}\right)(F_v)$. Also, $\mathrm{Mp}\left( W_{X, Y}\right)(\mA)$ denotes the 
metaplectic extension of $\mathrm{Sp}\left(W_{X, Y}\right)(\mA)$.

Let $\chi_X$ and $\chi_Y$  be characters of $\mA_E^\times \slash E^\times$
such that $\chi_{X}|_{\mA^\times} = \chi_E^m$ and $\chi_{Y}|_{\mA^\times} = \chi_E^n$.
For each place $v$ of $F$, let 
\[
\iota_{\chi_v} : \mathrm{U}(X)(F_v) \times \mathrm{U}(Y)(F_v) \rightarrow \mathrm{Mp}(W_{X, Y})(F_v)
\]
be the local splitting given by Kudla~\cite{Ku} depending on the choice of  a pair of characters $\chi_v =(\chi_{X,v}, \chi_{Y,v})$.
Using this local splitting, we get a splitting 
\[
\iota_{\chi} : \mathrm{U}(X)(\mA) \times \mathrm{U}(Y)(\mA) \rightarrow \mathrm{Mp}(W_{X, Y})(\mA),
\]
depending on $\chi = (\chi_X, \chi_Y)$. Then by the pull-back, we obtain the Weil representation $\omega_{\psi, \chi}$ of $\mathrm{U}(X)(\mA) \times \mathrm{U}(Y)(\mA)$.
When we fix a polarization $W_{X, Y} = W_{X, Y}^+ \oplus  W_{X, Y}^-$, we may realize  $\omega_{\psi, \chi}$
so that its space of smooth vectors is given by $\mathcal{S}(W_{X, Y}^+(\mA))$, the space of Schwartz-Bruhat functions on $W_{X, Y}^+(\mA)$.
We define 
\[
R := \left\{ (g, h) \in \mathrm{GU}(X) \times \mathrm{GU}(Y)  : \lambda(g) = \lambda(h) \right\}
\supset \mathrm{U}(X)\times \mathrm{U}(Y).
\]
Suppose that $\dim Y$ is even and $Y$ is maximally split, in the sense that $Y$ has a maximal isotropic subspace of dimension $\frac{1}{2} \dim Y$.
In this case, as in Section~\ref{def theta} and \ref{def theta D}, we may extend $\omega_{\psi,\chi}$ to $R(\mA)$.
On the other hand, in this case, we have an explicit local splitting of $R(F_v) \rightarrow \mathrm{Sp}(W_{X, Y})(F_v)$ by Zhang~\cite{CZ} 
and we may extend $\omega_{\psi,\chi}$ to $R(\mA)$ using this splitting.
These two extensions of $\omega_{\psi,\chi}$ to $R\left(\mA\right)$ coincide.

Then for $\phi \in \mathcal{S}(W_{X, Y}^+(\mA))$, we define the theta function $\theta_{\psi, \chi}^\phi$ on 
$R(\mA)$ by
\begin{equation}
\label{theta fct def}
\theta_{\psi, \chi}^\phi(g, h) = \sum_{w \in W_{X, Y}^+(F)} \omega_{\psi, \chi}(g, h)\phi(w).
\end{equation}
Let us define
\begin{align*}
\mathrm{GU}(X)(\mA)^{+}: &= \left\{g \in \mathrm{GU}(X)(\mA) : \lambda(g) \in \lambda(\mathrm{GU}(Y)(\mA)) \right\},
\\
\mathrm{GU}(X)(F)^{+} :&= \mathrm{GU}(X)(\mA)^{+} \cap \mathrm{GU}(X)(F).
\end{align*}
We define
$\mathrm{GU}(Y)(\mA)^{+}$ and $\mathrm{GU}(Y)(F)^{+} $ in a similar manner.
Let $(\sigma, V_\sigma)$ be an irreducible cuspidal automorphic representation of $\mathrm{GU}(X)(\mA)^{+}$.
Then for $\varphi \in V_\sigma$ and $\phi \in \mathcal{S}(W_{X, Y}^+(\mA))$, we define the theta lift of $\varphi$ by 
\[
\theta_{\psi, \chi}^\phi(\varphi)(h) = \int_{\mathrm{U}(X)(F) \backslash \mathrm{U}(X)(\mA)} \varphi(g_1g) \theta_{\psi, \chi}^\phi(g_1g, h) \, dg_1
\]
where $g_1 \in \mathrm{GU}(X)(\mA)^+$ is chosen so that $\lambda(g) = \lambda(h)$.
Further, we define the theta lift of $\sigma$ by
\[
\Theta_{\psi, \chi}^{X, Y}(\sigma) = \langle \theta^\phi_{\psi, \chi}(\varphi) ; \varphi \in \sigma, \phi \in \mathcal{S}(W_{X, Y}^+(\mA)) \rangle.
\]
When the space we consider is clear, we simply write $\Theta^{X, Y}_{\psi, \chi}(\sigma) = \Theta_{\psi, \chi}(\sigma)$.
Similarly, for an irreducible cuspidal automorphic representation $\tau$ of $\mathrm{U}(Y)(\mA)$,
we define $\Theta_{\psi, \chi}^{Y, X}(\tau)$ and we simply write it by $\Theta_{\psi, \chi}(\tau)$.
%
%
%
%
%
%
%
%
%
%
%
%
\section{Proof of the Gross-Prasad conjecture for $\left(\mathrm{SO}\left(5\right),\mathrm{SO}\left(2\right)\right)$}
In this section we prove  Theorem~\ref{ggp SO},
i.e. the Gross-Prasad conjecture for $\left(\mathrm{SO}\left(5\right),\mathrm{SO}\left(2\right)\right)$,
based on the pull-back formulas obtained
in the previous section.
\subsection{Proof of the statement (1)  in Theorem~\ref{ggp SO}}
Let $(\pi, V_\pi)$ be as in Theorem~\ref{ggp SO} (1).
By the uniqueness of  the Bessel model due to 
Gan, Gross and Prasad~\cite[Corollary~15.3]{GGP} at  
finite places
and to Jiang, Sun and Zhu~\cite[Theorem~A]{JSZ} at archimedean places, 
there exists uniquely an irreducible constituent 
$\pi_+^B$ of $\pi\mid_{G_D(\mA)^+}$
that has the $(\xi, \Lambda, \psi)$-Bessel period.

When $D$ is split and $\pi_+^B$ is a theta lift from an irreducible cuspidal automorphic representation of $\mathrm{GSO}_{3,1}\left(\mA\right)$, 
our assertion has been  proved by Corbett~\cite{Co}. 
Hence in the remainder of this subsection, we assume
 that:
 \begin{multline}\label{assumption not type I-A}
\text{\emph{ 
when $D$ is split, 
$\pi$ is not a theta lift from $\mathrm{GSO}_{3,1}$}}
\\
\text{\emph{of an irreducible cuspidal automorphic representation}}.
\end{multline}

Let us proceed under the assumption~\eqref{assumption not type I-A}.
By Proposition~\ref{pullback Bessel gsp} and \ref{pullback Bessel gsp innerD},
the theta lift $\theta_\psi(\pi_+^B)$ of $\pi_+^B$ to $\mathrm{GSU}_{3,D}\left(\mA\right)$ has the $(X_\xi, \Lambda^{-1}, \psi)$-Bessel period
and, in particular, $\theta_\psi(\pi_+^B) \ne 0$  where we take $X_\xi \in D^-(F)$ so that $\xi_{X_\xi} = \xi$. 
For example, when we take $\xi =\eta$, we may take $X_\xi =1$. 
%
\begin{lemma}
\label{nonzero lemma (1)}
$\theta_\psi(\pi_+^B)$ is an irreducible cuspidal automorphic representation of $\mathrm{GSU}_{3,D}(\mA)$.
\end{lemma}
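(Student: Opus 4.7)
The plan is to establish cuspidality and irreducibility separately. For irreducibility, once $\theta_\psi(\pi_+^B)$ is known to be non-zero and cuspidal, Remark~\ref{theta irr rem D} furnishes the conclusion by invoking Gan's Proposition~2.12 in \cite{Gan}, the Howe duality for quaternionic dual pairs (Gan--Sun and Gan--Takeda), and the conservation relation of Sun--Zhu. The substantive work is therefore the cuspidality.

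For cuspidality I would apply the Rallis tower property to the dual pair $(G_D, \mathrm{GSU}_{3,D})$. Since $\theta_\psi(\pi_+^B) \neq 0$ has already been established via the pull-back formulas of Proposition~\ref{pullback Bessel gsp} and Proposition~\ref{pullback Bessel gsp innerD}, it suffices to verify that the theta lift of $\pi_+^B$ to the member of the skew-hermitian tower immediately below $\mathrm{GSU}_{3,D}$ vanishes. When $D$ is non-split, that smaller member is $\mathrm{GSU}_{1,D} \simeq T_\eta \simeq E^\times$, and the required vanishing is routine: a non-zero lift would realize $\pi_+^B$ inside the ``going up'' theta lift of a character of the torus, which by a degenerate principal series / Weil representation argument cannot contain an infinite-dimensional cuspidal representation of $G_D$.

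When $D$ is split, $G_D \simeq \mathrm{GSp}_2$ and globally $\mathrm{GSU}_{3,D}$ is $\mathrm{GSO}_{4,2}$, the similitude orthogonal group of $(E, \mathrm{N}_{E\slash F}) \oplus \mathbb{H}^2$; the tower step immediately below is $\mathrm{GSO}_{3,1}$, attached to $(E, \mathrm{N}_{E\slash F}) \oplus \mathbb{H}$. Here assumption \eqref{assumption not type I-A} supplies exactly the input required: if the theta lift of $\pi_+^B$ to $\mathrm{GSO}_{3,1}$ were non-zero, the corresponding ``going up'' theta lift back to $G$ would contain $\pi_+^B$, and hence $\pi$, as a constituent, exhibiting $\pi$ as a theta lift from an irreducible cuspidal automorphic representation of $\mathrm{GSO}_{3,1}$, which has been excluded. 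Thus the lift of $\pi_+^B$ to $\mathrm{GSO}_{3,1}$ vanishes, and the Rallis tower property yields cuspidality of $\theta_\psi(\pi_+^B)$.

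The main technical care concerns the bookkeeping of similitude versus isometry groups: the tower property and the going-up/going-down dictionary must be applied in the similitude setting, with the restriction to $G_D(\mA)^+$ and the specific constituent $\pi_+^B$, and one must verify that non-vanishing of a smaller theta lift of $\pi_+^B$ does propagate to the non-vanishing assertion for the full representation $\pi$ assumed in \eqref{assumption not type I-A}. This is standard but worth spelling out; apart from this, the proof is a direct application of general theta-correspondence machinery.
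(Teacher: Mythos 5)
Your overall architecture (irreducibility deduced from cuspidality via Remark~\ref{theta irr rem} and \ref{theta irr rem D}, cuspidality via the Rallis tower property) matches the paper, but both of your vanishing arguments have genuine gaps, and the ingredient you never use is precisely the genericity hypothesis~\eqref{genericity}. In the non-split case, your claim that a non-zero lift of $\pi_+^B$ to $\mathrm{GSU}_{1,D}\simeq T_\eta$ is impossible ``by a degenerate principal series / Weil representation argument'' is false: theta lifts from two-dimensional orthogonal groups and their quaternionic analogues up to four-variable symplectic-type groups can very well be non-zero, infinite-dimensional and cuspidal --- these are exactly CAP representations of Howe--Piatetski-Shapiro type, so nothing abstract about the Weil representation rules them out. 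What actually excludes this possibility in the paper is a local argument at the finite place $w$ furnished by~\eqref{genericity}: there $D(F_w)$ is split, $\pi^B_{+,w}$ is generic, $\mathrm{GSU}_{1,D}(F_w)\simeq \mathrm{GSO}_2(F_w)$, and the local theta lift of a generic representation to $\mathrm{GSO}_2(F_w)$ vanishes (by the argument of \cite[Proposition~2.4]{GRS97}); hence the global lift to $\mathrm{GSU}_{1,D}(\mA)$, which the tower property would force to be non-zero if $\theta_\psi(\pi_+^B)$ were non-cuspidal, must vanish --- a contradiction.

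In the split case your deduction that the lift of $\pi_+^B$ to $\mathrm{GSO}_{3,1}$ vanishes overshoots what~\eqref{assumption not type I-A} provides. That assumption only excludes $\pi$ being a theta lift of an irreducible \emph{cuspidal} automorphic representation of $\mathrm{GSO}_{3,1}$; the adjointness/``going up contains $\pi_+^B$'' step you invoke is available only when the lift to $\mathrm{GSO}_{3,1}$ is cuspidal, so a non-zero \emph{non-cuspidal} lift to $\mathrm{GSO}_{3,1}$ is not ruled out by your reasoning, and then the tower property no longer gives cuspidality one level up. The paper argues differently: assuming $\theta_\psi(\pi_+^B)$ non-cuspidal, the lift to $\mathrm{GSO}_{3,1}$ is non-zero by the tower property, it is non-cuspidal because of~\eqref{assumption not type I-A}, and therefore, by the tower property applied once more, the lift to the anisotropic member $\mathrm{GSO}_{2,0}$ is non-zero; this is then contradicted by the same local genericity argument at $w$ as in the non-split case. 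You need this extra descent down to $\mathrm{GSO}_{2,0}$, together with the genericity input at $w$, to close the proof.
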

%
%
%
\begin{proof}
First we note that the irreducibility follows from 
the cuspidality by Remark~\ref{theta irr rem} and \ref{theta irr rem D}.

Let us show the cuspidality.
Suppose on the contrary that $\theta_\psi(\pi_+^B)$ is not cuspidal. 

When $D$ is not split, the Rallis tower property implies that
the the theta lift $\theta_{D, \psi}(\pi_+^B)$ of $\pi_+^B$ to $\mathrm{GSU}_{1,D}( \mA)$ 
is non-zero and cuspidal.
Let $w$ be a finite place of $F$ such that  $D(F_w)$ is split and
 $\pi_{+,w}^B$ is a generic representation of $G(F_w)^+$. 
 Since $\pi_{+,w}^B$ is generic, the theta lift of $\pi_{+, w}^B$  to $\mathrm{GSO}_2(F_w)$ vanishes by the same argument
 as  the one for \cite[Proposition~2.4]{GRS97}.
 We note that $\mathrm{GSU}_{1,D}( F_w) \simeq \mathrm{GSO}_2(F_w)$
 and hence the theta lift of $\pi_+^B$ to $\mathrm{GSU}_{1,D}( \mA)$
 must vanish. This is a contradiction.

Suppose that $D$ is split. Then the theta lift of $\pi_+^B$ to $\mathrm{GSO}_{3,1}$ is non-zero by the Rallis tower property.
Moreover, it is not cuspidal by our assumption on $\pi$.
Thus the theta lift of $\pi_+^B$ to $\mathrm{GSO}_{2,0}$ is non-zero, 
again by the Rallis tower property.
Then we reach a contradiction by 
the same argument as in the non-split case.
\end{proof}
%
%
%
%
We may regard $\theta_\psi(\pi_+^B)$ as an irreducible cuspidal automorphic representation of $\mathrm{PGU}_{2,2}$
or $\mathrm{PGU}_{3,1}$ according to whether $D$ is split or not,
under the isomorphism $\Phi$ in \eqref{acc isom2} or $\Phi_D$
in \eqref{acc isom1}.
Recall  our assumption that $\theta_{\psi_{w}}(\pi_{+,w}^B)$ is generic
at a finite place $w$.
Then the non-vanishing of $(X_\xi, \Lambda^{-1}, \psi)$-Bessel period on $\theta_\psi(\pi_+^B)$ implies
the non-vanishing of the central value of the
standard $L$-function for $\theta_\psi(\pi_+^B)$ of $\mathrm{PGU}_4$ twisted by $\Lambda^{-1}$, namely
\[
L^{S} \left(\frac{1}{2}, \theta_\psi(\pi_+^B) \times \Lambda^{-1} \right) \ne 0
\]
for any finite set $S$ of places of $F$ containing all archimedean places
because of the unitary group case of the 
Gan-Gross-Prasad conjecture for $\theta_\psi(\pi_+^B)$ proved by Proposition~A.2 and Remark~A.1 in \cite{FM3}.
Moreover, from the explicit computation of local theta correspondence in \cite{GT0} and \cite{Mo},
we see that 
\[
L(s, \pi_v \times \mathcal{AI} \left(\Lambda \right)_v) = L \left(s, \theta_\psi(\pi_+^B)_v \times \Lambda_v^{-1} \right)
\]
at a finite place $v$ where all data are unramified.
Thus when we take $S_0$, a finite set of places of $F$ containing
all archimedean places, so that all data are unramified at $v\notin S_0$,
we have
\[
L^{S} \left(\frac{1}{2}, \pi \times \mathcal{AI} \left(\Lambda \right) \right) = L^{S} \left(\frac{1}{2}, \theta_\psi(\pi_+^B) \times \Lambda \right) \ne 0
\]
for any finite set $S$ of places of $F$ with $S\supset S_0$.
%

Let us show an existence of $\pi^\circ$.
We denote $\theta_\psi(\pi_+^B)$ by $\sigma$.
Then the theta lift $\Sigma : = \Theta_{\psi, (\Lambda^{-1}, \Lambda^{-1})}(\sigma)$ of $\sigma$ to $\mathrm{GU}_{2,2}$ which we may regard as an automorphic representation
of $\mathrm{GSO}_{4,2}$ by the accidental isomorphism~\eqref{acc isom2}, is 
an irreducible cuspidal globally generic automorphic representation with trivial central character
by the proof of \cite[Proposition~A.2]{FM3} since $\theta_\psi(\pi_+^B)$ has 
the $(X_\xi, \Lambda^{-1}, \psi)$-Bessel period. 

Here we recall that, by the conservation relation due to Sun and Zhu~\cite[Theorem~1.10, Theorem~7.6]{SZ},
for any irreducible admissible representations 
$\tau$ of $\mathrm{GO}_{4,2}(k)$ (resp. $\mathrm{GO}_{3,3}(k)$) over a local field $k$ of characteristic zero, theta lifts of either $\tau$ or $\tau \otimes \det$ to 
$\mathrm{GSp}_3(k)^+$ (resp. $\mathrm{GSp}_3(k)$) is non-zero. 
Thus   we may extend $\Sigma$ to an automorphic representation
of $\mathrm{GO}_{4,2}(\mA)$ as in 
Harris--Soudry--Taylor~\cite[Proposition~2]{HST} so that 
 its local theta lift to $\mathrm{GSp}_3(F_v)^+$ is non-zero 
 at every place $v$.
 
 On the other hand, since $\Sigma$ is nearly equivalent to $\sigma$,  we have 
\begin{equation}\label{e: pole at 1}
L^S(s, \Sigma, \mathrm{std}) = L^S(s, \pi, \mathrm{std} \otimes \chi_E) \zeta_F^S(s)
\end{equation}
for a sufficiently large finite set $S$ of places of $F$
containing all archimedean places by the explicit computation of local theta correspondences in  \cite{GT0} and \cite{Mo}.
Here
\[
\label{s=1 non-zero}
L^S(1, \pi, \mathrm{std} \otimes \chi_E) \ne 0
\]
by Yamana~\cite[proof of Theorem~10.2, Theorem~10.3]{Yam},
since the theta lift $\theta_\psi(\pi_+^B)$ of $\pi_+^B$ to $\mathrm{GSU}_{3,D}( \mA)$ is non-zero and  cuspidal.
Hence the left hand side of \eqref{e: pole at 1} has a pole
at $s=1$. In particular, it is non-zero and the theta lift of $\Sigma$ to $\mathrm{GSp}_3(\mA)^+$ is non-zero by 
Takeda~\cite[Theorem~1.1 (1)]{Tak}. Further, again by Takeda~\cite[Theorem~1.1 (1)]{Tak}, this theta lift actually descends to 
$\mathrm{GSp}_2(\mA)^+ =G(\mA)^+$. Namely, the theta lift $\pi^\prime_+ := \theta_{\psi^{-1}}(\Sigma)$ of 
$\Sigma$ to $G(\mA)^+$ is non-zero since $L^S(s, \Sigma, \mathrm{std})$ 
actually has  a pole at $s=1$.

Suppose that $\pi^\prime_+$ is not cuspidal.
Then by the Rallis tower property, the theta lift of $\Sigma$ to
$\mathrm{GL}_2\left(\mA\right)^+$ is non-zero and
cuspidal.
Meanwhile the local theta lift of $\Sigma_v$ to $\mathrm{GL}_2\left(
F_v\right)^+$ vanishes by a computation similar to the one
for \cite[Proposition~3.3]{GRS97}
since $\Sigma_v$ is generic.
This is a contradiction
and hence $\pi^\prime_+$ is cuspidal.

Since $\Sigma$ is generic, so is $\pi^\prime_+$
by \cite[Proposition~3.3]{Mo}.
Let us take an extension $\pi^\circ$ of $\pi_+^\prime$ to
$G\left(\mathbb A\right)$.
Since $\left| G(F_v) \slash G(F_v)^+\right|=2$, 
we have $\pi^\prime_{v} \simeq \pi_v$ or $\pi^\prime_{v} \simeq \pi_v \otimes \chi_{E_v}$
at almost all places $v$ such that $\pi_{+,v}^\prime \simeq \pi_{+, v}^B$.
Hence
$\pi$ is locally $G^+$-nearly equivalent to $\pi^\circ$.
\qed
\\
%
%
%
%
%
%
\subsection{Some consequences of the proof of Theorem~\ref{ggp SO} (1)}
As preliminaries for our further considerations, we would like to discuss some 
consequence of the proof of Theorem~\ref{ggp SO} (1)
and related results.

First we note the following result concerning the functorial transfer.
%
\begin{proposition}
\label{exist gen prp}
Let $(\pi, V_\pi)$ be an irreducible cuspidal automorphic representation of $G_D(\mA)$ with a trivial central character.
Assume that there exists a finite place $w$
at which $\pi_w$ is generic and tempered.

Then there exists a globally generic irreducible cuspidal automorphic 
representation $\pi^\circ$ of $G\left(\mA\right)$
and an \'{e}tale quadratic extension $E^\circ$ of $F$
such that $\pi^\circ$ is $G^{+, E^\circ}$-nearly equivalent
to $\pi$.
In particular we have a weak functorial lift of $\pi$ to $\mathrm{GL}_4(\mA_{E^\circ})$ with respect to $\mathrm{BC} \circ \mathrm{spin}$.

Moreover, $\pi$ is tempered if and only if $\pi^\circ$ is  tempered.
\end{proposition}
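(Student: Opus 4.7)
The plan is to reduce to the setting of Theorem~\ref{ggp SO}~(\ref{theorem1-1-(1)}) by first producing an auxiliary Bessel period on $\pi$ and then invoking the theta-lift construction developed in that proof. Concretely, I would first establish the existence of a quadratic \'etale extension $E^\circ / F$ and a character $\Lambda^\circ$ of $\mA_{E^\circ}^\times / (E^\circ)^\times$ trivial on $\mA^\times$ such that $\pi$ has the $(E^\circ, \Lambda^\circ)$-Bessel period. When $D$ is non-split this is the non-vanishing result of J.-S.~Li~\cite{JSLi} already invoked in Section~1.3. When $D$ is split, so that $G_D \simeq \mathrm{GSp}_2$, the Piatetski-Shapiro-type dichotomy for $\mathrm{GSp}_2$ between Whittaker and Bessel models---combined with the hypothesis that $\pi_w$ is generic and tempered, which rules out the CAP alternative---guarantees that either $\pi$ is itself globally generic (in which case one simply takes $\pi^\circ = \pi$ and $E^\circ$ arbitrary) or $\pi$ carries some $(E^\circ, \Lambda^\circ)$-Bessel period.

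Once such a Bessel period is in hand, the hypothesis at $w$ supplies the genericity condition~\eqref{genericity}: $\pi_w$ is generic by assumption, and its local theta lift to $\mathrm{GSO}_{4,2}(F_w)$ is generic by the arguments recorded in the remark following Theorem~\ref{ggp SO}. Consequently the proof of Theorem~\ref{ggp SO}~(\ref{theorem1-1-(1)}) applies verbatim: I would theta lift the irreducible $G_D(\mA)^+$-constituent of $\pi$ carrying the Bessel period to $\mathrm{GSU}_{3, D}$, transport to $\mathrm{PGU}_{4, \varepsilon}$ via~\eqref{acc isom1}, theta lift to $\mathrm{GU}_{2,2}$ using the data $\Lambda^\circ$, view the result as a representation of $\mathrm{GSO}_{4,2}$ via~\eqref{acc isom2}, and finally theta lift once more down to $G(\mA)^+$. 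The resulting representation is irreducible, cuspidal, globally generic, and locally $G^{+,E^\circ}$-equivalent to $\pi$; extending to $G(\mA)$ yields the desired $\pi^\circ$. The weak functorial lift of $\pi$ to $\mathrm{GL}_4(\mA_{E^\circ})$ with respect to $\mathrm{BC} \circ \mathrm{spin}$ then follows by composing the Cogdell--Kim--Piatetski-Shapiro--Shahidi lift of $\pi^\circ$ to $\mathrm{GL}_4(\mA)$ with the base change $\mathrm{BC}_{E^\circ / F}$.

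For the equivalence of temperedness, observe that at almost all finite places $v$, both $\pi_v$ and $\pi^\circ_v$ are unramified and related by $\pi^\circ_v \simeq \pi_v$ or $\pi^\circ_v \simeq \pi_v \otimes \chi_{E^\circ, v}$. Hence if $\pi$ is tempered then $\pi^\circ$ has tempered local components on a density-one set of places; since $\pi^\circ$ is globally generic, Shahidi's generic packet conjecture---known for $\mathrm{GSp}_2$ via Arthur's endoscopic classification---forces $\pi^\circ$ to be tempered at every place. The converse is symmetric, since the local $G^{+,E^\circ}$-equivalence preserves the tempered parameter up to a quadratic twist by $\chi_{E^\circ}$.

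The main anticipated obstacle is Step~1 in the split case. One must rule out exotic cuspidal automorphic representations of $\mathrm{GSp}_2(\mA)$ that are simultaneously neither globally generic nor carry any Bessel period under only the mild hypothesis of a single locally generic tempered component at $w$. While no such representation is expected to exist, a clean published reference may not be at hand, and one may instead need to argue by a direct theta-lift computation for the dual pair $(G^+, \mathrm{GSO}_{4,2})$ (or $(G, \mathrm{GSO}_{3,3})$) at $w$ to force the existence of a Whittaker or Bessel model globally.
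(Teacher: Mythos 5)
Your overall strategy (produce a Bessel period, then run the theta-lift chain from the proof of Theorem~\ref{ggp SO}~(\ref{theorem1-1-(1)})) is the paper's strategy, but two steps have genuine gaps. First, in the split case your ``dichotomy'' is misdirected: by \cite{JSLi} \emph{every} cuspidal $\pi$ on $G(\mA)$ has some $(S_\circ,\Lambda_\circ,\psi)$-Bessel period, so the issue is not existence but the possibility that the only available Bessel periods are attached to the \emph{split} torus, i.e.\ $E^\circ=F\oplus F$. In that case Theorem~\ref{ggp SO}~(\ref{theorem1-1-(1)}) cannot be applied ``verbatim'': it is stated for a quadratic \emph{field} $E$, and the whole chain through $\mathrm{GSU}_{3,D}$, $\mathrm{GU}_{4,\varepsilon}$ and $\mathrm{GU}_{2,2}$ uses unitary groups over that field. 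The paper resolves exactly this case by a different route: Proposition~\ref{pullback Bessel gsp} shows the theta lift of $\pi$ to $\mathrm{GSO}_{3,3}$ is non-zero; genericity of $\pi_w$ kills the lift to $\mathrm{GSO}_{1,1}$, so by the tower property the lift to $\mathrm{GSO}_{2,2}$ or $\mathrm{GSO}_{3,3}$ is non-zero and cuspidal, and then $\pi$ itself is globally generic by Proposition~\ref{GRS identity} (resp.\ Soudry's argument). The case where $\pi$ participates in the theta correspondence with $\mathrm{GSO}_{3,1}$ also falls outside the proof of Theorem~\ref{ggp SO}~(\ref{theorem1-1-(1)}) (it was excluded by \eqref{assumption not type I-A}) and is handled separately via Roberts~\cite{Ro}. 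Your closing remark gestures at a theta-lift computation of this kind, but it is left unexecuted, and it is precisely the content of the paper's proof in the split case.

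Second, your temperedness argument does not work. Near equivalence only relates $\pi_v$ and $\pi^\circ_v$ (up to $\chi_{E^\circ,v}$) at \emph{almost all} places, so at the finitely many remaining places you have no a priori relation; and the claim that global genericity plus Arthur's classification forces temperedness everywhere is false as stated --- the generic packet conjecture says tempered packets have generic members, not that generic cuspidal representations are tempered, and upgrading ``tempered at a density-one set of places'' to ``tempered everywhere'' for the isobaric constituents of $\Pi_{\pi^\circ}$ is exactly the open Ramanujan-type local-global problem. The paper avoids this entirely: it proves the equivalence place by place, transporting temperedness through each local theta lift in the chain $\pi\rightsquigarrow\sigma\rightsquigarrow\Sigma\rightsquigarrow\pi^\prime_+$ using Atobe--Gan and Gan--Ichino at finite places, Paul and Li--Paul--Tan--Zhu and M\oe glin at real places, and Adams--Barbasch at complex places, and then runs the same argument backwards for the converse. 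You would need to replace your density-one argument by this local, place-by-place control.
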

%
%
%
\begin{Remark}
\label{exist gen rem}
When $D$ is split, our assumption implies that $\pi$ has a generic Arthur parameter.
Though our assertion thus follows from the global descent method
by Ginzburg, Rallis and Soudry~\cite{GRS} and
Arthur~\cite{Ar},
we shall present another  proof which does not refer
to these papers.
\end{Remark}
%
%
%
\begin{proof}
Suppose that $D$ is split.
When $\pi$ participates in the theta correspondence with $\mathrm{GSO}_{3,1}$,
our assertion follows from \cite{Ro}. 
Thus we now assume that the theta lift of $\pi$ to  $\mathrm{GSO}_{3,1}$ is zero.
By \cite{JSLi}, $\pi$ has $(S_\circ, \Lambda_\circ, \psi)$-Bessel period for some $S_\circ$ and $\Lambda_\circ$.
When $\mathrm{GSO}(S_\circ)$ is not split,
the existence of a globally generic irreducible cuspidal automorphic representation follows from Theorem~\ref{ggp SO} (1).
Suppose that $\mathrm{GSO}(S_\circ)$ is split.
Then by Proposition~\ref{pullback Bessel gsp},  the theta lift of $\pi$
 to $\mathrm{GSO}_{3,3}$ is non-zero.
 Since $\pi_w$ is generic, the local theta lift of $\pi_w$ to
 $\mathrm{GSO}_{1,1}$ is zero as in the proof of Theorem~\ref{ggp SO} (1)
 and hence
 the theta lift of $\pi$ to $\mathrm{GSO}_{1,1}$ is zero.
 Hence by the Rallis tower property, 
 either the theta lift of $\pi$  to $\mathrm{GSO}_{2,2}$ or the one
 to $\mathrm{GSO}_{3,3}$  is 
 non-zero and cuspidal.
 Then $\pi$ itself is globally generic by Proposition~\ref{GRS identity}
 in the former case. In the latter case, 
 the global genericity of 
 $\pi$ readily follows from the proof of Soudry~\cite[Proposition~1.1]{So} (see also Theorem in p.264 of \cite{So}).

  In any case when $D$ is split, we have a globally generic
   irreducible cuspidal automorphic representation $\pi^\circ$
   of $G\left(\mA\right)$
   which is nearly equivalent to $\pi$.
 Thus when we take the strong lift of $\pi^\circ$ to $\mathrm{GL}_4\left(\mA\right)$ by \cite{CKPSS}, it is a weak lift of $\pi$ to 
 $\mathrm{GL}_4\left(\mA\right)$.
 %
 %
 %
 %
 
 Suppose that $D$ is not split.
 Then by Li~\cite{JSLi},
 there exist an $\eta_\circ\in D^-(F)$ where $E_\circ:=F\left(\eta_\circ\right)$
 is a quadratic extension of $F$,  and a character
 $\Lambda_\circ$ of $\mathbb A_{E_\circ}^\times \slash E_\circ^\times \mA^\times$
 such that $\pi$ has the $\left(\eta_\circ, \Lambda_\circ\right)$-Bessel
 period.
 Then there exists a desired automorphic representation
 $\pi^\circ$ of $G\left(\mA\right)$ by Theorem~\ref{ggp SO} (1).
 
 Let us discuss the temperedness.
 Let $\sigma, \Sigma$ and $\pi_+^\prime$ denote the same 
 as in the proof of Theorem~\ref{ggp SO} (1).
 Suppose that $\pi$ is tempered.
 Then the temperedness of $\sigma$ 
 follows 
 from a similar argument as in Atobe-Gan~\cite[Proposition~5.5]{AG} 
 (see also  \cite[Proposition~C.1]{GI1}) at finite places,
 from Paul~\cite[Theorem~15, Theorem~30]{Pau}, \cite[Theorem~15, Theorem~18, Corollary~24]{Pau3} 
 and Li-Paul-Tan-Zhu~\cite[Theorem~4.20, Theorem~5.1]{LPTZ} at real places and from
 Adams-Barbasch~\cite[Theorem~2.7]{AB} at complex places.
Then the temperedness of $\sigma$ implies that
of $\Sigma$ by Atobe-Gan~\cite[Proposition~5.5]{AG} at finite places, 
by Paul~\cite[Theorem~3.4]{Pau2} at non-split real places, 
by M\oe glin~\cite[Proposition~III.9]{Moe} at split real places
and  by Adams-Barbasch~\cite[Theorem~2.6]{AB} at complex places.
As we obtained the temperedness of $\sigma$ from that of $\pi$,
the temperedness of $\Sigma$ implies that of $\pi^\prime_+$
and hence $\pi^\circ$ is tempered.
 The opposite direction, i.e., the temperedness of $\pi^\circ$
 implies that of $\pi$, follows by the same argument.
 \end{proof}
\begin{lemma}
\label{compo number}
Let $\pi$ be as in Theorem~\ref{ggp SO} (1). 
Suppose that $\sigma=\theta_\psi\left(\pi_+^B\right)$ is an
irreducible cuspidal autormophic representation of $\mathrm{GSU}_{3,D}
\left(\mA\right)$.
Here $\pi_+^B$ denotes the unique irreducible constituent of $\pi|_{G_D(\mA)^+}$ such that $\pi_+^B$
has the $(E, \Lambda)$-Bessel period.
We regard $\sigma$ as an automorphic representation of 
$\mathrm{GU}_{4,\varepsilon}\left(\mA\right)$ via 
\eqref{acc isom1} or \eqref{acc isom2} and 
let $\Pi_\sigma$ denote the base change lift of $\sigma |_{\mathrm{U}_{4,\varepsilon}\left(\mA\right)}$ to 
$\mathrm{GL}_4\left(\mA_E\right)$.
Let $\pi^\circ$ be a globally generic irreducible cuspidal automorphic representation of
$G\left(\mA\right)$ whose existence is proved in
 Theorem~\ref{ggp SO} (1).
 We denote the functorial lift of $\pi^\circ$ to 
 $\mathrm{GL}_4\left(\mA\right)$ by $\Pi_{\pi^\circ}$.
 
 Suppose that
 \begin{equation}\label{e: decomposition 1}
 \Pi_{\pi^\circ} = \Pi_{1} \boxplus \cdots \boxplus \Pi_\ell
 \end{equation}
where $\Pi_i$ are irreducible cuspidal automorphic representations of $\mathrm{GL}_{n_i}(\mA)$ and 
\begin{equation}\label{e: decomposition 2}
\Pi_{\sigma} =\Pi_1^\prime  \boxplus \cdots \boxplus \Pi_k^\prime
\end{equation}
where $\Pi_j^\prime$ are irreducible cuspidal automorphic representations of  $\mathrm{GL}_{m_j}(\mA_E)$. 

Then we have $\Pi_\sigma=\mathrm{BC}\left(\Pi_{\pi^\circ}\right)$,
$\Pi_{\pi^\circ}\not\simeq\Pi_{\pi^\circ}\otimes\chi_E$
and $\mathrm{BC}\left(\Pi_i\right)$ is cuspidal for each $i$.
In particular, we have $\ell=k$.
Here $\mathrm{BC}$ denotes the base change from $F$ to $E$.
\end{lemma}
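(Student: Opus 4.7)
My plan is to proceed in two stages: first to prove $\Pi_\sigma = \mathrm{BC}(\Pi_{\pi^\circ})$ by matching Satake parameters at almost all unramified places, and then to deduce the two structural claims from the cuspidality of $\sigma$ viewed as a representation of the unitary group via \eqref{acc isom1} or \eqref{acc isom2}. For the first stage, I would fix a finite set $T$ of places of $F$ containing all archimedean places together with all places where $\pi$, $\pi^\circ$, $\sigma$, $E$ or $D$ is ramified or non-split as appropriate, so that at each $v \notin T$ the representations $\pi_v$, $\pi_v^\circ$ and $\sigma_v$ are unramified. The explicit unramified local theta correspondence for the split dual pairs $(G_v, \mathrm{GSO}_{4,2,v})$ and $(G_v, \mathrm{GSO}_{3,3,v})$ computed in \cite{Mo} and \cite{GT0} respectively determines the Satake parameter of $\sigma_v$, viewed via the accidental isomorphism as a representation of $\mathrm{PGU}_{4,\varepsilon,v}$, in terms of the spin lift of the Satake parameter of $\pi_v$. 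Since $\pi$ is locally $G^+$-equivalent to $\pi^\circ$, at almost all such $v$ we have $\pi_v \simeq \pi_v^\circ$ or $\pi_v \simeq \pi_v^\circ \otimes \chi_{E,v}$, and in either case the resulting base-change parameters to $\mathrm{GL}_4(E_v)$ coincide because $\chi_{E,v}$ becomes trivial after base change. Strong multiplicity one for isobaric automorphic representations of $\mathrm{GL}_4(\mA_E)$ then promotes the local agreement to the global identity $\Pi_\sigma = \mathrm{BC}(\Pi_{\pi^\circ})$.

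For the second stage, the cuspidality of $\sigma|_{\mathrm{U}_{4,\varepsilon}(\mA)}$ combined with the endoscopic classification for unitary groups implies that $\Pi_\sigma = \Pi_1^\prime \boxplus \cdots \boxplus \Pi_k^\prime$ is a multiplicity-free isobaric sum of conjugate-self-dual cuspidal representations of $\mathrm{GL}_{m_j}(\mA_E)$. Recalling the standard criteria that $\mathrm{BC}(\Pi_i)$ is cuspidal if and only if $\Pi_i \not\simeq \Pi_i \otimes \chi_E$, and that $\mathrm{BC}(\Pi_i) \simeq \mathrm{BC}(\Pi_j)$ if and only if $\Pi_j \simeq \Pi_i$ or $\Pi_j \simeq \Pi_i \otimes \chi_E$, I would use a sign analysis on the conjugate-self-dual summands of $\Pi_\sigma$ to rule out the possibility that some $\Pi_i$ is fixed by the $\chi_E$-twist: if $\Pi_i \simeq \Pi_i \otimes \chi_E$ held, then $\mathrm{BC}(\Pi_i) = \tau \boxplus \tau^\sigma$ would contribute two conjugate-self-dual summands whose signs, as determined by the automorphic induction relation and the symplectic-type property of $\Pi_i$ (forced by the functorial lift of $\pi^\circ$ from $\mathrm{PGSp}_2$), would be incompatible with the sign condition that cuspidality on $\mathrm{U}_{4,\varepsilon}$ places on the summands of $\Pi_\sigma$. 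This rules out the $\chi_E$-fixed case, giving the cuspidality of each $\mathrm{BC}(\Pi_i)$; combined with the pairwise distinctness of the $\Pi_j^\prime$ it also yields $\Pi_{\pi^\circ} \not\simeq \Pi_{\pi^\circ} \otimes \chi_E$, and matching the number of summands then gives $\ell = k$.

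The main obstacle will be the sign analysis in the second stage: one must carefully track the signs of conjugate-self-dual representations under base change and automorphic induction, and verify that the constraint coming from the unitary group $\mathrm{U}_{4,\varepsilon}$ is indeed incompatible with the scenario $\Pi_i \simeq \Pi_i \otimes \chi_E$. An alternative, more direct route would exploit the theta-tower origin of $\sigma$: the fact that $\sigma$ is the first non-vanishing occurrence of the theta lift of $\pi_+^B$ in the tower of similitude quaternionic unitary groups (established in the proof of Theorem~\ref{ggp SO}~(\ref{theorem1-1-(1)})) should directly preclude the presence of automorphically-induced summands in $\Pi_\sigma$ of the type that $\Pi_i \simeq \Pi_i \otimes \chi_E$ would produce.
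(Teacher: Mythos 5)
Your proposal follows essentially the same route as the paper: the unramified local theta computations of \cite{GT0} and \cite{Mo} plus strong multiplicity one give $\Pi_\sigma=\mathrm{BC}\left(\Pi_{\pi^\circ}\right)$, and $\chi_E$-invariance of summands is then excluded by playing the constraint that $\Pi_\sigma$ is a base change from $\mathrm{U}_{4,\varepsilon}$ against the constraints coming from $\Pi_{\pi^\circ}$ being a lift from $\mathrm{PGSp}_2$, with multiplicity-freeness disposing of the case $\Pi_2\simeq\Pi_1\otimes\chi_E$. The ``sign analysis'' you flag as the main obstacle is precisely the step the paper carries out concretely: after reducing to $\ell\le 2$ by \cite{CKPSS}, it writes a $\chi_E$-fixed summand as $\mathcal{AI}(\tau)$ (or $\mathcal{AI}(\chi_i)$) and derives a contradiction between the trivial central character forced by the $\mathrm{PGSp}_2$ origin and the conjugate-duality conditions ($\tau\simeq\left(\tau^\sigma\right)^\vee$ with $\tau\not\simeq\tau^\sigma$, resp.\ $\chi_i|_{\mA^\times}$ trivial) forced by the descent to $\mathrm{U}_{4,\varepsilon}$, using \cite{AC} and \cite[Proposition~3.1]{PR}.
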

%
%
%
%
\begin{proof}
By the explicit computation of local theta correspondences in 
\cite{GT0} and \cite{Mo}, we see that 
$\left(\Pi_{\sigma}\right)_v\simeq \mathrm{BC}(\Pi_{\pi^\circ} )_v$
at almost all finite places $v$ of $E$. Thus, $\Pi_{\sigma}= \mathrm{BC}(\Pi_{\pi^\circ} )$ by the strong multiplicity one theorem.
Also, by \cite{CKPSS}, we know that $\ell = 1$ or $2$.

Suppose that $\ell=1$. 
We note that the cuspidality of $\mathrm{BC}(\Pi_{\pi^\circ})$
is equivalent to $\Pi_{\pi^\circ} \otimes \chi_E \not \simeq \Pi_{\pi^\circ}$.
Suppose otherwise, i.e.
$\Pi_{\pi^\circ} \simeq \Pi_{\pi^\circ} \otimes \chi_E$.
Then $\Pi_{\pi^\circ} = \mathcal{AI}(\tau)$ for some irreducible cuspidal automorphic 
representation $\tau$ of $\mathrm{GL}_2(\mA_E)$. 
Since $\Pi_{\pi^\circ}$ is a lift from $\mathrm{PGSp}_2$,
the central character of $\tau$ needs to be trivial
and hence $\tau\simeq \tau^\vee$.
On the other hand, we have 
\[
\Pi_{\sigma} = \mathrm{BC}\left( \mathcal{AI}(\tau) \right)= \tau \boxplus \tau^\sigma.
\]
Since this is a base change lift of $\sigma |_{\mathrm{U}_{4,\varepsilon}\left(\mA\right)}$, we  have $\tau = \left( \tau^\sigma \right)^\vee$
and $\tau \not \simeq \tau^\sigma$ by \cite{AC} (see also \cite[Proposition~3.1]{PR}).
In particular, $\tau\not\simeq \tau^\vee$ and we have a contradiction.
Thus $\mathrm{BC}\left(\Pi_{\pi^\circ}\right)$ is cuspidal
and $k=1$.

Suppose that $\ell=2$. 
First we show that $\Pi_{\pi^\circ} \not \simeq \Pi_{\pi^\circ}  \otimes \chi_E$.
Suppose otherwise, i.e. $\Pi_{\pi^\circ} \simeq \Pi_{\pi^\circ}  \otimes \chi_E$.
Then either
$\Pi_i \simeq \Pi_i \otimes \chi_E$ for $i=1,2$,
or, $\Pi_2 \simeq \Pi_1 \otimes \chi_E$.
In the former case, we have $\Pi_i=\mathcal{AI}\left(\chi_i\right)$
with a character $\chi_i$ of $\mA_E^\times \slash E^\times$
for $i=1,2$.
Then we have $\Pi_{\pi^\circ}=\mathcal{AI}\left(\chi_1\right)\boxplus
\mathcal{AI}\left(\chi_2\right)$ and
$\Pi_\sigma=\chi_1 \boxplus \chi_1^\sigma  \boxplus \chi_2  \boxplus \chi_2^\sigma$.
Since $\Pi_{\pi^\circ}$ is a lift from $\mathrm{PGSp}_2$,
the central character of $\mathcal{AI}\left(\chi_i\right)$
is trivial and hence $\chi_i\mid_{\mA^\times}=\chi_E$.
On the other hand, since $\Pi_\sigma$ is a base change lift
of $\sigma |_{\mathrm{U}_{4,\varepsilon}\left(\mA\right)}$, we see that $\chi_i\mid_{\mA^\times}$ is trivial.
This is a contradiction.
In the latter case, we have
$\mathrm{BC}\left(\Pi_2\right)=\mathrm{BC}\left(\Pi_1 \otimes \chi_E\right)
=\mathrm{BC}\left(\Pi_1\right)$
and hence $\Pi_\sigma=\mathrm{BC}\left(\Pi_1\right)\boxplus
\mathrm{BC}\left(\Pi_1\right)$.
This implies that $\Pi_\sigma$ is not in the image of the base change lift
from the unitary group
and again we have a contradiction.
Thus we have $\Pi_{\pi^\circ} \not \simeq \Pi_{\pi^\circ}  \otimes \chi_E$.
Then $\Pi_i \not\simeq \Pi_i \otimes \chi_E$ at least one of $i=1,2$.
Suppose that this is so only for one of the two, say $i=2$.
Then $\Pi_1=\mathcal{AI}\left(\chi\right)$ for some character
$\chi$ of $\mA_E^\times\slash E^\times$ and
$\mathrm{BC}\left(\Pi_2\right)$ is cuspidal.
We have
$\Pi_{\pi^\circ}=\mathcal{AI}\left(\chi\right)\boxplus \Pi_2$ and
$\Pi_\sigma=\chi\boxplus\chi^\sigma\boxplus\mathrm{BC}\left(\Pi_2\right)$.
Then $\chi\mid_{\mA^\times}$ is trivial from the former equality
and $\chi\mid_{\mA^\times}=\chi_E$ from the latter equality
as above. Hence we have a contradiction.
Thus $\mathrm{BC}\left(\Pi_i\right)$ for $i=1,2$ are both cuspidal,
$\Pi_\sigma=\mathrm{BC}\left(\Pi_1\right)\boxplus\mathrm{BC}\left(\Pi_2\right)$
and $k=2$.
\end{proof}
The following lemma gives the uniqueness of the constant $\ell(\pi)$ defined before Theorem~\ref{ref ggp}.
%
%
%
%
%
%
%
\begin{lemma}
\label{comp number not dep on S}
Let $\pi$ be as in Theorem~\ref{ggp SO} (1). 
For $i=1,2$, let $E_i$ be a quadratic extension of $F$
and $\pi^\circ_i$ an irreducible cuspidal automorphic representation
of $G\left(\mA\right)$ which is $G^{+,E_i}$-locally near equivalent
to $\pi$.
Let $\Pi_{\pi^\circ_i}$ be the functorial lift of $\pi_i^\circ$ to 
$\mathrm{GL}_4\left(\mA\right)$ and consider the decomposition
\[
\Pi_{\pi^\circ_i}=\Pi_{i,1}\boxplus\cdots\boxplus\Pi_{i,\ell_i}
\quad\text{for $i=1,2$}
\]
as \eqref{e: decomposition 1}.
Then we have $\ell_1=\ell_2$.
\end{lemma}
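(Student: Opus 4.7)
The plan is to identify $\ell_i$ with the order of the pole at $s=1$ of a suitable partial Rankin--Selberg $L$-function attached to $\Pi_{\pi^\circ_i}$, and to observe that this $L$-function is essentially insensitive to the local character twists implicit in $G^{+,E_i}$-local near equivalence, since such twists cancel in the pairing $\Pi_v \times \Pi_v^{\vee}$.

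First, I would note that at any place $v$ where $D(F_v)$ is split, the hypothesis furnishes characters $\chi_v^{(i)} \in \{1,\,\chi_{E_i,v}\circ \lambda\}$ with $\pi_v\otimes \chi_v^{(i)}\simeq \pi^\circ_{i,v}$ for almost all such $v$. Combining these for $i=1,2$ yields
\[
\pi^\circ_{2,v}\simeq \pi^\circ_{1,v}\otimes \omega_v,\qquad \omega_v:=\bigl(\chi_v^{(1)}\bigr)^{-1}\chi_v^{(2)},
\]
at almost every $v$, where $\omega_v$ is a quadratic character. Since the functorial lift of \cite{CKPSS} is compatible with character twists and since $(\Pi\otimes \omega)\times (\Pi\otimes \omega)^{\vee}\simeq \Pi\times \Pi^{\vee}$ for any character $\omega$, this gives an equality of local Rankin--Selberg $L$-factors
\[
L_v\bigl(s,\Pi_{\pi^\circ_1}\times \Pi_{\pi^\circ_1}^{\vee}\bigr)=L_v\bigl(s,\Pi_{\pi^\circ_2}\times \Pi_{\pi^\circ_2}^{\vee}\bigr)
\]
at almost all $v$. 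Hence, for a sufficiently large finite set $S$ of places of $F$, the partial Rankin--Selberg $L$-functions satisfy
\[
L^S\bigl(s,\Pi_{\pi^\circ_1}\times \Pi_{\pi^\circ_1}^{\vee}\bigr)\;=\;L^S\bigl(s,\Pi_{\pi^\circ_2}\times \Pi_{\pi^\circ_2}^{\vee}\bigr)
\]
as meromorphic functions, so their orders of pole at $s=1$ coincide.

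Finally, I would invoke Jacquet--Shalika. Factoring $L^S\bigl(s,\Pi_{\pi^\circ_i}\times \Pi_{\pi^\circ_i}^{\vee}\bigr)=\prod_{j,k}L^S(s,\Pi_{i,j}\times \Pi_{i,k}^{\vee})$, each factor has a simple pole at $s=1$ precisely when $\Pi_{i,j}\simeq \Pi_{i,k}$. Since the Arthur parameter of $\pi^\circ_i$ is generic, the cuspidal summands $\Pi_{i,j}$ are pairwise non-isomorphic, so only the diagonal terms $j=k$ contribute a pole, and the resulting pole order at $s=1$ is exactly $\ell_i$. Equating the two pole orders then forces $\ell_1=\ell_2$. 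The main technical point requiring verification is that the local Rankin--Selberg factors $L_v(s,\Pi_{\pi^\circ_i,v}\times \Pi_{\pi^\circ_i,v}^{\vee})$ at the finitely many bad places $v\in S$ are holomorphic at $s=1$, so that the pole order of the partial $L$-function agrees with that of the completed $L$-function; this is a standard fact for generic unitary representations of $\mathrm{GL}_n$ over local fields.
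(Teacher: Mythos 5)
Your argument is correct, but it follows a genuinely different route from the paper. The paper's proof is purely a base-change/character-twist argument: assuming $E_1\ne E_2$, it base changes both lifts to the compositum $K=E_1E_2$, where the local twists by $\chi_{E_1,v}$ and $\chi_{E_2,v}$ disappear, concludes $\mathrm{BC}_{E_1/F}(\Pi_{\pi^\circ_1})$ equals $\mathrm{BC}_{E_1/F}(\Pi_{\pi^\circ_2})$ possibly twisted by $\chi_{K/E_1}=\chi_{E_2}\circ \mathrm{N}_{E_1/F}$, and then descends (Arthur--Clozel) to get $\Pi_{\pi^\circ_1}=\Pi_{\pi^\circ_2}\otimes\chi$ with $\chi\in\{\mathbf{1},\chi_{E_1},\chi_{E_2},\chi_{E_1}\chi_{E_2}\}$; since twisting an isobaric sum by a character preserves the number of cuspidal summands, $\ell_1=\ell_2$ follows, and in fact one gets the stronger statement that the two lifts agree up to an explicit quadratic twist. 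You instead compare pole orders at $s=1$ of the partial Rankin--Selberg $L$-functions $L^S(s,\Pi_{\pi^\circ_i}\times\Pi_{\pi^\circ_i}^{\vee})$, using that $\Pi\times\Pi^\vee$ is insensitive to the character twists coming from $G^{+,E_i}$-local near equivalence at the unramified places, together with the Jacquet--Shalika criterion. This works, and it avoids the compositum trick and the appeal to Arthur--Clozel, at the cost of two analytic inputs you should make precise: (a) the pole order equals $\ell_i$ only because the cuspidal summands are pairwise inequivalent -- this is not a consequence of ``genericity of the Arthur parameter'' in the abstract, but of the cuspidality of $\pi^\circ_i$ together with the characterization of the image of the generic transfer of \cite{CKPSS} (here $\ell_i\le 2$ and the two $\mathrm{GL}_2$ summands of a Yoshida-type lift are distinct); and (b) the omitted local factors at the bad places must be holomorphic (they are automatically nonvanishing) at $s=1$, which indeed holds for the local components of unitary cuspidal representations of $\mathrm{GL}_2$ and $\mathrm{GL}_4$ by the standard bounds toward Ramanujan, as you note.
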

%
%
%
%
\begin{proof}
Since the case when $E_1=E_2$ is trivial, suppose that $E_1\ne E_2$.
Let $K=E_1E_2$.
From the definition of the base change, we have 
\[
\mathrm{BC}_{K\slash E_1} \left(  \mathrm{BC}_{E_1
 \slash F} (\Pi_{\pi^\circ_1}) \right) =
\mathrm{BC}_{K \slash E_1} \left(  \mathrm{BC}_{E_1 \slash F}
 (\Pi_{\pi^{\circ}_2}) \right).
\]
Hence
\[
 \mathrm{BC}_{E_1 \slash F} (\Pi_{\pi^\circ_1}) 
 = \mathrm{BC}_{E_1 \slash F} (\Pi_{\pi^{\circ}_2}) 
 \quad
 \text{or}
 \quad
  \mathrm{BC}_{E_1 \slash F} (\Pi_{\pi^\circ_1}) 
 = \mathrm{BC}_{E_1 \slash F} (\Pi_{\pi^{\circ}_2})\otimes
 \chi_{K\slash E_1}
\]
where $\chi_{K\slash E_1}$ denotes
 the character  of $\mA_E^\times$ corresponding to $K \slash E_1$.
In the former case, we have 
\[
\Pi_{\pi^\circ_1} = \Pi_{\pi^{\circ}_2}  \quad
 \text{or}
 \quad \Pi_{\pi^\circ_1} = \Pi_{\pi^{\circ}_2} \otimes \chi_{E_1}
\]
and our claim follows. In the latter case, since $\chi_{K\slash E_1} = \chi_{E_2} \circ N_{E_1 \slash F}$,
we have
\[
\Pi_{\pi^\circ_1} = \Pi_{\pi^{\circ}_2} \otimes \chi_{E_2}
 \quad
 \text{or}
 \quad
 \Pi_{\pi^\circ_1} = \Pi_{\pi^{\circ}_2} \otimes \chi_{E_2} \chi_{E_1}
\]
and our claim follows.
\end{proof}
%
%
%
%
%
%
%
\begin{Definition}
\label{type}
Let $\pi$ be as in Theorem~\ref{ggp SO} (1).
Then we say that $\pi$ is of Type I  if $\pi$ and $\pi \otimes \chi_E$ are nearly equivalent.
Moreover, we say that $\pi$ is of type I-A  if  $\pi$ participates in the theta correspondence with $\mathrm{GSO}(S_1) =\mathrm{GSO}_{3,1}$
and that $\pi$ is of type I-B  if  $\pi$ participates in the theta correspondence with $\mathrm{GSO}(X_\circ)$
for some four dimensional anisotropic
orthogonal space $X_\circ$ over $F$ with discriminant algebra $E$.
\end{Definition}
\begin{Remark}
From the proof of Theorem~\ref{ggp SO} (1), if $\pi$ is not of type I-A, then the theta lift of $\pi$
to $\mathrm{GSU}_{3,D}$ is cuspidal.
Further, we note that $D$ is necessarily split when $\pi$ is
of type I-A or I-B, by definition.
\end{Remark}

%
%
%
%
%
%
In order to study an explicit formula using theta lifts from $G_D(\mA)$, the following lemma 
will be important later.
%
%
%
%
\begin{lemma}
\label{irr lemma}
Let $\pi$ be as in Theorem~\ref{ggp SO} (1). Then
$\pi$ is either type I-A or I-B if and only if 
$\pi$ is nearly equivalent to $\pi \otimes \chi_E$.
In particular, when $\pi$ is neither of type I-A nor I-B, $\pi|_{\mathcal{G}_D}$ is irreducible where
\begin{equation}\label{d: mathcal G_D}
\mathcal G_D=Z_{G_D}\left(\mA\right)G_D\left(\mA\right)^+ G_D\left(F\right).
\end{equation}
\end{lemma}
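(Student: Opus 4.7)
The plan is to reduce the main equivalence to a statement about the weak functorial lift and then invoke the theta-lift characterization of self-duality under $\chi_E$. Let $\pi^\circ$ be the globally generic cuspidal representation of $G(\mA)$ produced by Proposition~\ref{exist gen prp}, so that $\Pi_{\pi^\circ}$ on $\mathrm{GL}_4(\mA)$ is a weak functorial lift of $\pi$. First I observe that strong multiplicity one on $\mathrm{GL}_4$ makes $\pi\sim\pi\otimes\chi_E$ equivalent to $\Pi_{\pi^\circ}\simeq\Pi_{\pi^\circ}\otimes\chi_E$. Running the isobaric-summand analysis already performed in the proof of Lemma~\ref{compo number}, this self-twist invariance holds in exactly the following cases: either (a) $\Pi_{\pi^\circ}$ is cuspidal and of the form $\mathcal{AI}(\tau)$ for a cuspidal $\tau$ on $\mathrm{GL}_2(\mA_E)$ with $\tau^\sigma\not\simeq\tau$; or (b) $\Pi_{\pi^\circ}=\Pi_1\boxplus\Pi_2$ with either both $\Pi_i=\mathcal{AI}(\chi_i)$ satisfying $\chi_i|_{\mA^\times}=\chi_E$, or $\Pi_2\simeq\Pi_1\otimes\chi_E$ and $\Pi_1\not\simeq\Pi_1\otimes\chi_E$.

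Next I would match each case with a theta lift of $\pi^\circ$ from a four-dimensional orthogonal $F$-space of discriminant algebra $E$: case (a) with the anisotropic forms $X_\circ$ of Definition~\ref{type}, yielding Type~I-B, and case (b) with $\mathrm{GSO}_{3,1}$, yielding Type~I-A. The forward implications are computed from the explicit local theta correspondence formulas in Gan-Takeda~\cite{GT0} and Morimoto~\cite{Mo}. The reverse implications use Yamana's Rallis inner product formula~\cite{Yam} combined with the pole identity $L^S(s,\Sigma,\mathrm{std})=L^S(s,\pi^\circ,\mathrm{std}\otimes\chi_E)\,\zeta_F^S(s)$ from the proof of Theorem~\ref{ggp SO}~(1) to produce the non-vanishing of the small theta lift in each case. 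Since the structure of $\Pi_{\pi^\circ}$ is shared between $\pi$ and $\pi^\circ$, the equivalence descends from $\pi^\circ$ to $\pi$; when $D$ is non-split, the matching with theta lifts on the $G_D$-side is vacuous, so the Remark after Definition~\ref{type} forces both sides of the equivalence to fail simultaneously, and no contradiction arises.

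For the ``in particular'' statement, since $(\mA^\times)^2\subset\mathrm N_{E/F}(\mA_E^\times)$ and $\lambda(G_D(F))\subset F^\times$, the similitude character identifies $G_D(\mA)/\mathcal G_D$ with $\mA^\times/F^\times\mathrm N_{E/F}(\mA_E^\times)\simeq\mZ/2\mZ$ by class field theory, the non-trivial character being $\chi_E\circ\lambda$. By the Mackey reducibility criterion for restriction to an index-two subgroup, $\pi|_{\mathcal G_D}$ is reducible if and only if $\pi\simeq\pi\otimes(\chi_E\circ\lambda)$, which implies the near equivalence $\pi\sim\pi\otimes\chi_E$, contradicting the hypothesis that $\pi$ is neither of Type~I-A nor I-B via the main equivalence. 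The principal obstacle is the reverse direction of case (a): given only the structural datum $\Pi_{\pi^\circ}=\mathcal{AI}(\tau)$, one must actually produce the preimage of $\pi^\circ$ under the theta correspondence with an anisotropic $\mathrm{GSO}(X_\circ)$, which requires ruling out exotic central-character configurations and invoking the non-vanishing of the relevant $L$-values noted above.
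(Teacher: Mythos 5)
The easy direction and the ``in particular'' step of your proposal are fine and essentially the paper's: for type I-A or I-B one computes (Roberts, or unramified local theta correspondence) that $\Pi_\pi=\mathcal{AI}(\tau)$, hence $\Pi_\pi\simeq\Pi_\pi\otimes\chi_E$ and $\pi\sim\pi\otimes\chi_E$; and the index-two/Mackey argument for $\pi|_{\mathcal G_D}$ is exactly what the paper uses. The genuine gap is in the hard direction, near equivalence $\Rightarrow$ type I-A or I-B. The paper gets this almost for free: if $\pi\sim\pi\otimes\chi_E$ but $\pi$ is neither I-A nor I-B, then $\sigma=\theta_\psi(\pi_+^B)$ is cuspidal by Lemma~\ref{nonzero lemma (1)}, and Lemma~\ref{compo number} already asserts $\Pi_{\pi^\circ}\not\simeq\Pi_{\pi^\circ}\otimes\chi_E$, contradicting what strong multiplicity one yields from the near equivalence; no theta lift of $\pi$ from a four-dimensional space has to be produced. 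You instead propose to realize $\pi$ (via $\pi^\circ$) as a theta lift from $\mathrm{GSO}_{3,1}$ or an anisotropic $\mathrm{GSO}(X_\circ)$, and you yourself flag this construction as the unresolved ``principal obstacle.'' The tools you cite do not supply it: the pole identity $L^S(s,\Sigma,\mathrm{std})=L^S(s,\pi,\mathrm{std}\otimes\chi_E)\,\zeta_F^S(s)$ and the Rallis inner product formula were used to control the lift of $\Sigma$ back to $\mathrm{GSp}_3$ and $G$, not the lift of $\pi$ down to a four-dimensional orthogonal space; for the latter one needs a pole of $L^S(s,\pi,\mathrm{std}\otimes\chi_E)$ at $s=1$ together with non-vanishing of the local theta lifts at every place, and even then the criterion produces a cuspidal representation on the orthogonal group whose lift is nearly equivalent to $\pi$, while types I-A/I-B are conditions on $\pi$ itself.

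Two further concrete problems. First, when $D$ is non-split, $\pi$ is never of type I-A or I-B (Remark after Definition~\ref{type}), so in that case the entire content of Lemma~\ref{irr lemma} is the assertion that $\pi\not\sim\pi\otimes\chi_E$; your closing claim that ``both sides of the equivalence fail simultaneously, and no contradiction arises'' assumes exactly this instead of proving it. The paper's argument covers it because Lemma~\ref{nonzero lemma (1)} and Lemma~\ref{compo number} apply to non-split $D$. Second, your case-matching is false as a dichotomy: a type I-A representation attached to a cuspidal $\tau$ on $\mathrm{GL}_2(\mA_E)$ with $\tau^\sigma\not\simeq\tau$ has cuspidal lift $\mathcal{AI}(\tau)$, i.e.\ it falls into your case (a), which you reserved for type I-B. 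Only the union of the two cases is relevant to the lemma, but since your argument is organized case-by-case, the matching as stated would have to be repaired as well.
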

%
%
%
\begin{proof}
Suppose that $\pi$ is nearly equivalent to $\pi \otimes \chi_E$.
Then at almost all places $v$ of $F$, $\mathrm{Ind}_{G_D\left(F_v\right)^+}^{G_D\left(F_v\right)}\left(\pi_{+,v}\right)$ is irreducible
where $\pi_{+,v}$ is an irreducible constituent of $\pi_v\mid_{G_D\left(
F_v\right)^+}$.
This implies that $\pi$ and $\pi^\circ$ are nearly equivalent
and hence $\pi^\circ$ is nearly equivalent to $\pi^\circ \otimes \chi_E$.
Thus $\Pi_{\pi^\circ}$ is nearly equivalent to $\Pi_{\pi^\circ} \otimes \chi_E$
and hence
$\Pi_{\pi^\circ}  = \Pi_{\pi^\circ} \otimes \chi_E$
by the strong multiplicity one theorem.
When $\pi$ is neither of type I-A nor I-B, this does not happen
by Lemma~\ref{nonzero lemma (1)} and Lemma~\ref{compo number}.

Suppose that $\pi$ is either of type I-A or I-B.
Then $D$ is split and the functorial lift $\Pi_\pi$
of $\pi$ to $\mathrm{GL}_4\left(\mA\right)$ is of the form
$\mathcal{AI}\left(\tau\right)$ for an irreducible automorphic representation
$\tau$ of $\mathrm{GL}_2\left(\mA_E\right)$ by Roberts~\cite{Ro}.
Then we have $\Pi_\pi=\Pi_\pi\otimes\chi_E$.
Hence $\pi$ is nearly equivalent to $\pi\otimes\chi_E$.

When $\pi$ is not nearly equivalent to $\pi\otimes\chi_E$,
$\pi\mid_{\mathcal G_D}$ is irreducible
since $\mathcal G_D$ is of index $2$ in $G_D\left(\mA\right)$.
\end{proof}
\begin{Remark}
This lemma give a classification of $\pi$ such that the twist $\pi \otimes \chi_E$ of $\pi$ by $\chi_E$
has the same Arthur parameter as $\pi$. 
A classification of
$\pi$ such that $\pi$ and $\pi \otimes \chi_E$ are isomorphic when $G_D \simeq G$
is given in Chan~\cite{PSC}.
\end{Remark}
%
%
%
%
%
%
%
%
\subsection{Proof of the statement (2) in Theorem~\ref{ggp SO}}
\label{s:Proof of the statement (2)}
%
Suppose that $\pi$ has a generic Arthur parameter.

When there exists a pair $\left(D^\prime,\pi^\prime\right)$
as described in Theorem~\ref{ggp SO} (2),
$\pi$ and $\pi^\prime$ share the same generic Arthur parameter
since they are nearly equivalent to each other.
Hence by Theorem~\ref{ggp SO} (1),
we have 
\[
L^S \left(\frac{1}{2}, \pi \times \mathcal{AI} \left(\Lambda \right) \right) = L^S \left(\frac{1}{2}, \pi^\prime \times \mathcal{AI} \left(\Lambda \right) \right) \ne 0
\]
when $S$ is a sufficiently large finite set of places of $F$.
Then by Remark~\ref{L-fct def rem},  we have
\[
L\left(\frac{1}{2},\pi\times\mathcal{AI}\left(\Lambda \right)\right)\ne 0,
\]
i.e. \eqref{e:L non-zero} holds.

%

Conversely suppose that $L \left(\frac{1}{2}, \pi \times \mathcal{AI} \left(\Lambda \right) \right) \ne 0$. 
There exists an irreducible cuspidal globally generic automorphic representation $\pi^\circ$ of $G(\mA)$ which is nearly equivalent to $\pi$
since $\pi$ has a generic Arthur parameter.
Let $U$ be a maximal unipotent subgroup of $\mathrm{GSO}_{4,2}$ and $\psi_U$ be a non-degenerate character of 
$U(\mA)$ defined below by \eqref{def:U} and \eqref{def:psi_U}, which are
the  same as \cite[(2.4)]{Mo} and \cite[(3.1)]{Mo}, respectively.
Let $U_G$ be the  maximal unipotent subgroup of $\mathrm{GSp}_{2}$ 
defined by \eqref{d: U_G} and $\psi_{U_G}$  the non-degenerate character of 
$U_G(\mA)$ defined by \eqref{def:psi_UG} in \ref{s: pf wh gso}.
Note that in \cite{Mo}, $U_G$ is denoted by $N$ and $\psi_{U_G}$ is denoted by $\psi_{N}$ in \cite[p.34]{Mo} and \cite[(3.2)]{Mo}, respectively.
Then we note that the restriction of $\pi^\circ$ to $G(\mA)^+$ contains 
a unique $\psi_{U_G}$-generic irreducible constituent and we denote it by $\pi_+^\circ$.
Let us consider the theta lift $\Sigma:=\theta_\psi(\pi_+^\circ)$ of $\pi_+^\circ$ to $\mathrm{GSO}_{4,2}(\mA)$.
Then by \cite[Proposition~3.3]{Mo}, we know that 
$\Sigma$ is $\psi_U$-globally generic and hence non-zero.
We divide into two cases according to the cuspidality of $\Sigma$.
%

Suppose that $\Sigma$ is not cuspidal.
Then by Rallis tower property, $\pi^\circ_+$ participates in the theta correspondence with $\mathrm{GSO}_{3,1}$.
As in the proof of Lemma~\ref{nonzero lemma (1)},
the theta lift of $\pi^\circ_+$ to $\mathrm{GSO}_2$ is zero
since $\pi^\circ_+$ is generic.
Hence the theta lift $\tau:=\theta_\psi^{X, S_1}(\pi^\circ_+)$ of $\pi^\circ_+$ to $\mathrm{GSO}_{3,1}$ is cuspidal
and non-zero.
By Remark~\ref{theta irr rem}, $\tau$ is also irreducible.

Recall that 
\[
\mathrm{GSO}_{3,1}(F) \simeq \mathrm{GL}_2(E) \times F^\times \slash \{(z, \mathrm{N}_{E \slash F}(z)) : z \in E^\times \},
\,\,
\mathrm{PGSO}_{3,1}(F) \simeq \mathrm{PGL}_2(E).
\]
Then we may regard $\tau$ as an irreducible cuspidal automorphic 
representation of $\mathrm{GL}_2(\mA_E)$ with a
trivial central character
since the central character of $\pi^\circ_+$ is trivial.
%

Let $\Pi$ denote the strong functorial lift of $\pi^\circ$ to $\mathrm{GL}_4(\mA)$ by \cite{CKPSS}. 
Then at almost all finite places $v$ of $F$, 
we have $\Pi_v \simeq \mathcal{AI}(\tau)_v$, and thus by the strong 
multiplicity one theorem,
$\Pi = \mathcal{AI}(\tau)$ holds. 
Since $\pi$ is nearly equivalent to $\pi^\circ$, Remark~\ref{L-fct def rem} and our assumption imply that for a sufficiently large finite set $S$ of places of $F$, we have
\begin{align*}
&L^S \left(\frac{1}{2}, \tau \times \Lambda \right) L^S \left(\frac{1}{2}, \tau \times \Lambda^{-1} \right) 
= L^S \left(\frac{1}{2}, \pi^\circ \times \mathcal{AI} \left(\Lambda \right) \right)
\\
 = &L^S \left(\frac{1}{2}, \pi \times \mathcal{AI} \left(\Lambda \right) \right) \ne 0.
\end{align*}
Then by Waldspurger~\cite{Wal}, $\tau$ has the
split torus model with respect to the character $(\Lambda, \Lambda^{-1})$.
Hence, the equation in Corbett~\cite[p.78]{Co} implies
 that $\pi^\circ$ has the $(E, \Lambda)$-Bessel period.
Hence we may take $D^\prime = \mathrm{Mat}_{2 \times 2}$ and $\pi^\prime = \pi^\circ$.
Thus the case when $\Sigma$ is not cuspidal is settled.

Suppose that  $\Sigma$ is cuspidal.
We may regard $\Sigma$ as an irreducible cuspidal globally generic automorphic representation of $\mathrm{GU}(2,2)$ with trivial central character because of the accidental isomorphism \eqref{acc isom2}.
As in the proof of Theorem~\ref{ggp SO} (1), our assumption implies that
$L \left( \frac{1}{2}, \Sigma \times \Lambda \right) \ne 0$.
Then by \cite[Proposition~A.2]{FM3}, there exists an irreducible cuspidal automorphic representation $\Sigma^\prime$ of $\mathrm{GU}(V)$
such that $\Sigma^\prime$ is locally $\mathrm{U}(V)$-nearly equivalent to $\Sigma$ and $\Sigma^\prime$ has the 
$(e, \Lambda, \psi)$-Bessel period where $V$
is a $4$-dimensional hermitian space over $E$ whose Witt index is at least $1$.
Then we note that 
$\mathrm{PGU}(V) \simeq \mathrm{PGSO}_{4,2}$
or $\mathrm{P}\mathrm{GU}_{3,D^\prime}$
for some quaternion division algebra $D^\prime$ over $F$.

In the first case, we consider the theta lift $\pi^\prime_+:=\theta_{\psi^{-1}}(\Sigma^\prime)$ of $\Sigma^\prime$ to 
$G(\mA)^+$.
Then by the same argument as the one in the proof of Theorem~\ref{ggp SO} (1), we see that $\pi_+^\prime \ne 0$ by Takeda~\cite[Theorem~1.1 (1)]{Tak}
and that it is an irreducible cuspidal automorphic representation
of $G\left(\mA\right)^+$.
Since $\Sigma^\prime$ has the 
$(e, \Lambda, \psi)$-Bessel period, $\pi^\prime_+$ has the $(E, \Lambda)$-Bessel period by Proposition~\ref{pullback Bessel gsp}.
From the definition, $\pi_+^\prime$ is nearly equivalent to $\pi_+^\circ$.
Let us take an irreducible cuspidal automorphic representation $(\pi^\prime, V_{\pi^\prime})$ of $G(\mA)$
such that $\pi^\prime\mid_{G\left(\mA\right)^+} \supset \pi_+^\prime$.
Then $\pi^\prime$ is locally $G^+$-nearly equivalent, and thus 
either $\pi^\prime$ or $\pi^\prime \otimes \chi_E$ is nearly equivalent to $\pi$
by Remark~\ref{rem loc ne}. Since both $\pi^\prime$ and $\pi^\prime \otimes \chi_E$ have the $(E, \Lambda)$-Bessel period,
our claim follows.

In the second case,  we consider the theta lift of $\Sigma^\prime$ to $G_{D^\prime}(\mA)$.
Then by an argument similar to the one in the  first case, we may
 show that the theta lift of $\Sigma^\prime$ to $G_{D^\prime}(\mA)$ contains an irreducible constituent which
 is cuspidal, locally $G^+$-nearly equivalent to $\pi$
and has the $(E, \Lambda)$-Bessel period.  Here we use \cite[Lemma~10.2]{Yam} and its proof in the case of ($\rm{I}_1$)
with $n=3, m=2$, noting Remark~\ref{yam typo}.
This completes our proof of the 
existence of a pair $\left(D^\prime,\pi^\prime\right)$.
%
%
%

Let us show the uniqueness of a pair $\left(D^\prime,\pi^\prime\right)$ 
under the assumption that $\pi$ is tempered.
Suppose that for $i=1,2$ there exists a pair$\left(D_i,\pi_i\right)$ 
where $D_i$ is a quaternion algebra over $F$
and $\pi_i$ is an irreducible cuspidal automorphic representation
of $G_{D_i}\left(\mA\right)$ which is nearly equivalent to $\pi$
such that $\pi_i$ has the $(E, \Lambda)$-Bessel period.
%

Suppose that $\pi_i$ is nearly equivalent to $\pi_i \otimes \chi_E$
for $i=1,2$.
Then by Proposition~\ref{irr lemma}, $\pi_1$, $\pi_2$ are of type I-A or I-B
and in particular $D_1 \simeq D_2 \simeq \mathrm{Mat}_{2 \times 2}$.
Hence for $i=1,2$, there exist a four dimensional orthogonal space $X_i$ over $F$ with discriminant algebra $E$
and an irreducible cuspidal automorphic representation $\sigma_i$ of $\mathrm{GSO}(X_i, \mA)$
such that $\pi_i= \theta_{\psi}(\sigma_i)$.
Since $\mathrm{PGSO}(X_i, F) \simeq (D_i^\prime)^\times(E)\slash E^\times$
for some quaternion algebra $D_i^\prime$ over $F$, 
we may regard $\sigma_i$
as an automorphic representation of $(D_i^\prime)^\times(\mA_E)$
with the trivial central character.
Since $\pi_i$ has the $(E, \Lambda)$-Bessel period,
$\sigma_i$ has the split torus period with respect to a character
$\left(\Lambda,\Lambda^{-1}\right)$  by \cite[p.78]{Co}.
Hence $D_i^\prime(E) \simeq \mathrm{Mat}_{2 \times 2}(E)$
by \cite{Wal}.
Since $\sigma_1$ is nearly equivalent to $\sigma_2$,
we have $\sigma_1=\sigma_2$ by the strong multiplicity one.
Thus $\pi_1\simeq \pi_2$.
%
%
%

Suppose that $\pi_i$ is neither type I-A nor I-B for $i=1,2$.
For each $i$, 
let us take a unique irreducible constituent $\pi_{i, +}^B$ of $\pi_{i}|_{G_{D_i}(\mA)^+}$
that has the $(\xi_i, \Lambda, \psi)$-Bessel period. 
Note that $\pi_{1, +}^B$ and $\pi_{2, +}^B$ are nearly equivalent
to each other.

Now
let $\sigma_i$ denote the theta lift $\theta_{\psi}(\pi_{i, +}^B)$ 
of $\pi_{i, +}^B$ to $\mathrm{GSU}_{3,D_i}$.
Then we regard $\sigma_i$ as an automorphic representation
of $\mathrm{GU}_{4,\varepsilon}$ via \eqref{acc isom1},
\eqref{acc isom2}
and let $\Sigma_i := \Theta_{\psi, (\Lambda^{-1}, \Lambda^{-1})}$ denote the theta lift of $\sigma_i$ to $\mathrm{GU}_{2,2}$.
In turn, we regard $\Sigma_i$ as an automorphic representation
of $\mathrm{GSO}_{4,2}$ via \eqref{acc isom2} and we denote by 
$\pi_{i,+}^\prime$ its theta lift to $G\left(\mA\right)^+$.
Then from the proof of Theorem~\ref{ggp SO} (1),
$\sigma_i$, $\Sigma_i$ and $\pi_{i,+}^\prime$
are irreducible and cuspidal.
Moreover $\pi_{1,+}^\prime$ and $\pi_{2,+}^\prime$
are both globally generic and nearly equivalent to each other.
Furthermore, since $\pi_i$ is tempered, 
$\sigma_i=\theta_{\psi}(\pi_{i, +}^B)$ is tempered
at finite places by an argument similar to the one
in Atobe-Gan~\cite[Proposition~5.5]{AG} (see also  \cite[Proposition~C.1]{GI1})
and similarly at real and complex places by
Paul~\cite[Theorem~15, Theorem~30]{Pau} and 
Li-Paul-Tan-Zhu~\cite[Theorem~4.20, Theorem~5.1]{LPTZ},
and, by Adams-Barbasch~\cite[Theorem~2.7]{AB}, respectively.
Similarly $\Sigma_i$ and $\pi_{i,+}^\prime$ are
also tempered.

By Proposition~\ref{pullback Bessel gsp} and 
Proposition~\ref{pullback Bessel gsp innerD}, we know that 
$\sigma_i$   has the $(X_{\xi_i}, \Lambda, \psi)$-Bessel period.
Let $\mathrm{GU}_i$ denote the similitude unitary group which 
modulo center is isomorphic to $\mathrm{PGSU}_{3, D_i}$
by \eqref{acc isom1}.
Then $\sigma_i\mid_{\mathrm{U}_i}$ has a unique irreducible constituent $\nu_i$
which has  the $(X_{\xi_i}, \Lambda, \psi)$-Bessel period.
Then by Beuzart-Plessis~\cite{BP1,BP2}  (also by Xue~\cite{Xuea} at the real place),
we see that $\mathrm{U}_1 \simeq \mathrm{U}_2$
since $\nu_1$ and $\nu_2$ are equivalent to each other.
This implies  that $D_1 \simeq D_2$
and hence $G_{D_1} \simeq G_{D_2}$.
Let us denote $D^\prime\simeq D_i$ for $i=1,2$.

We take an
irreducible cuspidal automorphic representation $\pi_{i}^\prime$ of $G(\mA)$
such that $\pi_i^\prime|_{G(\mA)^+}$ contains  $\pi_{i,+}^\prime$.
Then by Remark~\ref{rem loc ne}, we may suppose that $\pi_1^\prime$ is nearly equivalent to $\pi_2^\prime$
or $\pi_2^\prime \otimes \chi_E$.
Thus replacing $\pi_2^\prime$ by $\pi_2^\prime\otimes\chi_E$ if necessary,
we may assume that $\pi_1^\prime$ and $\pi_2^\prime$
are nearly equivalent to each other.
Then since $\pi_1^\prime$ and $\pi_2^\prime$
are generic and they have the same $L$-parameter because of the temperedness of $\pi_i^\prime$,
we have $\pi_1^\prime \simeq \pi_2^\prime$ by the uniqueness of the
generic member in the $L$-packet by Atobe~\cite{At} or Varma~\cite{Var} at finite places and by Vogan~\cite{Vo} at archimedean places.
Hence in particular, $\pi_{1,+}^\prime\simeq\pi_{2,+}^\prime$.

From the definition of $\pi_{+,i}^\prime$, we get $\pi_{1,+}^B \simeq \pi_{2,+}^B$.
Then, we see that $\pi_1 \simeq \pi_2 \otimes \omega$ for
 some character $\omega$ of $G_{D^\prime}(\mA)$
such that $\omega_v$ is trivial or $\chi_{E,v}$ at each place $v$ of $F$.
Since $\pi_1$ and $\pi_2$ have the same $L$-parameter, $\pi_{1,v}$ and $\pi_{1,v}\otimes \omega_v$ are in the same  $L$-packet 
for every place $v$ of $F$.

Let us take a place $v$ of $F$, and write the $L$-parameter of $\pi_{1, v}$ as $\phi_v : WD_{F_v} \rightarrow G^1(\mC)$. 
If $\phi_v$ is an irreducible four dimensional representation, the $L$-packet of $\phi_v$ is singleton,
and thus $\pi_{1, v} \simeq \pi_{2,v}$. 
So let us suppose that $\phi_{v} = \phi_1 \oplus \phi_2$
with two dimensional irreducible representations $\phi_i$.
Further, we may suppose that $\omega_v = \chi_{E,v}$
since there is nothing to prove when $\omega_v$
is trivial. This implies that 
$\phi_v \otimes \chi_{E ,v}\simeq \phi_v$. Then, by \cite[Proposition~3.1]{PR}, we have
$\phi_i=\pi(\chi_i)$ for some character $\chi_i$ of $E_v^\times$
for $i=1,2$.
Moreover, any member of the 
$L$-packet of $\pi_1$ is given by the theta lift 
from an irreducible representation $\mathrm{JL}(\pi(\chi_1)) \boxtimes \pi(\chi_2)$
of $D^\prime(F_v)^\times \times \mathrm{GL}_2(F_v)$
where $\mathrm{JL}$ denotes the
Jacquet-Langlands transfer.
Since the theta lift preserves the character twist,
we see that 
\[
\theta(\mathrm{JL}(\pi(\chi_i)) \boxtimes \pi(\chi_j)) \otimes \chi_{E,v} \simeq 
\theta(\mathrm{JL}(\pi(\chi_i)) \boxtimes \pi(\chi_j)) 
\]
by $\pi(\chi_i) \otimes \chi_{E, v} \simeq \pi(\chi_i)$. This shows that in this case, any element in the $L$-packet
is invariant under the twist by $\chi_{E,v}$. 
Thus $\pi_{1, v} \otimes \chi_{E, v} \simeq \pi_{2,v}$
and hence $\pi_{1,v} \simeq \pi_{2,v}$.
\qed
%
%
%
\begin{Remark}
As we remarked in the end of Section~\ref{s:Gross-Prasad conjecture}, the uniqueness of $(D^\prime, \pi^\prime)$
follows from the
local Gan-Gross-Prasad conjecture for $(\mathrm{SO}(5), \mathrm{SO}(2))$, which is proved by Luo~\cite{Luo} at archimedean places 
and by
Prasad--Takloo-Bighash~\cite{PT} (see also Waldspurger~\cite{Wal12} in general case) at finite places. Our proof gives another proof of the uniqueness.
\end{Remark}
\begin{Remark}
\label{yam typo}
There is a typo in the statement of \cite[Lemma~10.2]{Yam}.
The first condition stated there should be the holomorphy at $s=-s_m+\frac{1}{2}$.
\end{Remark}

%
%
%
%
%
%
%
%
%
%
%
%
%
%
%
%
%
%
%
%
\section{Rallis inner product formula for similitude groups}
In this section, as a preliminary for the 
 proof of Theorem~\ref{ref ggp}, we recall Rallis inner product formulas
for similitude dual pairs.
%
%
%
%
%
%
%
%
%
%
%
%
%
%
%
%
%
%
%
%
\subsection{For the theta lift from $G$ to $\mathrm{GSO}_{4,2}$}
\label{s:RI H gso}
In this section, we shall recall the Rallis inner product formula for the theta lift from $G$ to $\mathrm{GSO}_{4,2}$.
It is derived from the isometry case in a manner similar to
the one in Gan-Ichino~\cite[Section~6]{GI0}, where the case of the 
theta lift from $\mathrm{GL}_2$ to $\mathrm{GSO}_{3,1}$ is treated.

Let $(\pi, V_\pi)$ be an irreducible cuspidal automorphic representation of $G(\mA)$ with a trivial central character.
Let us define a subgroup $\mathcal G$ of $G\left(\mA\right)$ by
\begin{equation}
\label{cal G def}
\mathcal{G} := Z_G(\mA)G(\mA)^+ G(F)
\end{equation}
and in this section we assume that:
\begin{equation}\label{assumption 1}
\text{\emph{the restriction of $\pi$ to $\mathcal{G}$ is irreducible, i.e. $\pi \otimes \chi_E \not \simeq \pi$}}
\end{equation}
for our later use.
%
%
%
%

Let us recall the notation in \ref{sp4 so42}.
Thus $X$ denotes the four dimensional symplectic space
on which $G$ acts on the right and $Y$ denotes the six dimensional
orthogonal space on which $\mathrm{GSO}_{4,2}$ 
acts on the left.
Then $Z=X\otimes Y$ is a symplectic space over $F$.
Here we take $X_\pm\otimes Y$ as the polarization
and we realize the Weil representation $\omega_\psi$
of $\mathrm{Mp}\left(Z\right)\left(\mA\right)$
on $V_\omega:=\mathcal{S}((X_+ \otimes Y)(\mA))$.

Put $X^\Box = X \oplus (-X)$.
Then $X^\Box$ is naturally  a symplectic space. Let
$\widetilde{G}:=\mathrm{GSp}\left(X^\Box\right)$ and
we denote by  $\mathbf{G}$ a subgroup of $G\times G$ given by
\[
\mathbf{G}: = \left\{ (g_1, g_2) \in G \times G
 : \lambda(g_1) = \lambda(g_2) \right\},
\]
which has a natural embedding $\iota:\mathbf {G}\to\widetilde{G}$.
We define the canonical pairing $\mathcal{B}_\omega : V_\omega \otimes V_\omega \rightarrow \mC$ by 
\[
\mathcal{B}_\omega(\varphi_1, \varphi_2) := \int_{(X_+ \otimes Y)(\mA)} \varphi_1(x) \,\overline{\varphi_2(x)} \, dx
\quad\text{for $\varphi_1, \varphi_2 \in V_\omega$}
\]
where $dx$ denotes the Tamagawa measure on $(X_+ \otimes Y)(\mA)$.

Let $\widetilde{Z} = X^\Box \otimes Y$ and 
we take a polarization $\widetilde{Z}=\widetilde{Z}_+\oplus \widetilde{Z}_-$ with
\[
\widetilde{Z}_\pm:=\left(X_\pm\oplus \left(-X_\pm\right)\right)\otimes Y
\]
where the double sign corresponds.
Let us denote by $\widetilde{\omega}_\psi$ the Weil representation 
of $\mathrm{Mp}(\widetilde{Z}(\mA))$ on $\mathcal{S}( \widetilde{Z}^+(\mA))$.
On the other hand, let
\[
X^\nabla:=\left\{(x,-x) : x \in X \right\} 
\quad\text{and}\quad \widetilde{X}^\nabla := X^\nabla \otimes Y .
\]
Then we have a natural isomorphism
\[
V_\omega \otimes V_\omega\simeq
\mathcal{S}( \widetilde{X}^\nabla(\mA)) 
\]
by which we regard $\mathcal{S}( \widetilde{X}^\nabla(\mA))$
as a representation of $\mathrm{Mp}(Z)(\mA) \times \mathrm{Mp}(Z)(\mA)$.
Meanwhile
we may realize $\widetilde{\omega}_\psi$ on $\mathcal{S}(\widetilde{X}^\nabla(\mA) )$
and indeed
we have an isomorphism
\[
\delta : \mathcal{S}(\widetilde{Z}_+(\mA)) \rightarrow  \mathcal{S}( \widetilde{X}^\nabla (\mA))
\]
as representations of $\mathrm{Mp}(\widetilde{Z})(\mA)$ such that 
\[
\delta(\varphi_1 \otimes \overline{\varphi}_2)(0) = \mathcal{B}_\omega(\varphi_1, \varphi_2)
\quad\text{for $\varphi_1, \varphi_2 \in V_\omega$.}
\]

Let us define Petersson inner products on $G(\mA)$ and $G(\mA)^+$ as follows.
For $f_1, f_2 \in V_\pi$, we define the Petersson inner product 
$\left(\, ,\,\right)_\pi$ on  $G(\mA)$ by
\[
(f_1, f_2)_\pi := \int_{\mA^\times G(F) \backslash G(\mA)} f_1(g) \,\overline{f_2(g)} \,dg
\]
where $dg$ denotes the Tamagawa measure. 
Then regarding $f_1, f_2$ as automorphic forms on $G(\mA)^+$,
we define
\[
(f_1, f_2)_{\pi}^+ := \int_{\mA^\times G(F)^+ \backslash G(\mA)^+} f_1(h) \,\overline{f_2(h)} \,dh
\]
where the measure $dh$ is normalized so that 
\[
\mathrm{vol}(\mA^\times G(F)^+ \backslash G(\mA)^+)=1.
\]
Then from our assumption \eqref{assumption 1}
on $\pi$, as in \cite[Lemma~6.3]{GI0}, we see that 
\[
\left(f_1, f_2\right)_{\pi}^+= \frac{1}{2}\,\left(f_1, f_2\right)_{\pi}
\]
since $\mathrm{Vol}(\mA^\times G(F) \backslash G(\mA)) =2$.
For each place $v$ of $F$, we take a hermitian $G(F_v)$-invariant local pairing $\left(\, ,\,\right)_{\pi_v}$ of $\pi_v$
so that
\begin{equation}
\label{e:inner decomp H}
\left(f_1,f_2\right)_\pi = \prod_v
 \left(f_{1,v},f_{2,v}\right)_{\pi_v}
 \quad\text{for $f_i=\otimes_v \,f_{i,v}\in V_\pi\, \left(i=1,2\right)$}.
\end{equation}
We also choose a local Haar measure $d g_v$ on $G(F_v)$ for each place $v$ of $F$ so that
$\mathrm{Vol}(K_{G, v}, d_{g_v})=1$ at almost all $v$, where $K_{G, v}$ is a maximal compact subgroup of $G(F_v)$.
We define positive constants $C_{G}$ by
\[
dg = C_{G} \cdot \prod_v dg_v
\]

Local doubling zeta integrals are defined as follows.
Let $I(s)$ denote the degenerate principal series representation of $\widetilde{G}(\mA)$ defined by 
\[
I(s): = \mathrm{Ind}_{\widetilde{P}(\mA)}^{\widetilde{G}(\mA)}
\left(\chi_E \,\delta_{\widetilde{P}}^{\,s\slash 9}\right)
\]
where $\widetilde{P}$ denotes the Siegel parabolic subgroup of $\widetilde{G}$.
Then for each place $v$, we define a local zeta integral  by
\[
Z_v(s, \Phi_v, f_{1, v}, f_{2,v}) := \int_{G^1(F_v)} \Phi_v(\iota(g_v, 1), s) \left( \pi_v(g_v)f_{1,v}, f_{2,v} \right)_{\pi_v} \, dg_v
\]
for $\Phi_v\in I\left(s\right)$, $f_{1,v},f_{2,v}\in V_{\pi_v}$,
where $G^1=\left\{g\in G:\lambda\left(g\right)=1\right\}$.
The integral  converges absolutely at $s=\frac{1}{2}$ when $\Phi_v \in I_v(s)$ is a holomorphic section by \cite[Proposition~6.4]{PSR} (see also \cite[Lemma~6.5]{GI0}).
Moreover, when we define a map
$
\mathcal{S}( \widetilde{X}^\nabla(\mA))
\ni\varphi\mapsto \left[\varphi\right]\in  I \left(\frac{1}{2} \right)
$
by
\[
[\varphi] \left(g, \frac{1}{2} \right) := |\nu(g)|^{-4}  \left(\widetilde{\omega}_\psi(\begin{pmatrix}1_4&\\&\lambda\left(g\right)^{-1}1_4\end{pmatrix}g))\varphi\right)(0),
\]
we may naturally extend $\left[\varphi\right]$
 to a holomorphic section in $I\left(s\right)$. 

By an argument similar to the one in the proof of 
\cite[Proposition~6.10]{GI0}, 
we may derive the following Rallis inner product formula in 
the similitude groups case from  the one \cite[Theorem~8.1]{GQT}
in the isometry groups case.
%
\begin{proposition}
\label{RIPF H}
Keep the above notation. 

Then for decomposable vectors $f  = \otimes f_v \in V_\pi$ and $\phi = \otimes \phi_v\in V_\omega$, we have
\begin{multline*}
\frac{\langle \Theta(f; \phi),  \Theta(f; \phi) \rangle}{\left( f, f \right)_\pi} =C_G \cdot \frac{1}{2} \cdot
\frac{L\left(1, \pi, \mathrm{std} \otimes \chi_E \right)}{L(3, \chi_E) L(2, \mathbf{1}) L(4, \mathbf{1}) }
\\
\times
\prod_{v} 
\,
Z_v^\sharp \left(\frac{1}{2}, [\delta(\phi_v \otimes \phi_v)], f_v, f_v \right).
\end{multline*}
Here we recall that $\Theta_\psi(f; \phi)$ is the
theta lift of $f$ to $\mathrm{GO}_{4,2}$, 
$\langle\,\, , \,\,\rangle$  denotes the Petersson inner product
with respect to the Tamagawa measure and we define
\begin{multline*}
Z_v^\sharp \left(\frac{1}{2}, [\delta(\phi_v \otimes \phi_v)], f_v, f_v \right): = 
\frac{1}{\left(f_v, f_v \right)_{\pi_v}}
\frac{L(3, \chi_{E_v \slash F_v}) L(2, \mathbf{1}_v) 
L(4, \mathbf{1}_v) }{L\left(1, \pi_v, \mathrm{std} \otimes 
\chi_{E_v \slash F_v} \right)}
\\
\times
Z_v \left(\frac{1}{2}, [\delta(\phi_v \otimes \phi_v)], f_v, f_v \right),
\end{multline*}
which is equal to $1$ at almost all places $v$ of $F$ by \cite{PSR}.
\end{proposition}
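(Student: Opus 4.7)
The plan is to derive this similitude-case formula from the isometry-case Rallis inner product formula of Gan-Qiu-Takeda (\cite{GQT} Theorem 8.1), following the template of Gan-Ichino's \cite[Proposition 6.10]{GI0}. The bridge is the seesaw pair associated to the doubled dual pair $(\mathrm{Sp}(X^\Box), \mathrm{O}(Y)\times\mathrm{O}(Y))$ in the isometry setting, transported to similitudes via the unfolding of the Petersson inner product on $\mathrm{GSO}_{4,2}$.

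First I would unfold the inner product $\langle\Theta(f;\phi),\Theta(f;\phi)\rangle$ on $\mathrm{GO}_{4,2}(\mathbb A)$ to an integral over $\mathrm{O}(Y,F)\backslash \mathrm{O}(Y,\mathbb A)$. Because $\Theta(f;\phi)$ is supported on $h$ whose similitude lies in $\lambda(G(\mathbb A)^+)$, one fibers the $\mathrm{GO}_{4,2}$-integral over the similitude character and at each fiber picks $g_h \in G(\mathbb A)^+$ with $\lambda(g_h)=\lambda(h)$. This converts $\langle\Theta(f;\phi),\Theta(f;\phi)\rangle$ into a product of $(f,f)^+_\pi$ with an isometry-theta inner product integrated against the matrix coefficient of $\pi$, after using $(f_1,f_2)^+_\pi = \tfrac12(f_1,f_2)_\pi$ (which holds by assumption \eqref{assumption 1} and $\mathrm{Vol}(\mathbb A^\times G(F)\backslash G(\mathbb A))=2$).

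Next, I would apply the isometry Rallis inner product formula of \cite[Theorem 8.1]{GQT} to the resulting inner product on $\mathrm{O}(Y,\mathbb A)$. The see-saw identity realizes this inner product as a doubling zeta integral for $\pi|_{\mathcal G}$ against the Siegel Eisenstein section $[\delta(\phi\otimes\bar\phi)]\in I(s)$, evaluated at $s=1/2$. The absolute convergence at $s=1/2$ needed to justify the unfolding is guaranteed by Piatetski-Shapiro-Rallis \cite[Proposition 6.4]{PSR}. The unramified computation of Piatetski-Shapiro-Rallis then produces the ratio $L(1,\pi,\mathrm{std}\otimes\chi_E)/[L(3,\chi_E)L(2,\mathbf{1})L(4,\mathbf{1})]$ as a normalizing factor, which is what makes the normalized local integrals $Z_v^\sharp$ equal to $1$ at almost all places. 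Finally, converting from the Tamagawa measure on $G(\mathbb A)$ to the product measure $\prod_v dg_v$ produces the constant $C_G$.

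The main obstacle will be the careful bookkeeping of similitude factors and measure normalizations: one must match the volume constants arising from $G(\mathbb A)^+\subset G(\mathbb A)$, the unfolded fiber over the similitude character, and the doubling from $\mathbf G \hookrightarrow \widetilde G$ to the product of local Haar measures that define the $Z_v$. A secondary technical point is to verify that the holomorphic extension of $[\delta(\phi\otimes\bar\phi)]$ from $s=1/2$ into $I(s)$ used here is compatible with the Siegel Eisenstein section appearing in Gan-Qiu-Takeda's isometry-case formula; this is routine once the local Schr\"odinger models on $\widetilde Z$ and $\widetilde X^\nabla$ are identified via $\delta$. The assumption \eqref{assumption 1} is essential: without $\pi\otimes\chi_E\not\simeq\pi$, the restriction of $\pi$ to $\mathcal G$ would decompose and the $L$-function appearing on the right-hand side would have to be adjusted accordingly.
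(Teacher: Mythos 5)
Your proposal follows essentially the same route as the paper: the paper proves this proposition precisely by transporting the isometry-case Rallis inner product formula of \cite[Theorem~8.1]{GQT} to the similitude setting via the argument of \cite[Proposition~6.10]{GI0}, with the factor $\frac{1}{2}$ coming from $(f_1,f_2)^+_\pi=\frac{1}{2}(f_1,f_2)_\pi$ under the assumption that $\pi|_{\mathcal G}$ is irreducible, the convergence at $s=\frac{1}{2}$ from \cite{PSR}, and the constant $C_G$ from comparing the Tamagawa measure with the product of local measures. The bookkeeping points you flag are exactly the ones implicit in the paper's citation of \cite{GI0}, so no further adjustment is needed.
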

%
%
%
%
%
Recall that $ \theta(f; \phi)$ denotes
the restriction of $ \Theta_\psi(f; \phi)$ to $\mathrm{GSO}_{4,2}(\mA)$, namely the theta lift of $f$ to $\mathrm{GSO}_{4,2}$.
Then as in \cite[Lemma~2.1]{GI0}, we see that
\[
2\langle \Theta(f; \phi),  \Theta(f; \phi) \rangle
= \langle \theta(f; \phi), \theta(f; \phi) \rangle
\]
where the right hand side denotes the Petersson inner product
on $\mathrm{GSO}_{4,2}$ with respect to the Tamagawa measure.
Hence, Proposition~\ref{RIPF H} yields
\begin{multline}
\label{RIPF H GSO}
\frac{\langle  \theta(f; \phi), \theta(f; \phi) \rangle}{\left(f, f \right)_{\pi}}=C_G \cdot  \frac{L\left(1, \pi, \mathrm{std} \otimes \chi_E \right)}{L(3, \chi_E) L(2, \mathbf{1}) L(4, \mathbf{1}) }
\\
\times
\prod_{v}\, Z_v^\sharp \left(\frac{1}{2}, [\delta(\phi_v \otimes \phi_v)], f_v, f_v \right).
\end{multline}
%
%
%
%
%
%
%
%
%
%
%
%
%
%
%
%
\subsection{Theta lift from $G_D$ to $\mathrm{GSU}_{3,D}$}
\label{s:RI HD gsu}
In this subsection, we shall consider the Rallis inner product formula for the theta lift from $G_D$ to $\mathrm{GSU}_{3,D}$ as in the previous section.
We recall  that  the formula in the case of isometry groups is proved by Yamana~\cite[Lemma~10.1]{Yam} 
where our case corresponds to ($\rm{I}_3$)
with $m=3, n=2$.

Let $(\pi, V_\pi)$ be an irreducible cuspidal automorphic representation of $G_D(\mA)$ with a trivial central character.
Recall that $\mathcal G_D$ denotes the subgroup of $G_D\left(\mA\right)$
given by \eqref{d: mathcal G_D}.
In this section, assume that:
\begin{equation}\label{assumption 1_D}
\text{\emph{the restriction of $\pi$ to $\mathcal{G}_D$ is irreducible}}
\end{equation}
for our later use.

Let us recall the notation in \ref{ss: second pull-back}.
Thus $X_D$ denotes the hermitian space of degree two over $D$
on which $G_D$ acts on the right and $Y_D$ 
denotes the skew-hermitian space of degree three
over $D$ on which $\mathrm{GSU}_{3,D}$ acts on the left.
Then $Z_D= X_D \otimes_D Y_D$ is a symplectic space over $F$
by \eqref{d: quaternion dual pair}.
Here we take $X_{D,\pm}\otimes_D Y_D$ as the polarization
and we realize the Weil representation $\omega_\psi$ of 
$\mathrm{Mp}\left(Z_D\right)\left(\mA\right)$ on 
$V_{\omega,D}:=\mathcal S\left(\left(X_{D,+}\otimes_D Y_D\right)\left(\mA\right)\right)$.

Put $X_D^\Box = X_D \oplus \overline{X_D}$. 
Then $X_D^\Box$ is naturally a hermitian space over $D$.
Let $\widetilde{G}_D:=\mathrm{GU}\left(X_D^\Box\right)$ and we denote by $\mathbf{G}_D$ 
a subgroup of $G_D\times G_D$ given by
\[
\mathbf{G}_D := \left\{ (g_1, g_2) \in G_D \times G_D : \lambda(g_1) = \lambda(g_2) \right\}
\]
which has a natural embedding $\iota:\mathbf{G}_D\to\widetilde{G}_D$.
We define the canonical pairing $\mathcal{B}_\omega : V_{\omega,D} \otimes V_{\omega,D}\rightarrow \mC$ by 
\[
\mathcal{B}_\omega(\varphi_1, \varphi_2) := \int_{(X_{D,+} \otimes Y_D)(\mA)} \varphi_1(x) \,\overline{\varphi_2(x)} \, dx
\quad\text{for $\varphi_1, \varphi_2 \in V_{\omega,D}$}
\]
where $dx$ denotes the Tamagawa measure on $(X_{D,+} \otimes Y_D)(\mA)$.

Let $\widetilde{Z}_D = X_D^\Box \otimes Y_D$ and 
we take a polarization $\widetilde{Z}_D=\widetilde{Z}_{D,+}\oplus \widetilde{Z}_{D,-}$ with
\[
\widetilde{Z}_{D,\pm}=
\left(
X_{D,\pm}\oplus \overline{-X_{D,\pm}}\,\right)\otimes Y_D
\]
where the double sign corresponds.
Let us denote by $\widetilde{\omega}_\psi$ the Weil representation 
of $\mathrm{Mp}(\widetilde{Z}_D)(\mA)$ on $\mathcal{S}( \widetilde{Z}_{D,+}(\mA))$.
On the other hand, let
\[
X_D^\nabla:=\left\{(x,\overline{x}) : x \in X_D \right\} 
\quad\text{and}\quad \widetilde{X}_D^\nabla := X_D^\nabla \otimes Y_D .
\]
Then we have a natural isomorphism
\[
V_{\omega,D} \otimes V_{\omega,D}\simeq
\mathcal{S}( \widetilde{X}_D^\nabla(\mA)) 
\]
by which we regard $\mathcal{S}( \widetilde{X}_D^\nabla(\mA))$
as a representation of $\mathrm{Mp}(Z_D)(\mA) \times \mathrm{Mp}(Z_D)(\mA)$.
Meanwhile
we may realize $\widetilde{\omega}_\psi$ on $\mathcal{S}(\widetilde{X}_D^\nabla(\mA) )$
and indeed
we have an isomorphism
\[
\delta : \mathcal{S}(\widetilde{Z}_{D,+}(\mA)) \rightarrow  \mathcal{S}( \widetilde{X}_D^\nabla (\mA))
\]
as representations of $\mathrm{Mp}(\widetilde{Z}_D)(\mA)$ such that 
\[
\delta(\varphi_1 \otimes \overline{\varphi}_2)(0) = \mathcal{B}_\omega(\varphi_1, \varphi_2)
\quad\text{for $\varphi_1, \varphi_2 \in V_{\omega,D}$.}
\]

Let us define Petersson inner products on $G_D(\mA)$ and $G_D(\mA)^+$ as follows.
For $f_1, f_2 \in V_{\pi_D}$, we define the Petersson inner product 
$\left(\, ,\,\right)_{\pi_D}$ on  $G_D(\mA)$ by
\[
(f_1, f_2)_{\pi_D}: = \int_{\mA^\times G_D(F) \backslash G_D(\mA)} f_1(g) \,\overline{f_2(g)} \,dg
\]
where $dg$ denotes the Tamagawa measure. 
Then regarding $f_1, f_2$ as automorphic forms on $G_D(\mA)^+$,
we define
\[
(f_1, f_2)_{\pi_D}^+ := \int_{\mA^\times G_D(F)^+ \backslash G_D(\mA)^+} f_1(h) \,\overline{f_2(h)} \,dh
\]
where the measure $dh$ is normalized so that 
\[
\mathrm{vol}(\mA^\times G_D(F)^+ \backslash G_D(\mA)^+)=1.
\]
Then from our assumption \eqref{assumption 1_D}
on $\pi_D$, as in \cite[Lemma~6.3]{GI0}, we see that 
\[
\left(f_1, f_2\right)_{\pi_D}^+= \frac{1}{2}\,\left(f_1, f_2\right)_{\pi_D}
\]
since $\mathrm{Vol}(\mA^\times G_D(F) \backslash G_D(\mA)) =2$.
For each place $v$ of $F$, we take a hermitian $G_D(F_v)$-invariant local pairing $\left(\, ,\,\right)_{\pi_{D,v}}$ of $\pi_{D,v}$
so that
\begin{equation}
\label{e:inner decomp H_D}
\left(f_1,f_2\right)_{\pi_D} = \prod_v
 \left(f_{1,v},f_{2,v}\right)_{\pi_{D,v}}
 \quad\text{for $f_i=\otimes_v \,f_{i,v}\in V_{\pi_D}\, \left(i=1,2\right)$}.
\end{equation}
As in the previous section, we choose local Haar measures $dg_v$  on $G_D(F_v)$ at each place $v$ of $F$ and we have 
\[
dg = C_{G_D} \cdot \prod_v dg_v
\]
for some positive constant $C_{G_D}$.

Local doubling zeta integrals are defined as follows.
Let $I_D(s)$ denote the degenerate principal series representation of $\widetilde{G}_D(\mA)$ defined by 
\[
I_D(s) := \mathrm{Ind}_{\widetilde{P}_D(\mA)}^{\widetilde{G}_D(\mA)}\left(\chi_E \,\delta_{\widetilde{P}_D}^{\,s \slash 9}\right)
\]
where $\widetilde{P}_D$ denotes the Siegel parabolic subgroup of $\widetilde{G}_D$.
Then for each place $v$, we define a local zeta integral  
for $\Phi_v\in I_{D,v}\left(s\right)$, $f_{1,v},f_{2,v}\in V_{\pi_{D,v}}$
by
\[
Z_v(s, \Phi_v, f_{1, v}, f_{2,v}) := \int_{G^1_{D}(F_v)} \Phi_v(\iota(g_v, 1), s) \left( \pi_{D,v}(g_v)f_{1,v}, f_{2,v} \right)_{\pi_v} \, dg_v
\]
where $G_D^1=\left\{g\in G_D:\lambda\left(g\right)=1\right\}$.
The integral  converges absolutely at $s=\frac{1}{2}$ when 
$\Phi_v \in I_{D,v}(s)$ is a holomorphic section by \cite[Proposition~6.4]{PSR} (see also \cite[Lemma~6.5]{GI0}).
Moreover, when we define a map
$
\mathcal{S}( \widetilde{X}_D^\nabla(\mA))
\ni\varphi\mapsto \left[\varphi\right]\in  I_D \left(\frac{1}{2} \right)
$
by
\[
[\varphi] \left(g, \frac{1}{2} \right): = |\lambda(g)|^{-4}  \left(\widetilde{\omega}_\psi(\begin{pmatrix}1_4&\\&\lambda\left(g\right)^{-1}1_4\end{pmatrix}g))\varphi\right)(0),
\]
we may naturally extend $\left[\varphi\right]$
 to a holomorphic section in $I_D\left(s\right)$. 

By an argument similar to the one in the proof of 
\cite[Proposition~6.10]{GI0}, 
we may derive the following Rallis inner product formula in 
the similitude groups case from  the one \cite[Theorem~2]{Yam2}
in the isometry groups case.
%
\begin{proposition}
\label{RIPF HD}
Keep the above notation. 

Then for decomposable vectors $f  = \otimes f_v \in V_{\pi_D}$ and $\phi = \otimes \phi_v\in V_{\omega,D}$, we have
\[
\frac{\langle \theta(f; \phi),  \theta(f; \phi) \rangle}{\left( f_{\pi_D}, f_{\pi_D} \right)} =
\frac{L\left(1, \pi, \mathrm{std} \otimes \chi_E \right)}{L(3, \chi_E) L(2, \mathbf{1}) L(4, \mathbf{1}) }\,
\prod_{v} \,Z_v^\sharp \left(\frac{1}{2}, [\delta(\phi_v \otimes \phi_v)], f_v, f_v \right).
\]
Here recall that $\theta_\psi(f; \phi)$ is the
theta lift of $f$ to $\mathrm{GSU}_{3,D}$,
$\langle\,\, ,\,\,\rangle$ denotes the Petersson inner product
with respect to the Tamagawa measure and we define
\begin{multline*}
Z_v^\sharp \left(\frac{1}{2}, [\delta(\phi_v \otimes \phi_v)], f_v, f_v \right) := \frac{1}{\left( f_v, f_v \right)_{\pi_{D, v}}}
\frac{L(3, \chi_{E_v \slash F_v}) L(2, \mathbf{1}_v) 
L(4, \mathbf{1}_v) }{L\left(1, \pi_v, \mathrm{std} \otimes 
\chi_{E_v \slash F_v} \right)}
\\
\times
Z_v \left(\frac{1}{2}, [\delta(\phi_v \otimes \phi_v)], f_v, f_v \right),
\end{multline*}
which is equal to $1$ at almost all places $v$ of $F$ by \cite{PSR}.
\end{proposition}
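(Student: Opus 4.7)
The plan is to adapt the similitude-to-isometry reduction mechanism of Gan--Ichino \cite[Section~6]{GI0} to the quaternionic dual pair $(G_D,\mathrm{GSU}_{3,D})$, reducing the assertion to the isometric Rallis inner product formula of Yamana for the dual pair $(\mathrm{U}(X_D),\mathrm{SU}(Y_D))$ \cite[Lemma~10.1]{Yam}, applied in case $(\mathrm{I}_3)$ with $m=3, n=2$ (equivalently \cite[Theorem~2]{Yam2}). The hypothesis \eqref{assumption 1_D} plays the role of the irreducibility assumption in \cite{GI0}, forcing $(f_1,f_2)^+_{\pi_D}=\tfrac{1}{2}(f_1,f_2)_{\pi_D}$ via $\mathrm{Vol}(\mA^\times G_D(F)\backslash G_D(\mA))=2$, and thereby legitimizing the passage between $G_D(\mA)$ and $G_D(\mA)^+$ in all subsequent unfoldings.

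First I would compute $\langle\theta(f;\phi),\theta(f;\phi)\rangle$ on $\mathrm{GSU}_{3,D}(\mA)$. Restricting each factor to $G_D(\mA)^+$ realizes the similitude theta lift as an integral of the isometric theta kernel against $f$ over $[\mathrm{U}(X_D)]$, and the product of two such kernels may be converted, via the canonical isomorphism $V_{\omega,D}\otimes V_{\omega,D}\simeq \mathcal{S}(\widetilde{X}_D^\nabla(\mA))$ and the intertwining map $\delta$, into a single theta function for the doubled quaternionic dual pair $(\mathbf{G}_D,\mathrm{GU}(Y_D))$ built from $\delta(\phi\otimes\bar{\phi})$. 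Interchanging the order of integration produces a double integral of $f(g_1)\overline{f(g_2)}$ against an inner theta integral over $[\mathrm{GSU}(Y_D)]$.

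Next, by the Siegel--Weil identity in the convergent range developed by Yamana \cite{Yam,Yam2}, this inner theta integral is identified with the value at $s=\tfrac{1}{2}$ of a Siegel Eisenstein series on $\widetilde{G}_D(\mA)$ attached to the holomorphic section $[\delta(\phi\otimes\bar{\phi})]\in I_D(s)$; the temperedness of $\pi_D$ ensures the absolute convergence and absence of polar contributions at this point. Unfolding the Eisenstein series against $f\otimes\bar{f}$ along the doubling embedding $\iota:\mathbf{G}_D\hookrightarrow\widetilde{G}_D$ and choosing decomposable data factors the result into the Euler product of local zeta integrals $Z_v\bigl(\tfrac{1}{2},[\delta(\phi_v\otimes\bar{\phi}_v)],f_v,f_v\bigr)$, while the Piatetski-Shapiro--Rallis unramified computation \cite{PSR} supplies the global ratio of $L$-values $L(1,\pi_D,\mathrm{std}\otimes\chi_E)/\left\{L(3,\chi_E)L(2,\mathbf{1})L(4,\mathbf{1})\right\}$.

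The main obstacle will be the precise bookkeeping of constants in passing from the isometric to the similitude formula. Note that, in contrast to the orthogonal case \eqref{RIPF H GSO} where a factor $C_G$ appears, no factor $C_{G_D}$ shows up on the right-hand side of Proposition~\ref{RIPF HD}; this demands a careful check that the volume $\mathrm{Vol}(\mA^\times G_D(F)\backslash G_D(\mA))=2$, the $\tfrac{1}{2}$-relation between $(\,,\,)_{\pi_D}$ and $(\,,\,)^+_{\pi_D}$ under \eqref{assumption 1_D}, and the measure decomposition $dg=C_{G_D}\prod_v dg_v$ conspire to cancel cleanly against the Tamagawa normalizations on $\mathrm{GSU}(Y_D)$ and $\widetilde{G}_D$, and against the normalization implicit in the section $[\delta(\phi\otimes\bar{\phi})]$. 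Verifying this cancellation --- rather than merely transcribing \cite[Section~6]{GI0} --- is where the quaternionic setting genuinely differs from the orthogonal one, and is where I expect most of the care to be required.
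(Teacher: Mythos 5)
Your proposal follows essentially the same route as the paper: Proposition~\ref{RIPF HD} is there deduced from Yamana's isometry-case formula (\cite[Theorem~2]{Yam2}, i.e. \cite[Lemma~10.1]{Yam} in case ($\rm{I}_3$) with $m=3$, $n=2$) by the same Gan--Ichino similitude reduction \cite[Proposition~6.10]{GI0} that you outline, with the assumption \eqref{assumption 1_D} and the relation $\left(f,f\right)^+_{\pi_D}=\tfrac{1}{2}\left(f,f\right)_{\pi_D}$ used exactly as you describe. Your additional sketch of the doubling/Siegel--Weil and \cite{PSR} input is just the content of the cited isometry-case theorem, so no genuinely different argument is involved.
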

%
%
%
%
%
%
%
%
%
%
%
%
%
%
%
%
%
%
%
%
%
\section{Explicit formula for Bessel periods on $\mathrm{GU}(4)$}
Let $\mathrm{GU}\left(4\right)$ stand for
 one of $\mathrm{GU}_{2,2}$ or $\mathrm{GU}_{3,1}$.
In \cite{FM3}, the explicit formula for the Bessel periods on
$\mathrm{GU}\left(4\right)$ is proved under the assumption that 
 the  explicit formula for the Whittaker periods on
 $\mathrm{GU}_{2,2}$ holds.
In this section
we shall show that this assumption is indeed satisfied 
in the cases we need,
 from the explicit formula for the Whittaker periods on $G=\mathrm{GSp}_2$,
 which in turn will be proved in Appendix~\ref{appendix A}.
Thus the explicit formula for the Bessel periods on 
$\mathrm{GU}(4)$ holds by \cite{FM3},
in the cases which we need for the proof of
 Theorem~\ref{ref ggp}.
 %
 %
 %
\subsection{Explicit formulas}
\label{s:Explicit formulas whittaker}
Let $(\pi, V_{\pi})$ be an irreducible cuspidal tempered globally generic automorphic representation of $G(\mA)$ 
such that $\pi |_{\mathcal{G}}$ is irreducible.
We recall that the subgroup $\mathcal G$ 
of $G\left(\mA\right)$ is defined by \eqref{cal G def}.
Let $\pi^\circ$ denote the unique generic irreducible constituent of $\pi |_{G(\mA)^+}$.
Let $\left(\Sigma, V_\Sigma\right)$ denote the theta lift of 
$\pi^\circ$ to $\mathrm{GSO}_{4,2}(\mA)$.
Then as in \cite[Proposition~3.3]{Mo}, we know that $\Sigma$ is 
an irreducible globally generic  cuspidal  tempered automorphic representation. 
Here we prove the explicit formula for the Whittaker periods for $\Sigma$ 
assuming the explicit formula for the Whittaker periods for
 $\pi$.

Let us recall some notation.
Let $X, Y, Y_0$ and $Z$ be as in Section~\ref{sp4 so42}
and 
we use a polarization $Z = Z_+ \oplus Z_-$ with
\[
Z_{\pm} = (X \otimes Y_{\pm}) \oplus (X_{\pm} \otimes Y_0)
\]
where the double sign corresponds.
We write $z_+=(a_1, a_2; b_1, b_2)$ when
\[
z_+=a_1 \otimes y_1 + a_2 \otimes y_2 + b_1 \otimes e_1+b_2\otimes e_2\in Z_+
\quad\text{with $a_i \in X$, $b_i \in X_+$}
.
\]
Recall that the unipotent subgroups
$N_0$, $N_1$ and $N_2$ of $\mathrm{GSO}_{4,2}$
are defined by \eqref{d: N_0}, \eqref{d: N_1}
and \eqref{d: N_2}, respectively.
Let us define an unipotent subgroup $\tilde{U}$
of $\mathrm{GSO}_{4,2}$ by
\begin{equation}\label{e: def of u_tilde}
\tilde{U}:  = \left\{ \tilde{u}(b): =
\begin{pmatrix}
1&-{}^{t}\widetilde{X} S_1& 0\\
0&1_4&\widetilde{X}\\
0&0&1
\end{pmatrix}
: \widetilde{X} = \begin{pmatrix} 0\\0\\0\\-b\end{pmatrix}
\right\}
\end{equation}
where $S_1$ is given by \eqref{d: symmetric}.
Let 
\begin{equation}
\label{def:U}
U:=N_{4,2}\,\tilde{U}.
\end{equation}
Then $U$ is a maximal unipotent subgroup of $\mathrm{GSO}_{4,2}$
and we have
\[
N_0 \triangleleft N_0 N_1 \triangleleft N_0 N_1 N_2=
N_{4,2}  \triangleleft N_{4,2}\, \tilde{U} = U.
\]
Then we define a non-degenerate character $\psi_U$ of $U(\mA)$ by 
\begin{equation}
\label{def:psi_U}
\psi_U(u_0(x)u_1(s_1, t_1)u_2(s_2, t_2)\tilde{u}(b)) := \psi(2dt_2+b).
\end{equation}
By \cite[Proposition~3.3]{Mo}, $\Sigma$ is $\psi_U$-generic. Namely
\[
\label{W U}
W^{\psi_U}(\varphi) := \int_{U(F) \backslash U(\mA)} \varphi(u)\, \psi_U^{-1}(u) \,du
 \quad\text{for $ \varphi \in V_\Sigma$},
\]
is not identically zero on $V_\Sigma$.
Now we regard $\Sigma$ as an automorphic representation
of $\mathrm{GU}_{2,2}$ by the accidental isomorphism
\eqref{acc isom2} and
let $\Pi_{\Sigma} =\Pi_{1}^\prime \boxplus \cdots \boxplus \Pi_\ell^\prime$ denote the base change lift of 
$\Sigma\mid_{\mathrm{U}_{2,2}}$ to $\mathrm{GL}_4(\mA_E)$ where 
$\Pi_i^\prime$ is an irreducible cuspidal
automorphic representations  of $\mathrm{GL}_{m_i}(\mA_E)$.
Here the existence of $\Pi_\Sigma$  follows from \cite{KMSW}.

Recall that in Section~\ref{s:RI H gso}, 
the Petersson inner products on $G\left(\mA\right)$
and $\mathrm{GSO}_{4,2}(\mA)$ using the Tamagawa measures,
denoted respectively as $\left(\,\, ,\,\,\right)$ and $\left<\,\, ,\,\,\right>$,
are introduced.
Moreover at each place $v$ of $F$, we choose and fix an  $G(F_v)$-invariant hermitian
inner product $\left(\,\, ,\,\,\right)_v$ on $V_{\pi^\circ_v}$ so that 
the decomposition formula
\eqref{e:inner decomp H} holds.
Similarly at each place $v$, we choose and fix a $\mathrm{GSO}_{4,2}(F_v)$-invariant hermitian
inner product $\langle\,\,,\,\,\rangle_v$ on $V_{\Sigma_v}$
so that the decomposition formula
\begin{equation}\label{e:inner decomp GSO_4,2}
\langle \phi_1,\phi_2\rangle=
\prod_v\,\langle \phi_{1,v},\phi_{2,v}\rangle_v
\quad
\text{for $\phi_i=\otimes_v\,\phi_{i,v}\in V_{\Sigma}\quad(i=1,2)$}
\end{equation}
holds.

Then as in Section~\ref{s:def local bessel}, 
at each place $v$ of $F$,
we may define a local  period $\mathcal{W}_v(\varphi_v)$
for $\varphi_v \in V_{\Sigma_v}$ by the stable integral
\begin{equation}
\label{e: local integral whittaker}
\mathcal{W}_v(\varphi_v):=
\int_{U(F_v)}^{\mathrm{st}}
\frac{\langle \Sigma_v\left(n_v\right)\varphi_v,
\varphi_v \rangle_v}{{\langle \varphi_v, \varphi_v \rangle_v}}
\cdot \psi_U^{-1}\left(n_v\right)\, dn_v
\end{equation}
when $v$ is finite.
When $v$ is archimedean, we use
the Fourier transform to define $\mathcal{W}_v(\varphi_v)$.
 See \cite[Proposition~3.5, Proposition~3.15]{Liu2} for 
 the details.

We shall prove the following theorem,
namely 
the explicit formula for
the Whittaker periods on $V_\Sigma$,
in \ref{s: pf wh gso}. 
\begin{theorem}
\label{gso whittaker}
For a non-zero decomposable vector $\varphi = \otimes \varphi_v \in V_\Sigma$, we have
\begin{equation}
\label{e:gso whittaker}
\frac{|W^{\psi_U}(\varphi)|^2}{ \langle \varphi, \varphi \rangle} = 
\frac{1}{2^\ell}\cdot \frac{\prod_{j=1}^4 L \left(j, \chi_{E}^j \right)}{L \left(1, \Pi_{\Sigma}, \mathrm{As}^+ \right)}\cdot \prod_v 
\mathcal{W}_v^\circ(\varphi_v)
\end{equation}
where 
\[
\mathcal{W}_v^\circ(\varphi_v):=
\frac{L\left(1,\Pi_{\Sigma_v}, \mathrm{As}^+\right)}{
\prod_{j=1}^4 L \left(j, \chi_{E_v}^j \right)}
\cdot \mathcal{W}_v(\varphi_v).
\]
Here we note that $\mathcal{W}_v^\circ(\varphi_v) = 1$ at almost all places $v$ by Lapid and Mao~\cite{LM}.
\end{theorem}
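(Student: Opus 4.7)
The plan is to deduce the explicit Whittaker formula on $\Sigma = \theta_\psi(\pi^\circ)$ from the corresponding formula on $G = \mathrm{GSp}_2$ (to be proved in Appendix~\ref{appendix A}). The bridges between the two periods are, first, a pull-back identity for the Whittaker period along the theta correspondence for the dual pair $(G, \mathrm{GSO}_{4,2})$, and second, the Rallis inner product formula of Proposition~\ref{RIPF H}.

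First I would derive a Whittaker analogue of Proposition~\ref{pullback Bessel gsp}. Namely, for decomposable $f \in V_{\pi^\circ}$ and $\phi \in \mathcal{S}(Z_+(\mA))$, I expect
\[
W^{\psi_U}(\theta(f;\phi)) = \int_{U_G(\mA) \backslash G^1(\mA)} W^{\psi_{U_G}}(\pi^\circ(g) f)\, (\omega_\psi(g, 1)\phi)(v_0)\, dg
\]
for a suitable distinguished vector $v_0 \in Z_+(F)$. The derivation proceeds by unfolding $W^{\psi_U}$ stage by stage along the filtration $N_0 \triangleleft N_0 N_1 \triangleleft N_{4,2} \triangleleft N_{4,2}\tilde{U} = U$, computing at each step the Fourier transform of $(\omega_\psi(1,\cdot)\phi)$ along the corresponding unipotent piece exactly as in Lemma~\ref{pull comp 2} and its companion computation for $N_0$ and $\tilde U$. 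Only a single orbit of $G^1(F)$ acting on the relevant vectors in $Z_+(F)$ supports a non-trivial contribution, leaving the $\psi_{U_G}$-Whittaker period of $f$ as the inner datum.

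Next, squaring the pull-back and invoking the similitude Rallis formula~\eqref{RIPF H GSO} for the denominator produces, after combining the two copies of the pull-back integral with the doubling variable, a global identity of the form
\[
\frac{|W^{\psi_U}(\theta(f;\phi))|^2}{\langle \theta(f;\phi), \theta(f;\phi)\rangle} = \frac{|W^{\psi_{U_G}}(f)|^2}{(f,f)_\pi} \cdot \frac{L(3,\chi_E)\, L(2,\mathbf{1})\, L(4,\mathbf{1})}{L(1,\pi,\mathrm{std}\otimes\chi_E)} \cdot \prod_v \alpha_v^{\mathrm{loc}},
\]
where each local factor $\alpha_v^{\mathrm{loc}}$ is a ratio of a local Whittaker pull-back weight by the doubling local zeta integral $Z_v^\sharp$ of Proposition~\ref{RIPF H}, and equals $1$ at almost every place. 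Substituting the explicit Whittaker formula for $\pi$ proved in Appendix~\ref{appendix A}, using the Asai factorization
\[
L(s,\Pi_\Sigma,\mathrm{As}^+) \cdot L(s,\mathbf{1}) = L(s,\pi,\mathrm{Ad}) \cdot L(s,\pi,\mathrm{std}\otimes\chi_E)
\]
that arises from the base change $\mathrm{BC}(\Pi_\Sigma) \simeq \Pi_\pi \boxplus (\Pi_\pi \otimes \chi_E)$, and absorbing the remaining zeta values, one recovers the shape of~\eqref{e:gso whittaker}. The combinatorial factor $2^{-\ell}$ matches the $2^{-\ell(\pi)}$ coming from $|\mathcal{S}(\phi_\pi)|$ in the Whittaker formula for $\pi$ via Lemma~\ref{compo number}.

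The main obstacle will be the identification of the local factor $\alpha_v^{\mathrm{loc}}$ with the normalized local Whittaker period $\mathcal{W}_v^\circ(\varphi_v)$ at ramified finite and at archimedean places. At unramified finite places the match is a standard computation using the spherical vectors in the Weil representation together with the Casselman--Shalika formula. At the remaining places one needs to verify that, for the decomposition $\varphi_v = \theta(f_v;\phi_v)$, the product of the local pull-back weight $|(\omega_{\psi,v}(g,1)\phi_v)(v_0)|^2$ integrated against the local matrix coefficient of $\pi^\circ_v$, together with the local doubling zeta integral $Z_v^\sharp$, matches the stable integral of~\eqref{e: local integral whittaker}, up to the prescribed Asai $L$-factor ratio. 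I would carry this out by a see-saw argument relating $(G, \mathrm{GSO}_{4,2})$ to $(\mathrm{GL}_2, \mathrm{GSO}_{4,2})$ (or, more directly, by following the chain of theta lifts sketched in Section~\ref{ss:method} that ultimately reduces to the local identities of Lapid--Mao~\cite{LM} for $\mathrm{GL}_n$); at archimedean places this reduction must moreover be compatible with the Fourier-regularized definition of $\mathcal{W}_v$, which is the most delicate technical point.
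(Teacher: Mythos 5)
Your route is essentially the paper's: pull back the $\psi_U$-Whittaker period along the theta lift (the paper does not re-derive this but quotes it from \cite{Mo} as Proposition~\ref{mo 40}), combine it with the similitude Rallis inner product formula \eqref{RIPF H GSO} and the $\mathrm{GSp}_2$ Whittaker formula of Appendix~\ref{appendix A}, reduce \eqref{e:gso whittaker} to a local identity, and prove that identity by a local analogue of the global unfolding (Propositions~\ref{1st local id} and \ref{main identity whittaker}), with stable integrals at finite places and partial Fourier transforms of tempered distributions at archimedean places. Two bookkeeping points in your sketch need correction: the pull-back integral runs over $N(\mA)\backslash G^1(\mA)$ (the Siegel unipotent radical), not $U_G(\mA)\backslash G^1(\mA)$ --- the integrand is only left $N$-invariant, since the $\mathrm{GL}_2$-unipotent part of $U_G$ genuinely moves the distinguished vector (cf.\ \eqref{z0 w(b)}); and the $L$-function relation should read $L\left(s,\Pi_{\Sigma},\mathrm{As}^+\right)=L\left(s,\Pi_{\pi},\mathrm{Sym}^2\right)L\left(s,\pi,\mathrm{std}\otimes\chi_E\right)L\left(s,\chi_E\right)$ as in Lemma~\ref{pi sigma identity}, rather than your version with $L(s,\mathbf{1})$ on the left (note $\Pi_\Sigma=\mathrm{BC}(\Pi_{\pi^\circ})$, not the other way around). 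Finally, the paper does not need Casselman--Shalika or a see-saw at the remaining places: the normalization $\mathcal{W}_v^\circ(\varphi_v)=1$ at almost all $v$ is quoted from Lapid--Mao, and the local matching at every place is handled uniformly by the single bilinear identity \eqref{e: weil hermitian13}, proved by Fourier inversion mirroring the global computation.
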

%
%
%
%
%
Before proceeding to the proof of Theorem~\ref{gso whittaker},
by assuming it, we prove the following theorem, namely
the explicit formula for the Bessel periods on $\mathrm{GU}\left(4\right)$.
%
%
%
\begin{theorem}
\label{gsu Bessel}
Let $(\pi, V_\pi)$ be an irreducible cuspidal tempered automorphic representation of $G_D(\mA)$ with trivial central character. 
Suppose that
$\pi$ has the $\left(\xi,\Lambda,\psi\right)$-Bessel period and that $\pi$ is neither of type I-A nor type I-B.
Let $\pi_+^B$ denote the unique irreducible constituent of $\pi|_{G_D(\mA)^+}$
which has  the $\left(\xi,\Lambda,\psi\right)$-Bessel period.
We denote by $\left(\sigma, V_\sigma\right)$
the theta lift of $\pi_+^B$ to $\mathrm{GSU}_{3,D}$,
which is an irreducible cuspidal automorphic representation by Lemma~\ref{nonzero lemma (1)} and Lemma~\ref{irr lemma}. 

Then for a non-zero decomposable vector $\varphi = \otimes \varphi_v \in V_\sigma$, we have
\[
\frac{|\mathcal B_{X, \psi. \Lambda}(\varphi)|^2}{(\varphi, \varphi)} =\frac{1}{2^\ell} \left( \prod_{j=1}^4 L(1, \chi_E^j) \right) \frac{  L \left(\frac{1}{2}, \sigma \times \Lambda^{-1} \right)}{L(1, \pi, \mathrm{std} \otimes \chi_E) L(1, \chi_E)} \prod_v \alpha_{\Lambda_v, \psi_{X, v}}^{\natural}(\varphi_v) 
\]
where 
\[
\alpha_{\Lambda_v, \psi_{X, v}}^{\natural}(\varphi_v) =
\left( \prod_{j=1}^4 L(1, \chi_{E, v}^j) \right)^{-1} \frac{L(1, \pi_v, \mathrm{std} \otimes \chi_{E,v}) L(1, \chi_{E_v})}{ L \left(\frac{1}{2}, \sigma_v \times \Lambda_v^{-1} \right)}
\cdot \frac{\alpha_{\Lambda_v, \psi_{X, v}}(\varphi_v)}{(\varphi_v, \varphi_v)_v}
\]
and $X\in D^\times$ is taken so that $\xi=S_X$
in \eqref{def of S_X}.
\end{theorem}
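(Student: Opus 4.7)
The plan is to combine the main theorem of our earlier paper \cite{FM3} (the refined Gan-Gross-Prasad conjecture for Bessel periods on $\mathrm{GU}_{4,\varepsilon}$) with Theorem~\ref{gso whittaker} (the explicit Whittaker period formula on $\mathrm{GSO}_{4,2}$), threading them together by means of the accidental isomorphisms $\Phi_D$ of \eqref{acc isom1} and $\Phi$ of \eqref{acc isom2}, together with the theta correspondence for the dual pair $(\mathrm{GU}_{4,\varepsilon},\mathrm{GU}_{2,2})$, following the roadmap described in \ref{ss:method}.

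First I would transport $\sigma$ to an irreducible cuspidal automorphic representation of $\mathrm{GU}_{4,\varepsilon}(\mA)$ via $\Phi_D$. Exactly as in \ref{s:Proof of the statement (2)}, the temperedness of $\pi$ propagates to $\sigma$ through the results of Atobe--Gan \cite{AG}, Paul \cite{Pau,Pau3}, Li--Paul--Tan--Zhu \cite{LPTZ} and Adams--Barbasch \cite{AB}. The $(X,\Lambda,\psi)$-Bessel period on $\mathrm{GSU}_{3,D}$ becomes an $(e,\Lambda,\psi)$-Bessel period on $\mathrm{GU}_{4,\varepsilon}$ for a suitable anisotropic vector $e$ of the form ${}^t(0,\ast,\ast,0)$, as one checks by comparing the unipotent radicals and stabilizers in \ref{d: bessel for GSU_3,D} and \ref{d: Bessel on G_4,varepsilon}. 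Consequently the refined formula of \cite{FM3} applies, and it reduces the proof to the explicit Whittaker period formula for the further theta lift $\Theta_{\psi,(\Lambda^{-1},\Lambda^{-1})}(\sigma)$ of $\sigma$ to $\mathrm{GU}_{2,2}(\mA)$. By the proof of Theorem~\ref{ggp SO}~(\ref{theorem1-1-(1)}) and Proposition~\ref{exist gen prp}, this theta lift is non-zero, cuspidal, tempered, and globally generic; under $\Phi$ it coincides with the representation $\Sigma=\theta_\psi(\pi_+^\circ)$ of $\mathrm{GSO}_{4,2}(\mA)$ that is the subject of Theorem~\ref{gso whittaker}, where $\pi^\circ$ is the unique globally generic member of the near-equivalence class of $\pi$. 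Theorem~\ref{gso whittaker} then furnishes the required input, and the common exponent $\ell=\ell(\pi)$ appears on both sides because $\pi,\pi^\circ,\sigma,\Sigma$ share one Arthur parameter by Lemma~\ref{compo number} and Lemma~\ref{comp number not dep on S}.

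The bulk of the remaining work is the verification of the $L$-factor and constant dictionary. On the $L$-side, the identities $L(1,\Pi_\Sigma,\mathrm{As}^+)=L(1,\pi,\mathrm{std}\otimes\chi_E)\,L(1,\chi_E)$ and $L(\tfrac{1}{2},\Sigma\times\Lambda)=L(\tfrac{1}{2},\sigma\times\Lambda^{-1})$ follow from the explicit computation of local theta correspondences in \cite{GT0,Mo} at unramified places together with Arthur's classification at the remaining ones. The main obstacle is the meticulous matching of the local factors: one must align the local Petersson inner products through \eqref{e:inner decomp H_D} and \eqref{e:inner decomp GSO_4,2}, and then show that the ratio of the local Bessel integrand $\alpha_{\Lambda_v,\psi_{X,v}}(\varphi_v)$ on $\mathrm{GSU}_{3,D}(F_v)$ to the corresponding local Whittaker integrand $\mathcal{W}_v^\circ$ on $\mathrm{GSO}_{4,2}(F_v)$, passed through the two intermediate local theta correspondences $(\mathrm{GU}_{4,\varepsilon},\mathrm{GU}_{2,2})$ and $(\mathrm{GSO}_{4,2},\mathrm{GSp}_2)$, reduces via the Rallis inner product formula of Proposition~\ref{RIPF HD} and local unramified doubling zeta integral evaluations to the product structure asserted in the statement. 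The global Haar-measure constants $C_{G_D}$ and $C_G$ are handled uniformly by the Tamagawa normalizations fixed in \ref{ss: measures}.
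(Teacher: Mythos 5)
Your top-level route is the same as the paper's: transport $\sigma$ to $\mathrm{GU}_{4,\varepsilon}$ via the accidental isomorphism, use \cite[Theorem~A.1]{FM3} to reduce the Bessel formula to the explicit Whittaker formula for the theta lift $\Theta_{\psi,(\Lambda^{-1},\Lambda^{-1})}(\sigma)$ to $\mathrm{GU}_{2,2}$, and then invoke Theorem~\ref{gso whittaker}. The gap is at the step where you simply assert that this theta lift ``under $\Phi$ coincides with $\Sigma=\theta_\psi(\pi_+^\circ)$'': Theorem~\ref{gso whittaker} applies only to representations exhibited as $\theta_\psi(\pi^\circ_+)$ for a globally generic, cuspidal, tempered $\pi^\circ$ of $G(\mA)$ whose restriction to $\mathcal G$ is irreducible, and producing such a presentation of $\Theta_{\psi,(\Lambda^{-1},\Lambda^{-1})}(\sigma)$ is precisely the content of the paper's proof. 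There one lifts $\Theta(\sigma)$ back down: since it is generic and its twisted exterior square $L$-function has a pole at $s=1$, it carries the unitary Shalika period by \cite[Theorem~4.1]{FM0}, hence by \cite[Theorem~B]{Mo} its theta lift $\pi'_+$ to $G(\mA)^+$ is a non-zero irreducible cuspidal globally generic representation; one then chooses $\pi'\supset\pi'_+$ and checks — and this is exactly where the hypothesis that $\pi$ is neither of type I-A nor I-B enters, a hypothesis your argument never actually uses — that $\pi'\otimes\chi_E$ is not nearly equivalent to $\pi'$ (Lemma~\ref{irr lemma}), so that $\pi'|_{\mathcal G}$ is irreducible and Theorem~\ref{gso whittaker} may be applied with $\pi^\circ=\pi'$ and $\Sigma=\Theta(\sigma)$. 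Your appeal to ``the proof of Theorem~\ref{ggp SO}~(1) and Proposition~\ref{exist gen prp}'' gestures at such a descent but does not discharge the identification of $\Theta(\sigma)$ with $\theta_\psi(\pi'_+)$ nor the verification of the irreducible-restriction hypothesis.

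Your closing paragraph also misplaces the remaining work. In the paper's proof of this theorem no Rallis inner product formula and no doubling zeta integrals appear: all local matching between Bessel periods on $\mathrm{GU}(4)$ and Whittaker periods on $\mathrm{GU}_{2,2}$ is already packaged in \cite[Theorem~A.1]{FM3}, while Proposition~\ref{RIPF HD} and the doubling zeta integrals belong to the later reduction of Theorem~\ref{ref ggp} to the present theorem (Section~7), not here. Moreover the $L$-function identity you quote is false as stated: by Lemma~\ref{pi sigma identity} one has $L(1,\Pi_{\Sigma},\mathrm{As}^+)=L(1,\pi,\mathrm{std}\otimes\chi_E)\,L(1,\Pi_{\pi},\mathrm{Sym}^2)\,L(1,\chi_E)$, and the $\mathrm{Sym}^2$ factor cannot be dropped (the degrees, $16$ versus $6$, do not even match).
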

%
%
%
\begin{proof}
Let us regard $\sigma$ as an automorphic representation of $\mathrm{GU}(4)$ with trivial central character via the accidental isomorphisms $\Phi$
\eqref{acc isom2} or $\Phi_D$ \eqref{acc isom1},
depending whether $D$ is split or not.
Let $\theta(\sigma) = \Theta_{\psi, (\Lambda^{-1},\Lambda^{-1})}(\sigma)$ denote the theta lift of $\sigma$ to $\mathrm{GU}_{2,2}$ with respect to $\psi$ and 
$(\Lambda^{-1}, \Lambda^{-1})$.
By \cite[Proposition~3.1]{FM3}, $\theta(\sigma)$ is globally generic
and, in particular, non-zero.
By the same argument as in
the proof of \cite[Theorem~1]{FM3}, we see that $\theta(\sigma)$ is 
cuspidal and hence irreducible by Remark~\ref{theta irr rem}
and \ref{theta irr rem D}.
Moreover by the unramified computations in \cite{Ku2} and \cite[(3.6)]{Mo}, we see that 
$L^S(s, \Sigma, \wedge_t^2)$ has a pole at $s=1$
 when $S$ is 
 a sufficiently large finite set  of places of $F$ containing all archimedean places,
where 
$L^S(s, \Sigma, \wedge_t^2)$ denotes the twisted exterior square $L$-function of $\Sigma$ (see \cite[Section~2.1.1]{FM0} for the definition).
Since $\theta(\sigma)$ is generic, \cite[Theorem~4.1]{FM0}  implies
 that it has the unitary Shalika period defined in \cite[(2.5)]{FM0}.
Then, by \cite[Theorem~B]{Mo}, the theta lift of $\theta(\sigma)$ to $G(\mA)^+$,
which we denote by  $(\pi_{+}^\prime, V_{\pi_{+}^\prime})$,
is an irreducible cuspidal globally generic automorphic representation
of $G\left(\mA\right)^+$.
We note that $\pi_+^B$ is nearly equivalent to $\pi_+^\prime$.

Let us take an irreducible cuspidal automorphic representation $(\pi^\prime, V_{\pi^\prime})$ of $G(\mA)$
such that $V_{\pi^\prime}|_{G(\mA)^+} \supset V_{\pi_+^\prime}$. 
Then $\pi^\prime$ is globally generic. 
Moreover
$\pi^\prime \otimes \chi_E$ is not nearly equivalent to $\pi^\prime$
by our assumption on $\pi$.
Hence $\pi^\prime|_{\mathcal{G}}$ is irreducible.
Thus we may apply Theorem~\ref{gso whittaker},
taking  $\pi^\circ = \pi^\prime$ and $\Sigma = \theta(\sigma)$,
and we obtain the explicit formula 
for  the Whittaker periods on $\theta(\sigma)$.
Then by \cite[Theorem~A.1]{FM3}, the required explicit formula for the
Bessel periods follows.
\end{proof}
%
%
%
%
%
%
%
%
\subsection{Proof of Theorem~\ref{gso whittaker}}
\label{s: pf wh gso}
We reduce Theorem~\ref{gso whittaker} to a certain local identity
in \ref{Reduction to a local identity}
and then prove the local identity in \ref{ss: local pull-back computation}.

As we stated in the beginning of this section, what we do essentially is to 
deduce the explicit formula \eqref{e:gso whittaker} 
for the Whittaker periods on $\mathrm{GSO}_{4,2}$ from
\eqref{e:gsp whittaker} below, the one for the Whittaker periods on $G$.
%
\subsubsection{Explicit formula for the Whittaker periods on $G=\mathrm{GSp}_2$}
Let $U_G$ denote the maximal unipotent subgroup of $G$. 
Namely
\begin{equation}\label{d: U_G}
U_G := \left\{ m\left(n\right)\begin{pmatrix}1_2&X\\ 0&1_2 \end{pmatrix} : X \in \mathrm{Sym_2}, n \in N_2 \right\}
\end{equation}
where $m\left(h\right)=\begin{pmatrix} h&0 \\0 &{}^{t}h^{-1}\end{pmatrix} $ for $h\in\mathrm{GL}_2$
and $N_2$ denotes the group of upper unipotent matrices in $\mathrm{GL}_2$.
Then we define a non-degenerate character $\psi_{U_G}$ of $U_G(\mA)$ by 
\begin{equation}
\label{def:psi_UG}
\psi_{U_G}(u) := \psi(u_{1\,2}+d \,u_{2\,4})
\quad\text{for $u=\left(u_{i\,j}\right)\in U_G\left(\mA\right)$}.
\end{equation}
Then for an automorphic form $\phi$ on $G(\mA)$, we define 
the Whittaker period $W_{\psi_{U_G}}(\phi)$ 
of $\phi$ by 
\[
\label{W U_G}
W_{\psi_{U_G}}(\phi) = \int_{U_G(F) \backslash U_G(\mA)} \phi(n) \,\psi_{U_G}^{-1}(n) \,dn.
\]
The following theorem shall be  proved in Appendix~\ref{appendix A}.
%
%
%
\begin{theorem}
\label{gsp whittaker}
Suppose that $\left(\pi,V_{\pi}\right)$ is an irreducible cuspidal
tempered globally generic automorphic representation of $G\left(\mA\right)$.
Let $\Pi_{\pi} = \Pi_1 \boxplus \cdots \boxplus \Pi_k$ denote the functorial lift of $\pi$ to $\mathrm{GL}_4(\mA)$. 

Then for any non-zero decomposable vector $\varphi = \otimes \varphi_v \in V_{\pi}$, we have
\begin{equation}
\label{e:gsp whittaker}
\frac{|W_{\psi_{U_G}}(\varphi)|^2}{\left( \varphi, \varphi \right)} =\frac{1}{2^k}\cdot \frac{\prod_{j=1}^2\xi_F(2j)}{L\left(1, \Pi_{\pi}, \mathrm{Sym}^2\right)} \cdot \prod_v \mathcal{W}_{G,v}^\circ(\varphi_v).
\end{equation}
Here $\mathcal{W}^\circ_{G, v}(\varphi_v)$ is defined by
\[
\label{mathcal W_G}
\mathcal{W}^\circ_{G, v}(\varphi_v) =\frac{L\left(1, \Pi_{\pi, v}, \mathrm{Sym}^2\right)}{\prod_{j=1}^2 \zeta_{F_v}(2j)} \mathcal{W}_{G, v}(\varphi_v)
\]
and $\mathcal{W}_{G, v}(\varphi_v)$ is defined by
\[
\mathcal{W}_{G, v}(\varphi_v) =  \int_{U_G(F_v)}^{st} 
\frac{\left( \pi_v^\circ(n)\varphi_v, \varphi_v \right)}{\left( \varphi_v, \varphi_v \right)} \,\psi_{U_G}^{-1}(n) \, dn
\]
when $v$ is finite
and by the Fourier transform when $v$ is archimedean.
\end{theorem}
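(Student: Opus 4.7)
\medskip

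The plan is to reduce the explicit Whittaker formula on $G=\mathrm{GSp}_2$ to the Lapid--Mao formula on $\mathrm{PGL}_n$ via theta correspondence, exploiting the accidental isomorphisms $\mathrm{PGSO}_{2,2}\simeq \mathrm{PGL}_2\times\mathrm{PGL}_2$ and $\mathrm{PGSO}_{3,3}\simeq \mathrm{PGL}_4$. The argument splits according to the shape of the functorial lift $\Pi_\pi=\Pi_1\boxplus\cdots\boxplus\Pi_k$, i.e.\ whether $k=1$ (stable case) or $k=2$ (endoscopic case).

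First I would consider the theta lifts $\theta_\psi^{X,Y_m}(\pi)$ of $\pi$ to $\mathrm{GSO}_{m,m}$ for $m=2,3$. Since $\pi$ is tempered and globally generic, standard results on theta towers (Rallis tower property together with the vanishing of local theta lifts on $\mathrm{GSO}_{1,1}$ for generic representations, as in Ginzburg--Rallis--Soudry) show that: if $k=2$, then $\theta_\psi^{X,Y_2}(\pi)$ is a non-zero irreducible cuspidal globally generic representation $\sigma_2$ of $\mathrm{GSO}_{2,2}(\mathbb A)$; if $k=1$, then $\theta_\psi^{X,Y_2}(\pi)=0$ and $\theta_\psi^{X,Y_3}(\pi)=\sigma_3$ is a non-zero irreducible cuspidal globally generic representation of $\mathrm{GSO}_{3,3}(\mathbb A)$. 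In either case, via the accidental isomorphism, $\sigma_m$ corresponds to an irreducible cuspidal globally generic representation of $\mathrm{PGL}_2(\mathbb A)\times \mathrm{PGL}_2(\mathbb A)$ (respectively $\mathrm{PGL}_4(\mathbb A)$) whose base data is $(\Pi_1,\Pi_2)$ (resp.\ $\Pi_\pi$).

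Next I would establish a pull-back formula expressing the Whittaker period $W_{\psi_{U_G}}(\varphi)$ in terms of a Whittaker-type period on $\sigma_m$, in the spirit of Propositions~\ref{pullback Bessel gsp} and \ref{pullback Bessel gsp innerD}. Concretely, one chooses a suitable decomposition of the maximal unipotent of $\mathrm{GSO}_{m,m}$ so that the theta integral, after conjugation and unfolding against the character $\psi_{U_G}$, collapses to a single orbit contribution, and the resulting pull-back identity takes the form
\[
W^{\psi_{U_{m,m}}}(\theta(\varphi;\phi)) \;=\; \int_{U_G(\mathbb A)\backslash G^1(\mathbb A)} W_{\psi_{U_G}}(\pi(g)\varphi)\,\bigl(\omega_\psi(g,1)\phi\bigr)(v_\star)\,dg
\]
for an appropriate base-point $v_\star$, where $U_{m,m}$ is the chosen maximal unipotent of $\mathrm{GSO}_{m,m}$ with non-degenerate character. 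This should be combined with the Rallis inner product formula analogous to Proposition~\ref{RIPF H} (expressing $\langle\theta(\varphi;\phi),\theta(\varphi;\phi)\rangle / (\varphi,\varphi)_\pi$ in terms of $L(1,\pi,\mathrm{std})$ or $L(1,\pi,\mathrm{std}\otimes\chi)$ and local doubling zeta integrals) to eliminate $(\varphi,\varphi)_\pi$ in favor of $\langle\theta(\varphi;\phi),\theta(\varphi;\phi)\rangle$.

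Applying the Lapid--Mao explicit Whittaker formula on $\mathrm{PGL}_n$ (Lapid--Mao~\cite{LM}) to $\sigma_m$, one obtains $|W^{\psi_{U_{m,m}}}(\theta(\varphi;\phi))|^2/\langle\theta(\varphi;\phi),\theta(\varphi;\phi)\rangle$ as a product of the appropriate ratio of $L$-values and local factors. Combining the pull-back identity, the Cauchy--Schwarz-type reduction to a single decomposable vector, and the Rallis inner product formula then yields the desired identity for $|W_{\psi_{U_G}}(\varphi)|^2/(\varphi,\varphi)_\pi$; matching $L$-factors is achieved by the unramified computations of Kudla and the see-saw identity $L(s,\Pi_\pi,\mathrm{Sym}^2)=L(s,\sigma_m,\mathrm{std})$ in the stable case (resp.\ its factorization $\prod_i L(s,\Pi_i,\mathrm{Sym}^2)$ in the endoscopic case), together with the tower relations for the completed zeta factors $\zeta_F(2j)$. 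The factor $2^{-k}$ arises from the passage between $\pi$ and its restriction to the index-two subgroup $\mathcal G\subset G(\mathbb A)$, which mirrors the analysis leading to Lemma~\ref{compo number} and the global multiplicity of $\sigma_m$ inside its $L$-packet.

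The main obstacle will be the archimedean analysis: one must justify the pull-back identity and the Rallis inner product formula as identities of regularized/stable integrals in the sense of \cite{Liu2}, and verify the compatibility of local Whittaker periods $\mathcal W_{G,v}^\circ(\varphi_v)$ on $G(F_v)$ with their counterparts on $\mathrm{GSO}_{m,m}(F_v)$ at archimedean places. This is delicate because one needs local identities that intertwine the stable-integral definition on $G(F_v)$ with the Fourier-transform definition on $\mathrm{GSO}_{m,m}(F_v)$, and such identities must be matched factor-by-factor with ratios of archimedean $L$-values. A secondary difficulty is verifying that the chosen extension of $\pi^\circ$ to $G(\mathbb A)$ does not affect the resulting formula, which requires a careful bookkeeping of central characters and the distinction between $G(\mathbb A)^+$ and $G(\mathbb A)$, again handled by the irreducibility assumption $\pi|_{\mathcal G}$ irreducible that is used throughout.
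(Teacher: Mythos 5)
Your overall strategy — split according to $k=1$ or $k=2$, move to $\mathrm{GSO}_{3,3}\simeq\mathrm{PGL}_4$ resp. $\mathrm{GSO}_{2,2}\simeq\mathrm{PGL}_2\times\mathrm{PGL}_2$ by theta correspondence, and import the Lapid--Mao formula — is the same as the paper's, and your treatment of the $k=1$ case (pull-back of the Whittaker period on $\mathrm{GSO}_{3,3}$ of $\theta(\varphi;\phi)$ against $W_{\psi_{U_G}}$, combined with the Rallis inner product formula and a local identity of stable/regularized integrals) is exactly how the paper argues when the lift to $\mathrm{GSO}_{3,3}$ is cuspidal.

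The gap is in the $k=2$ case: your proposed pull-back identity, with the Whittaker period of $\theta(\varphi;\phi)$ on $\mathrm{GSO}_{2,2}$ on the left and $W_{\psi_{U_G}}(\pi(g)\varphi)$ under the integral on the right, does not hold for the pair $\left(\mathrm{GSp}_2,\mathrm{GSO}_{2,2}\right)$. In this "going down" direction the unfolding does not collapse to the Whittaker orbit on $G$: using the polarization $X\otimes Y_+$, the integration over the Siegel radical of $\mathrm{GSO}_{2,2}$ picks out pairs $(v_1,v_2)\in X(F)^2$ with $\langle v_1,v_2\rangle$ a fixed nonzero value, whose stabilizer in $\mathrm{Sp}_2(F)$ is an $\mathrm{SL}_2$, and the remaining unipotent integration produces a Fourier coefficient of $\varphi$ along a one-parameter (long-root) subgroup integrated over that $\mathrm{SL}_2$ — a descent-type period, not $W_{\psi_{U_G}}(\varphi)$. (This is consistent with the Ginzburg--Rallis--Soudry philosophy: Whittaker coefficients of theta lifts to a smaller group yield degenerate/descent periods.) Moreover, the accompanying inner product formula for the lift down to $\mathrm{GSO}_{2,2}$ is not of the shape of Proposition~\ref{RIPF H}, since the doubling integral is then evaluated at a different point. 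The paper avoids both problems by running the correspondence in the opposite direction: one realizes the relevant piece of $\pi$ as a theta lift from $\mathrm{GO}_{2,2}$, expresses the $\mathrm{GSp}_2$-Whittaker period of that lift through the $\mathrm{GL}_2\times\mathrm{GL}_2$ Whittaker periods (Proposition~\ref{GRS identity}), constructs the invariant pairing on $\pi_v$ via the local doubling-type integral $\mathcal{Z}$, proves the corresponding local pull-back identity, and then applies the Lapid--Mao formula for $\mathrm{GL}_2$; the factor $2^{-2}$ there comes from the Tamagawa-volume constants in the two $\mathrm{GL}_2$ Lapid--Mao formulas rather than from the index-two subgroup $\mathcal{G}$. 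To repair your argument in the endoscopic case you would have to reverse the direction of the lift in exactly this way.
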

%
%
%
\subsubsection{Reduction to a local identity}
\label{Reduction to a local identity}
Let us go back to the situation
stated in the beginning of \ref{s:Explicit formulas whittaker}.

First we note that the unramified computation in \cite{Ku2}
implies the following lemma.
\begin{lemma}
\label{pi sigma identity}
There exists a finite set $S_0$ of places of $F$ containing all archimedean
places such that for a place $v\notin S_0$, 
we have 
\[
L \left(1, \Pi_{\Sigma_v}, \mathrm{As}^+ \right) = L\left(1, \pi_v, \mathrm{std} \otimes \chi_E\right) L\left(1, \Pi_{\pi,v}, \mathrm{Sym}^2\right)L\left(1, \chi_{E_v}\right).
\]
\end{lemma}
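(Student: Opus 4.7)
The plan is to verify the asserted identity at each non-archimedean place $v$ outside a sufficiently large finite set $S_0$, namely at all $v$ where $\pi_v$, $\chi_{E,v}$, $\psi_v$, and the relevant theta data are unramified. At such places both sides of the identity are rational functions of the Satake parameters of $\pi_v$, so the statement reduces to an identity of Euler factors which can be checked by direct computation.

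The key input is the unramified local theta correspondence for the dual pair $\left(G, \mathrm{GSO}_{4,2}\right)$ as computed in \cite{Mo} and \cite{Ku2}. Writing the Satake parameters of $\pi_v$ as $\left(a, b, b^{-1}, a^{-1}\right)$---the form forced by triviality of the central character---this computation pins down the local $L$-parameter of $\Sigma_v$ as a representation of $\mathrm{GSO}_{4,2}(F_v)$, and, via the accidental isomorphism \eqref{acc isom2}, of $\Sigma_v$ as a representation of $\mathrm{GU}_{2,2}(F_v)$. The base change $\Pi_{\Sigma, v}$ to $\mathrm{GL}_4(E_v)$ is then read off by restricting the local $L$-parameter of $\Sigma_v$ to $W_{E_v}$. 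Concretely, the identity \eqref{e: pole at 1} at $v$, namely $L(s, \Sigma_v, \mathrm{std}) = L(s, \pi_v, \mathrm{std} \otimes \chi_{E,v})\cdot\zeta_{F_v}(s)$, already fixes the six-dimensional $\wedge^2$ of $\Pi_{\Sigma, v}$ in terms of $\phi_{\pi,v}$; this, combined with the fact that $\Pi_{\Sigma,v}$ is a base change from a rank-four unitary group, determines $\Pi_{\Sigma,v}$.

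With $\Pi_{\Sigma, v}$ identified, I would expand both sides in Satake parameters. On the left, $L(1, \Pi_{\Sigma_v}, \mathrm{As}^+)$ expands via the standard Asai recipe, with separate formulas in the split and inert cases of $v$ in $E \slash F$. On the right, I would use the decomposition $\phi_{\pi,v} \otimes \phi_{\pi,v} = \mathrm{Sym}^2 \phi_{\pi,v} \oplus \wedge^2 \phi_{\pi,v}$ together with the symplectic splitting $\wedge^2 \phi_{\pi,v} \simeq \mathbf{1} \oplus \mathrm{std}_5(\phi_{\pi,v})$. This accounts for exactly $16$ local factors on the right-hand side --- five from $L(1, \pi_v, \mathrm{std} \otimes \chi_{E,v})$, ten from $L(1, \Pi_{\pi,v}, \mathrm{Sym}^2)$, and one from $L(1, \chi_{E,v})$ --- which match the $16$ factors of the Asai$^+$ $L$-function of the $4$-dimensional parameter $\Pi_{\Sigma,v}$ one by one. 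The necessary $\chi_{E,v}$-twist emerges from the fact that $\Pi_{\Sigma,v}$ is the base change of a unitary representation.

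The main obstacle will not be the polynomial identity itself---which is mechanical once the parameters are in hand---but the bookkeeping required to align all conventions: the sign in $\mathrm{As}^+$ versus $\mathrm{As}^-$, the precise $\chi_{E,v}$-twist coming from the accidental isomorphism \eqref{acc isom2}, and the uniform treatment of the split and inert cases of $v$ in $E \slash F$. Since the lemma only asserts the identity outside some finite set $S_0$, any pathology that arises at a finite number of places can simply be absorbed by enlarging $S_0$.
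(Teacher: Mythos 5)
Your proposal is correct and is essentially the paper's own argument: the paper offers no written proof beyond the remark that the unramified computation of the theta correspondence in \cite{Ku2} implies the lemma, which is precisely your step of reading off the Satake parameters of $\Sigma_v$ (hence $\Pi_{\Sigma,v}=\mathrm{BC}(\phi_{\pi,v})$, consistent with Lemma~\ref{compo number}) and then checking the factor identity. Your verification via $\mathrm{As}^+\left(\mathrm{BC}(\phi_{\pi,v})\right)=\mathrm{Sym}^2\phi_{\pi,v}\oplus\left(\wedge^2\phi_{\pi,v}\otimes\chi_{E,v}\right)$ together with the symplectic splitting $\wedge^2\phi_{\pi,v}=\mathbf{1}\oplus\mathrm{std}_5$ is exactly the $16$-factor bookkeeping the paper leaves implicit.
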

Let us recall the following pull-back formula
 for the Whittaker period on $\Sigma = \theta_\psi\left(\pi^\circ\right)$.
\begin{proposition}\cite[p.~40]{Mo}
\label{mo 40}
Let $f \in V_{\pi^\circ}$ and $\phi \in \mathcal{S}(Z_+(\mA))$.
Then 
\begin{multline}\label{e: global w pull-back}
W^{\psi_U}(\theta(\phi; f)) = \int_{N(\mA) \backslash G^1(\mA)} 
(\omega_{\psi}(g_1, 1)\phi)( (x_{-2}, x_{-1}, 0, x_2)) 
\\
\times W_{\psi_{U_G}}(\pi^\circ(g_1) f) \, dg_1.
\end{multline}
\end{proposition}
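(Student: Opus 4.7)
The plan is to prove \eqref{e: global w pull-back} by unfolding the Whittaker integral
\[
W^{\psi_U}(\theta(f;\phi)) = \int_{U(F)\backslash U(\mA)} \theta(f;\phi)(u)\,\psi_U(u)^{-1}\,du
\]
in stages, directly extending the method used for Proposition~\ref{pullback Bessel gsp}. We decompose $U = N_0 N_1 N_2 \tilde U$ with $N_0 N_1 N_2 = N_{4,2}$, insert the definition of the theta lift, and interchange the finite sum over $Z_+(F)$ with the successive unipotent integrations. The additional integration over $\tilde U$, absent in the Bessel case, is exactly what upgrades the partially-degenerate Bessel character $\psi_U\big|_{N_{4,2}}$ to a non-degenerate Whittaker character on $U$.

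In order, the key steps are the following. First, the $N_0$-integration against the trivial character forces $\langle a_1, a_2\rangle = 0$ by the calculation \eqref{global comp1}, whereupon a standard $G^1(F)$-orbit reduction picks out the representative $(a_1, a_2) = (x_{-2}, x_{-1})$ with stabilizer $N(F)$. Second, the $N_1 N_2$-integration against $\psi_U$ restricted to $N_{4,2}$, which by \eqref{def:psi_U} is trivial on $N_1$ and equals $\psi(2dt_2)$ on $N_2$, is handled by Lemma~\ref{pull comp 2}; matching the character with the formula of Lemma~\ref{pull comp 2} pins $b_1 = 0$ together with $b_2$ proportional to $x_2$ up to the sign/normalization coming from the symplectic pairing. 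Third, the remaining $\tilde U$-integration against $b \mapsto \psi(b)$ is the new ingredient: a direct calculation shows that $(1, \tilde u(b))$ acts on $Z_+$ by $(a_1, a_2; b_1, b_2) \mapsto (a_1 + b\, a_2, a_2; b_1, b_2)$ with trivial Weil factor, so the integration against $\psi(b)^{-1}$ collapses the remaining freedom and selects the single evaluation point $(x_{-2}, x_{-1}, 0, x_2)$ that appears in the statement.

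Once the theta-sum has collapsed to this single term, the argument concludes exactly as in \eqref{MX to TS}--\eqref{pull-back gsp complete}: one folds $\sum_{\gamma \in N(F)\backslash G^1(F)}$ into an integral over $N(\mA)\backslash G^1(\mA)$ and unfolds an inner integral over $N(F)\backslash N(\mA)$ against the character on $N(\mA)$ produced by the Weil action on the evaluation point. A direct computation identifies this character, combined with the character $\psi(u_{12})$ coming from the Siegel-unipotent direction of $G$, as the restriction of the Whittaker character $\psi_{U_G}$, so the inner integral becomes $W_{\psi_{U_G}}(\pi^\circ(g_1)f)$. The hard part will be the careful bookkeeping in step (iii) and the final fold: one must verify that the character produced on $N(\mA)$ by the $\tilde U$-integration, together with the $N_2$-character $\psi(2dt_2)$, assembles precisely into $\psi_{U_G}$, and that the various Weil-representation factors along the way conspire to give the clean evaluation $(\omega_\psi(g_1, 1)\phi)(x_{-2}, x_{-1}, 0, x_2)$ without extraneous phases.
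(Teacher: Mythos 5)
Your skeleton (unfold over $N_0$, then $N_1N_2$ via Lemma~\ref{pull comp 2}, then $\tilde U$, then fold back to an integral over $N(\mA)\backslash G^1(\mA)$) is indeed the route of \cite[Section~3.1]{Mo}, which is all the paper itself offers for this proposition, and it parallels both the proof of Proposition~\ref{pullback Bessel gsp} and the local computation in \ref{ss: local pull-back computation}. However, two of your steps are not right as stated. First, the orbit reduction you perform right after the $N_0$-integration is unjustified here: the character $\psi_U$ restricted to $N_{4,2}(\mA)$ is $\psi(2dt_2)$, which in the notation of Proposition~\ref{pullback Bessel gsp} corresponds to the rank-one matrix $X=\begin{pmatrix}0&2d\\0&0\end{pmatrix}$, so the hypothesis $\det X\ne 0$ that forced $a_1,a_2$ to be linearly independent in the Bessel case is absent. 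The conditions produced by the $N_0$, $N_1$, $N_2$ integrations (namely $\langle a_1,a_2\rangle=0$, $\langle a_2,b_1\rangle=\langle a_2,b_2\rangle=0$, $\langle a_1,b_1\rangle=0$, $\langle a_1,b_2\rangle\ne 0$) leave, besides the open orbit of $(x_{-2},x_{-1})$, the degenerate terms with $a_2=0$, and these must be shown to vanish before one may restrict to the single representative with stabilizer $N(F)$. (They do vanish, e.g.\ because such points are fixed by every $\tilde u(b)$ and carry no phase, so integrating the resulting $b$-independent expression against $\psi(b)^{-1}$ over $F\backslash\mA$ gives zero; your write-up never addresses them.)

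Second, your account of what the $\tilde U$-integration does on the main orbit is off. Once $(a_1,a_2)=(x_{-2},x_{-1})$, the conditions of Lemma~\ref{pull comp 2} already pin $b_1=0$ and $b_2$ to a multiple of $x_2$, so there is no ``remaining freedom'' for $\tilde U$ to collapse; and the direction $\psi(u_{12})$ of $\psi_{U_G}$ does not come from the Siegel unipotent of $G$. The correct bookkeeping is: the Weil action of the Siegel unipotent $N(\mA)$ on the evaluation point $z_0=(x_{-2},x_{-1},0,x_2)$ produces only the degenerate direction $\psi(d\,u_{24})$ (up to sign conventions, cf.\ \eqref{da22}), while the nondegenerate direction $\psi(u_{12})$ is obtained by transporting the $\tilde U$-integration to the symplectic side through the identity $z_0(1,\tilde u(b))=z_0(w(b),1)$ (cf.\ \eqref{z0 w(b)}), which converts it into the integration over the $\mathrm{GL}_2$-unipotent inside $U_G$ in the argument of $f$. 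This transfer identity, together with the vanishing of the degenerate orbit, is precisely the content you defer as ``the hard part,'' so as written the proposal has a genuine gap rather than a complete argument.
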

%
%
%
Suppose that $f=\otimes f_v$ and $\phi=\otimes\phi_v$.
Then by an argument similar to the one in 
obtaining \cite[(2.27)]{FM2},
when $W_{\psi_{U_G}}(f) \ne 0$, we have
\[
W^{\psi_U}(\theta(\phi; f)) = C_{G^1} \cdot W_{\psi_{U_G}}(f) \cdot \prod_v \mathcal{L}_v^\circ(\phi_v, f_v) 
\]
where 
\begin{multline*}
\mathcal{L}_v^\circ(\phi_v, f_v)  := \int_{N(F_v) \backslash G^1(F_v)} 
(\omega_{\psi_v}(g_1, 1)\phi_v)((x_{-2}, x_{-1}, 0, x_2))
\\
\times \mathcal{W}_{G, v}^\circ(\pi_v^\circ(g_1)f_v) \, dg_{1,v}
\end{multline*}
when $\phi = \otimes_v \phi_v$ and $f = \otimes_v f_v$.
We also define
\[
\label{mathcal L}
\mathcal{L}_v(\phi_v, f_v): =  \frac{\prod_{j=1}^2
\xi_f\left(2j\right)}{L\left(1,\Pi_{\pi, v},
\mathrm{Sym}^2\right)} 
\,
\frac{\mathcal{L}_v^\circ(\phi_v, f_v)}{ \mathcal{W}_{G, v}(f_v)}.
\]
Here the measures are taken as the following.
Let $dg_v$ be the measure on $G^1\left(F_v\right)$
defined by the gauge form and $dn_v$ the measure on 
$N\left(F_v\right)$ defined in the manner 
stated in
\ref{ss: measures}.
Then we take the measure $dg_{1,v}$ on $N(F_v) \backslash G^1(F_v)$ so that $dg_v=dn_v \, dg_{1,v}$.

Let $\Theta\left(\pi_v^\circ,\psi_v\right):=\operatorname{Hom}_{G(F_v)^+}
\left(\Omega_{\psi_v},\bar{\pi}_v^\circ\right)$
where $\Omega_{\psi_v}$ is the extended local Weil representation
of $G(F_v)^+ \times \mathrm{GSO}_{4,2}\left(F_v\right)$
realized on $\mathcal S\left(Z_+\left(F_v\right)\right)$,
the space of Schwartz-Bruhat functions on $Z_+\left(F_v\right)$.
We recall that the action of $G(F_v)^+ \times \mathrm{GSO}_{4,2}\left(F_v\right)$
on $\mathcal S\left(Z_+\left(F_v\right)\right)$ via $\Omega_{\psi_v}$
is defined as in the global case (e.g. see \cite[2.2]{Mo}).
We also recall that for $\Sigma=\theta_\psi\left(\pi^\circ\right)$,
we have $\Sigma=\otimes_v\,\Sigma_v$
where $\Sigma_v=\theta_{\psi_v}\left(\pi_v^\circ\right)$ is the local theta lift of $\pi_v^\circ$.

Let 
\[
\theta_v:\mathcal S\left(Z_+\left(F_v\right)\right)\otimes V_{\pi_v^\circ}\to
V_{\Sigma_v}
\]
be a $G(F_v)^+ \times \mathrm{GSO}_{4,2}\left(F_v\right)$-equivariant
linear map, which is unique up to a scalar multiplication.
Since the global mapping
\[
\mathcal S\left(Z_+\left(\mA\right)\right)\otimes V_{\pi^\circ}
\ni\left(\phi^\prime,f^\prime\right)\mapsto
\theta_\psi\left(\phi^\prime; f^\prime\right)\in V_\Sigma
\]
is $G(F_v)^+ \times \mathrm{GSO}_{4,2}\left(F_v\right)$-equivariant  at any place $v$,
by the uniqueness of $\theta_v$, we may adjust $\left\{\theta_v\right\}_v$ so that 
\[
\theta_\psi\left(\phi^\prime; f^\prime\right)=\otimes_v\,
\theta_v\left(\phi_v^\prime\otimes f_v^\prime\right)
\quad\text{for $f^\prime=\otimes_v\,f_v^\prime\in V_{\pi^\circ}$,
$\phi^\prime=\otimes_v\, \phi_v^\prime
\in\mathcal S\left(Z_+\left(\mA\right)\right)$.}
\]
%
%
%
Then as in \cite[Section~2.4]{FM2}, combining Theorem~\ref{gsp whittaker}, 
the Rallis inner product formula \eqref{RIPF H GSO}, Lemma~\ref{pi sigma identity},
Lemma~\ref{compo number} and Proposition~\ref{mo 40},
we see that a proof of Theorem~\ref{gso whittaker} is reduced to 
a proof of the  following local identity~\eqref{e: 1st local id}.
%
%
%
\begin{proposition}
\label{1st local id}
Let $v$ be an arbitrary place of $F$ . 
For a given $f_v \in V_{\pi^\circ_v}^\infty$ satisfying  $\mathcal{W}_{G_,v}(f_v) \ne 0$,
there exists $\phi_v \in \mathcal{S}(Z_+(F_v))$ such that the local integral $\mathcal{L}_v(\phi_v, f_v)$ 
converges absolutely, $\mathcal{L}_v(\phi_v, f_v) \ne 0$ and the equality
\begin{equation}\label{e: 1st local id}
\frac{Z_v(\phi_v, f_v, \pi_v)  \cdot \mathcal{W}_v(\theta(\phi_v \otimes f_v))}{|\mathcal{L}_v(\phi_v, f_v)|^2} =  \mathcal{W}_{G_, v}(f_v)
\end{equation}
holds with respect to the specified local measures.
\end{proposition}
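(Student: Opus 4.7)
The plan is to verify the identity \eqref{e: 1st local id} via a direct local pull-back computation that parallels the global argument leading to Theorem~\ref{gso whittaker}. To begin with, I would unfold the numerator of the left-hand side: using the definition \eqref{e: local integral whittaker} of $\mathcal{W}_v$ as a stable integral at finite places (or as a Fourier transform of a tempered distribution at archimedean places, in the sense of \cite{Liu2}), together with the definition of the doubling zeta integral $Z_v$, the product $Z_v(\phi_v, f_v, \pi_v) \cdot \mathcal{W}_v(\theta_v(\phi_v \otimes f_v))$ may be rewritten as an iterated integral over $G^1(F_v) \times U(F_v)$ of a matrix coefficient of $\Sigma_v$ against $\psi_U^{-1}$, and then, using the intertwining property of the local theta map $\theta_v : \mathcal{S}(Z_+(F_v)) \otimes V_{\pi_v^\circ} \to V_{\Sigma_v}$, as an integral of an evaluation of the local Weil representation at the distinguished vector appearing in Proposition~\ref{mo 40} paired against a matrix coefficient of $\pi_v^\circ$.

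The key step is then a local analog of the global pull-back formula in Proposition~\ref{mo 40}. By unfolding the unipotent integration over $U$ in the stages $N_0 \triangleleft N_0 N_1 \triangleleft N_{4,2} \triangleleft U$, imitating locally the change of variables carried out in \ref{sp4 so42}, the resulting expression should collapse into the product $|\mathcal{L}_v^\circ(\phi_v, f_v)|^2 \cdot \mathcal{W}_{G,v}(f_v)$, up to the normalizing $L$-factors absorbed into $\mathcal{L}_v$, $\mathcal{W}_v^\circ$ and $Z_v^\sharp$. These $L$-factor ratios are then matched via Lemma~\ref{pi sigma identity}, which identifies $L(1, \Pi_{\Sigma_v}, \mathrm{As}^+)$ with $L(1, \pi_v, \mathrm{std} \otimes \chi_E) L(1, \Pi_{\pi,v}, \mathrm{Sym}^2) L(1, \chi_{E_v})$, so that all factors cancel to produce \eqref{e: 1st local id}. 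For the existence of $\phi_v$ making $\mathcal{L}_v(\phi_v, f_v)$ non-zero, I would choose $\phi_v$ concentrated near the distinguished vector in $Z_+(F_v)$, exploiting the genericity built into the hypothesis $\mathcal{W}_{G,v}(f_v) \ne 0$.

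The principal technical obstacle I anticipate lies at the archimedean places, where both $\mathcal{W}_v$ and the inner integrals defining $\mathcal{L}_v^\circ$ are only defined after regularization by Fourier transforms on abelianizations of unipotent subgroups; one must show that these Fourier transforms commute with the doubling and pull-back manipulations and that all the relevant distributions are smooth on the regular locus of the Pontryagin dual. This is to be handled by extending the distributional framework of \cite[Section~3]{Liu2} from the Bessel to the present Whittaker setting, together with the temperedness of $\pi_v^\circ$ and $\Sigma_v$ to guarantee convergence. At non-archimedean places, by contrast, the identity should follow by direct manipulation of the stable integrals, with convergence on sufficiently large compact open subgroups of the unipotent radicals ensured by the formalism of Lapid and Mao~\cite{LM}.
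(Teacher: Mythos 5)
Your plan coincides with the paper's own proof: the paper likewise rewrites $Z_v\cdot\mathcal{W}_v(\theta(\phi_v\otimes f_v))$ through the doubling inner product $\mathcal{B}_{\Sigma_v}$, reduces \eqref{e: 1st local id} (word for word as in \cite[3.2--3.3]{FM2}) to the sesquilinear local pull-back identity of Proposition~\ref{main identity whittaker}, and proves that identity by unfolding the unipotent integration in the same stages as the global computation, with stable integrals \`a la Lapid--Mao at finite places, the Fourier-transform regularization of \cite{Liu2} at archimedean places, and a test function $\phi_v$ supported near the orbit of the distinguished vector to get $\mathcal{L}_v(\phi_v,f_v)\ne 0$. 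One small correction: no appeal to Lemma~\ref{pi sigma identity} is needed (nor possible, since it only holds at unramified places) — the normalizing $L$-factors built into $\mathcal{L}_v$ and $\mathcal{W}^\circ_{G,v}$ cancel by definition, so \eqref{e: 1st local id} is an identity of unnormalized local integrals.
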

\noindent
%
%

Let us define a hermitian inner product $\mathcal{B}_{\omega_v}$
on 
$\mathcal{S}(Z_+(F_v))$ by 
\[
\mathcal{B}_{\omega_v}(\phi, \phi^\prime) = \int_{Z_+(F_v)} \phi(x) 
\,\overline{\phi^\prime}(x) \,dx
\quad \text{for} \quad \phi, \phi^\prime \in \mathcal{S}(Z_+(F_v)).
\]
Here on $Z_+(F_v) \simeq (F_v)^{12}$,
we take the product measure of the one on $F_v$.
Then we consider the integral
\begin{align}
\label{theta local PI}
&Z^{\flat}(f, f^\prime; \phi, \phi^\prime) = \int_{G^1(F_v)} \langle \pi_v^\circ(g) f, f^\prime \rangle_v \,
 \mathcal{B}_{\omega_v}(\omega_{\psi}(g)\phi, \phi^\prime) \, dg
 \\
 = &\int_{G^1(F_v)} \int_{Z_+(F_v)} \langle \pi_v^\circ(g) f, f^\prime \rangle_v \,
\left(\omega_{\psi_v}(g,1)\phi \right)(z) \overline{\phi^\prime(z)} \, dx \,dg
\quad\text{for $f, f^\prime \in V_{\pi_v^\circ}$}.
\notag
\end{align}
The integral \eqref{theta local PI} converges absolutely by Yamana~\cite[Lemma~7.2]{Yam}.
As in Gan and Ichino~\cite[16.5]{GI1}, we may define a
$\mathrm{GSO}_{4,2}(F_v)$-invariant hermitian inner product $\mathcal{B}_{\Sigma_v} : V_{\Sigma_v} \times V_{\Sigma_v} \rightarrow \mC$ by 
\[
\mathcal{B}_\Sigma(\theta(\phi \otimes f), \theta(\phi^\prime \otimes f^\prime))
:= Z^{\flat}(f, f^\prime; \phi, \phi^\prime).
\]
Here
we note that for $h \in \mathrm{SO}_{4,2}(F_v)$, we have 
\[
\mathcal{B}_\Sigma(\Sigma(h)\theta(\phi \otimes f), \theta(\phi^\prime \otimes f^\prime))
=
\mathcal{B}_\Sigma(\theta(\omega_{\psi}(1,h)\phi \otimes f), \theta(\phi^\prime \otimes f^\prime)).
\]
As in the definition of $W_v$, we define
\[
\label{mathcal W psi U}
\mathcal{W}^{\psi_U} (\tilde{\phi}_1, \tilde{\phi}_2):= \int_{U(F_v)}^{st} \mathcal{B}_\Sigma(\Sigma(n)\tilde{\phi}_1, \tilde{\phi}_2) \psi_{U}(n)^{-1} \, dn
\quad\text{for 
$\,\widetilde{\phi}_i \in \Sigma_v\,\,\left(i=1,2\right)$}.
\]
Then by an argument similar 
 to the one in 
 \cite[3.2--3.3]{FM2}, 
 indeed by word for word,
 Proposition~\ref{1st local id} is reduced to the following another local identity,
which is regarded as a local pull-back computation of the
Whittaker periods with respect to  the theta lift.
\begin{proposition}\label{main identity whittaker}
For any $f, f^\prime\in V_{\pi_v^\circ}$ and any $\phi,\phi^\prime\in C_c^\infty\left(Z_+(F_v)\right)$,
we have
\begin{multline}\label{e: weil hermitian13}
\mathcal{W}_{\psi_U}\left(
\theta\left(\phi\otimes f
\right),
\theta\left(\phi^\prime\otimes f^\prime\right)
\right)
=
\int_{N(F_v)\backslash G^1(F_v)}
\int_{N(F_v)\backslash G^1(F_v)}
\\
\mathcal{W}_{G,v}\left(\pi_v^\circ\left(g\right)f,\pi_v^\circ\left(g^\prime\right)f^\prime\right)
\left(\omega_{\psi_v}\left(g,1\right)\phi\right)\left(x_0\right)\,
\overline{\left(\omega_{\psi_v}\left(g^\prime,1\right)\phi^\prime\right)\left(x_0\right)}\,
dg\, dg^\prime.
\end{multline}
\end{proposition}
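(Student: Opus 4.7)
The plan is to prove Proposition~\ref{main identity whittaker} as a local analogue of the global pull-back formula in Proposition~\ref{mo 40}, carried out term-by-term on the doubling integrand that defines $\mathcal{B}_\Sigma$. The starting point is the observation that
\[
\mathcal{B}_\Sigma\bigl(\Sigma(n)\theta(\phi\otimes f),\theta(\phi'\otimes f')\bigr)
=\int_{G^1(F_v)}\langle \pi_v^\circ(g)f,f'\rangle_v\,
\mathcal{B}_{\omega_v}\bigl(\omega_{\psi_v}(g,n)\phi,\phi'\bigr)\,dg
\]
by the definition of $\mathcal B_\Sigma$ together with the $\mathrm{SO}_{4,2}(F_v)$-equivariance. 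Substituting this into the stable integral defining $\mathcal{W}_{\psi_U}$, and formally commuting the $U$-integral with the integral over $G^1(F_v)$, reduces the problem to computing
\[
\int_{U(F_v)}^{\mathrm{st}}\mathcal{B}_{\omega_v}\bigl(\omega_{\psi_v}(g,n)\phi,\phi'\bigr)\,\psi_U(n)^{-1}\,dn
\]
and then integrating the result against $\langle\pi_v^\circ(g)f,f'\rangle_v$ over $G^1(F_v)$.

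First I would unfold the $U$-integration in stages along the filtration
\[
N_0\triangleleft N_0N_1\triangleleft N_0N_1N_2=N_{4,2}\triangleleft N_{4,2}\tilde U=U,
\]
performing the same Weil-representation calculations used in the proof of Proposition~\ref{pullback Bessel gsp} but now applied directly to the Schr\"{o}dinger kernel $\bigl(\omega_{\psi_v}(g,1)\phi\bigr)(z_+)\,\overline{\phi'(z_+)}$. Exactly as in the global setting, integration against $\psi_U^{-1}$ over $N_0$, then $N_1$, then $N_2$ collapses the sum-over-lattice into a pairing against the single orbit representative $x_0=(x_{-2},x_{-1},0,x_2)$, producing a summand of the form
\[
\sum_{\gamma\in N(F_v)\backslash G^1(F_v)}\bigl(\omega_{\psi_v}(\gamma g,1)\phi\bigr)(x_0)\,\overline{\phi'(x_0)}.
\]
(In the local setting the series collapses to a single orbit: after the stable regularization, only the $\gamma$ supporting the stabilizer condition contributes.) The final layer $\tilde U$ introduces a character twist $\psi\circ(2dt_2+b)$ that, after another application of the formulas \eqref{weil act 1}--\eqref{weil act 2}, is absorbed into the Weil action of a unipotent element of $G^1$; combining everything with the remaining integral over $G^1(F_v)$ and folding a copy of $N(F_v)$ back in yields
\[
\int_{N(F_v)\backslash G^1(F_v)}\int_{N(F_v)\backslash G^1(F_v)}
\bigl(\omega_{\psi_v}(g,1)\phi\bigr)(x_0)\,\overline{\bigl(\omega_{\psi_v}(g',1)\phi'\bigr)(x_0)}
\,\mathfrak{W}(g,g')\,dg\,dg',
\]
where $\mathfrak{W}(g,g')$ is the stable integral over $U_G(F_v)$ of $\langle\pi_v^\circ(ng)f,\pi_v^\circ(g')f'\rangle_v\,\psi_{U_G}(n)^{-1}$. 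By the very definition of the local Whittaker period in \eqref{e: local integral whittaker} applied to $G$, this is exactly $\mathcal{W}_{G,v}\bigl(\pi_v^\circ(g)f,\pi_v^\circ(g')f'\bigr)$, giving the right-hand side of \eqref{e: weil hermitian13}.

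The main obstacle will be justifying the formal manipulations above: the stable integral over $U(F_v)$ does not converge absolutely, so one cannot simply swap it with the $G^1$-integral. At non-archimedean places this is handled as in \cite[Proposition~3.1]{Liu2}, \cite[Section~5.1, Lemme]{Wa2}, by showing that each layer of the $U$-filtration already stabilizes on a compact open in the other variables, so the intermediate identities become finite sums after truncation, and the limits match. At archimedean places the stable integral is defined through the Fourier transform \eqref{local archimedean Bessel}; here one must verify that the Fourier transforms of the tempered distributions produced at each layer (viewed on $U/N_{-\infty}$-type quotients) behave well and that the unfolding is compatible with the Fourier-regularization, essentially by imitating the reasoning in \cite[Proposition~3.14]{Liu2}. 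Once that analytic bookkeeping is settled, the algebraic identity follows from the same Weil-representation computations that drive Proposition~\ref{pullback Bessel gsp}, with $\phi,\phi'\in C_c^\infty(Z_+(F_v))$ providing enough test-function flexibility to handle convergence of the outer $G^1\times G^1$-integral via the tempered matrix coefficient bounds.
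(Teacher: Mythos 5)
Your outline follows the paper's route in broad strokes (staged unfolding along $N_0\lhd N_0N_1\lhd N_{4,2}\lhd U$, absorbing the $\tilde U$-layer into a unipotent of $G^1$ via the relations corresponding to \eqref{da22} and \eqref{z0 w(b)}, and ending with the stable $U_G$-integral), but the central local mechanism is missing and is replaced by an argument that does not make sense. Locally there is no theta series: the pairing $\mathcal{B}_{\omega_v}$ is an integral over $Z_+(F_v)$, not a sum over $Z_+(F)$, so there is no ``sum over the lattice'' to collapse, and your displayed expression $\sum_{\gamma\in N(F_v)\backslash G^1(F_v)}(\omega_{\psi_v}(\gamma g,1)\phi)(x_0)\overline{\phi'(x_0)}$ is a sum over an uncountable, non-discrete set and is undefined. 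The appeal to ``stable regularization picking out the $\gamma$ supporting the stabilizer condition'' is not how the reduction works and cannot be made into a proof as stated.

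What actually replaces the global unfolding is a measure-theoretic orbit decomposition of the $Z_+(F_v)$-integral: one fibers $Z_{+,\circ}\oplus(X_+\otimes Y_0)$ over $t=\langle a_1,a_2\rangle\in F_v$ with fibers the $G^1(F_v)$-orbits $R_t\backslash G^1(F_v)$ (the paper's Lemma~\ref{meas decomp 1}), and likewise identifies $X_+\otimes Y_0$ with $\mathrm{Mat}_{2\times 2}(F_v)$. The $N_0$-integration against $\psi_U^{-1}$ then produces $\psi(-xt)$, and Fourier inversion in $(x,t)$ isolates the orbit $t=0$ with stabilizer $R_0=N$; a second Fourier inversion in the $\mathrm{Mat}_{2\times 2}$-variable dual to the $N_1N_2$-characters isolates the point $z_0=x_0+x_{T_0}$. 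Both inversions require the absolute convergence and local constancy (continuity in the archimedean case) of the fiber integrals — the paper's Lemma~\ref{lem L1} and Remark~\ref{archi lem L1}, in the spirit of \cite[Lemmas~3.19, 3.20]{Liu2} — and these are precisely what legitimize interchanging the stable $U$-integral with the $G^1$- and $Z_+$-integrals; your proposed first step of commuting the stable $U$-integral past the $G^1$-integral and evaluating an inner $\int_U^{\mathrm{st}}\mathcal{B}_{\omega_v}(\omega_{\psi_v}(g,n)\phi,\phi')\psi_U(n)^{-1}\,dn$ on its own is not justified and is not how the orders of integration are controlled. Only after the orbit decomposition and the two Fourier inversions does one telescope $\int_{G^1}=\int_{N\backslash G^1}\int_N$ and convert the remaining $N$- and $\tilde U$-integrations into the stable $U_G$-integral defining $\mathcal{W}_{G,v}$, yielding the double integral over $N(F_v)\backslash G^1(F_v)$ on the right-hand side of \eqref{e: weil hermitian13}; at archimedean places the same scheme is carried out with partial Fourier transforms of tempered distributions, as you correctly anticipate. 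Without the orbit decomposition and the integrability/local-constancy input, the claim that only $x_0$ (and then $z_0$) contributes has no proof, so the argument as proposed has a genuine gap.
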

\begin{Remark}
Since $\{ g \cdot x_0 : g \in G^1(F_v) \}$  is locally closed in $Z_+(F_v)$, 
the mappings
\[
N(F_v)\backslash G^1(F_v) \ni g \mapsto \phi(g^{-1} \cdot x_0)\in
\mathbb C,  
\quad 
N(F_v)\backslash G^1(F_v) \ni g^\prime \mapsto \phi^\prime(g^{-1} \cdot x_0)\in
\mathbb C
\]
are compactly supported, and thus
the right-hand side of \eqref{e: weil hermitian13} converges absolutely for $\phi, \phi^\prime \in C_c^\infty\left(Z_+(F_v)\right)$.
\end{Remark}
\subsubsection{Local pull-back computation}
\label{ss: local pull-back computation}
Here we shall prove Proposition~\ref{main identity whittaker}
and thus complete our proof of Theorem~\ref{gso whittaker}.

Since we work over a fixed place $v$ of $F$,
we shall suppress $v$  from the notation in this subsection,
e.g. $F$ means $F_v$.
Further, 
for any algebraic group $K$ over $F$,
we denote its group of $F$-rational points $K\left(F\right)$
by $K$ for simplicity.

\subsubsection*{The case when $F$ is non-archimedean}
Suppose that $F$ is non-archimedean.
From the definition, the local Whittaker period is equal to
\[
 \int_{U}^{st} \int_{G^1} \int_{Z_+} (\omega_{\psi}(g, n) \phi)(x) \overline{ \phi^\prime(x)} \langle \pi^\circ(g)f, f^\prime \rangle \psi_{U}(n)^{-1} \, dx \, dg \, dn.
\]
Recall that we have defined subgroups $N_0, N_1, N_2$ and $\widetilde{U}$ of $U$ in 
\eqref{d: N_0}, \eqref{d: N_1},
\eqref{d: N_2} and \eqref{e: def of u_tilde}, respectively.
Then because of the absolute convergence of the integral \eqref{theta local PI}, the above local integral can be written as 
%
\begin{multline}
\label{e:1}
\int_{\widetilde{U}}^{st} \int_{N}^{st} \int_{N_1} \int_{N_0} \int_{Z_+}  \int_{G^1}
(\omega_{\psi}(g, u_0 u_1 u_2 \tilde{u}) \phi)(x) \overline{ \phi^\prime(x)} 
\\
\times
\langle \pi^\circ(g)f, f^\prime \rangle \psi_{U}(u_2 \tilde{u})^{-1} \, dx \, dg \, du_0\, du_1 \, du_2\, d\tilde{u}.
\end{multline}

Let us define $Z_{+, \circ} := \left\{ (a_1, a_2 ; 0,0) \in Z_+ : \text{$a_1$ and $a_2$ are linearly independent}\right\}$.
Then since
$Z_{+, \circ} \oplus (X_+ \otimes Y_0)$ is open and dense in $Z_+$, we have
\[
\int_{Z_+} \Phi(z) \, dz = \int_{Z_{+, \circ}} \int_{X_+ \otimes Y_0} \Phi(z_1 +z_2) \, dz_2 \, dz_1
\]
for any $\Phi \in L^1(Z_+)$. 
We consider a map $p : Z_{+, \circ} \rightarrow F$ defined by $p((a_1, a_2; 0, 0)) = \langle a_1, a_2\rangle$.
This is clearly surjective.
For each $t \in F$, we fix $x_t \in Z_{+, \circ}$ such that $p(x_t) = t$.
Then by Witt's theorem, the fiber $p^{-1}(x_t)$ of $x_t :=(a_1^t, a_2^t; 0, 0)$ is given by 
\[
p^{-1}(x_t) = \left\{ \gamma \cdot x_t :=(\gamma a_1^t, \gamma a_2^t: 0, 0) : \gamma \in G^1 \right\}.
\]
We may identify this space with $G^1 \slash R_t$ as a $G^1$-homogeneous space.
Here $R_t$ denotes the stabilizer of $x_t$ in $G^1$.
From this observation, the following lemma readily follows (cf. \cite[Lemma~3]{FM2}).
\begin{lemma}
\label{meas decomp 1}
For each $x_t \in Z_{+, \circ}$, there exists a Haar measure $dr_t$ on $R_t$ such that 
\[
\int_{Z_+}\Phi(z) \, dz = \int_{F} \int_{R_t \backslash G^1} \int_{X_+ \otimes Y_0} \Phi(g^{-1} \cdot x_t+z) \, dz \, dg_t \, dt.
\]
Here $dg_t$ denotes the quotient measure $dr_t \backslash dg$ on $R_t \backslash G^1$.
\end{lemma}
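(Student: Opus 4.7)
The plan is to derive the decomposition by combining three standard ingredients: the observation (already noted in the excerpt) that $Z_{+,\circ} \oplus (X_+ \otimes Y_0)$ is of full measure in $Z_+$, the $G^1$-invariance of the natural measure on $Z_{+,\circ}$, and the coarea/quotient measure formula on the $G^1$-homogeneous fibers of $p$. Concretely, I would first apply Fubini to write
\[
\int_{Z_+}\Phi(z)\,dz = \int_{Z_{+,\circ}}\int_{X_+\otimes Y_0}\Phi(z_1+z_2)\,dz_2\,dz_1,
\]
thereby reducing the problem to establishing
\[
\int_{Z_{+,\circ}}\Psi(z_1)\,dz_1 = \int_F\int_{R_t\backslash G^1}\Psi(g^{-1}\cdot x_t)\,dg_t\,dt
\]
for arbitrary integrable $\Psi$ on $Z_{+,\circ}$.

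Next, I would verify that $p : Z_{+,\circ} \to F$ is a smooth submersion. Its differential at $(a_1,a_2)$ sends $(v_1,v_2)$ to $\langle v_1,a_2\rangle + \langle a_1,v_2\rangle$, which is surjective onto $F$ because the symplectic form on $X$ is non-degenerate and $a_1 \ne 0$. The coarea formula then produces, for each $t \in F$, a measure $d\nu_t$ on $p^{-1}(t)$ with $dz_1 = \int_F d\nu_t\,dt$. Moreover $d\nu_t$ inherits $G^1$-invariance from $dz_1$, because $G^1 = \mathrm{Sp}(X)$ preserves the gauge form on $X$, hence its square on $X \oplus X \simeq X \otimes Y_+$, and $p$ itself is $G^1$-invariant.

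Finally, by Witt's theorem (as already invoked in the excerpt) $G^1$ acts transitively on $p^{-1}(t)$, so fixing the base point $x_t$ identifies $p^{-1}(t) \simeq R_t \backslash G^1$. By uniqueness of nonzero $G^1$-invariant measures on a homogeneous space up to scalars, $d\nu_t$ corresponds to a unique Haar measure $dr_t$ on $R_t$ satisfying $d\nu_t = dr_t\backslash dg$, which is precisely the measure $dg_t$ appearing in the statement. Substituting gives the claimed identity. The argument is in the same spirit as \cite[Lemma~3]{FM2}, so I expect no serious obstacle; the only points requiring genuine care are the submersion check above and confirming that $R_t$ is unimodular so that the quotient measure exists, both of which reduce to elementary symplectic linear algebra on $X$.
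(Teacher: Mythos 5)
Your argument is correct and is essentially the route the paper intends: the paper gives no details, asserting the lemma ``readily follows'' from the Fubini reduction, Witt transitivity on the fibers of $p$, and the identification of each fiber with $R_t\backslash G^1$ (cf. \cite[Lemma~3]{FM2}), and your chain (Fubini, disintegration along $p$, $G^1$-invariance of the fiber measures, uniqueness of invariant measures on a homogeneous space, plus unimodularity of $R_t$, which is $\mathrm{SL}_2$ for $t\ne 0$ and the abelian group $N$ for $t=0$) supplies exactly those missing details. The only adjustment is that the lemma is stated and first used for non-archimedean $F$, so the appeal to the smooth submersion/coarea formula should be replaced by its standard local-field analogue (disintegration of the additive Haar measure along the submersive analytic map $p$, e.g.\ via the implicit function theorem over local fields or the theory of gauge forms), after which your argument goes through verbatim in both the non-archimedean and archimedean cases.
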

Further, we note that the following lemma, which is proved by an argument similar to the one for 
\cite[Lemma~3.20]{Liu2}.
(cf. \cite[Lemma~3]{FM2}).
\begin{lemma}
\label{lem L1}
For $\phi_1,\phi_2\in C_c^\infty\left(Z_+\right)$
and $f_1,f_2\in V_{\pi^\circ}$, let 
\[
\mathcal G_{\phi_1,\phi_2,f_1,f_2}
\left(t\right)=\int_{G^1}
\int_{R_t \backslash G^1}
\phi_1\left((g g^\prime)^{-1} \cdot x_t\right)\,
\phi_2\left(g^{-1} \cdot x_t\right)\,
\langle\pi^\circ\left(g^\prime\right)f_1,f_2\rangle\,
dg\, dg^\prime
\]
for $t \in F$.
Then the integral is absolutely convergent and is 
locally constant.
\end{lemma}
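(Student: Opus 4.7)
The plan is to establish absolute convergence and then local constancy, following closely the strategy of \cite[Lemma~3]{FM2} and \cite[Lemma~3.20]{Liu2}.

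For absolute convergence, I would first exploit the compact supports of $\phi_1, \phi_2$ to confine the bulk of the integration to a compact region modulo the stabilizer $R_t$. Since $R_t$ fixes $x_t$, the factor $\phi_2(g^{-1} x_t)$ descends to $R_t \backslash G^1$ and there has compact support, say $\Omega_2$. Via the identification $R_t \backslash G^1 \simeq G^1 \cdot x_t$, $R_t h \mapsto h^{-1} x_t$, the function $h \mapsto \phi_1(h^{-1} x_t)$ similarly descends to a compactly supported function $\tilde{\phi}_1$ on $R_t \backslash G^1$, with support $\tilde{\Omega}_1$. Swapping the order of integration (justified for non-negative integrands), substituting $h = gg'$ in the $g'$ integral, and decomposing $dh = dr \, d\tilde{h}$ along $G^1 \to R_t \backslash G^1$, I would bound $|\mathcal{G}_{\phi_1,\phi_2,f_1,f_2}(t)|$ above by
\[
\int_{\Omega_2 \times \tilde{\Omega}_1} |\tilde{\phi}_2(g)\, \tilde{\phi}_1(\tilde{h})| \left( \int_{R_t} \left| \langle \pi^\circ(r) \pi^\circ(\tilde{h}) f_1, \pi^\circ(g) f_2 \rangle \right| dr \right) dg \, d\tilde{h}.
\]

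The main obstacle is the convergence of the inner integral over $R_t$ and its uniformity as $g, \tilde{h}$ vary over compact sets. In our setup, $x_t = (a_1^t, a_2^t; 0, 0)$ with $\langle a_1^t, a_2^t \rangle = t$: for $t \neq 0$, the span of $a_1^t, a_2^t$ is a non-degenerate symplectic plane, so $R_t$ is the isometry group of its orthogonal complement, i.e.\ $R_t \simeq \mathrm{SL}_2(F)$; for $t = 0$, the span is Lagrangian and $R_t$ is the abelian unipotent radical $\simeq \mathrm{Sym}_2(F)$ of the Siegel parabolic stabilizing it. In both cases, $\pi^\circ$ being tempered (as the generic constituent of a tempered cuspidal representation of $G$) yields, via Harish-Chandra's $\Xi$-function bounds together with the continuity of $(g, \tilde{h}) \mapsto (\pi^\circ(g) f_2, \pi^\circ(\tilde{h}) f_1)$, the required uniform absolute convergence. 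For the details I would invoke \cite[Section~5.1]{Wa2} and the formalism of \cite[Lemma~3.20]{Liu2}.

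For local constancy at $t_0 \in F$, I would observe that every piece of data is locally constant in the non-archimedean setting: $\phi_1, \phi_2 \in C_c^\infty(Z_+)$ are locally constant with compact support, the matrix coefficient $g' \mapsto \langle \pi^\circ(g') f_1, f_2 \rangle$ is locally constant since $f_1, f_2$ are smooth, and the section $t \mapsto x_t$ of $p : Z_{+, \circ} \to F$ can be chosen to be locally constant on an open neighborhood $\mathcal{U}$ of $t_0$. Shrinking $\mathcal{U}$ if necessary so that both $R_t = R_{t_0}$ and $dr_t = dr_{t_0}$ hold throughout, the entire integrand becomes constant in $t$ on $\mathcal{U}$. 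Combined with the uniform absolute convergence obtained above, this yields the asserted local constancy of $\mathcal{G}_{\phi_1, \phi_2, f_1, f_2}$ at $t_0$.
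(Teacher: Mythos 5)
The heart of Lemma~\ref{lem L1} is the absolute convergence, and your argument for it does not go through; note also that the paper itself offers no details here beyond referring to \cite[Lemma~3.20]{Liu2} and \cite[Lemma~3]{FM2}, so the burden is entirely on your estimate. Your own displayed majorant reduces everything to the convergence of $\int_{R_t}\left|\langle \pi^\circ(r)\pi^\circ(\tilde h)f_1,\pi^\circ(g)f_2\rangle\right|dr$, uniformly for $g,\tilde h$ in compacta, and you assert this follows from temperedness via Harish-Chandra's $\Xi$-function. It does not: the weak inequality only yields the majorant $\int_{R_t}\Xi^{G^1}(r)\,dr$, and this integral diverges for both shapes of $R_t$. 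For $t\neq 0$ one has $R_t=\mathrm{Sp}(W^\perp)\simeq \mathrm{SL}_2(F)$, $W=\mathrm{span}(a_1^t,a_2^t)$; its torus element acts by $(1,s)$ on the two symplectic planes, the dominant Weyl conjugate has $\delta_B$-value $|s|^4$, so $\Xi^{G^1}\asymp |s|^{-2}$ up to logarithms, while the corresponding $\mathrm{SL}_2$-Cartan cell has volume $\asymp |s|^{2}$; the product does not decay, so $\int_{R_t}\Xi^{G^1}=\infty$. An analogous Cartan estimate for $R_0=N\simeq \mathrm{Sym}_2(F)$ gives volume growth exactly cancelling the $\Xi$-decay, so $\int_{N}\Xi^{G^1}=\infty$ as well. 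Since matrix coefficients of tempered non-square-integrable $\pi^\circ_v$ (e.g.\ unramified tempered principal series, which occur at almost all places) genuinely attain the $\delta_B^{-1/2}$ decay rate along these directions, the convergence cannot be extracted from temperedness by this kind of crude support-plus-$\Xi$ bound; this is exactly the delicate content of the lemma, and neither \cite[Section~5.1]{Wa2} (which concerns stable integrals over unipotent groups against a nontrivial character, i.e.\ oscillation, not absolute convergence over a reductive $\mathrm{SL}_2$) nor a circular appeal to "the formalism of \cite[Lemma~3.20]{Liu2}" fills it. In addition, your preliminary step already fails at $t=0$: the orbit of $x_0$ consists of linearly independent pairs spanning an isotropic plane and is only locally closed in $Z_+$, so the pullback $h\mapsto \phi_i(h^{-1}\cdot x_0)$ need not be compactly supported on $R_0\backslash G^1$ (its support has finite invariant measure, but that requires a separate computation you do not make).

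The local-constancy argument is also flawed. A locally constant section $t\mapsto x_t$ of $p$ cannot exist, since $p(x_t)=t$ varies; and no shrinking of $\mathcal U$ can arrange $R_t=R_{t_0}$ and $dr_t=dr_{t_0}$ near $t_0=0$, because the stabilizer changes type there (the Siegel unipotent radical at $t=0$ versus $\mathrm{SL}_2$ at $t\neq 0$) and the measures $dr_t$, $dg_t$ are pinned down by Lemma~\ref{meas decomp 1} rather than freely adjustable. For $t_0\neq 0$ one can salvage your idea by taking $x_t=(a_1^{t_0},(t/t_0)a_2^{t_0};0,0)$, which makes $R_t$ literally constant near $t_0$, but that device is unavailable at $t_0=0$ — and $t=0$ is precisely the point at which the lemma is invoked to justify the Fourier inversion. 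So both the convergence and the local constancy, in the only regime that matters, are left unproved.
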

\begin{Remark}
\label{archi lem L1}
When $F$ is archimedean, by an 
argument similar to the one for \cite[Proposition 3.22]{Liu2},
we see that this integral is absolutely convergent  and 
is a continuous function on $F$ not only for $C_c^\infty\left(Z_+\right)$ but also for $\mathcal{S}(Z_+)$.
\end{Remark}
By Lemma~\ref{meas decomp 1}, the integral
\eqref{e:1} can be written as 
\begin{multline*}
 \int_{N_0} \int_{F} \int_{R_t \backslash G^1} \int_{X_+ \otimes Y_0} \int_{G^1} 
 (\omega_{\psi}(g, u_0 h) \phi)(\gamma^{-1} \cdot x_t+z) \overline{ \phi^\prime(\gamma^{-1} \cdot x_t+z)} 
 \\
 \times\langle \pi^\circ(g)f, f^\prime \rangle 
  \, dg \, dz \, d\gamma_t \, dt   \, du_0.
\end{multline*}
Moreover, by the computation in \cite[Section~3.1]{Mo}, we have 
\[
 (\omega_{\psi}(g, u_0(x) h) \phi)(\gamma^{-1} \cdot x_t+z) = \psi(-xt)  \phi(\gamma^{-1} \cdot x_t+z).
\]
Then because of Lemma~\ref{lem L1}, we may apply the Fourier inversion with respect to $x$ and $t$, and thus the above integral is equal to
\begin{multline}
\label{FI 1}
\int_{R_0 \backslash G^1} \int_{X_+ \otimes Y_0} \int_{G^1} (\omega_{\psi}(g, h) \phi)(\gamma^{-1} \cdot x_0+z) \overline{ \phi^\prime(\gamma^{-1} \cdot x_0+z)} 
\\
\times
\langle \pi^\circ(g)f, f^\prime \rangle
  \, dg \, dz \, d\gamma_0 \, dt  \, du_0
\\
=
\int_{R_0 \backslash G^1} \int_{X_+ \otimes Y_0} \int_{G^1} (\omega_{\psi}(\gamma g, h) \phi)(x_0+z) 
(\overline{\omega_{\psi}(\gamma, 1) \phi^\prime)(x_0+z)}
\\
\times \langle \pi^\circ(g)f, f^\prime \rangle
  \, dg \, dz \, d\gamma_0. 
\end{multline}
The support of $\phi^\prime(\gamma^{-1} \cdot x_0+z)$ as a function of $X_+ \otimes Y_0$ is compact
since $\phi^\prime \in C_c^\infty(Z_+)$.
Therefore this integral converges absolutely and is equal to
\begin{multline*}
\int_{X_+ \otimes Y_0} \int_{R_0 \backslash G^1}  \int_{G^1} (\omega_{\psi}(\gamma g, h)\phi)(x_0+z) 
(\overline{\omega_{\psi}(\gamma, 1)\phi^\prime)(x_0+z)} 
\\
\times
\langle \pi^\circ(g)f, f^\prime \rangle
  \, dg  \, d\gamma_0 \, dz.
\end{multline*}
Now, let us take $(x_{-2}, x_{-1}: 0, 0)$ as $x_0$. Then we have
\[
R_0 = N.
\]
Let us define a map $q : X_+ \otimes Y_0 \rightarrow \mathrm{Mat}_{2 \times 2}$ by 
\[
q(b_1 \otimes e_1+b_2 \otimes e_2) = \begin{pmatrix} \langle x_{-2}, b_1 \rangle & \langle x_{-2}, b_2 \rangle \\ \langle x_{-1}, b_1 \rangle & \langle x_{-1}, b_2 \rangle  \end{pmatrix}
\]
with $b_i \in X_+$. Clearly this map is bijective. Hence, there exists a measure $dT$ on $\mathrm{Mat}_{2 \times 2}$ such that we have 
\[
\int_{X_+ \otimes Y_0} \Phi(x_{-2}, x_{-1}: z) \, dz = \int_{\mathrm{Mat}_{2 \times 2}} \Phi(x_{-2}, x_{-1}: x_T) \, dT
\]
with $x_T = q^{-1}(T)$.
Here
we note that the measure $dz$ on $X_+ \otimes Y_0$
is taken to be the Tamagawa measure and hence
we have the Fourier inversion
\[
\int_{\mathrm{Mat}_{2 \times 2}} \int_{\mathrm{Mat}_{2 \times 2}} \Phi(T) \psi \left(\mathrm{tr} \left(T S_0 T^\prime \right) \right) \, dT \, dT^\prime = \Phi(0)
\]
with the above Haar measures $dT, dT^\prime$ on $\mathrm{Mat}_{2 \times 2}$ if the integral converges.
Thus we have
\begin{multline*}
\int_{N_2}^{st} \int_{N_1}
\int_{X_+ \otimes Y_0} \int_{N \backslash G^1}  \int_{G^1} (\omega_{\psi}(\gamma g, u_1 u_2 h)\phi)(x_0+z) 
(\overline{\omega_{\psi}(\gamma, 1)\phi^\prime)(x_0+z)} 
\\
\times
\langle \pi^\circ(g)f, f^\prime \rangle
  \, dg \, d\gamma_0  \, dz  \, du_1 \, du_2
  \\
=
\int_{N}^{st} \int_{N_1}
\int_{\mathrm{Mat}_{2 \times 2}} \int_{N \backslash G^1}  \int_{G^1} (\omega_{\psi}(\gamma g, u_1 u_2 h)\phi)(x_0+x_T) 
(\overline{\omega_{\psi}(\gamma, 1)\phi^\prime)(x_0+x_T)} 
\\
\times \langle \pi^\circ(g)f, f^\prime \rangle
  \, dg \, d\gamma_0  \, dT  \, du_1 \, du_2.
\end{multline*}
Moreover, similarly to the global computation in \cite[Section~3.1]{Mo}, we may write this integral as
\begin{multline}
\label{e:pre 1}
\int_{N_2}^{st} \int_{N_1}
\int_{\mathrm{Mat}_{2 \times 2}} \int_{N \backslash G^1}  \int_{G^1} 
\psi \left(\mathrm{tr} \left( \begin{pmatrix}s_1&s_2\\ t_1 &t_2 \end{pmatrix}S_0 (x_T-x_{T_0}) \right) \right)
\\ \times
(\omega_{\psi}(\gamma g, h)\phi)(x_0+x_T) 
(\overline{\omega_{\psi}(\gamma, 1)\phi^\prime)(x_0+x_T)} \langle \pi^\circ(g)f, f^\prime \rangle
  \, dg \, d\gamma_0 \, dT  \, du_1 \, du_2
\end{multline}
where we write $u_1 = u_1(s_1, t_1)$ and $u_2 = u_2(s_2, t_2)$, and we put
$
T_0 = \begin{pmatrix}0&0\\0&1 \end{pmatrix}$.
By an  argument similar to  the proof to show \eqref{FI 1}, we may apply the Fourier inversion to this integral, 
and we see that this is equal to
\[
\int_{N_H \backslash G^1}  \int_{G^1} 
(\omega_{\psi}(\gamma g, h) \phi)(x_0+x_{T_0}) 
(\overline{\omega_{\psi}(\gamma, 1) \phi^\prime)(x_0+x_{T_0})} \langle \pi^\circ(g)f, f^\prime \rangle
  \, dg \, d\gamma_0.
\]
Now we note that from the argument to obtain \eqref{FI 1}, this integral converges absolutely.
Then by telescoping the $G^1$-integration, we obtain
\begin{multline*}
\int_{N \backslash G^1}  \int_{N \backslash G^1}  \int_{N}
(\omega_{\psi}( r g, h) \phi)(x_0+x_{T_0}) 
(\overline{\omega_{\psi}(\gamma, 1)\phi^\prime)(x_0+x_{T_0})} 
\\
\times
\langle \pi^\circ(r g)f, \pi^\circ(\gamma)f^\prime \rangle
\, dr  \, dg \, d\gamma_0.
\end{multline*}
Put $z_0 = x_0+x_{T_0} = (x_{-2}, x_{-1}, 0, x_2)$. 
Recall that from the computation in \cite[Section~3.1]{Mo}, we have
\begin{equation}
\label{da22}
\omega_{\psi}(v(A)g, \tilde{u}(b) h) \phi(z_0) = \psi(-d a_{22})\omega_{\psi}(g, \tilde{u}(b) h) \phi(z_0)
\end{equation}
when we write $A =\begin{pmatrix}a_{11}&a_{12}\\a_{21}&a_{22} \end{pmatrix} $,
and we have
\begin{equation}
\label{z0 w(b)}
z_0(1, \tilde{u}(b)) = z_0(w(b), 1).
\end{equation}
Therefore, $\mathcal{W}_{\psi_U}\left(
\theta\left(\phi\otimes f
\right),
\theta\left(\phi^\prime\otimes f^\prime\right)
\right)
$ is equal to
\begin{multline*}
\int_{F}^{st}
\int_{N \backslash G^1}  \int_{N \backslash G^1} \int_{N} \psi(-b)
(\omega_{\psi}(r g, \tilde{u}(b)) \phi)(z_0) 
(\overline{\omega_{\psi}(\gamma, 1) \phi^\prime)(z_0)}
\\
\times
 \langle \pi^\circ(rg)f, \pi^\circ(\gamma)f^\prime \rangle
 \, dr \, dg  \, d\gamma_0 \, db
\\
=
    \int_{F}^{st}
\int_{N \backslash G^1}  \int_{N \backslash G^1} \int_{\mathrm{Sym}^2}  \psi(-b-da_{22})
(\omega_{\psi}(w(b)g, 1)\phi)(z_0) 
\\
\times
(\overline{\omega_{\psi}(\gamma, 1)\phi^\prime)(z_0)} \langle \pi^\circ(v(A)g)f, \pi^\circ(\gamma)f^\prime \rangle
\, dA  \, dg  \, d\gamma_0 \, db.
\end{multline*}
By an  argument similar to the one in \cite{FM2}
showing that
 \cite[(3.30)]{FM2} is equal to $\alpha(\pi(g) \phi, \pi(h)\phi^\prime)$
 there,
indeed, by word for word, we see that this integral is equal to
\begin{multline*}
\int_{N \backslash G^1}  \int_{N \backslash G^1} \int_{U_G}^{st}  \psi_{U_{G}}^{-1}(n)
(\omega_{\psi}(g, 1)\phi)(z_0) 
(\overline{\omega_{\psi}(\gamma, 1)\phi^\prime)(z_0)}
\\
\times \langle \pi^\circ(ng)f, \pi^\circ(\gamma)f^\prime \rangle
\, dn  \, dg \, d\gamma_0. 
\end{multline*}
Thus Proposition~\ref{main identity whittaker} in the non-archimedean case is proved.
%
%
%
%
\subsubsection*{The case when $F$ is archimedean}
Suppose that $F$ is archimedean. Recall that 
\[
\mathcal{W}^{\psi_U}(\tilde{\phi}_1, \tilde{\phi}_2)=
\widehat{\mathcal{W}_{\tilde{\phi}_1, \tilde{\phi}_2}}\left(
\psi_U\right)
\quad \text{for $\tilde{\phi}_i \in \Sigma^\infty\,\,\left(i=1,2\right)$},
\]
where we set
\[
\mathcal{W}_{\tilde{\phi}_1, \tilde{\phi}_2}(n) = \int_{U_{-\infty}} \mathcal{B}_{\Sigma}(\Sigma(n u)\tilde{\phi}_1, \tilde{\phi}_2) \psi_{U}^{-1}(n u) \, du
\quad\text{for $n\in U$},
\]
which converges absolutely and gives a  tempered distribution on 
$U \slash U_{-\infty}$ by \cite[Corollary~3.13]{Liu2}. 
Let us define $U^\prime = N_0 N_1 N_2$. Then $U^\prime_{-\infty} = U_{-\infty}$.
Moreover, for any $\tilde{u} \in \widetilde{U}$ and $u^\prime \in U^\prime$, we have $\tilde{u} u^\prime \tilde{u}^{-1} (u^\prime)^{-1} \in U^\prime_{-\infty}$
and we obtain $\mathcal{W}_{\tilde{\phi}_1, \tilde{\phi}_2}(\tilde{u} u^\prime) = \mathcal{W}_{\tilde{\phi}_1, \tilde{\phi}_2}(u^\prime \tilde{u})$.
Hence, we may regard it as a tempered distribution on $\tilde{U} \times \left(U^\prime \slash U_{-\infty}^\prime \right)$.
Then for a tempered distribution $I$ on $\tilde{U} \times \left(U^\prime \slash U_{-\infty}^\prime \right)$, 
we define partial Fourier transforms $\widehat{I^j}$ of $I$ 
for $j=1,2$ by 
\[
\langle I, \widehat{f_1} \otimes f_2 \rangle = \langle  \widehat{I^1}, f_1 \otimes f_2 \rangle 
\quad
\text{and}
\quad
\langle I, f_1 \otimes \widehat{f_2} \rangle = \langle  \widehat{I^2}, f_1 \otimes f_2 \rangle 
\]
where $f_1 \in \mathcal{S} \left(  \tilde{U} \right)$
and $f_2 \in \mathcal{S} \left( U^\prime \slash U_{-\infty}^\prime \right)$,
respectively.
Then we have 
\[
\widehat{ \widehat{I}^2}^1(\psi_U) = \widehat{ \widehat{I}^1}^2(\psi_U)
 = \widehat{I}(\psi_U).
\]
From the definition of $\mathcal{B}_\Sigma$, we have
\begin{multline*}
\mathcal{W}_{\theta(\phi \otimes f), \theta(\phi^\prime \otimes f^\prime)}(n)
=  \int_{U_{-\infty}} \int_{G^1} \int_{Z_+}
(\omega_{\psi}(g, nu) \phi)(x) \overline{ \phi^\prime(x)} 
\\
\times
\langle \pi^\circ(g)f, f^\prime \rangle
 \psi_{U}^{-1}(n u) \, dx \, dg \, du
 \\
 =
  \int_{U_{-\infty} \slash N_0} \int_{N_0} \int_{G^1} \int_{Z_+}
(\omega_{\psi}(g, nu_0u) \phi)(x) \overline{ \phi^\prime(x)} 
\\
\times
\langle \pi^\circ(g)f, f^\prime \rangle
 \psi_{U}^{-1}(n u) \, dx \, dg \, du_0 \, du,
\end{multline*}
for $\phi, \phi^\prime \in \mathcal{S}(Z_+)$ and $f, f^\prime \in V_{\pi^\circ}^\infty$.
Clearly, Lemma~\ref{meas decomp 1} holds in the archimedean case also. Then as in \eqref{FI 1}, because of Remark~\ref{archi lem L1}
and the Fourier inversion, the above integral is equal to
\begin{multline*}
 \int_{U_{-\infty} \slash N_0} \int_{N \backslash G^1} \int_{X_+ \otimes Y_0} \int_{G^1} (\omega_{\psi}(\gamma g, nu) \phi)(x_0+z) 
(\overline{\omega_{\psi}(\gamma, 1) \phi^\prime)(x_0+z)} 
\\
\times
\langle \pi^\circ(g)f, f^\prime \rangle
  \, dg \, dz \, d\gamma_0 \, du.
\end{multline*}
As \eqref{FI 1}, this integral converges absolutely.
Let us denote this integral by $J_{\phi, \phi^\prime, f, f^\prime}(n)$. Then  from the definition,
\[
\widehat{J_{\phi, \phi^\prime, f, f^\prime}} = \widehat{\mathcal{W}_{\theta(\phi \otimes f), \theta(\phi^\prime \otimes f^\prime)}}.
\]
Again,
 from the definition, for $\varphi \in \mathcal{S}(U^\prime \slash U^\prime_{-\infty})$, we have
\begin{multline*}
(\widehat{J_{\phi, \phi^\prime, f, f^\prime}}^2, \psi_U \cdot \varphi) =(J_{\phi, \phi^\prime, f, f^\prime}, \widehat{\psi_U \cdot \varphi})
= \int_{U^\prime \slash U^\prime_{-\infty}} \int_{U_{-\infty}^\prime \slash N_0} \int_{N \backslash G^1} \int_{X_+ \otimes Y_0} \int_{G^1} 
\\
\times
(\omega_{\psi}(\gamma g, nu) \phi)(x_0+z) 
(\overline{\omega_{\psi}(\gamma, 1) \phi^\prime)(x_0+z)}
\\
\times
 \langle \pi^\circ(g)f, f^\prime \rangle \widehat{\varphi}(n) \psi_U^{-1}(n)
  \, dg \, dz \, d\gamma_0 \, du \, dn.
\end{multline*}
By  a computation similar to the one to obtain \eqref{e:pre 1}, this integral is equal to
\begin{multline*}
\int_{N_1} \int_{N_2} \int_{N \backslash G^1} \int_{X_+ \otimes Y_0} \int_{G^1} 
(\omega_{\psi}(\gamma g, u_1 u_2u) \phi)(x_0+z) 
(\overline{\omega_{\psi}(\gamma, 1) \phi^\prime)(x_0+z)}
\\
\times \langle \pi^\circ(g)f, f^\prime \rangle 
\widehat{\varphi}(u_1 u_2) \psi_U^{-1}(u_1 u_2)
  \, dg \, dz \, d\gamma_0 \, du \, du_1 \, du_2
  \\
  =
\int_{N_1} \int_{N_2}
\int_{\mathrm{Mat}_{2 \times 2}} \int_{N \backslash G^1}  \int_{G^1} 
\psi \left(\mathrm{tr} \left( \begin{pmatrix}s_1&s_2\\ t_1 &t_2 \end{pmatrix}S_0 (x_T-x_{T_0}) \right) \right)
\\
\times
(\omega_{\psi}(\gamma g, h)\phi)(x_0+x_T) 
(\overline{\omega_{\psi}(\gamma, 1)\phi^\prime)(x_0+x_T)}
\\
\times
 \langle \pi^\circ(g)f, f^\prime \rangle \widehat{\varphi}(u_1 u_2)
  \, dg \, d\gamma_0 \, dT  \, du_1 \, du_2.
\end{multline*}
As above, we may apply the Fourier inversion, and thus this is equal to
\begin{multline*}
\widehat{\varphi}(1) \cdot
 \int_{N \backslash G^1} \int_{G^1} 
(\omega_{\psi}(\gamma g,1) \phi)(x_0+x_{T_0}) 
(\overline{\omega_{\psi}(\gamma, 1) \phi^\prime)(x_0+x_{T_0})} 
\\
\times
\langle \pi^\circ(g)f, f^\prime \rangle 
  \, dg  \, d\gamma_0.
\end{multline*}
Hence, 
\begin{multline*}
\widehat{J_{\phi, \phi^\prime, f, f^\prime}}^2(\psi_U)
= \int_{N \backslash G^1} \int_{G^1} 
(\omega_{\psi}(\gamma g,1) \phi)(x_0+x_{T_0}) 
(\overline{\omega_{\psi}(\gamma, 1) \phi^\prime)(x_0+x_{T_0})} 
\\
\times
\langle \pi^\circ(g)f, f^\prime \rangle 
  \, dg  \, d\gamma_0.
\end{multline*}
Here, we note that by Remark~\ref{archi lem L1}, this integral converges absolutely.
Then this identity shows that we have
\begin{multline}
\label{J12}
\widehat{\widehat{J_{\phi, \phi^\prime, f, f^\prime}}^2}^1(\varphi) =
\int_{\tilde{U}} \int_{N \backslash G^1} \int_{G^1} 
(\omega_{\psi}(\gamma g,b) \phi)(x_0+x_{T_0}) 
(\overline{\omega_{\psi}(\gamma, 1) \phi^\prime)(x_0+x_{T_0})} 
\\
\times
\langle \pi^\circ(g)f, f^\prime \rangle  \varphi(b)
  \, dg  \, d\gamma_0 \, db
\end{multline}
for $\varphi \in \mathcal{S}(\tilde{U})$.
As in the non-archimedean case, by \eqref{da22} and \eqref{z0 w(b)}, we may easily show that this is equal to 
\begin{multline*}
\int_{N \backslash 
G^1}  \int_{N \backslash G^1} \int_{N} \int_{F} \psi_{U_{G}}^{-1}(v(x) n)
(\omega_{\psi}(g, 1)\phi)(z_0) 
(\overline{\omega_{\psi}(\gamma, 1)\phi^\prime)(z_0)}
\\
\times
 \langle \pi^\circ(v(x)ng)f, \pi^\circ(\gamma)f^\prime \rangle
\varphi(\tilde{u}(x))
 \,dx \, dn  \, dg \, d\gamma_0 
\end{multline*}
since the integral in \eqref{J12} converges absolutely. Thus 
Proposition~\ref{main identity whittaker}
is proved
in the archimedean case also.
%
%
%
%
%
%
%
%
%
%
%
%
\section{Proof of Theorem~\ref{ref ggp}}
In this section, we complete our proof of Theorem~\ref{ref ggp}.
Let $(\pi, V_\pi)$ be an irreducible cuspidal tempered automorphic representation of $G_D(\mA)$ with  a  trivial central character.
Throughout this section, we suppose that $\pi$ is neither of type I-A nor type I-B.
When $\pi$ is one of these types, our theorem is already
proved in \cite[Theorem~7.5]{Co}.

The case when $B_{\xi, \Lambda,\psi} \not \equiv 0$ on $V_\pi$
is treated in \ref{s: pf main thm 1}
and the case when $B_{\xi, 
\Lambda, \psi} \equiv 0$ on $V_\pi$
is treated in \ref{s: pf main thm 3}, respectively.
%
%
%
%
%
%
%
%
%
%
%
%
\subsection{Proof of Theorem~\ref{ref ggp}
when  $B_{\xi, \Lambda,\psi} \not\equiv 0$}
\label{s: pf main thm 1}
\subsubsection{Reduction to a local identity}
Suppose that $B_{\xi, \Lambda,\psi} \not \equiv 0$ on $V_\pi$.
Let $(\sigma, V_\sigma)$ denote the 
theta lift of $\pi$ to $\mathrm{GSU}_{3,D}(\mA)$, which is an irreducible cuspidal automorphic representation.
As in the proof of Theorem~\ref{gso whittaker}, our theorem may be reduced to a certain local identity. 
Let us set some notation to explain our local identity.

As in Section~\ref{s:RI H gso} and Section~\ref{s:RI HD gsu}, we fix the
Petersson inner product $(\,,\,)$ on $V_{\pi}$
and the local hermitian pairing $(\,,\,)_v$ on $\pi_v$.
As in \eqref{zD def}, we define the maximal isotropic subspaces $Z_{D, \pm}$.
Let 
\[
\theta_{D, v}:\mathcal S\left(Z_{D, +}\left(F_v\right)\right)\otimes V_{\pi_v}\to
V_{\sigma_v}
\]
be the $G_D(F_v)^+ \times \mathrm{GSU}_{3, D}\left(F_v\right)$-equivariant
linear map, which is unique up to multiplication by a scalar.
As in Section~\ref{s:Explicit formulas whittaker}, 
let us adjust $\left\{\theta_{D, v}\right\}_v$ so that 
\[
\theta_{D, \psi}\left(\phi^\prime; f^\prime\right)=\otimes_v\,
\theta_{D, v}\left(\phi_v^\prime\otimes f_v^\prime\right)
\]
for $f^\prime=\otimes_v\,f_v^\prime\in V_{\pi}$
and $\phi^\prime=\otimes_v\, \phi_v^\prime
\in\mathcal S\left(Z_{D, +}\left(\mA\right)\right)$.
Let us choose $X \in D^\times(F)$ so that $S_{X} =\xi$. Then by Proposition~\ref{pullback Bessel gsp innerD}, we have 
\begin{equation}
\label{pullback main decomp}
\mathcal{B}_{X, \Lambda^{-1}}(\theta(f: \phi)) =B_{\xi, \Lambda}(f) \cdot \prod_v \mathcal{K}_v(f_v; \phi_v)
\end{equation}
where $f = \otimes f_v \in V_{\pi_D}$ and $\phi = \otimes \phi_v \in \mathcal{S}(Z_{D, +}(\mA))$,
and we define 
\[
\mathcal{K}_v(f_v ;\phi_v) = \int_{N_D(F_v) \backslash G_D^1(F_v)} \alpha_{\Lambda_v, \psi_{\xi, v}}(\pi_v(g)f_v) \phi_v(g^{-1} \cdot v_{D, X}) \, dg.
\]
Here, we take the measure $dh_{v}$ on $G_D^1(F_v)$ defined by the
gauge form, the measure $dn_v$ on $N_{G_D}(F_v)$ defined in \ref{ss: measures}
under the identification $D(F_v) \simeq F_v^4$
and the measure $dg_{1,v}$ on $N_{G_D}(F_v) \backslash G_D^1(F_v)$ such that $dh_v=dn_v \, dg_{1,v}$.
%
%
%
Then by combining the explicit formula of the Bessel periods on $\sigma$ given in Theorem~\ref{gsu Bessel}, 
the Rallis inner product formulas \eqref{RIPF H GSO} and Proposition~\ref{RIPF HD},
 Lemma~\ref{pi sigma identity}
and Lemma~\ref{compo number}, and the above pull-back formula \eqref{pullback main decomp},
we see that Theorem~\ref{ref ggp} is reduced to the following local identity.
\begin{proposition}
\label{main prp bessel}
Let $v$ be an arbitrary place of $F$.
For a given $f_v \in V_{\pi_v}$ satisfying $\alpha_{\xi, \Lambda, v}\left(f_v\right)\ne 0$, 
there exists $\phi_v \in \mathcal{S}(Z_{D, +}\left(F_v\right))$
such that the local integral $\mathcal{K}_v\left(f_v;\phi_v\right)$
converges absolutely, $\mathcal{K}_v\left(f_v;\phi_v\right) \ne 0$
 and the  equality
 \[
\frac{Z_v(\phi_v, f_v, \pi_v) \,  \alpha_{\Lambda_v^{-1}, \psi_{X, v}}\left(\theta(\phi_v \otimes f_v\right)}{|\mathcal{K}_v(f_v;\phi_v)|^2}
= \frac{\alpha_{\Lambda_v, \psi_{\xi, v}}(f_v)}{(f_v, f_v)_v}
\]
holds.
\end{proposition}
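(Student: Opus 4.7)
The plan is to follow the strategy employed for Proposition \ref{main identity whittaker} in \ref{ss: local pull-back computation}, deducing Proposition \ref{main prp bessel} from a local pullback identity for Bessel periods which is the pointwise counterpart of the global formula \eqref{2: 2nd pull-back formula}. Specifically, the key intermediate statement to establish is that for all $f, f^\prime \in V_{\pi_v}^\infty$ and all $\phi, \phi^\prime \in C_c^\infty(Z_{D, +}(F_v))$,
\begin{multline*}
\alpha^{\sigma_v}_{\Lambda_v^{-1}, \psi_{X, v}}\bigl(\theta_{D, v}(\phi \otimes f), \theta_{D, v}(\phi^\prime \otimes f^\prime)\bigr) \\
= \iint_{(N_D(F_v) \backslash G_D^1(F_v))^2} \alpha^{\pi_v}_{\Lambda_v, \psi_{\xi, v}}\bigl(\pi_v(g)f, \pi_v(g^\prime)f^\prime\bigr)\\
\times \bigl(\omega_{\psi_v}(g, 1)\phi\bigr)(v_{D, X}) \, \overline{\bigl(\omega_{\psi_v}(g^\prime, 1)\phi^\prime\bigr)(v_{D, X})}\, dg\, dg^\prime.
\end{multline*}

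To prove this identity in the non-archimedean case, I would unfold the left-hand side via the stable integral \eqref{e: local integral 1} defining $\alpha^{\sigma_v}$, decompose $N_{3, D}$ along the chain $N_{0, D} \lhd \cdots \lhd N_{3, D}$, and carry out the same Fourier-inversion steps that took the global argument from \eqref{e: 2nd bessel step 1} to \eqref{e: 2nd bessel step 2} and beyond. The Weil-representation computation of $\omega_\psi(1, u_D(x))$ forces the $a$-variable to be isotropic, and the computation for the next unipotent layer isolates $b = x_+(-\overline{X}\eta^{-1})$, producing the orbit representative $v_{D, X}$. A Witt-theorem style parametrization analogous to Lemma \ref{meas decomp 1} then reorganizes the integration, and the residual $N_D$-integration is recognized as $\alpha^{\pi_v}$; absolute convergence at each stage follows from the temperedness of $\pi_v$ together with Yamana's bounds \cite{Yam}. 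In the archimedean case the stable integrals are replaced by the regularized expressions \eqref{local archimedean Bessel}, and the Fourier inversions are executed on partial Fourier transforms of tempered distributions, exactly as in \ref{ss: local pull-back computation}; the continuity estimates of \cite[Proposition~3.14]{Liu2} together with the analog of Lemma \ref{lem L1} and Remark \ref{archi lem L1} justify pointwise evaluation at the characters $\psi_{\xi, v}$ and $\psi_{X, v}$.

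Once this identity is in hand, Proposition \ref{main prp bessel} follows by the doubling argument of Section \ref{s:RI HD gsu}. Namely, applying the pullback identity with the $\mathrm{GSU}_{3, D}(F_v)$-invariant sesquilinear pairing $\mathcal{B}_{\sigma_v}$ defined via the local Rallis inner product integral analogous to \eqref{theta local PI} in place of the hermitian form $\langle\, ,\, \rangle_v$, and specializing $f = f^\prime$, $\phi = \phi^\prime$, yields the desired quotient identity directly. The doubling zeta integral $Z_v(\phi_v, f_v, \pi_v)$ arises from the $G_D^1$-integration defining $\mathcal{B}_{\sigma_v}$, the factor $|\mathcal{K}_v(f_v; \phi_v)|^2$ from the squared orbital integrand on the right-hand side, and the normalization $(f_v, f_v)_v^{-1}$ from the proportionality between $\mathcal{B}_{\sigma_v}$ and the chosen invariant hermitian form on $\sigma_v$. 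The existence of a non-vanishing test vector $\phi_v$ is then routine: since the $G_D^1(F_v)$-orbit of $v_{D, X}$ in $Z_{D, +}(F_v)$ is locally closed, any bump function supported in a sufficiently small neighborhood of $v_{D, X}$ produces $\mathcal{K}_v(f_v; \phi_v) \ne 0$ whenever $\alpha_{\Lambda_v, \psi_{\xi, v}}(f_v) \ne 0$.

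The hardest part will be the archimedean case of the intermediate local pullback identity. The Bessel unipotent $N_{3, D}$ has a more intricate nilpotent structure than the Whittaker unipotent $U$ in Proposition \ref{main identity whittaker}, and the characters $\psi_{\xi, v}$, $\psi_{X, v}$ lie on regular but non-generic orbits, so at each Fourier-inversion step one must verify that the orbital integral produced by the previous step defines an honest continuous (not merely distributional) function of the parameter being inverted. This amounts to an archimedean smoothness statement for orbital integrals of matrix coefficients of tempered representations twisted by Weil-representation data, sharpening Lemma \ref{lem L1} and Remark \ref{archi lem L1} to accommodate the multiple unipotent variables involved; it should ultimately follow from the same techniques as \cite[Proposition~3.22]{Liu2}, but carefully tracking the bookkeeping through the successive inversions is the most delicate point.
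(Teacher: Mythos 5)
Your proposal follows essentially the same route as the paper: the paper reduces the proposition (via the argument of \cite[3.2--3.3]{FM2}, using the doubling pairing $\mathcal{B}_{\sigma_v}$ built from $Z^{\bullet}$) to exactly the local pull-back identity \eqref{e:main local pullback bessel} that you state, and proves that identity by the same unfolding and Fourier-inversion computation modeled on Propositions~\ref{pullback Bessel gsp innerD} and \ref{main identity whittaker}, with the archimedean case handled by partial Fourier transforms of tempered distributions and the test vector obtained from a bump function near the locally closed orbit of $v_{D,X}$. The only step you compress is the passage from the specialized identity ($f=f^\prime$, $\phi=\phi^\prime$) to the stated quotient formula, which rests on the multiplicity-one/proportionality argument of \cite[3.2--3.3]{FM2} rather than a literal factorization of the double integral, but that is precisely what the paper also invokes.
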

\begin{Remark}
In Corollary~\ref{cor nonzero bessel}, the existence of $f_v$ with
$\alpha_{\Lambda_v, \psi_{\xi, v}}\left(f_v\right)\ne 0$ is shown.
\end{Remark}
Let us define hermitian inner product on $\mathcal{S}(Z_{D, +}(F_v))$ by 
\[
\mathcal{B}_{\omega_v, D}(\phi, \phi^\prime) = \int_{Z_{D, +}(F_v)} \phi(x) \overline{\phi^\prime}(x) \,dx
\quad \text{for} \quad \phi, \phi^\prime \in \mathcal{S}(Z_{D, +}(F_v)).
\]
Then we consider the integral
\[
Z^{\bullet}(f, f^\prime; \phi, \phi^\prime) = \int_{G^1(F_v)} \langle \pi_v(g) f, f^\prime \rangle_v \,
 \mathcal{B}_{\omega_v}(\omega_{\psi}(g)\phi, \phi^\prime) \, dg
\]
for $f, f^\prime \in \pi_v$ and $\phi, \phi^\prime \in \mathcal{S}(Z_{D, +}(F_v))$.
As in Section~\ref{s: pf wh gso}, this converges absolutely and gives
a $\mathrm{GSU}_{3,D}(F_v)$-invariant hermitian inner product 
\[
\mathcal{B}_{\sigma_v} : V_{\sigma_v} \times V_{\sigma_v} \rightarrow \mC
\]
 by 
\[
\mathcal{B}_{\sigma_v}(\theta(\phi \otimes f), \theta(\phi^\prime \otimes f^\prime))
:= Z^{\bullet}(f, f^\prime; \phi, \phi^\prime).
\]
By the Rallis inner product formula \eqref{RIPF H GSO} and Proposition~\ref{RIPF HD}, 
at any place $v$, there exist $f_v, f_v^\prime, \phi, \phi^\prime$ such that $Z^{\bullet}(f, f^\prime; \phi, \phi^\prime) \ne 0$
since $\theta_{\psi, D}(\pi) \ne 0$. Thus, $\mathcal{B}_{\sigma_v} \not\equiv 0$.

For $\widetilde{\phi}_i \in \sigma_v$, we define
\[
\mathcal{A}(\tilde{\phi}_1, \tilde{\phi}_2):= \int_{N_{3, D}(F_v)}^{st} \int_{M_{X}(F_v)} \mathcal{B}_{\sigma_v}(\sigma_v(nt)\tilde{\phi}_1, \tilde{\phi}_2)
\Lambda_{D,v}(t) \psi_{X, D, v}(n)^{-1} \, dt \, dn.
\]
Here, at an archimedean place $v$, a stable integration means 
the Fourier transform as in the definition of $\alpha_{\chi, \psi_N}$.
Then by an
argument similar to the one in  \cite[3.2--3.3]{FM2}, we may reduce Proposition~\ref{main prp bessel} to the following identity.
\begin{proposition}
For any $f, f^\prime\in V_{\pi_v}$ and any $\phi,\phi^\prime\in C_c^\infty\left(Z_{D, +}(F_v)\right)$,
we have
\begin{multline}
\label{e:main local pullback bessel}
\mathcal{A}\left(
\theta\left(\phi\otimes f
\right),
\theta\left(\phi^\prime\otimes f^\prime\right)
\right)
=
\\
\int_{N_D(F_v)\backslash G_D^1(F_v)}
\int_{N_D(F_v)\backslash G_D^1(F_v)}
\alpha_{\Lambda_v, \psi_{\xi, v}}\left(\pi_v\left(h\right)f,\pi_v\left(h^\prime\right)f^\prime\right)
\\
\times
\left(\omega_{\psi_v}\left(h,1\right)\phi\right)\left(x_0\right)\,
\overline{\left(\omega_{\psi_v}\left(h^\prime,1\right)\phi^\prime\right)\left(x_0\right)}\,
dh\, dh^\prime.
\end{multline}
\end{proposition}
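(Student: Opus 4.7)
The plan is to establish this local identity by mirroring the strategy of Proposition~\ref{main identity whittaker}, transposing the argument from the orthogonal dual pair $(G, \mathrm{GSO}_{4,2})$ to the quaternionic dual pair $(G_D, \mathrm{GSU}_{3,D})$. The structural parallel is exact: the Bessel character $\psi_{X,D,v}$ on $N_{3,D}$ plays the role of $\psi_U$ on the maximal unipotent subgroup of $\mathrm{GSO}_{4,2}$, and the torus $M_X$ plays the role of the extra diagonal torus appearing in the Whittaker unfolding. Consequently, the calculation should be essentially a local, integral-theoretic transcription of the global computation in the proof of Proposition~\ref{pullback Bessel gsp innerD}.

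First, I would substitute the definition of $\mathcal{B}_{\sigma_v}$ in terms of the doubling integral $Z^{\bullet}$ into $\mathcal{A}(\theta(\phi \otimes f), \theta(\phi' \otimes f'))$, obtaining an iterated integral over $G_D^1(F_v)$, $Z_{D,+}(F_v)$, $M_X(F_v)$ and (stably) $N_{3,D}(F_v)$. I would then unfold the stable integration over $N_{3,D}(F_v)$ in stages using the central normal subgroup $N_{0,D}$, on which $\psi_{X,D}$ is trivial. Applying the Weil-representation formula
\[
(\omega_\psi(1, u_D(x))\phi)(z_+) = \psi\bigl(-\tfrac{1}{2}\mathrm{tr}_D(\langle a,a\rangle \eta^2 x)\bigr)\phi(z_+)
\]
together with Fourier inversion in $x$ forces the support to lie on the isotropic locus $\{\langle a,a\rangle = 0\} \subset Z_{D,+}$. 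This is the analog of the passage to $Z_{+,\circ}$ in the orthogonal case, and its justification will require the quaternionic analogs of Lemma~\ref{meas decomp 1}, Lemma~\ref{lem L1} and Remark~\ref{archi lem L1}.

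Next, I would apply a Witt-type orbit decomposition of $X_{D,-}$ under $G_D^1(F)$: only the open orbit of $x_-$ is non-degenerate and contributes, with stabilizer $N_D$, so the isotropic-vector integral collapses to one over $N_D(F_v)\backslash G_D^1(F_v)$. A second application of Fourier inversion, this time over the quotient $N_{3,D}/N_{0,D}$ against the character $\psi_{X,D}$ and using
\[
(\omega_\psi(1, u)\phi)(z_+) = \psi\bigl(\mathrm{tr}_D(\eta\langle b, x_-\rangle A)\bigr)\phi(z_+)
\]
for $u$ in a transversal to $N_{0,D}$, pins down the second component of the argument to be $-\eta^{-1}X x_+$, i.e.\ reproduces the distinguished vector $v_{D,X} = x_0$. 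The remaining $G_D^1(F_v)$-integration telescopes into two copies of $N_D(F_v)\backslash G_D^1(F_v)$ separated by an inner integration over $N_D(F_v)$. Combining that inner integration with the $M_X(F_v)$-integration against $\Lambda_{D,v}$ and invoking the conjugation identity $\psi_{\xi_X}(u)^{-1}$ that appeared in the global calculation, one exactly reconstructs the local Bessel period $\alpha_{\Lambda_v,\psi_{\xi,v}}(\pi_v(h)f, \pi_v(h')f')$ and arrives at the right-hand side of \eqref{e:main local pullback bessel}.

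The main obstacle will be the rigorous handling of the two successive Fourier inversions at archimedean places, where the unfolded integrals only make sense distributionally and one must work with the tempered-distribution formalism of Liu~\cite{Liu2}. The Schwartz assumption on $\phi, \phi'$ is used through the same partial-Fourier-transform bookkeeping that appears in the archimedean portion of the proof of Proposition~\ref{main identity whittaker}; once the quaternionic analogs of Remark~\ref{archi lem L1} and of the continuity/compactness properties of the orbital-type integral $\mathcal{G}_{\phi_1,\phi_2,f_1,f_2}$ are in place, each intermediate interchange of integrations and each Fourier-inversion step is justified and the computation proceeds just as in Section~\ref{ss: local pull-back computation}, with no further conceptual input.
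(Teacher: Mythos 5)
Your proposal is correct and follows essentially the same route as the paper: the paper proves the identity as a local transcription of the global unfolding in Propositions~\ref{pullback Bessel gsp} and \ref{pullback Bessel gsp innerD} (staged unfolding through the unipotent radical, Fourier inversion onto the isotropic locus, Witt-type orbit collapse to $N_D\backslash G_D^1$ with a second Fourier inversion pinning the vector $v_{D,X}$, then telescoping and reconstituting $\alpha_{\Lambda_v,\psi_{\xi,v}}$), with the archimedean case handled by the same tempered-distribution bookkeeping as in Proposition~\ref{main identity whittaker} and the non-archimedean interchanges justified by convergence lemmas of Liu together with a stable-integral lemma. The only difference is presentational: the paper writes out the split case in the orthogonal realization $(\mathrm{GSp}_2,\mathrm{GSO}_{4,2})$ and remarks that the non-split quaternionic case is similar and easier, whereas you carry out the quaternionic computation directly.
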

Before proceeding to a proof of this proposition, we give some corollaries of this identity.
\begin{corollary}
\label{cor nonzero bessel}
For an arbitrary place $v$ of $F$, 
we have 
$\alpha_{\Lambda_v, \psi_{\xi, v}} \not \equiv 0$ on $\pi_v$.
\end{corollary}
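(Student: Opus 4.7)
The plan is to derive the corollary directly from the local pullback identity \eqref{e:main local pullback bessel}, combined with local non-vanishing of the Bessel functional on $\sigma_v = \theta_\psi(\pi)_v$. First, since $B_{\xi,\Lambda,\psi}\not\equiv 0$ on $V_\pi$ by hypothesis, Proposition~\ref{pullback Bessel gsp innerD} implies that $\sigma$ has a non-zero global $(X,\Lambda^{-1},\psi)$-Bessel period. Combined with the uniqueness of Bessel models (Gan--Gross--Prasad at non-archimedean places and Jiang--Sun--Zhu at archimedean places), this forces $\mathrm{Hom}_{R_v}(\sigma_v,\chi_v)\neq\{0\}$ at every place $v$ of $F$, where $\chi_v$ is the local Bessel character attached to $(X,\Lambda^{-1},\psi)$. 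By the equivalence \eqref{equiv wald}, due to Waldspurger at finite places and to Luo at archimedean places, this in turn yields that the local Bessel functional $\alpha_{\Lambda_v^{-1},\psi_{X,v}}^{\mathrm{GSU}_{3,D}}$ on $\sigma_v$ is not identically zero.

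Next, I would observe that the sesquilinear form $\mathcal{A}$ on $V_{\sigma_v}\times V_{\sigma_v}$ is, by construction, $(R_v,\chi_v)$-equivariant on the left and is given by exactly the same defining (stable/regularized) integral as the local Bessel period on $\sigma_v$, but computed with respect to the inner product $\mathcal{B}_{\sigma_v}$. Since $\mathcal{B}_{\sigma_v}\not\equiv 0$, as noted right after its definition, and the local Bessel functional on $\sigma_v$ is non-zero by the previous paragraph, the multiplicity-one property of the Bessel model forces $\mathcal{A}\not\equiv 0$ on $V_{\sigma_v}\times V_{\sigma_v}$.

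Since the theta lifting map $\theta_v:\mathcal{S}(Z_{D,+}(F_v))\otimes V_{\pi_v}\twoheadrightarrow V_{\sigma_v}$ is surjective and $C_c^\infty(Z_{D,+}(F_v))$ is dense in $\mathcal{S}(Z_{D,+}(F_v))$, one may then select $f,f'\in V_{\pi_v}$ and $\phi,\phi'\in C_c^\infty(Z_{D,+}(F_v))$ so that the left-hand side $\mathcal{A}\bigl(\theta(\phi\otimes f),\theta(\phi'\otimes f')\bigr)$ of \eqref{e:main local pullback bessel} does not vanish. Applying that identity, the right-hand integral is then also non-zero, so the integrand $\alpha_{\Lambda_v,\psi_{\xi,v}}(\pi_v(h)f,\pi_v(h')f')$ must be non-zero for at least one pair $(h,h')$ in $\bigl(N_D(F_v)\backslash G_D^1(F_v)\bigr)^2$. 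This proves $\alpha_{\Lambda_v,\psi_{\xi,v}}\not\equiv 0$ on $V_{\pi_v}$.

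The main subtle point is the identification of $\mathcal{A}$ with a non-zero multiple of the standard local Bessel period on $\sigma_v$ computed with respect to some invariant inner product; this requires checking that $\mathcal{B}_{\sigma_v}$ obtained from the absolutely convergent doubling integral \eqref{theta local PI} coincides, up to a scalar, with the hermitian inner product $(\,,\,)_{\sigma_v}$ used in the definition of the local Bessel period, and appealing to Liu's convergence and regularization results in \cite{Liu2} so that the stable integral / Fourier transform of $\mathcal{A}$ makes sense and falls into the one-dimensional space of Bessel functionals. Once this identification is in place, the three steps above close the argument.
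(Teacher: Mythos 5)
Your proposal is correct in outline and shares the paper's central mechanism: both proofs transfer non-vanishing through the local pull-back identity \eqref{e:main local pullback bessel}, using that $\mathcal{B}_{\sigma_v}\not\equiv 0$ and that $\mathcal{A}$ is (a non-zero multiple of) the local Bessel period of $\sigma_v$. Where you genuinely differ is in how the non-vanishing of $\alpha_{\Lambda_v^{-1},\psi_{X,v}}$ on $\sigma_v$ is obtained. The paper gets it by a purely local argument, citing \cite[Corollary~5.1]{FM2} together with the genericity of the local theta lift of $\sigma_v$ to $\mathrm{GU}_{2,2}(F_v)$; you instead argue globally: the hypothesis $B_{\xi,\Lambda,\psi}\not\equiv 0$ and Proposition~\ref{pullback Bessel gsp innerD} give a non-zero global Bessel period on $\sigma$, uniqueness of Bessel models yields $\mathrm{Hom}\neq\{0\}$ at every place, and the tempered ``$\mathrm{Hom}\neq\{0\}\Leftrightarrow\alpha\not\equiv 0$'' theorem upgrades this to the integral functional. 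This route works (the temperedness of $\sigma_v$ needed for that equivalence is available here), and it is attractive in that it uses only data already in play in Section~7; the paper's version has the advantage of being a purely local criterion, in the spirit of Corollary~\ref{supple lem  RIPF}, and of not invoking the deep local Gan--Gross--Prasad input. Two points in your write-up need repair, though neither is fatal. First, \eqref{equiv wald} is the Waldspurger--Luo statement for the $(\mathrm{SO}(5),\mathrm{SO}(2))$ Bessel model on $G_D(F_v)$ and does not literally apply to $\sigma_v$ on $\mathrm{GSU}_{3,D}(F_v)$; for the $(\mathrm{GU}(4),\mathrm{U}(1))$-type Bessel model you must instead cite the unitary-group results (Beuzart-Plessis \cite{BP1,BP2} at finite places, Xue \cite{Xuea} at the real place, or the corresponding statements in \cite{FM3}), which is exactly what the paper does elsewhere for such models. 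Second, the identification of $\mathcal{A}$ with a non-zero multiple of the standard local Bessel period rests on Schur's lemma — a non-zero invariant hermitian form on the irreducible unitary $\sigma_v$ is proportional to $(\,,\,)_{\sigma_v}$ — not on multiplicity one of Bessel models; you do pinpoint the right issue in your final paragraph, and the archimedean density/regularization caveat you raise is handled exactly as in the paper's own use of \eqref{e:main local pullback bessel}.
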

\begin{proof}
Sine $\mathcal{B}_{\sigma_v} \not\equiv 0$, \eqref{e:main local pullback bessel} implies that $\alpha_{\Lambda_v, \psi_{\xi, v}} \not \equiv 0$ on $\pi_v$
 if and only if $\alpha_{\Lambda_v^{-1}, \psi_{X, v}} \not \equiv 0$ on $\sigma_v$.
Moreover, by \cite[Corollary~5.1]{FM2},  $\alpha_{\Lambda_v^{-1}, \psi_{X, v}} \not \equiv 0$ on $\sigma_v$ since the theta lift of $\sigma_v$
to $\mathrm{GU}_{2,2}(F_v)$ is generic.
Thus our claim follows.
\end{proof}
As another corollary, a non-vanishing of local theta lifts follows from a non-vanishing of local periods.
\begin{corollary}
\label{supple lem  RIPF}
Let $k$ be a local field of characteristic zero and $\mathcal{D}$ be a quaternion algebra over $k$.
Let $\tau$ be an irreducible admissible tempered representation of $G_{\mathcal{D}}$ with a trivial central character.
Let $S_{\mathcal{D}} \in \mathcal{D}^1$ and $\chi$ be a character of $T_{\mathcal{D}, S_{\mathcal{D}}}$.
Suppose that $\alpha_{\chi, \psi_{S_{\mathcal{D}}}} \not \equiv 0$ on $\tau$. Then $\mathcal{A} \not \equiv 0$ 
on $\theta_{\psi, \mathcal{D}}(\tau) \times \theta_{\psi, \mathcal{D}}(\tau)$. 
In particular  $\theta_{\psi, \mathcal{D}}(\tau) \ne 0$ and 
$Z^\bullet\left(\phi, \phi^\prime, f, f^\prime\right) \ne 0$ for some $f, f^\prime \in \tau$ and $\phi, \phi^\prime \in \mathcal{S}(Z_{\mathcal{D}, +})$.
\end{corollary}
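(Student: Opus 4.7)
The plan is to derive the Corollary as a direct consequence of the local pull-back identity \eqref{e:main local pullback bessel}, applied with $(\tau, S_{\mathcal{D}}, \chi)$ playing the role of $(\pi_v, \xi, \Lambda)$. The identity itself is purely local and makes sense verbatim over any local field of characteristic zero, so no extra work is needed to transport it from the global setting; what remains is to pick test data making both sides nonzero.

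First I would use the hypothesis $\alpha_{\chi, \psi_{S_{\mathcal{D}}}} \not\equiv 0$ to select $f \in V_\tau$ with $\alpha_{\chi, \psi_{S_{\mathcal{D}}}}(f, f) \ne 0$. This is a standard polarization step: the form $(f_1, f_2) \mapsto \alpha_{\chi, \psi_{S_{\mathcal{D}}}}(f_1, f_2)$ is hermitian with respect to the fixed inner product on $\tau$ (a consequence of the stable-integral definition in \eqref{e: local integral 1} together with tempered matrix-coefficient estimates), so non-vanishing off the diagonal implies non-vanishing on the diagonal. Next, I would choose $\phi \in C_c^\infty(Z_{\mathcal{D},+}(k))$ (respectively a Schwartz function at archimedean places) supported in a very small neighborhood of the distinguished vector $x_0 = v_{\mathcal{D}, X}$ arising from Proposition~\ref{pullback Bessel gsp innerD}, with $X \in \mathcal{D}^\times$ chosen so that $\xi_X = S_{\mathcal{D}}$. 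Because $(\omega_{\psi}(h,1)\phi)(x_0)$ is then a bump function on $N_D(k) \backslash G_D^1(k)$ localized at the identity coset, and because the local Bessel pairing $\alpha_{\chi, \psi_{S_{\mathcal{D}}}}(\tau(h)f, \tau(h')f)$ is continuous in $(h, h')$ and nonzero at $(1,1)$, the double integral on the right-hand side of \eqref{e:main local pullback bessel} is nonzero.

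Consequently the left-hand side $\mathcal{A}(\theta(\phi \otimes f), \theta(\phi \otimes f))$ is nonzero, which simultaneously proves $\mathcal{A} \not\equiv 0$ on $\theta_{\psi, \mathcal{D}}(\tau) \times \theta_{\psi, \mathcal{D}}(\tau)$ and shows that the local theta lift $\theta_{\psi, \mathcal{D}}(\tau)$ is nonzero. The last assertion follows by unwinding the definitions: $\mathcal{A}$ is built from the hermitian form $\mathcal{B}_{\sigma_v}$, which in turn is defined through $Z^\bullet$, so the non-vanishing of $\mathcal{A}$ forces $\mathcal{B}_{\sigma_v} \not\equiv 0$ and therefore $Z^\bullet(\phi, \phi', f, f') \ne 0$ for some choice of data.

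The main obstacle is the archimedean case, where the outer integral in $\mathcal{A}$ is a Fourier-transform-regularized distribution rather than an absolutely convergent integral, and the support argument for $\phi$ must be phrased as a statement about Schwartz functions whose image under partial Fourier transform localizes the relevant distribution. This is handled by exactly the same regularization machinery of Liu~\cite[Proposition~3.14]{Liu2} that underlies the archimedean part of Section~\ref{ss: local pull-back computation}; the absolute convergence of the matrix-coefficient integral underlying $\mathcal{B}_{\sigma_v}$ (via Yamana~\cite[Lemma~7.2]{Yam}) together with temperedness of $\tau$ ensures that all interchanges of integration used in evaluating the right-hand side of \eqref{e:main local pullback bessel} are legitimate, as in the proof of Proposition~\ref{main identity whittaker}.
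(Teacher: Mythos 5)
Your proposal is correct and follows essentially the same route as the paper: the paper's own proof simply invokes the local pull-back identity \eqref{e:main local pullback bessel}, observes that the hypothesis $\alpha_{\chi,\psi_{S_{\mathcal{D}}}}\not\equiv 0$ allows one to choose $f,f',\phi,\phi'$ making the right-hand side nonzero, and concludes that the left-hand side, hence $\mathcal{A}$, $\theta_{\psi,\mathcal{D}}(\tau)$ and $Z^\bullet$, are nonzero. The only difference is that you spell out the implicit steps (polarization to get $\alpha(f,f)\neq 0$, a bump function $\phi$ near $v_{\mathcal{D},X}$, and the archimedean regularization via Liu), which the paper leaves unstated.
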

\begin{Remark}
By \cite[Lemma~8.6, Remark~8.4 (1)]{Yam}, we know that the  existence of such $f, f^\prime, \phi, \phi^\prime$ is 
equivalent to the non-vanishing of the  theta lift of $\tau$ to $\mathrm{GSU}_{3,\mathcal{D}}$ when $k \ne \mR$.
Though the equivalence is not clear when $k=\mR$,
we shall use Corollary~\ref{supple lem  RIPF} to show 
that the local non-vanishing of the theta lifts 
implies the global non-vanishing of the theta lifts
in \ref{s: pf main thm 3}.
\end{Remark}
\begin{proof}
By our assumption,
 the right-hand side of \eqref{e:main local pullback bessel} is not zero for some 
$f, f^\prime, \phi, \phi^\prime$ when $F_v \ne \mR$.
Hence, the left-hand side is not zero, and in particular  
$Z^\bullet\left(\phi, \phi^\prime, f, f^\prime\right) \ne 0$.
\end{proof}
%
%
%
%
%
%
%
%
%
%
%
%
%
%
%
%
\subsubsection{Local pull-back computation}
Here we shall prove the identity
\eqref{e:main local pullback bessel} and  thus
we complete our proof of Theorem~\ref{ref ggp} 
when
$B_{\xi, \psi, \Lambda} \not \equiv 0$.
Here we give a proof of \eqref{e:main local pullback bessel}
only in  the non-archimedean case
since the
archimedean case is similarly proved 
as in the proof of Proposition~\ref{main identity whittaker}.
Our proof is a local analogue of the proof of Proposition~\ref{pullback Bessel gsp}
and  Proposition~\ref{pullback Bessel gsp innerD}.
Moreover we will consider only the case
when $D$ is split since the proof is similar 
and indeed is easier in the non-split case
as in the global computation.
Since the argument in this subsection
is purely local, 
in order to simplify the notation, 
we omit subscripts $v$ and 
we simply write $K(F)$ by $K$ for any algebraic group
$K$ defined over $F=F_v$.

From the definition, we may write the left-hand side of \eqref{e:main local pullback bessel} as
 \[
 \int_{N_{4,2}}^{st} \int_{M_{X}} \int_{G^1} \int_{Z_+}  \langle \pi(g)f, f^\prime \rangle (\omega_{\psi}(g, nt) \phi)(x) \overline{\phi^\prime(x)}
\Lambda(t) \psi_{X}(n)^{-1} \, dx \, dg \, dt \, dn
 \]
 where $X$ is chosen so that $S_X =S$.
 Further as in \eqref{bessel u0 u1 u2}, this is equal to
 \begin{multline*}
\int_{F}^{st} \int_{F^2}^{st} \int_{F^2}^{st} \int_{M_{X}} \int_{G^1} \int_{Z_+}   
(\omega_{\psi}(g, u_0(s) u_1(s_1, t_1) u_2(s_2, t_2) t) \phi)(x) \overline{\phi^\prime(x)}
\\
\times
 \langle \pi(g)f, f^\prime \rangle
\Lambda(t) \psi(x_{21}s_1+x_{22}t_1+x_{11}s_2+x_{12}t_2)^{-1} \, dx \, dg \, dt \, ds_2 \,dt_2 \,ds_1 \, dt_1 \,ds
\end{multline*}
when we write $X = \begin{pmatrix} x_{11}&x_{12}\\ x_{21}&x_{22}\end{pmatrix}$.
For each $r \in F$, we may take $A_r =(a_1^r, a_2^r, 0, 0) \in Z_+$ such that $a_1^r,a_2^r$ are linearly independent and $\langle a_1^r, a_2^r \rangle =r$.
Let us denote by $Q_r$ the stabilizer of $x_r$ in $G^1$.
Then as in the proof of Proposition~\ref{main identity whittaker}, for each $r \in F$, there is 
a Haar measure $dq_r$ of $Q_r$
such that
\[
\int_{Z_+} \Phi(x) \, dx = \int_{F} \int_{Q_r \backslash G^1} \int_{X_+^2} \Phi(h^{-1} \cdot A_r+b) \, db \,dh_r  \,dr
\] 
with $dh_r=dq_r \backslash dh$,
provided that the both sides converge. Then applying the Fourier inversion, because of \eqref{global comp1}, our integral becomes
 \begin{multline*}
 \int_{F^2}^{st} \int_{F^2}^{st} \int_{M_{X} }\int_{G^1}\int_{Q_0 \backslash G^1} \int_{X_+^2}   
 \langle \pi(g)f, f^\prime \rangle
\Lambda(t) \psi(x_{21}s_1+x_{22}t_1+x_{11}s_2+x_{12}t_2)^{-1}
\\
\times
(\omega_{\psi}(hg, u_1(s_1, t_1) u_2(s_2, t_2) t) \phi)(A_0+b) \overline{(\omega_{\psi}(h,1)\phi^\prime)(A_0+b)}
\\ db  \, dh \, dx \, dg \, dt \, ds_2 \,dt_2 \,ds_1 \, dt_1 
\end{multline*}
with $A_0=(x_{-2}, x_{-1}, 0, 0)$.
This is verified by an argument similar to the one 
for  \cite[Lemma~3.20]{Liu2}.
We note that $Q_0 = N$
from the definition. Moreover, as in \cite[Lemma~3.19]{Liu2},
the inner integral $\int_{M_{X}} \int_{G^1}\int_{Q_0 \backslash G^1} \int_{X_+^2} $ converges absolutely, and thus this is equal to
 \begin{multline*}
 \int_{F^2}^{st} \int_{F^2}^{st}   \int_{Q_0 \backslash G^1} \int_{G^1}  \int_{M_{X}}  \int_{X_+^2}  \langle \pi(g)f, f^\prime \rangle 
 \Lambda(t) \psi(x_{21}s_1+x_{22}t_1+x_{11}s_2+x_{12}t_2)^{-1}
 \\
\times
(\omega_{\psi}(hg, u_1(s_1, t_1) u_2(s_2, t_2) t) \phi)(A_0+b) \overline{(\omega_{\psi}(h,1)\phi^\prime)(A_0+b)}
\\
 \\ db  \, dh \, dx \, dg \, dt \, ds_2 \,dt_2 \,ds_1 \, dt_1 .
\end{multline*}
From the proof of Lemma~\ref{pull comp 2}, this integral is equal to
 \begin{multline}
 \label{pre F4 X+}
 \int_{F^2}^{st} \int_{F^2}^{st}  \int_{Q_0 \backslash G^1} \int_{G^1}  \int_{M_{X}}   \int_{X_+^2}  \langle \pi(g)f, f^\prime \rangle 
(\omega_{\psi}(hg, t) \phi)(A_0+b)
\\
\times \overline{(\omega_{\psi}(h,1)\phi^\prime)(A_0+b)}
\,
\Lambda(t)\, \psi \left( \mathrm{tr} \begin{pmatrix}s_2&t_2\\ s_1&t_1 \end{pmatrix} \left( S_0 \begin{pmatrix} \langle x_{-2}, b_1 \rangle & \langle x_{-2}, b_2 \rangle\\ 
\langle x_{-1}, b_1 \rangle& \langle x_{-1}, b_2 \rangle \end{pmatrix} -X\right) \right)
\\ db  \, dh \, dx \, dg \, dt \, ds_2 \,dt_2 \,ds_1 \, dt_1 .
\end{multline}
Now we claim that we may define the stable integral
 \begin{multline*}
 \int_{F^2}^{st} \int_{F^2}^{st} \int_{X_+^2}  \langle \pi(g)f, f^\prime \rangle 
(\omega_{\psi}(hg, t) \phi)(A_0+b) \overline{(\omega_{\psi}(h,1)\phi^\prime)(A_0+b)}
\\
\times
\Lambda(t) \psi \left( \mathrm{tr} \begin{pmatrix}s_2&t_2\\ s_1&t_1 \end{pmatrix} \left( S_0 \begin{pmatrix} \langle x_{-2}, b_1 \rangle & \langle x_{-2}, b_2 \rangle\\ 
\langle x_{-1}, b_1 \rangle& \langle x_{-1}, b_2 \rangle \end{pmatrix} -X\right) \right)
\, db \, ds_2 \,dt_2 \,ds_1 \, dt_1
\end{multline*}
and we may
 choose a sufficiently large compact open subgroup $F_i$ of $F$ ($1 \leq i \leq 4$)
so that it depends only on $\psi$ and $\int_{F^2}^{st} \int_{F^2}^{st}\dots = \int_{F_1} \int_{F_2}\int_{F_3}\int_{F_4}\cdots$.
This claim easily follows from the following lemma 
in the one dimensional case.
\begin{lemma}\label{l: integral formula}
Let $f$ be a locally constant function on $F$ which is in $L^1(F)$.
Then there exists a compact open subgroup $F_0$ of $F$ such that for any compact open subgroups
$F^\prime$ and $F^{\prime \prime}$
of $F$ containing $F_0$, we have
\begin{equation}\label{e: integral formula}
\int_{F^\prime} \int_{F} f(x) \psi(xy) \, dx \, dy = \int_{F^{\prime \prime}} \int_{F} f(x) \psi(xy) \, dx \, dy.
\end{equation}
\end{lemma}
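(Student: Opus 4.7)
The plan is to invoke Fubini in order to reduce the double integral to a single integral on a small set where the local constancy of $f$ can be exploited directly. Since $F^\prime$ is compact and $f\in L^1(F)$, Fubini applies and yields
\[
\int_{F^\prime}\int_F f(x)\psi(xy)\,dx\,dy
=\int_F f(x)\left(\int_{F^\prime}\psi(xy)\,dy\right)dx
=\mathrm{vol}(F^\prime)\int_{(F^\prime)^{\perp}}f(x)\,dx,
\]
where $(F^\prime)^{\perp}:=\{x\in F : \psi(xy)=1 \text{ for all } y\in F^\prime\}$ is a compact open subgroup of $F$.

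Next I would exploit the local constancy of $f$ at the origin: fix a compact open subgroup $U\subset F$ with $f|_U\equiv f(0)$. Since the duality $F^\prime\mapsto (F^\prime)^{\perp}$ reverses inclusions, I would take $F_0$ to be any compact open subgroup of $F$ satisfying $F_0^{\perp}\subset U$. Then for every compact open subgroup $F^\prime\supset F_0$ one has $(F^\prime)^{\perp}\subset F_0^{\perp}\subset U$, so the right-hand side of the previous display collapses to $f(0)\cdot\mathrm{vol}(F^\prime)\,\mathrm{vol}((F^\prime)^{\perp})$.

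The final ingredient is the standard fact that $\mathrm{vol}(F^\prime)\,\mathrm{vol}((F^\prime)^{\perp})$ is a constant independent of the compact open subgroup $F^\prime$. This follows from Pontryagin duality: for $F^\prime\subset F^{\prime\prime}$ the finite group $F^{\prime\prime}/F^\prime$ is canonically dual to $(F^\prime)^{\perp}/(F^{\prime\prime})^{\perp}$ under the pairing induced by $\psi$, hence the two finite groups share the same cardinality, giving
\[
\frac{\mathrm{vol}(F^{\prime\prime})}{\mathrm{vol}(F^\prime)}
=\frac{\mathrm{vol}((F^\prime)^{\perp})}{\mathrm{vol}((F^{\prime\prime})^{\perp})}.
\]
Combining these three observations shows that the integral in the statement equals the same constant multiple of $f(0)$ for every $F^\prime\supset F_0$, and a symmetric argument with $F^{\prime\prime}$ in place of $F^\prime$ gives \eqref{e: integral formula}. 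No serious obstacle is anticipated; the argument is really just Fubini combined with the self-duality of $F$, and the only choice to be made is that of the stabilizing subgroup $F_0$, which is governed by the conductor of $\psi$ and the scale on which $f$ becomes locally constant at the origin.
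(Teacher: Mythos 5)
Your proof is correct, and it takes a genuinely different (and more streamlined) route than the paper's. After one application of Fubini (legitimate since $F'$ is compact and $f\in L^1$) you evaluate $\int_{F'}\psi(xy)\,dy=\operatorname{vol}(F')\mathbf{1}_{(F')^{\perp}}(x)$, so the double integral collapses to $\operatorname{vol}(F')\int_{(F')^{\perp}}f(x)\,dx$, and everything reduces to local constancy of $f$ at $0$ plus the constancy of $\operatorname{vol}(F')\operatorname{vol}((F')^{\perp})$, which your duality argument gives for nested subgroups (for $F',F''$ containing $F_0$ but not nested, compare each with $F_0$; also shrink $U$ to a ball $\varpi^{N}\mathcal{O}_F$ so that it is a subgroup and $F_0=U^{\perp}$ makes sense). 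The paper instead takes $F_0=\varpi^{m}\mathcal{O}_F$ to be the conductor of $\psi$, restricts to $F'=\varpi^{m'}\mathcal{O}_F$, splits $\int_F$ into $\int_{F\setminus\mathcal{O}_F}$ and $\int_{\mathcal{O}_F}$, kills the first piece by non-triviality of $y\mapsto\psi(xy)$ on $F'$, and then discards the contribution of $y\in F'\setminus F_0$ so that both sides of \eqref{e: integral formula} become $\int_{F_0}\int_{\mathcal{O}_F}f(x)\psi(xy)\,dx\,dy$. Your approach buys two things: it treats arbitrary compact open subgroups, not only fractional ideals, and it lets $F_0$ depend on $f$ through the radius on which $f\equiv f(0)$ near $0$ — which is in fact necessary, since the discarded term $\int_{F'\setminus F_0}\bigl(\int_{\mathcal{O}_F}f(x)\psi(xy)\,dx\bigr)dy$ need not vanish ($f$ cannot be pulled out of the inner integral): for $f=\mathbf{1}_{1+p\mathbb{Z}_p}$ on $\mathbb{Q}_p$ with $\psi$ of conductor $\mathbb{Z}_p$ the iterated integral is $p^{-1}$ for $F'=\mathbb{Z}_p$ but $0$ for $F'=p^{-1}\mathbb{Z}_p$, so stabilization does not occur at the conductor alone. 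The stable value is exactly your $f(0)\operatorname{vol}(F')\operatorname{vol}((F')^{\perp})$, so your choice of $F_0$ is the right one.
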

\begin{proof}
Suppose that $\psi$ is trivial on $F_0 : =\varpi^{m}\mathcal{O}_F$ and not trivial on $\varpi^{m-1}\mathcal{O}_F$.
Put $F^\prime = \varpi^{m^\prime} \mathcal{O}_F$ with $m^{\prime} \leq m$. Then we may write the left-hand side of
\eqref{e: integral formula} as 
\begin{equation}\label{e: integral formula2}
  \int_{F^\prime} \int_{F \setminus \mathcal{O}} f(x) \psi(xy)\, dx \, dy
+\int_{F^\prime}\int_{\mathcal{O}}  f(x) \psi(xy) \, dx \, dy .
\end{equation}
The first integral of 
\eqref{e: integral formula2} 
converges absolutely.
Hence by interchanging the order of integration, it is equal
to
\[
\int_{F \setminus \mathcal{O}} \int_{F^\prime}  f(x) \psi(xy)\, dy \, dx
=\int_{F \setminus \mathcal{O}}f\left(x\right)
 \left(\int_{F^\prime}\psi\left(xy\right)\,dy\right)\, dx=0
\]
since
$y \mapsto \psi(xy)$ is a non-trivial character of $F^\prime$
for each $x \in F\setminus \mathcal{O}$.
As for the second integral of \eqref{e: integral formula2},
we have
\begin{multline*}
\int_{F^\prime}\int_{\mathcal{O}}  f(x) \psi(xy) \, dx \, dy
\\
=\int_{\varpi^{m} \mathcal{O}} \int_{\mathcal{O}}
  f(x) \psi(xy) \, dx \, dy
+\int_{\varpi^{m^\prime} \mathcal{O} \setminus \varpi^{m} \mathcal{O}}f\left(x\right)
\left(\int_{\mathcal{O}}   \psi(xy) \, dy\right) \, dx
\end{multline*}
where the inner integral of the second integral vanishes
as above.
Thus the left hand side of \eqref{e: integral formula}
is equal to
\[
 \int_{\varpi^{m} \mathcal{O}} \int_{\mathcal{O}} f(x) \psi(xy) \, dx \, dy.
\]
Similarly the right-hand side of \eqref{e: integral formula}
becomes as above, and our claim follows.
\end{proof}
By Lemma~\ref{l: integral formula}, we see that \eqref{pre F4 X+} is equal to
\begin{multline*}
 \int_{N \backslash G^1} \int_{G^1}  \int_{M_{X}}   \int_{F^2}^{st} \int_{F^2}^{st}  \int_{X_+^2}  \langle \pi(g)f, f^\prime \rangle 
(\omega_{\psi}(hg, t) \phi)(A_0+b)
\\
\times \overline{(\omega_{\psi}(h,1)\phi^\prime)(A_0+b)}
\,
\Lambda(t)\, \psi \left( \mathrm{tr} \begin{pmatrix}s_2&t_2\\ s_1&t_1 \end{pmatrix} \left( S_0 \begin{pmatrix} \langle x_{-2}, b_1 \rangle & \langle x_{-2}, b_2 \rangle\\ 
\langle x_{-1}, b_1 \rangle& \langle x_{-1}, b_2 \rangle \end{pmatrix} -X\right) \right)
\\ db  \, dh \, dx \, dg \, dt \, ds_2 \,dt_2 \,ds_1 \, dt_1 .
\end{multline*}
Then applying the Fourier inversion,  we get
\begin{multline}
\label{eq 1}
\int_{N \backslash G^1} \int_{G^1}  \int_{M_{X}}  \langle \pi(g)f, f^\prime \rangle 
\\
\times
(\omega_{\psi}(hg, t) \phi)(A_0+B_0) \overline{(\omega_{\psi}(h,1)\phi^\prime)(A_0+B_0)}
\,
\Lambda(t) \, db  \, dh \, dx \, dg \, dt 
\end{multline}
where $B_0 = (0, 0,\frac{x_{21}}{2}x_1+\frac{x_{11}}{2} x_2, -\frac{x_{22}}{2d}x_1-\frac{x_{12}}{2d} x_2)$
and $x_0=A_0+B_0$.
By \cite[Proposition~3.1]{Liu2}, for a sufficiently large compact open subgroup $N_0$ of $N$,
we have 
\[
\int_{M_X} \int^{st}_{N} f(nt) \chi(nt) \,dn \,dt =
\int_{N_0} \int_{M_X}  f(nt) \chi(nt) \,dn \,dt 
\]
and thus we may define 
\[
\int_{N}^{st} \int_{M_X}  f(nt) \chi(nt) \,dn \,dt .
\]
Further, we note a simple fact that we have 
\[
\int_{G} g(h) \,dh = \int_{N \backslash G} \int_{N}^{st} g(nh) \, dn dh
\]
when both sides are defined. Thus \eqref{eq 1} is equal to
\begin{multline*}
\int_{N \backslash G^1} \int_{N \backslash G^1}   \int_{M_{X}} \int_{N}^{st}  \langle \pi(g)f, f^\prime \rangle 
\\
\times
(\omega_{\psi}(hg, t) \phi)(A_0+B_0) \overline{(\omega_{\psi}(h,1)\phi^\prime)(A_0+B_0)}
\Lambda(t) \, db  \, dh \, dx \, dg \, dt .
\end{multline*}
Then the same computation 
as the one to get \eqref{pull-back gsp complete} from \eqref{MX to TS} may be applied to the above integral, and thus 
we see that our integral is equal to 
\begin{multline*}
\int_{N\backslash G^1}
\int_{N\backslash G^1}
\alpha_{\Lambda, \psi_{S}}\left(\pi_v\left(h\right)f,\pi_v\left(h^\prime\right)f^\prime\right)
\\
\times
\left(\omega_{\psi}\left(h,1\right)\phi\right)\left(x_0\right)\,
\overline{\left(\omega_{\psi}\left(h^\prime,1\right)\phi^\prime\right)\left(x_0\right)}\,
dh\, dh^\prime.
\end{multline*}
Hence the identity \eqref{e:main local pullback bessel} holds
when $B_{\xi, \Lambda,\psi} \not\equiv 0$.
%
%
%
%
%
%
%
%
%
%
%
%
%
%
%
%
\subsection{Proof of Theorem~\ref{ref ggp}
when  $B_{\xi, \Lambda,\psi} \equiv 0$}
\label{s: pf main thm 3}
First we note the following proposition concerning
the  non-vanishing of the $L$-values.
%
\begin{proposition}
Let $\pi$ be an irreducible cuspidal tempered automorphic representation of $G_D(\mA)$ with trivial central character.
If $G_D \simeq G$ and $\pi$ is a theta lift from $\mathrm{GSO}_{3,1}$, then $L(s, \pi, \mathrm{std} \otimes \chi_E)$ has a simple pole at $s=1$.
Otherwise $L(s, \pi, \mathrm{std} \otimes \chi_E)$ is  holomorphic and non-zero at $s=1$.
\end{proposition}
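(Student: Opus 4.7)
The proof naturally splits according to whether the theta lift $\theta_{D,\psi}(\pi_+^B)$ of the relevant restriction of $\pi$ to $\mathrm{GSU}_{3,D}(\mA)$ is cuspidal, and my plan is to read off the behavior of $L(s, \pi, \mathrm{std} \otimes \chi_E)$ at $s=1$ from this dichotomy via the Rallis inner product formula (Proposition~\ref{RIPF HD}). By the Rallis tower property together with the genericity argument in the proof of Lemma~\ref{nonzero lemma (1)}, $\theta_{D,\psi}(\pi_+^B)$ is non-zero and cuspidal unless $\pi$ appears as a theta lift from a smaller orthogonal space in the same Witt tower. For $D$ non-split this possibility is ruled out since the smaller step $\mathrm{GSU}_{1,D}$ is incompatible with the existence of a generic local constituent coming from temperedness; for $D$ split, the only remaining obstruction to cuspidality is that $\pi$ is a theta lift from $\mathrm{GSO}_{3,1}$, which matches exactly the exceptional case of the proposition.

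In the cuspidal case, Proposition~\ref{RIPF HD} yields
\begin{equation*}
\langle \theta_{D,\psi}(\pi_+^B), \theta_{D,\psi}(\pi_+^B) \rangle \;=\; \frac{L(1, \pi, \mathrm{std} \otimes \chi_E)}{L(3, \chi_E) L(2, \mathbf{1}) L(4, \mathbf{1})} \cdot \prod_v Z_v^\sharp,
\end{equation*}
where the denominator factors are finite and non-zero and the normalized local zeta integrals $Z_v^\sharp$ equal $1$ outside a finite set. Cuspidality of $\theta_{D,\psi}(\pi_+^B)$ forces the Petersson norm on the left to be finite, and non-vanishing follows for a suitable choice of decomposable data, whence $L(s, \pi, \mathrm{std} \otimes \chi_E)$ is holomorphic and non-zero at $s=1$. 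Choosing the data so that the remaining local $Z_v^\sharp$ are non-zero at every place is possible by temperedness of $\pi_v$ together with standard local arguments for doubling zeta integrals.

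For the remaining case $G_D \simeq G$ and $\pi = \theta_\psi(\tau)$ with $\tau$ cuspidal on $\mathrm{GSO}_{3,1}(\mA)$, I would use $\mathrm{PGSO}_{3,1} \simeq \mathrm{PGL}_2(E)$ to regard $\tau$ as a cuspidal representation of $\mathrm{GL}_2(\mA_E)$, so that the strong functorial lift of $\pi$ to $\mathrm{GL}_4(\mA)$ becomes $\Pi = \mathcal{AI}(\tau)$. Combining the decomposition of $L$-parameters
\begin{equation*}
\wedge^2 \mathcal{AI}(\tau) \otimes \chi_E \;\cong\; \mathrm{As}^+(\tau) \,\oplus\, \mathcal{AI}(\det \tau)
\end{equation*}
with the identity $L(s, \Pi, \wedge^2 \otimes \chi_E) = L(s, \pi, \mathrm{std} \otimes \chi_E) \cdot L(s, \chi_E)$ coming from $\wedge^2 \mathrm{spin} = \mathrm{std} \oplus \mathbf{1}$ on $\mathrm{Sp}_4(\mC)$, the pole analysis at $s=1$ reduces to that of the Asai $L$-function $L(s, \mathrm{As}^+(\tau))$. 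By Flicker-Rallis, this Asai $L$-function has a simple pole at $s=1$ precisely when $\tau$ is distinguished by $\mathrm{GL}_2(\mA)$, which is guaranteed here by the constraint $\omega_\pi = 1$ combined with $\pi = \theta_\psi(\tau)$; the remaining factor $L(s, \mathcal{AI}(\det \tau))$ is holomorphic and non-vanishing at $s=1$, so the pole of $L(s, \pi, \mathrm{std} \otimes \chi_E)$ at $s=1$ is indeed simple.

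The hardest step will be verifying that the local $L$-factors at ramified and archimedean places, defined in Remark~\ref{L-fct def rem} via local Langlands parameters, are consistent with those appearing in the factorizations through $L(s, \Pi, \wedge^2 \otimes \chi_E)$ and the Asai decomposition above. At unramified places this is immediate from Satake parameters; at the remaining places it is a case-by-case verification using the local Langlands correspondences for $\mathrm{GL}_n$ and for the inner forms of $\mathrm{GSp}_4$ cited in Remark~\ref{L-fct def rem}, which I expect to be routine but somewhat tedious.
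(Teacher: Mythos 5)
Your proposal has a genuine gap in the main case and an error in the exceptional case, so it does not go through as written. In the ``otherwise'' case your argument is circular: the Rallis tower property does not give non-vanishing of the theta lift $\theta_{D,\psi}(\pi_+^B)$ to $\mathrm{GSU}_{3,D}$ attached to the \emph{specific} field $E$ defining $\chi_E$; it only says the lift is cuspidal at first occurrence, which may lie higher in the tower. In Lemma~\ref{nonzero lemma (1)} the non-vanishing is deduced from the assumed non-vanishing of an $(E,\Lambda)$-Bessel period via the pull-back formulas, and no such hypothesis is available here: by \cite{JSLi} the representation $\pi$ has \emph{some} Bessel period, but possibly only for a different quadratic field $E'$, whereas all Bessel periods detected by this particular $\mathrm{GSU}_{3,D}$ (via $\xi_X=X\eta\bar X$) have field isomorphic to $E$. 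Worse, by Yamana's criterion \cite[Theorem~10.3]{Yam} the non-vanishing of this theta lift requires precisely $L(1,\pi,\mathrm{std}\otimes\chi_E)\neq 0$, which is what you are trying to prove; and holomorphy at $s=1$ is not an output of Proposition~\ref{RIPF HD} either, but must be excluded via the pole criterion \cite[Lemma~10.2]{Yam} (a pole forces $\pi$ to be a theta lift from a rank-one group in the tower, contradicting temperedness). The paper argues quite differently: in the split case it passes to a nearly equivalent globally generic $\pi_0$ and quotes \cite[Lemma~10.2]{Yam} together with \cite[Theorem~5.1]{Sha0}; in the non-split case it chooses an auxiliary quadratic field $E_0$ for which a Bessel period exists, compares the product $L^S(s,\pi,\mathrm{std}\otimes\chi_{E_0}\chi_E)L^S(s,\pi,\mathrm{std}\otimes\chi_E)$ with the corresponding product for the split-group representation supplied by Theorem~\ref{ggp SO}~(1), and then rules out poles of each factor by \cite[Lemma~10.2]{Yam} plus temperedness. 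Your proposal has no working substitute for this step.

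In the exceptional case your pole mechanism is backwards. Since $\omega_\tau$ is trivial, $L(s,\mathcal{AI}(\det\tau))=\zeta_F(s)L(s,\chi_E)$ is \emph{not} holomorphic at $s=1$; dividing $L(s,\Pi,\wedge^2\otimes\chi_E)$ by $L(s,\chi_E)$ gives $L(s,\pi,\mathrm{std}\otimes\chi_E)=\zeta_F(s)\,L(s,\tau,\mathrm{As}^+)$, and the simple pole comes from the $\zeta_F(s)$ factor. Conversely, $L(s,\tau,\mathrm{As}^+)$ cannot have a pole at $s=1$ in this situation: a pole would mean $\tau$ is $\mathrm{GL}_2(\mA)$-distinguished, hence (as $\tau^\vee\simeq\tau$) $\tau\simeq\tau^\sigma$, in which case the theta lift of $\tau$ from $\mathrm{GSO}_{3,1}$ to $G$ is not cuspidal, contradicting the cuspidality of $\pi$; and if it did have a pole the total $L$-function would acquire a double pole, contradicting the very statement being proved. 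What actually needs checking is holomorphy and non-vanishing of $L(s,\tau,\mathrm{As}^+)$ at $s=1$, which follows from $\tau\not\simeq\tau^\sigma$ together with standard non-vanishing results (for example via the factorization of $L(s,\tau\times\tau^\sigma)$ or the Langlands--Shahidi method), not from distinction.
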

\begin{proof}
Suppose that $G_D \simeq G$, i.e. $D$ is split.
Then  there exists an irreducible cuspidal globally generic automorphic representation $\pi_0$ of $G(\mA)$
such that $\pi$ and $\pi_0$ are nearly equivalent.
Then our claim follows from \cite[Lemma~10.2]{Yam}  and \cite[Theorem~5.1]{Sha0}.

Suppose that $D$ is not split.
Let us take a quadratic extension $E_0$ of $F$ such that $\pi$ has $(E_0, \Lambda_0)$-Bessel period 
for some character $\Lambda_0$ of $\mA_{E_0}^\times \slash E_0^\times$.
Then by Theorem~\ref{ggp SO} (1), we see that there exists
 an irreducible cuspidal tempered automorphic representation $\pi_0$
of $G(\mA)$ such that for a sufficiently large finite set $S$ of places of $F$
containing all archimedean places, $\pi_v, \pi_{0,v}$ are unramified and 
$\mathrm{BC}_{E_0 \slash F}(\pi_v) \simeq \mathrm{BC}_{E_0 \slash F}(\pi_{0,v})$ for $v\not\in S$.
This implies that 
\begin{multline*}
L^S(s, \pi_0, \mathrm{std} \otimes \chi_{E_0} \chi_E)
L^S(s, \pi_0, \mathrm{std} \otimes \chi_E)
\\
=
L^S(s, \pi, \mathrm{std} \otimes  \chi_{E_0} \chi_E)
L^S(s, \pi, \mathrm{std} \otimes \chi_E).
\end{multline*}
From the case when  $G_D \simeq G$, the left-hand side of this identity is not zero at $s=1$,
and thus so is the right-hand side, which possibly has a pole at $s=1$.

Suppose that $L^S(s, \pi, \mathrm{std} \otimes \chi_{E_0 \slash F} \chi_E)$ has a pole at $s=1$.
We may take a quadratic extension $E_1 \subset D$ of $F$ such that $\chi_{E_1} =  \chi_{E_0} \chi_E$.
Then by Yamana~\cite[Lemma~10.2]{Yam}, $\pi$ is a theta lift from $\mathrm{GSU}_{1,D}$,
which is a similitude quaternion unitary group of degree one defined by an element in $E_1$ as in \eqref{d: gu_1,d}.
In this case, $\pi$ is not tempered, and thus it contradicts to our assumption on $\pi$.
Thus, $L^S(s, \pi, \mathrm{std} \otimes \chi_{E_0 \slash F} \chi_E)$ is holomorphic at $s=1$.
Further, by an argument  similar to the one for $L^S(s, \pi, \mathrm{std} \otimes \chi_{E_0 \slash F} \chi_E)$,
we see that  $L^S(s, \pi, \mathrm{std} \otimes \chi_E)$ is holomorphic.
Therefore, it is holomorphic and non-zero at $s=1$.
\end{proof}
%
Suppose that $B_{\xi, \Lambda,\psi} \equiv 0$ on $V_\pi$.
We shall show that the right-hand side of \eqref{e: main identity} is zero.
If $L\left(\frac{1}{2}, \pi \times \mathcal{AI} \left(\Lambda \right)\right) = 0$, then there is nothing to prove.
Hence, we may suppose that $L\left(\frac{1}{2}, \pi \times \mathcal{AI} \left(\Lambda \right)\right) \ne 0$.
Then we shall show that for some place $v$ of $F$, we have
$\alpha_{\Lambda_v, \psi_{\xi, v}}\equiv 0$ on $\pi_v$.

Assume contrary, i.e. 
 $\alpha_{\Lambda_v, \psi_{\xi, v}} \not \equiv 0$ on $\pi_v$ for any $v$.
Let us denote by $\pi_+^{B, \rm{loc}}$ the unique irreducible constituent of $\pi|_{G_D(\mA)^+}$
such that $\alpha_{\Lambda_v, \psi_{\xi, v}} \not \equiv 0$ on $\pi_{+,v}^{B, \mathrm{loc}}$ for any $v$.
From our assumption $\alpha_{\Lambda_v, \psi_{\xi, v}} \not \equiv 0$ on $\pi_v$ and Corollary~\ref{supple lem  RIPF},
we see that $\alpha_{\Lambda_v^{-1}, \psi_{X, v}} \not \equiv 0$ on the theta lift $\theta_{\psi_v, D}(\pi_v)$ of $\pi_v$
to $\mathrm{GSU}_{3,D}(F_v)$ and $Z_v(\phi_v, f_v, \pi) \ne 0$ for some $f_v \in \pi_v$ and $\phi_v \in \mathcal{S}(Z_{D, +}(F_v))$.
Since $\pi^\prime$ is nearly equivalent to $\pi$, we have $L(1, \pi, \mathrm{std} \otimes \chi_E) \ne 0$.
Therefore, the theta lift $\theta_{\psi, D}(\pi_+^{B, \mathrm{loc}})$ of $\pi_+^{B, \mathrm{loc}}$ to 
$\mathrm{GSU}_{3,D}(\mA)$ is non-zero by Yamana~\cite[Theorem~10.3]{Yam},
which states that the non-vanishing of local theta lifts at all places together with  the non-vanishing of the $L$-value
implies the non-vanishing of the global theta lift.
We note that actually in \cite[Theorem~10.3]{Yam},
there is an assumption that $D$ is not split at real places,
which was necessary to ensure that 
 the non-vanishing of the local theta lift implies $Z_v(\phi_v, f_v, \pi) \ne 0$ 
 for some $f_v \in \pi_v$ and $\phi_v \in \mathcal{S}(Z_{D, +}(F_v))$.
 Since the non-vanishing of $Z_v(\phi_v, f_v, \pi)$ for some $f_v$ and $\phi_v$
 is shown in our case by the argument above, 
  we may apply \cite[Theorem~10.3]{Yam} regardless of the assumption.

Recall that from the proof of Theorem~\ref{ggp SO} (1), $\theta_{\psi, D}(\pi_+^{B, \mathrm{loc}})$ is tempered.
Let us regard $\theta_{\psi, D}(\pi_+^{B, \mathrm{loc}})$ as automorphic representations of $\mathrm{GU}_{4, \varepsilon}$.
By the uniqueness of the Bessel model for $\mathrm{GU}_{4, \varepsilon}$ proved in \cite[Proposition~A.1]{FM3}, there uniquely exists
an irreducible constituent $\tau$ of  $\theta_{\psi, D}(\pi_+^{B, \mathrm{loc}})|_{\mathrm{U}(4)}$
such that  $\tau$ has  the local $(X, \Lambda_v^{-1},\psi_v)$-Bessel model at any place $v$.

On the other hand, we note $L\left(1 \slash 2, \tau \times \Lambda^{-1} \right) \ne 0$ since  $L\left(\frac{1}{2}, \pi \times \mathcal{AI} \left(\Lambda \right)\right) \ne 0$.
Then by \cite[Theorem~1.2]{FM3}, there exists an irreducible cuspidal automorphic representation $\tau^\prime$
of $\mathrm{U}(V_0)$ with four dimensional hermitian space $V_0$ over $E$ such that $\tau^\prime$ has 
$(X, \Lambda_v,\psi_v)$-Bessel period.
Then we know that $\tau$ and $\tau^\prime$ have the same $L$-parameter, in particular, $\tau_v \simeq \tau_v^\prime$
when $v$ is split.
At a non-split place $v$, by the uniqueness of an element of the tempered $L$-packet which has the same Bessel period due to Beuzart-Plessis~\cite{BP1,BP2},
we see that $\mathrm{U}(V_0) \simeq \mathrm{U}(J_D)$ and $\tau \simeq \tau^\prime$.
Moreover, by Mok~\cite{Mok}, we have $\tau = \tau^\prime$.
Therefore, $\tau = \tau^\prime$ has $(X, \Lambda^{-1},\psi)$-Bessel period, and 
this implies that $\theta_{\psi, D}(\pi_+^{B, \mathrm{loc}})$ also has $(X, \Lambda^{-1},\psi)$-Bessel period.
Then Proposition~\ref{pullback Bessel gsp}
and \ref{pullback Bessel gsp innerD} show that $\pi$ has 
$(E,\Lambda)$-Bessel period, and this is a contradiction.
Thus,  \eqref{e: main identity} holds when $B_{\xi, \Lambda,\psi} \equiv 0$ on $V_\pi$.
%
%
%
%
%
%
%
%
%
%
%
%
%
%
%
%
\section{Generalized B\"{o}cherer conjecture}
\label{GBC}
In this section we  prove the generalized B\"{o}cherer conjecture.
In fact, we shall prove Theorem~\ref{t: vector valued boecherer}
below,
which is more general than Theorem~\ref{Boecherer:scalar}
stated in the introduction.
%
%
%
\subsection{Temperedness condition}
In order to apply Theorem~\ref{ref ggp} to holomorphic Siegel
cusp  forms of degree two,
we need to verify the  temperedness 
for corresponding automorphic representations.
%
\begin{proposition}
\label{temp prp}
Suppose that $F$ is totally real.
Let $\tau$ be an irreducible cuspidal automorphic representation of $G_D(\mA)$ with a trivial central character
such that $\tau_v$ is a discrete series representation for every real place $v$ of $F$.
Suppose moreover  that $\tau$ is not CAP. Then $\tau$ is tempered.
\end{proposition}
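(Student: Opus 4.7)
The plan is to reduce to Weissauer's resolution of the Ramanujan conjecture for non-CAP holomorphic Siegel cusp forms on $\mathrm{GSp}_4$, extended to totally real $F$ by cyclic base change. Since discrete series representations are automatically tempered, archimedean temperedness is free and only the finite places require work. When $D$ is split, $G_D \simeq G = \mathrm{GSp}_2$ and $\tau$ is a non-CAP cuspidal automorphic representation of $\mathrm{GSp}_4(\mA_F)$ whose archimedean components are discrete series; Weissauer's theorem then gives temperedness at every finite place directly. (Equivalently, the weak functorial lift of $\tau$ to $\mathrm{GL}_4(\mA_F)$ is an isobaric sum of cohomological cuspidal representations, which are tempered by Clozel--Harris--Taylor--Shin.)

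When $D$ is non-split, I would reduce to the split case by producing an irreducible cuspidal automorphic representation $\tau^\circ$ of $G(\mA_F)$ which is nearly equivalent to $\tau$ (possibly after a twist by $\chi_{E'}$ for some auxiliary quadratic extension $E'/F$) and whose archimedean components are still discrete series. Concretely, Li's theorem \cite{JSLi} provides an $(E', \Lambda')$-Bessel period on $\tau$. Applying the theta correspondence for $(G_D, \mathrm{GSU}_{3,D})$ developed in Section~3, then using the accidental isomorphism $\mathrm{PGSU}_{3,D} \simeq \mathrm{PGU}_{4,\varepsilon}$ and the theta lifts to $\mathrm{GU}_{2,2}$ and finally back to $G$, one obtains $\tau^\circ$ by following the construction in the proof of Theorem~\ref{ggp SO}~(1). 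Preservation of discrete series under archimedean theta correspondence (Paul, Li--Paul--Tan--Zhu at real places; Moeglin at split real places; Adams--Barbasch at complex places) ensures that each archimedean component of $\tau^\circ$ is a discrete series. The split case then applies to $\tau^\circ$, and temperedness transfers back to $\tau$: at almost all finite places via the near equivalence, and at the remaining places via the local Langlands correspondences of Gan--Takeda and Gan--Tantono together with the compatibility of the theta and base-change lifts with tempered $L$-parameters.

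The main obstacle will be showing, in the non-split case, that the tower of theta lifts producing $\tau^\circ$ yields a non-zero \emph{cuspidal} automorphic representation of $G(\mA_F)$ with discrete series archimedean components \emph{without} assuming temperedness of $\tau$ a priori. This requires combining Yamana's non-vanishing criteria for theta lifts of quaternionic dual pairs with a Rallis tower argument, and crucially using the non-CAP hypothesis on $\tau$ to rule out descent of the lift to lower steps of the tower that would destroy cuspidality. The archimedean preservation-of-discrete-series step must also be executed uniformly for every step of the tower, which is the most delicate point of the construction.
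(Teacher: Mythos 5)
The gap is in your split case. Weissauer's theorem does not ``give temperedness at every finite place directly'': as the paper itself notes in the remark following the proposition, \cite{We} yields temperedness only at places where $\tau_v$ is unramified (and Jorza's refinement \cite{Jo} requires holomorphic discrete series at infinity and omits places above $2$); moreover it is a theorem over $\mathbb{Q}$, and your ``cyclic base change'' to a totally real $F$ is not an available tool --- $F/\mathbb{Q}$ need not be cyclic, and base change for $\mathrm{GSp}_4$ in the generality you would need does not exist. The workable route is the one in your parenthetical, and it is what the paper actually does, but it needs the case analysis and verifications you elide. One takes Arthur's lift $\Pi$ of $\tau$ to $\mathrm{GL}_4(\mA)$ \cite{Ar}. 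If $\Pi$ is not cuspidal, the non-CAP hypothesis forces $\Pi=\Pi_1\boxplus\Pi_2$ with $\Pi_i$ cuspidal on $\mathrm{GL}_2$ having discrete series archimedean components, and temperedness is Blasius's bound for Hilbert modular forms \cite{Bl}. If $\Pi$ is cuspidal, one first shows $\Pi_v$ is tempered \emph{and cohomological} at the real places (Raghuram--Sarnobat \cite{RS}), then base changes to a suitable imaginary quadratic extension of $F$ where $\mathrm{BC}(\Pi)$ stays cuspidal, observes that it is conjugate self-dual and cohomological, and invokes Caraiani \cite{Car} to get temperedness at all finite places. Your one-line appeal to ``Clozel--Harris--Taylor--Shin'' hides exactly these points (regularity coming from the discrete-series hypothesis, essential self-duality, cuspidality of the base change, and the split into the Yoshida-type versus cuspidal-lift cases), which is where the hypotheses of the proposition are actually used.

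Your non-split reduction, by contrast, is essentially the paper's. The paper packages it as Proposition~\ref{exist gen prp} (a by-product of the proof of Theorem~\ref{ggp SO}~(1)): using a Bessel period supplied by Li and genericity at a finite place --- not temperedness --- the theta tower through $\mathrm{GSU}_{3,D}$, the accidental isomorphism with $\mathrm{GU}_{4,\varepsilon}$, $\mathrm{GU}_{2,2}\simeq\mathrm{GSO}_{4,2}$ and back to $G$ produces a globally generic cuspidal $\tau^\circ$ on $G(\mA)$ nearly equivalent to $\tau$ up to a quadratic twist, and the statement ``$\tau$ is tempered if and only if $\tau^\circ$ is tempered'' is built in, proved by tracking temperedness through each local theta lift (Atobe--Gan and Gan--Ichino at finite places; Paul, Li--Paul--Tan--Zhu, M\oe glin, Adams--Barbasch at archimedean ones); the same archimedean results preserve the discrete-series property. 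So the obstacle you flag (cuspidality of the intermediate lifts without a priori temperedness) is resolved there by Rallis tower and non-vanishing arguments, and the transfer of temperedness back to $\tau$ is carried out place by place through the local correspondences rather than by near equivalence at the ramified places.
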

\begin{Remark}
When $D$ is split, i.e. 
$G_D \simeq G$,
Weissauer~\cite{We} proved that $\tau_v$ is tempered at a place $v$
when $\tau_v$ is unramified.
Moreover, when $\tau_v$ is a holomorphic discrete series 
representation at each archimedean place $v$, 
Jorza~\cite{Jo} showed the temperedness at finite places 
not dividing $2$.
\end{Remark}
%
%
%
\begin{proof}
First suppose that $G_D \simeq G$.
Let $\Pi$ denote the functorial lift of $\tau$ to $\mathrm{GL}_4(\mA)$ established by Arthur~\cite{Ar} (see also Cai-Friedberg-Kaplan~\cite{CFK}).

When $\Pi$ is not cuspidal, since $\tau$ is not CAP, 
$\Pi$ is of the form
 $\Pi=\Pi_1 \boxplus \Pi_2$
with irreducible cuspidal automorphic representations $\Pi_i$ of $\mathrm{GL}_2(\mA)$.
Since $\tau_v$ is a discrete series representation for any real place $v$,
$\Pi_{i,v}$ is also a discrete series representation.
Then $\Pi_i$ is tempered by \cite{Bl}
and thus the Langlands parameter of $\Pi_v$ is tempered at all places $v$ of $F$.
Hence $\tau$ is tempered.
%
%
%

Suppose that  $\Pi$ is cuspidal.
Then by Raghuram-Sarnobat~\cite[Theorem~5.6]{RS}, $\Pi_v$ is tempered and cohomological at any real place $v$.
Let us take an imaginary quadratic extension $E$ of $F$ such that the base change lift $\mathrm{BC}(\Pi)$
of $\Pi$ to $\mathrm{GL}_4(\mA_E)$ is cuspidal. 
Note that $\mathrm{BC}(\Pi)$ is cohomological and 
that $\mathrm{BC}(\Pi)^\vee  \simeq \mathrm{BC}(\Pi^\vee)\simeq \mathrm{BC}(\Pi) \simeq \mathrm{BC}(\Pi)^\sigma$.
Then Caraiani~\cite[Theorem~1.2]{Car} shows that $\mathrm{BC}(\Pi)$ is tempered at all finite places.
This implies that $\Pi_v$ is also tempered for any finite place $v$.
Thus $\tau$ is tempered.
%
%
%

Now let us consider the case when  $D$ is not split. 
Since $\tau$ is not CAP, 
by Proposition~\ref{exist gen prp}, 
there exists an irreducible cuspidal automorphic representation
$\tau^\prime$
of $G\left(\mA\right)$ and a quadratic extension $E_0$ of $F$
such that $\tau^\prime$ is $G^{+, E_0}$-locally equivalent
to $\tau$.
Moreover $\tau$ is tempered if and only if $\tau^\prime$
is tempered.
By \cite{LPTZ, Moe, Pau, Pau2}, $\tau^\prime_v$
is a discrete series representation at 
any real place $v$.
Then the temperedness of $\tau^\prime$ follows from the split
case. Hence $\tau$ is also tempered.
\end{proof}
%
%
%
As an application of Proposition~\ref{temp prp},
the following corollary holds.
\begin{corollary}
\label{GRC}
Suppose that $F$ is totally real.
Let $\tau$ be an irreducible cuspidal globally generic automorphic representation of $G(\mA)$
such that $\tau_v$ is a discrete series representation at any real place $v$.
Then $\tau$ is tempered and hence the
 explicit formula \eqref{e:gsp whittaker} for the
 Whittaker periods holds for any non-zero decomposable vector in  $V_\tau$.
\end{corollary}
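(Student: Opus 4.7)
The plan is to deduce Corollary~\ref{GRC} by combining Proposition~\ref{temp prp} (temperedness of non-CAP, discrete-series-at-infinity automorphic representations) with Theorem~\ref{gsp whittaker} (the Whittaker period formula for cuspidal tempered globally generic representations of $G$). Since $G_D \simeq G$ corresponds to $D$ split and the archimedean hypothesis on $\tau$ matches exactly the assumption of Proposition~\ref{temp prp}, the only real work is to verify that $\tau$ is not CAP.

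First, I would show that $\tau$ cannot be CAP. By \cite{CKPSS}, every globally generic cuspidal automorphic representation of $G(\mA) = \mathrm{GSp}_2(\mA)$ admits a strong functorial lift to $\mathrm{GL}_4(\mA)$ of the form $\Pi_1 \boxplus \cdots \boxplus \Pi_k$ where each $\Pi_i$ is an irreducible cuspidal automorphic representation of $\mathrm{GL}_{n_i}(\mA)$. Consequently the Arthur parameter of $\tau$ is generic, meaning the $\mathrm{SL}_2(\mC)$ factor of the Arthur--$\mathrm{SL}_2$ acts trivially. On the other hand, a CAP representation is by definition nearly equivalent to an irreducible constituent of a globally induced representation $\mathrm{Ind}_{P(\mA)}^{G(\mA)}(\sigma)$ for some proper parabolic $P = MN$ and cuspidal automorphic representation $\sigma$ of $M(\mA)$, and its Arthur parameter carries a nontrivial $\mathrm{SL}_2$ factor reflecting the Levi residue datum. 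These two features are mutually exclusive, so $\tau$ is not CAP.

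Second, invoking Proposition~\ref{temp prp} with $D$ split directly yields the temperedness of $\tau$. Applying Theorem~\ref{gsp whittaker} to the now cuspidal tempered globally generic representation $\tau$ produces the explicit formula \eqref{e:gsp whittaker} for any nonzero decomposable $\varphi = \otimes_v \varphi_v \in V_\tau$, completing the proof.

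The main obstacle is the non-CAP step, and more precisely the appeal to the Arthur-parameter formalism to rule out CAP behaviour. An alternative, more elementary route would be to argue that the only CAP families for $\mathrm{GSp}_2$ are the Saito--Kurokawa lifts (CAP with respect to the Siegel parabolic) and the Soudry lifts (CAP with respect to the Klingen parabolic), each being non-generic at every place by the explicit description of their local components, so that global genericity of $\tau$ forbids either possibility. Alternatively one can observe that for a CAP representation the Satake parameters at almost all unramified finite places violate the Ramanujan bound, whereas the CKPSS lift forces $\tau_v$ to satisfy the Ramanujan bound at unramified places (being a local factor of an isobaric sum of cuspidals on $\mathrm{GL}_4$, tempered at unramified places by \cite{Bl} or Shahidi's results on the generic Ramanujan property).
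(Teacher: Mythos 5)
Your proposal is correct and follows essentially the same route as the paper: the paper's proof likewise takes the \cite{CKPSS} strong lift of $\tau$ to $\mathrm{GL}_4(\mA)$ (cuspidal or an isobaric sum of cuspidal $\mathrm{GL}_2$'s), concludes via Arthur~\cite{Ar} that $\tau$ is not CAP, and then applies Proposition~\ref{temp prp} followed by Theorem~\ref{gsp whittaker}. Your extra elaboration of the non-CAP step and the alternative arguments are fine but not needed beyond the paper's appeal to the endoscopic classification.
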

\begin{proof}
Recall that the functorial lift $\Pi$ of $\tau$ to $\mathrm{GL}_4\left(\mA\right)$ is cuspidal
or an isobaric sum of irreducible cuspidal automorphic representations of $\mathrm{GL}_2$ by \cite{CKPSS}.
In particular $\tau$  is not CAP by Arthur~\cite{Ar}. Then by Proposition~\ref{temp prp}, $\tau$ is tempered and our claim follows from Theorem~\ref{gsp whittaker}.
\end{proof}
%
%
%
%
%
%
%
%
%
%
%
%
%
%
%
%
\subsection{Vector valued Siegel cusp forms and Bessel periods}
Let $\mathfrak H_2$ be the Siegel upper half space of degree two, i.e.
the set of two by two symmetric complex matrices whose imaginary parts are 
positive definite.
Then the group $G\left(\mathbb R\right)^+=\left\{
g\in G\left(\mathbb R\right):\nu\left(g\right)>0\right\}$
acts on $\mathfrak H_2$ by
\[
g\langle Z\rangle=\left(AZ+B\right)\left(CZ+D\right)^{-1}
\quad
\text{for 
$g=\begin{pmatrix}A&B\\C&D\end{pmatrix}\in
G\left(\mathbb R\right)^+$ and $Z\in\mathfrak H_2$}
\]
and the factor of automorphy $J\left(g,Z\right)$ is defined by
\[
J\left(g,Z\right)=CZ+D.
\]

For an integer $N\ge 1$, let
\[
\Gamma_0\left(N\right)=
\left\{
\gamma\in G^1\left(\mathbb Z\right)
:
\gamma=\begin{pmatrix}A&B\\C&D\end{pmatrix},\,
C
\equiv
0
\pmod{N\mathbb{Z}}
\right\}.
\]
%
\subsubsection{Vector valued Siegel cusp forms}
Let $\left(\varrho, V_\varrho\right)$ be 
an algebraic representation of $\gl_2\left(\mathbb C\right)$.
Then a holomorphic mapping
$\varPhi:\mathfrak H_2\to V_\varrho$ is 
a \emph{Siegel cusp form of weight $\varrho$ with respect to 
$\Gamma_0\left(N\right)$}
when $\varPhi$ 
vanishes at the cusps 
and satisfies
\begin{equation}\label{e: transformation}
\varPhi\left(\gamma\langle Z\rangle\right)=
\varrho\left(J\left(\gamma, Z\right)\right)\varPhi\left(Z\right)
\quad
\text{for $\gamma\in \Gamma_0\left(N\right)$ and $Z\in\mathfrak H_2$}.
\end{equation}
We denote by $S_\varrho\left(\Gamma_0\left(N\right)\right)$
the complex vector space of Siegel cusp forms of weight $\varrho$
with respect to $\Gamma_0\left(N\right)$.
Then $\varPhi\in S_\varrho\left(\Gamma_0\left(N\right)\right)$
has a Fourier expansion
\[
\varPhi\left(Z\right)=\sum_{T>0}
a\left(T,\varPhi\right)\,\exp\left[2\pi\sqrt{-1}\,
\mathrm{tr}\left(TZ\right)\right]
\quad\text{where
$Z\in\mathfrak H_2$ and $a\left(T,\Phi\right)\in V_\varrho$}.
\]
Here $T$ runs over positive definite two by two symmetric matrices
which are semi-integral, i.e. $T$ is of the form
$T=\begin{pmatrix}a&b\slash 2\\ b\slash 2&c\end{pmatrix}$,
$a,b,c\in\mathbb Z$.
We note that \eqref{e: transformation} implies
\begin{equation}\label{e: transformation2}
a\left(\varepsilon \,T\,{}^t\varepsilon,\varPhi\right)=
\varrho\left(\varepsilon\right) a\left(T,\varPhi\right)
\quad\text{for $\varepsilon\in\gl_2\left(\mathbb Z\right)$}.
\end{equation}

From now on till the end of this paper, 
we assume $\varrho$ to be \emph{irreducible}.
It is well known that the irreducible algebraic representations
of $\gl_2\left(\mathbb C\right)$ are parametrized by
\begin{equation}\label{e: L}
\mathbb L=\left\{\left(n_1,n_2\right)\in\mathbb Z^2: n_1\ge n_2\right\}.
\end{equation}
Namely the parametrization is given by assigning 
\[
\varrho_\kappa:=\mathrm{Sym}^{n_1-n_2}\otimes {\det}^{n_2}
\quad\text{to $\kappa=\left(n_1,n_2\right)\in\mathbb L$}.
\]

Suppose that $\varrho=\varrho_\kappa$ with 
$\kappa=\left(n+k,k\right)\in\mathbb L$.
Then we realize $\varrho$ concretely by 
taking its space of representation
$V_{\varrho}$ to be $\mathbb C\left[X,Y\right]_{n}$, the space of degree $n$ homogeneous
polynomials of $X$ and $Y$, where the action of $\gl_2\left(\mathbb C\right)$
is given by
\[
\varrho\left(g\right)P\left(X,Y\right)=\left(\det g\right)^k\cdot
P\left(\left(X,Y\right)g\right)
\quad
\text{for
$g\in\gl_2\left(\mathbb C\right)$ and
$P\in\mathbb C\left[X,Y\right]_n$}.
\]
%
Let us define
a bilinear form 
\[
\mathbb C\left[X,Y\right]_n\times
\mathbb C\left[X,Y\right]_n\ni\left(P,Q\right)\mapsto
\left(P,Q\right)_n\in \mathbb C
\]
by
\begin{equation}\label{e: bilinear form}
\left(X^iY^{n-i}, X^j Y^{n-j}\right)_n=
\begin{cases}
\displaystyle{\left(-1\right)^i \begin{pmatrix}n\\ i\end{pmatrix}}
&\text{if $i+j=n$};
\\
0&\text{otherwise.}
\end{cases}
\end{equation}
Then 
we have
\begin{equation}\label{e: equivariance}
\left(\varrho\left(g\right)P,\varrho\left(g\right)Q\right)_n=
\left(\det g\right)^{n+2k}\left(P,Q\right)_n
\quad\text{for $g\in\gl_2\left(\mathbb C\right)$.}
\end{equation}
We define a positive definite hermitian inner product
$\langle\,,\,\rangle_\varrho$
on $V_\varrho$ 
by
\begin{equation}\label{e: def of hermitian}
\langle P,Q\rangle_\varrho:=
\left(P,\varrho\left(w_0\right)\overline{Q}\,\right)_n
\quad\text{where $w_0=\begin{pmatrix}0&1\\-1&0\end{pmatrix}$.}
\end{equation}
Here $\overline{Q}$ denotes the polynomial obtained from $Q$ by taking
the complex conjugates of its coefficients.
Then  \eqref{e: equivariance} implies that
we have
\begin{equation}\label{e: invariant inner product}
\langle\varrho\left(g\right)v,w\rangle_{\varrho}=
\langle v,\varrho\left({}^t\bar{g}\right)w\rangle_{\varrho}
\quad
\text{for 
$g\in\gl_2\left(\mathbb C\right)$ and $v,w\in V_\varrho$}.
\end{equation}
In particular the hermitian inner product $\langle\, ,\,\rangle_\varrho$
is $\mathrm{U}_2\left(\mathbb R\right)$-invariant.
Then for $\varPhi,\varPhi^\prime\in S_{\varrho}\left(\Gamma_0\left(N\right)\right)$, 
we define the Petersson inner product $\langle\varPhi,\varPhi^\prime\rangle_{\varrho}$ by
\begin{equation}\label{e: Petersson}
\langle\varPhi,\varPhi^\prime\rangle_{\varrho}=
\frac{1}{\left[\mathrm{Sp}_2\left(\mathbb Z\right):
\Gamma_0\left(N\right)\right]}
\int_{\Gamma_0\left(N\right)\backslash \mathfrak H_2}
\langle\varPhi\left(Z\right),\varPhi^\prime\left(Z\right)\rangle_{\varrho}\,
\left(\det Y\right)^{k-3}\, dX\, dY
\end{equation}
where $X=\mathrm{Re}\left(Z\right)$ and $Y=\mathrm{Im}\left(Z\right)$.
The space $S_{\varrho}\left(\Gamma_0\left(N\right)\right)$
has a natural orthogonal  decomposition with respect to the Petersson inner 
product 
\[
S_{\varrho}\left(\Gamma_0\left(N\right)\right)=
S_{\varrho}\left(\Gamma_0\left(N\right)\right)^{\mathrm{old}}\oplus
S_{\varrho}\left(\Gamma_0\left(N\right)\right)^{\mathrm{new}}
\]
into the oldspace and the newspace
in the sense of Schmidt~\cite[3.3]{Sch1}.
We note that when $n$ is odd, we have
$S_\varrho\left(\Gamma_0\left(N\right)\right)=\left\{0\right\}$
for $\varrho$ with $\kappa=\left(n+k,k\right)$
by \eqref{e: transformation}
since $-1_4\in\Gamma_0\left(N\right)$.
%
%
%
\subsubsection{Adelization}\label{sss: adelization}
Given $\varPhi\in S_\varrho\left(\Gamma_0\left(N\right)\right)$,
its adelization $\varphi_\varPhi:G\left(\mathbb A\right)\to V_\varrho$
 is defined as follows
(cf. \cite[3.1]{Sa}, \cite[3.2]{Sch1}).
For each prime number $p$, let us define a compact open subgroup
$P_{1,p}\left(N\right)$ of $G\left(\mathbb Q_p\right)$ by
\[
P_{1,p}\left(N\right):=
\left\{ g\in G\left(\mathbb Z_p\right):
g=\begin{pmatrix}A&B\\C&D\end{pmatrix},\,
C\equiv 0\pmod{N\,\mathbb Z_p}
\right\}.
\]
Then we  define a mapping $\varphi_{\varPhi}:G\left(\mathbb A\right)
\to V_{\varrho}$ by 
\begin{equation}\label{e: vector valued}
\varphi_{\varPhi}\left(g\right)=
\nu\left(g_\infty\right)^{k+r}
\varrho\left(J\left(g_\infty, \sqrt{-1}\, 1_2\right)\right)^{-1}
\varPhi\left(g_\infty\langle\sqrt{-1}\, 1_2\rangle\right)
\end{equation}
when
\[
g=\gamma\, g_\infty\,k_0\quad
\text{with $\gamma\in G\left(\mathbb Q\right)$,
$g_\infty\in G\left(\mathbb R\right)^+$
and $k_0\in\prod_{p<\infty}P_{1,p}\left(N\right)$}.
\]

Let $L$ be any non-zero linear form on $V_\varrho$.
Then $L\left(\varphi_\varPhi\right):G\left(\mA\right)\to\mathbb C$
defined by $L\left(\varphi_\varPhi\right)\left(g\right)=L\left(\varphi_\varPhi\left(g\right)\right)$
is a scalar valued automorphic form on $G\left(\mA\right)$.
Let 
$V\left(\varPhi\right)$ denote the the space generated by  right 
$G\left(\mathbb A\right)$-translates of $L\left(\varphi_\varPhi\right)$.
Then $V\left(\varPhi\right)$ does not depend on the choice
of $L$ and we denote by $\pi\left(\varPhi\right)$
the right regular representation of $G\left(\mA\right)$ on 
$V\left(\varPhi\right)$.
Note that the central character of $\pi\left(\varPhi\right)$ is trivial.

We recall that for  scalar valued automorphic forms 
$\phi$, $\phi^\prime$ on
$G\left(\mathbb A\right)$ with a trivial central character,
their  Petersson inner product $\langle \phi,\phi^\prime\rangle$
is defined by
\[
\langle \phi,\phi^\prime\rangle
=\int_{Z_G\left(\mathbb A\right) G\left(\mathbb Q\right)
\backslash  G\left(\mathbb A\right)}
\phi\left(g\right)\overline{\phi^\prime\left(g\right)}\,dg
\]
where $Z_G$ denotes the center of $G$ 
and $dg$ is the Tamagawa measure.
%
%
%
\begin{lemma}\label{l: norm constant}
Let $L$ be a non-zero linear form on $V_\varrho$.
Take $v^\prime\in V_\varrho$ such that $L\left(v\right)=\langle v,v^\prime\rangle_\varrho$ for any $v\in V_\varrho$.

Then
we have
\[
\langle L\left(\varphi_\varPhi\right),L\left(\varphi_\varPhi\right)\rangle
=C\left(v^\prime\right)\cdot
\langle\varPhi,\varPhi\rangle_\varrho
\quad\text{for any $\varPhi\in S_\varrho\left(\Gamma_0\left(N\right)\right)$}
\]
where 
\begin{equation}\label{e: norm constant}
C\left(v^\prime\right)=
\frac{\mathrm{Vol}\left(Z_G\left(\mathbb A\right) G\left(\mathbb Q\right)
\backslash G\left(\mathbb A\right)\right)}{
\mathrm{Vol}\left(\mathrm{Sp}_2\left(\mathbb Z\right)\backslash
\mathfrak H_2\right)}\cdot
\frac{\langle v^\prime,v^\prime\rangle_\varrho}{
\dim V_\varrho}.
\end{equation}
\end{lemma}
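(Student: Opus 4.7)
The plan is to unfold the adelic Petersson inner product to a classical integral over $\Gamma_0(N) \backslash \mathfrak{H}_2$ and then use Schur orthogonality on the maximal compact subgroup of $G(\mathbb{R})^+$ to separate the ``vector part'' of $\varphi_\Phi$ from the classical integrand $\langle \Phi(Z), \Phi(Z)\rangle_\varrho (\det Y)^{k-3}$. The constant $\langle v',v'\rangle_\varrho / \dim V_\varrho$ should emerge as the Schur constant, while the volume quotient comes from comparing the Tamagawa measure with the hyperbolic measure on $\mathfrak{H}_2$.

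First I would use strong approximation for $\mathrm{Sp}_2$ (together with the triviality of the idele class group of $\mathbb{Q}$ modulo $\mathbb{R}_{>0} \hat{\mathbb{Z}}^\times$) to write $G(\mathbb{A}) = G(\mathbb{Q}) G(\mathbb{R})^+ K_0$ where $K_0 = \prod_{p<\infty} P_{1,p}(N)$, so that the quotient $Z_G(\mathbb{A}) G(\mathbb{Q}) \backslash G(\mathbb{A})$ may be identified (up to a volume factor supported on $K_0$) with the classical quotient $\Gamma_0(N) Z_G(\mathbb{R})^+ \backslash G(\mathbb{R})^+$. Since $L(\varphi_\Phi)$ is right-invariant under $K_0$ by construction, the integration over $K_0$ contributes only a factor of $\operatorname{Vol}(K_0)$, which combined with the measure-theoretic identity $\operatorname{Vol}(Z_G(\mathbb{A}) G(\mathbb{Q}) \backslash G(\mathbb{A})) = \operatorname{Vol}(K_0) \cdot \operatorname{Vol}(\Gamma_0(N) Z_G(\mathbb{R})^+ \backslash G(\mathbb{R})^+)$ reduces everything to an archimedean computation.

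Next I would use the Iwasawa-type decomposition $G(\mathbb{R})^+ = Z_G(\mathbb{R})^+ \cdot P(\mathbb{R})^+ \cdot K_\infty$, where $K_\infty$ is the stabilizer of $\sqrt{-1}\, 1_2$ in $\mathrm{Sp}_2(\mathbb{R})$ (isomorphic to $\mathrm{U}_2(\mathbb{R})$), to parametrize the quotient as $\mathfrak{H}_2 \times K_\infty$. For $k_\infty \in K_\infty$ corresponding to $u \in \mathrm{U}_2(\mathbb{R})$ we have $J(k_\infty, \sqrt{-1}\,1_2) = u$, and the cocycle relation $J(g_\infty k_\infty, \sqrt{-1}\,1_2) = J(g_\infty,\sqrt{-1}\,1_2)\,u$ yields $\varphi_\Phi(g_\infty k_\infty) = \varrho(u)^{-1} \varphi_\Phi(g_\infty)$. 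Hence, writing $v(Z) := \varrho(J(g_\infty,\sqrt{-1}\,1_2))^{-1} \Phi(Z)$ for $Z = g_\infty \langle \sqrt{-1}\,1_2\rangle$, the inner integral in $k_\infty$ becomes
\[
\int_{K_\infty} \bigl| \langle \varrho(u)^{-1} v(Z), v'\rangle_\varrho \bigr|^2 \, du,
\]
and Schur orthogonality for the irreducible unitary representation $\varrho\mid_{\mathrm{U}_2(\mathbb{R})}$ (which is unitary with respect to $\langle\,,\,\rangle_\varrho$ by \eqref{e: invariant inner product}) gives this integral as $\frac{\operatorname{Vol}(K_\infty)}{\dim V_\varrho} \langle v(Z), v(Z)\rangle_\varrho \langle v', v'\rangle_\varrho$. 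Finally, the $K_\infty$-invariance of $\langle\,,\,\rangle_\varrho$ together with the transformation law \eqref{e: vector valued} converts $\langle v(Z), v(Z)\rangle_\varrho$ times the appropriate power of $\nu(g_\infty)$ into $(\det Y)^{k-3}\langle \Phi(Z), \Phi(Z)\rangle_\varrho$ after integration against the $G(\mathbb{R})^+$-invariant measure and descent to $\mathfrak{H}_2$.

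The main obstacle is bookkeeping: one must carefully track the Tamagawa normalization against the classical measure $dX\,dY\,(\det Y)^{-3}$ on $\mathfrak{H}_2$, ensure that the factor $\nu(g_\infty)^{k+r}$ in \eqref{e: vector valued} combines correctly with the Jacobian of the $G(\mathbb{R})^+$ action, and verify that the index factor $[\mathrm{Sp}_2(\mathbb{Z}) : \Gamma_0(N)]$ appearing in the classical Petersson product \eqref{e: Petersson} is absorbed properly so that only the clean ratio $\operatorname{Vol}(Z_G(\mathbb{A}) G(\mathbb{Q})\backslash G(\mathbb{A})) / \operatorname{Vol}(\mathrm{Sp}_2(\mathbb{Z})\backslash \mathfrak{H}_2)$ survives. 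Once these normalizations are organized, the Schur orthogonality step gives exactly the claimed constant $C(v')$.
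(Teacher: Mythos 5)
Your proposal is correct and is essentially the paper's own argument: both rest on Schur orthogonality for $\varrho\vert_{\mathrm{U}_2(\mathbb{R})}$ with respect to the invariant inner product $\langle\,,\,\rangle_\varrho$, applied to the translation law $\varphi_\varPhi(gk_\infty)=\varrho(k_\infty)^{-1}\varphi_\varPhi(g)$, combined with the comparison of the adelic volume $\mathrm{Vol}\left(Z_G(\mathbb{A})G(\mathbb{Q})\backslash G(\mathbb{A})\right)$ with $\mathrm{Vol}\left(\mathrm{Sp}_2(\mathbb{Z})\backslash\mathfrak{H}_2\right)$. The only cosmetic difference is that the paper keeps the Schur step adelic, interposing the vector-valued norm $\int\langle\varphi_\varPhi(g),\varphi_\varPhi(g)\rangle_\varrho\,dg$ and using right invariance of the quotient measure under $K_\infty$, rather than unfolding explicitly through strong approximation and the Iwasawa decomposition as you do, which is what allows it to sidestep the measure bookkeeping you flag at the end.
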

%
\begin{proof}
Let $K_\infty=\mathrm{U}_2\left(\mathbb R\right)$.
We identify $K_\infty$
as a subgroup of $\mathrm{Sp}_2\left(\mathbb R\right)$
via 
\[
K_\infty\ni A+\sqrt{-1}\, B\mapsto
\begin{pmatrix}A&-B\\B&A\end{pmatrix}
\in \mathrm{Sp}_2\left(\mathbb R\right).
\]
Let $dk$ be the Haar measure on $K_\infty$
such that $\mathrm{Vol}\left(K_\infty, dk\right)=1$.
Then by the Schur orthogonality relations, we have
\[
\int_{K_\infty}L\left(\varrho\left(k\right)^{-1}v\right)
\cdot\overline{L\left(\varrho\left(k\right)^{-1}w\right)}\,dk
=
\frac{\langle v,w\rangle_\varrho\cdot\langle v^\prime,v^\prime\rangle_\varrho}{
\dim V_\varrho}.
\]
On the other hand, it is easily seen that
for $\varPhi\in S_\varrho\left(\Gamma_0\left(N\right)\right)$, we have
\[
\frac{\langle\varPhi,\varPhi\rangle_\varrho}{
\mathrm{Vol}\left(\mathrm{Sp}_2\left(\mathbb Z\right)\backslash
\mathfrak H_2\right)}
=\frac{\langle\varphi_\varPhi,\varphi_\varPhi\rangle_\varrho}{
\mathrm{Vol}\left(Z_G\left(\mathbb A\right) G\left(\mathbb Q\right)
\backslash G\left(\mathbb A\right)\right)
}
\]
where
\[
\langle\varphi_\varPhi,\varphi_\varPhi\rangle_\varrho:=
\int_{Z_G\left(\mathbb A\right) G\left(\mathbb Q\right)
\backslash  G\left(\mathbb A\right)}
\langle\varphi_\varPhi\left(g\right),\varphi_\varPhi\left(g\right)\rangle_\varrho
dg.
\]
Hence
\begin{align*}
\langle\varPhi,\varPhi\rangle_\varrho=
&C\left(v^\prime\right)^{-1}
\int_{Z_G\left(\mathbb A\right)  G\left(\mathbb Q\right)
\backslash  G\left(\mathbb A\right)}
\int_{K_\infty}
\left| L\left(\varrho\left(k\right)^{-1}\varphi_\varPhi\left(g\right)\right)
\right|^2
\,
dk\,dg
\\
=&
C\left(v^\prime\right)^{-1}\int_{K_\infty}
\int_{Z_G\left(\mathbb A\right)  
G\left(\mathbb Q\right)
\backslash  G\left(\mathbb A\right)}
\left| L\left(\varphi_\varPhi\left(gk\right)\right)
\right|^2\,dg\,dk
\\
=&C\left(v^\prime\right)^{-1}\cdot\langle L\left(\varphi_\varPhi\right),L\left(\varphi_\varPhi\right)
\rangle_\varrho.
\end{align*}
\end{proof}
%
\subsubsection{Bessel periods of vector valued Siegel cusp forms}
Let $E$ be an imaginary quadratic field of $\mQ$ and $-D_E$ its discriminant.
We put
\begin{equation}\label{e: matrix S}
S_E:=\begin{cases}
\,\begin{pmatrix}1&0\\0&D_E\slash 4\end{pmatrix}
&\text{when $D_E\equiv 0\pmod{4}$};
\\
\begin{pmatrix}1&1\slash 2\\ 1\slash 2&\left(1+D_E\right)\slash 4\end{pmatrix}
&\text{when $D_E\equiv -1\pmod{4}$}.
\end{cases}
\end{equation}
Given $S=S_E$ as above, we define 
$T_S$, $N$ and
$\psi_{S}$ as in \ref{s:def bessel G}.
Then $T_S\left(\mathbb Q\right)\simeq E^\times$.
%
%

Let $\Lambda$ be a character of $T_S\left(\mathbb A\right)$
which is trivial on $\mathbb A^\times T_S\left(\mathbb Q\right)$.
Let $\psi$ be the unique character of $\mathbb A\slash \mathbb Q$
such that $\psi_\infty\left(x\right)=e^{-2\pi\sqrt{-1}\, x}$ and
the conductor of $\psi_\ell$ is $\mathbb Z_\ell$ for any prime number $\ell$.
%
Then for a scalar valued
 automorphic form $\phi$ on $G\left(\mathbb A\right)$
 with a trivial central character, we define its
 $(S, \Lambda, \psi)$-Bessel period $B_{S,\Lambda, \psi}\left(\phi\right)$ by \eqref{Beesel def gsp}
 with the Haar measures $du$ on $N\left(\mathbb A\right)$
 and $dt=dt_\infty\, dt_f$ on 
 $T_S\left(\mathbb A\right)=T_S\left(\mathbb R\right)
 \times T_S\left(\mathbb A_f\right)$
 are taken 
 so that
 $\mathrm{Vol}\left(N\left(\mathbb Q\right)\backslash N\left(\mathbb A\right),
 du\right)=1$ and
 \[
\mathrm{Vol}\left(\mathbb R^\times
 \backslash T_S\left(\mathbb R\right), dt_\infty\right)=
 \mathrm{Vol}\left(T_S\left(\mathbb{Z}_p \right), dt_f\right)=1.
 \]
 Then we note that 
 \[
 \mathrm{Vol}(\mathbb A^\times T_S\left(\mathbb Q\right)
 \backslash T_S\left(\mathbb A\right),dt) = \frac{2h_E}{w(E)} = D_E^{1 \slash 2} \cdot L(1, \chi_E).
 \]
 %
 
 For a $V_\varrho$-valued
 automorphic form $\varphi$ with a trivial central character,
 it is clear that for a linear form $L:V_\varrho\to\mathbb C$ we have
 \begin{equation}\label{e: vector into scalar}
 B_{S,\Lambda, \psi}\left(L\left(\varphi\right)\right)
 =L\left[ \int_{\mathbb A^\times T_S\left(\mathbb Q\right)
 \backslash T_S\left(\mathbb A\right)}
 \int_{N\left(\mathbb Q\right)\backslash N\left(\mathbb A\right)}
\Lambda \left(t\right)^{-1}\psi_S \left(u\right)^{-1} \varphi\left(tu\right)\, dt\, du
 \right].
  \end{equation}
%
Recall that we may identify
the ideal class group $\mathrm{Cl}_E$ of $E$
with
the quotient group
\[
T_S\left(\mathbb A\right)
\slash
T_S\left(\mathbb Q\right)T_S\left(\mathbb R\right) T_S\left(\hat{\mathbb Z}\right) .
\]
Let $\left\{t_c:
c\in\mathrm{Cl}_E\right\}$ be a set of representatives of $\mathrm{Cl}_E$
such that $t_c\in \prod_{p<\infty}T\left(\mathbb Q_p\right)$.
We 
write
$t_c$ as $t_c=\gamma_c\,m_c\,\kappa_c$ with
$\gamma_c\in \gl_2\left(\mathbb Q\right)$,
$m_c\in\left\{g\in\gl_2\left(\mathbb R\right):\det g>0\right\}$,
$\kappa_c\in\prod_{p<\infty}\gl_2\left(\mathbb Z_p\right)$.
Let $S_c=\left(\det \gamma_c\right)^{-1}\cdot {}^t\gamma_c S\gamma_c$.
Then the set $\left\{S_c: c\in\mathrm{Cl}_E\right\}$ is
a set of representatives for
 the $\mathrm{SL}_2\left(\mathbb Z\right)$-equivalence
classes of primitive semi-integral positive definite two by two symmetric 
matrices of discriminant $D_E$.
%

Thus when $\varphi=\varphi_\varPhi$
for  $\varPhi\in S_\varrho\left(\Gamma_0\left(N\right)\right)$
and $\Lambda$ is a character of $\mathrm{Cl}_E$, 
we may write  \eqref{e: vector into scalar} as
\begin{equation}\label{e: scalar vs vector}
B_{S,\Lambda, \psi}\left(L\left(\varphi_\varPhi\right)\right)=
2\cdot e^{-2\pi\mathrm{tr}\left(S\right)}\cdot
L\left(B_\Lambda\left(\varPhi;E\right)\right)
\end{equation}
where
\begin{equation}\label{e: sbp for vector valued}
B_\Lambda \left(\varPhi; E\right):=
w\left(E\right)^{-1}\cdot \pi_\varrho\left(\sum_{c\in\mathrm{Cl}_E}
\Lambda(c)^{-1} \cdot a\left(S_c,\varPhi\right)\right)
\end{equation}
is the vector valued $(S, \Lambda, \psi)$-Bessel period where
\begin{equation}\label{e: representation part}
\pi_\varrho=\int_{T_S^1\left(\mathbb R\right)}
\varrho\left(t\right)\, dt
\quad\text{with
$T_S^1=\mathrm{SL}_2\cap T_S$,
$\mathrm{Vol}\left(T_S^1\left(\mathbb R\right), dt\right)=1$}
\end{equation}
(e.g.  Dickson et al.~\cite[Proposition 3.5]{DPSS} and
Sugano~\cite[(1-26)]{Su}).
%
\begin{Remark}[An erratum to \cite{FM1}]
The definition of $B\left(\varPhi;E\right)$ in the vector valued case
in \cite[Theorem~5]{FM1} should be replaced by
 \eqref{e: sbp for vector valued}.
The statement and the proof of \cite[Theorem~5]{FM1}  remain valid.
\end{Remark}

Suppose that $\varrho=\varrho_\kappa$ where $\kappa=\left(2r+k,k\right)
\in\mathbb L$.
We define $Q_{S,\varrho}\in \mathbb C\left[X,Y\right]_{2r}$
by
\begin{equation}\label{e: def of Q}
Q_{S,\varrho}\left(X,Y\right):=
\left(\left(X,Y\right) S\begin{pmatrix}X\\ Y\end{pmatrix}\right)^r
\cdot \left(\det S\right)^{-\frac{2r+k}{2}}
\quad\text{where $S=S_E$ in \eqref{e: matrix S}}.
\end{equation}
Then for $\varPhi\in S_\varrho\left(\Gamma_0\left(N\right)\right)$,
the scalar valued $(S, \Lambda, \psi)$-Bessel period
${\mathcal B}_\Lambda\left(\Phi; E\right)$ of $\varPhi$
is defined by
\begin{equation}\label{e: scalar bs1}
{\mathcal B}_\Lambda\left(\Phi; E\right):=
\left(B_\Lambda\left(\varPhi;E\right), Q_{S,\varrho}\right)_{2r}.
\end{equation}
%
%
%
%
\subsection{Explicit $L$-value formula in the vector valued case}
\label{generalized boecherer statement}
%
Let us state our explicit formula for holomorphic Siegel modular forms.
In what follows, 
whenever
we refer to a type of 
an admissible representation of $G$ 
over a non-archimedean local field,
we use the standard classification
due to Roberts and Schmidt~\cite{RS}.
%

Let $N$ be a squarefree integer.
We say that a non-zero $\varPhi\in S_\varrho\left(\Gamma_0\left(
N\right)\right)$ is a \emph{newform} if
\begin{enumerate}
\item $\varPhi\in S_\varrho\left(\Gamma_0\left(
N\right)\right)^{\text{new}}$.
\item
$\varPhi$ is an eigenform for the local Hecke algebras for
all primes $p$ not dividing $N$ and an eigenfunction of 
the local $U\left(p\right)$ operator (see Saha and Schmidt~\cite[2.3]{SS}) for all primes dividing $N$.
\item
The representation $\pi\left(\varPhi\right)$ of $G\left(\mA\right)$
is irreducible.
\end{enumerate}
%
%
%
%

Then the following theorem is derived from
Theorem~\ref{ref ggp} exactly 
as Dickson, Pitale, Saha and Schmidt~\cite[Theorem~1.13]{DPSS}
except that we need to compute local Bessel periods
at the real place
adapting to the  vector valued case.
We perform the computation of them in Appendix~\ref{s:e comp}.
%
\begin{theorem}\label{t: vector valued boecherer}
Let $N\ge 1$ be an odd squarefree integer.
Let $\varrho=\varrho_\kappa$ where $\kappa=\left(2r+k,k\right)$
with $k\ge 2$.
Let $\varPhi$ be a non-CAP newform in $S_{\varrho}\left(\Gamma_0\left(N\right)\right)$.
Suppose that $\displaystyle{\left(\frac{D_E}{p}\right)=-1}$ for all primes $p$ dividing $N$.
When $k=2$, suppose moreover that
$\pi\left(\varPhi\right)$ is tempered.

Then we have
\begin{equation}\label{e: vector valued boecherer}
\frac{\left| {\mathcal B}_\Lambda\left(\varPhi;E\right)\right|^2}{
\langle\varPhi,\varPhi\rangle_{\varrho}}
=\frac{2^{4k+6r-c}}{D_E}\cdot
\frac{L\left(1\slash 2,\pi\left(\varPhi\right) \times \mathcal{AI} \left(\Lambda \right)\right)}{
L\left(1,\pi\left(\varPhi\right),\mathrm{Ad}\right)}
\cdot \prod_{p | N}J_p
\end{equation}
where $c=5$ if $\varPhi$ is a Yoshida lift in the sense of
Saha~\cite[Section~4]{Sa}
and $c=4$ otherwise.
The quantities $J_p$ for $p$ dividing $N$ are given by
\[
J_p=\left(1+p^{-2}\right)\left(1+p^{-1}\right)
\times
\begin{cases}
1&\text{if $\pi\left(\varPhi\right)_p$ is of type
$\mathrm{IIIa}$};
\\
2&\text{if $\pi\left(\varPhi\right)_p$
is of type $\mathrm{VIb}$};
\\
0&\text{otherwise.}
\end{cases}
\]
\end{theorem}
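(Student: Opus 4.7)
The plan is to derive \eqref{e: vector valued boecherer} from Theorem~\ref{ref ggp} applied to $\pi := \pi(\varPhi)$, following the strategy of Dickson--Pitale--Saha--Schmidt~\cite{DPSS} essentially verbatim, with the sole genuinely new ingredient being an evaluation of the local Bessel integral at the real place in the vector-valued case (carried out in Appendix~\ref{s:e comp}).

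First I would check the hypotheses of Theorem~\ref{ref ggp} for $\pi(\varPhi)$. By the construction of \ref{sss: adelization}, $\pi(\varPhi)$ is a cuspidal automorphic representation with trivial central character, and irreducibility is built into the definition of a newform. For $k \geq 3$ the archimedean component is a holomorphic discrete series, so Proposition~\ref{temp prp} yields temperedness of $\pi(\varPhi)$; for $k = 2$ temperedness is imposed as a hypothesis in the statement. Next I would translate the classical Bessel period into the adelic one: with $S := S_E$ from \eqref{e: matrix S} and the linear form $L(v) := (v, Q_{S,\varrho})_{2r}$ on $V_\varrho$, the identities \eqref{e: scalar vs vector} and \eqref{e: scalar bs1} give
\begin{equation*}
B_{S,\Lambda,\psi}(L(\varphi_\varPhi)) \;=\; 2\, e^{-2\pi\,\mathrm{tr}(S)}\,\mathcal{B}_\Lambda(\varPhi; E),
\end{equation*}
while Lemma~\ref{l: norm constant}, applied to the $v' \in V_\varrho$ attached to $L$, converts $(L(\varphi_\varPhi), L(\varphi_\varPhi))_\pi$ into an explicit multiple of $\langle \varPhi, \varPhi\rangle_\varrho$. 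Writing $\phi := L(\varphi_\varPhi)$ as a pure tensor $\otimes_v \phi_v$, where $\phi_p$ is a local newform vector at primes $p \mid N$ and $\phi_\infty$ is the vector of $K_\infty$-type $\varrho_\kappa$ determined by $L$, and substituting into \eqref{e: main identity}, reduces the problem to a product of local ratios $\alpha_v^\natural(\phi_v)/(\phi_v,\phi_v)_{\pi_v}$ times the central $L$-value quotient and the global constants $2^{-\ell(\pi)} C_\xi$.

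The bulk of the work is then the place-by-place evaluation of these local ratios. At $p \nmid N$ they equal $1$ by the unramified computation of Liu~\cite{Liu2}. At $p \mid N$, the hypothesis $(D_E/p) = -1$ forces $\pi(\varPhi)_p$ to be of type IIIa or VIb, and the local Bessel period on a local newvector was computed in \cite{DPSS}, yielding precisely the factor $J_p$ and accounting for the dichotomy between $c=4$ and $c=5$ via the Yoshida/non-Yoshida distinction (which matches $\ell(\pi)=2$ for Yoshida lifts and $\ell(\pi)=1$ otherwise). Collecting the powers of $2$ and of $D_E$ arising from (i) squaring the relation between classical and adelic Bessel periods, (ii) the volume constants in Lemma~\ref{l: norm constant}, (iii) the Haar measure constant $C_\xi$, (iv) the factors $L(1,\chi_E)$, $\zeta_F(2)$, $\zeta_F(4)$ in \eqref{e: main identity}, and (v) the archimedean ratio $\alpha_\infty^\natural(\phi_\infty)$ from Appendix~\ref{s:e comp}, will produce the coefficient $2^{4k+6r-c}/D_E$ of \eqref{e: vector valued boecherer}.

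The main obstacle is the archimedean local computation. One must evaluate the stable Bessel integral $\alpha_{\Lambda_\infty, \psi_{\xi,\infty}}$ on the specific vector of $K_\infty$-type $\varrho_\kappa$ singled out by the pairing \eqref{e: bilinear form} against $Q_{S,\varrho}$, taking into account the group-algebra element $\pi_\varrho$ of \eqref{e: representation part} that intervenes in the vector-valued Fourier coefficient sum, and match it with the local Langlands $L$-factor quotient $L(1,\pi_\infty,\mathrm{Ad})L(1,\chi_{E,\infty}) / \bigl(L(1/2, \pi_\infty \times \Pi(\Lambda)_\infty) \prod_{j=1}^{2}\zeta_{F_\infty}(2j)\bigr)$ entering the definition of $\alpha_\infty^\natural$. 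This vector-valued refinement of the scalar weight-$k$ computation in \cite{DPSS} is the only genuinely new step; once its output is inserted, the bookkeeping of the constants assembles cleanly into the identity \eqref{e: vector valued boecherer}.
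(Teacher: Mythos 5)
Your global framework is the right one and matches the paper: apply Theorem~\ref{ref ggp} to $\pi(\varPhi)$ (temperedness from Proposition~\ref{temp prp} for $k\ge 3$, assumed for $k=2$), translate $\mathcal B_\Lambda(\varPhi;E)$ into the adelic period via \eqref{e: scalar vs vector}--\eqref{e: scalar bs1} and Lemma~\ref{l: norm constant}, take $\phi_{\varPhi,S}$ factorizable, use Liu's unramified computation away from $N\infty$ and the DPSS evaluation at $p\mid N$ to get $J_p$, and read off $c$ from $\ell(\pi)$. The gap is in what you call ``the only genuinely new step'': you assert that the archimedean factor $\alpha_\infty^\natural(\phi_\infty)/(\phi_\infty,\phi_\infty)_{\pi_\infty}$ can be obtained by a ``vector-valued refinement of the scalar weight-$k$ computation in \cite{DPSS}'', i.e.\ by directly evaluating the regularized Bessel integral of matrix coefficients on the lowest $K_\infty$-type vector paired against $Q_{S,\varrho}$, but you give no method for doing so. This is precisely the point the paper does \emph{not} do directly: as Remark~\ref{scalar rem} records, the DPSS matrix-coefficient computation works only for scalar weight $k\ge 3$ (the $k=2$ scalar case needed a different method, via Shimura's confluent hypergeometric functions in \cite{HY}), so your proposed route is both unexecuted and, as stated, would not cover the $k=2$ case included in the theorem, nor is it clear how to handle the extra group-algebra averaging $\pi_\varrho$ and the vector-valued lowest $K_\infty$-type by a straightforward extension of that calculation.

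The paper instead circumvents the direct archimedean integral entirely (Appendix~\ref{s:e comp}): it observes that the quantity $C(Q_{S,\varrho})\,\mathcal{C}\,J_\infty$ depends only on $\pi(\varPhi)_\infty$ and the archimedean test vector, and then pins it down globally by producing vector-valued Yoshida lifts of the same archimedean type for which \emph{both} sides can be computed independently: the Bessel period and Petersson norm of the Yoshida lift are evaluated explicitly via Hsieh--Namikawa \cite{HN1,HN2}, Chida--Hsieh \cite{CH} and Martin--Whitehouse \cite{MW} (Lemmas~\ref{l: bessel for Yoshida}, \ref{l: MW}, \ref{l: inner product change}, Proposition~\ref{p: inner product classical}, giving \eqref{e: formula 1}), while the Ichino--Ikeda formula for Yoshida lifts (already known by Liu) gives \eqref{e: formula 2}; a simultaneous non-vanishing input (Lemma~\ref{l: simultaneous non-vanishing}) makes the comparison non-vacuous, and the known scalar-valued case is used once more to determine the auxiliary ramified factor $J_q=q^{2m}$ before extracting $J_\infty$ in general, yielding \eqref{e:arch comp}. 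So to complete your argument you must either carry out the direct archimedean computation in the vector-valued setting (including $k=2$), which is a substantial open step in your write-up, or replace it by an indirect determination of the archimedean constant along the lines of the paper's Yoshida-lift comparison.
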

%
%
%
%
\begin{Remark}\label{temperedness condition}
When $k\ge 3$, $\pi\left(\varPhi\right)$ is tempered
by Proposition~\ref{temp prp}.
\end{Remark}
%
\begin{Remark}\label{e: scalar valued case}
Since $\mathcal B\left(\Phi;E\right)=2^{k}D_E^{-\frac{k}{2}}
\cdot B\left(\varPhi;E\right)$
when $r=0$, \eqref{intro B conj} follows from \eqref{e: vector valued boecherer}
by putting $N=1$ and $r=0$.
\end{Remark}
%
\begin{Remark}\label{e: Yoshida space}
In the statement of the theorem, 
we used the notion of Yoshida lifts in the sense of Saha~\cite{Sa}.
Though
 it is necessary to extend the arguments concerning
Yoshida lifts
in \cite[Section~4]{Sa} in the scalar valued case to the vector valued case to be rigorous, 
we omit it here 
since it is straightforward.
We also mention that the arguments in \cite[4.4]{Sa} now work
unconditionally since the classification theory in Arthur~\cite{Ar}
is  complete
for $\mathbb G=\mathrm{PGSp}_2\simeq\mathrm{SO}\left(3,2\right)$.
\end{Remark}
%
\begin{Remark}\label{r: finite part}
Recall that the $L$-functions in \eqref{e: vector valued boecherer}
are complete $L$-functions.
We may rewrite the explicit formula 
in terms of the finite parts of the $L$-functions by observing that
the relevant archimedean $L$-factors are given by
\[
L\left(1\slash 2,\pi\left(\varPhi\right)_\infty \times \mathcal{AI} \left(\Lambda \right)_\infty \right)
=2^4\left(2\pi\right)^{-2\left(k+r\right)}
\Gamma\left(k+r-1\right)^2\Gamma\left(r+1\right)^2
\]
and
\begin{multline*}
L\left(1,\pi\left(\varPhi\right)_\infty,\mathrm{Ad}\right)
=2^6\left(2\pi\right)^{-\left(4k+6r+1\right)}
\\
\times
\Gamma\left(k+2r\right)\Gamma\left(k-1\right)\Gamma\left(2r+2\right)
\Gamma\left(2k+2r-2\right)
\end{multline*}
respectively.
\end{Remark}
%
%
%
\begin{Remark}
Let us consider the case when $D$ is a quaternion algebra over $\mQ$ which
is split at the real place, i.e. 
$D(\mR) \simeq \mathrm{Mat}_{2 \times 2}(\mR)$.
Assuming  that 
the endoscopic classification holds for $\mathbb G_D=G_D\slash
Z_D$,
we may apply Theorem~\ref{ref ggp} to holomorphic modular forms on  $\mathbb G_D\left(\mA\right)$.
In this case, Hsieh-Yamana~\cite{HY} compute local Bessel periods and show an explicit formula for Bessel periods such as 
\eqref{e: vector valued boecherer}
for scalar valued holomorphic modular forms, 
including the case when $G_D =G$ and $N$ is an even 
squarefree integer.
Meanwhile we shall maintain $N$ to be odd
 in Theorem~\ref{t: vector valued boecherer},
 since 
our computation of  the local Bessel period at the real place
in the vector valued case
in Appendix~\ref{s:e comp} is performed under the assumption
that $N$ is odd.

As we noted in Remark~\ref{rem Arthur + ishimoto}, after the submission
of this paper, Ishimoto~\cite{Ishimoto}
showed the
endoscopic classification of $\mathrm{SO}(4,1)$ for generic Arthur parameters.
Therefore, we may apply our theorem to the case of $\Bbb G_D \simeq \mathrm{SO}(4,1)$.
\end{Remark}
%
\begin{Remark}
A global explicit formula such as \eqref{e: vector valued boecherer}
is obtained in a certain non-squarefree level case
by Pitale, Saha and Schmidt~\cite[Theorem~4.8]{PSS2}.
\end{Remark}
%
%
%
%
%
%
%
%
%
%
%
%
%
%
%
%
%
%
%
%
%
%
%
%
%
%
%
%
%
%
%
%
%
%
%
%
%
%
%
%
\appendix
%
%
%
%
\section{Explicit formula for the Whittaker periods on $G=\mathrm{GSp}_2$}
\label{appendix A}
Here we shall prove Theorem~\ref{gsp whittaker}.

Let $(\pi, V_{\pi})$ be an irreducible cuspidal globally generic 
automorphic representation of $G\left(\mA\right)$.
Then Soudry~\cite{So} has shown that the theta lift  of $\pi$ to $\mathrm{GSO}_{3,3}$ is non-zero and globally generic.
We may divide into two cases
according to whether 
the theta lift  of $\pi$ to $\mathrm{GSO}_{3,3}$ is cuspidal 
or not.

Suppose that the theta lift of $\pi$ to $\mathrm{GSO}_{3,3}$ is cuspidal.
Since $\mathrm{PGSO}_{3,3}\simeq \mathrm{PGL}_4$
and the explicit formula for the Whittaker periods
on $\mathrm{GL}_n$
is known by Lapid and Mao~\cite{LM},
the arguments in \ref{s: pf wh gso} and \ref{ss: local pull-back computation},
which are used to obtain \eqref{e:gso whittaker}
in Theorem~\ref{gso whittaker}
from \eqref{e:gsp whittaker},
work mutatis mutandis
to obtain \eqref{e:gsp whittaker}
from the Lapid--Mao formula in the case of $\mathrm{GL}_4$.

Suppose that the theta lift of $\pi$ to $\mathrm{GSO}_{3,3}$ is not cuspidal.
Then the theta lift of $\pi$ to $\mathrm{GSO}_{2,2}$
is non-zero and cuspidal.

Thus here we give a proof of Theorem~\ref{gsp whittaker} 
only in the case when $\pi$ is a theta lift from 
$\mathrm{GSO}_{2,2}$.
Recall that $\mathrm{PGSO}_{2,2}\simeq
\mathrm{PGL}_2\times \mathrm{PGL}_2$.
Our argument is similar to the one for 
\cite[Theorem~4.3]{Liu2}. 
 Indeed
 we shall prove \eqref{e:gsp whittaker}
 by  pushing forward the 
Lapid--Mao formula for $\mathrm{GSO}_{2,2}$ to $G$.
%
%
%
%
\subsection{Global pull-back computation}
%
%
%
Let $\left(X,\left<\,,\,\right>\right)$ be the $4$ dimensional symplectic space as in \ref{sp4 so42} and 
let $\left\{x_1,x_2,x_{-1},x_{-2}\right\}$  be the standard basis of $X$ 
given by \eqref{sb for X}.

Let $Y=F^4$ be an  orthogonal space with a non-degenerate symmetric bilinear form defined by
\[
(v_1, v_2) = {}^{t}v_1 J_4 v_2
\quad\text{for $v_1,v_2 \in Y$}
\] 
where $J_4$ is given by \eqref{d: J_m}.
We take a standard basis $\left\{y_{-2},y_{-1},y_1,y_2\right\}$
of $Y = F^{4}$ given by 
\[
y_{-2} = {}^{t}(1, 0, 0, 0), \quad y_{-1} = {}^{t}(0, 1, 0, 0),\quad
y_{1} = {}^{t}(0, 0, 1, 0), \quad y_{2} = {}^{t}(0, 0, 0, 1).
\]
We note that  $\left( y_i , y_{-j}\right)=\delta_{ij}$
for $1\le i,j\le 2$.

Put $Z=X \otimes Y$.
Then $Z$ is naturally a symplectic space over $F$.
We take a polarization $Z = Z_+ \oplus Z_-$ where
\[
Z_\pm=X_\pm\otimes Y
\]
and $X_\pm=F\cdot x_{\pm 1}+F\cdot x_{\pm 2}$.
Here all the double signs correspond.
When  $z_+=x_1\otimes a_1+x_2\otimes a_2\in Z_+\left(\mA\right)$ where
$a_1,a_2\in Y$, 
we write $z_+=\left(a_1,a_2\right)$
and $\phi\left(z_+\right)=\phi\left(a_1,a_2\right)$
for $\phi\in\mathcal S\left(Z_+\left(\mA\right)\right)$.
%

Let $N_{2,2}$ denote the group of upper triangular unipotent 
matrices of $\mathrm{GO}_{2,2}$, i.e.
\[
N_{2,2}\left(F\right) = \left\{ \begin{pmatrix} 1&x&y&-xy\\ 0&1&0&-y\\ 0&0&1&-x\\0&0&0&1\end{pmatrix} | \, x, y \in F \right\}.
\]
We define a non-degenerate character $\psi_{2,2}$ of $N_{2,2}(\mA)$ by
\[
\psi_{2,2} \begin{pmatrix} 1&x&y&-xy\\ 0&1&0&-y\\ 0&0&1&-x\\0&0&0&1\end{pmatrix} = \psi(x+y).
\]
Then for a cusp form $f$ on $\mathrm{GSO}_{2,2}\left(\mA\right)$, we define its Whittaker period $W_{2,2}(f)$ by
\[
W_{2,2}(f) = \int_{N_{2,2}(F) \backslash N_{2,2}(\mA)} f\left(n\right) 
\,\psi_{2,2}\left(n\right)^{-1} \, dn.
\]

The following identity is stated in \cite[p.113]{GRS97} but without a proof.
Though
it is shown by  an argument similar
to  the one for \cite[Proposition~2.6]{GRS97}, 
here we give a proof  for the convenience of the reader.
%
%
%
\begin{proposition}
\label{GRS identity}
Let $\varphi$ be a cusp form  on $\mathrm{GO}_{2,2}\left(\mA\right)$.
For $\phi \in \mathcal{S}(Z(\mA)_+)$, 
let $\Theta_\psi\left(\varphi,\phi\right)$  (resp.
$\theta_\psi\left(\varphi,\phi\right)$) be the theta
lift of $\sigma$ (resp. the restriction of $\varphi$ to
$\mathrm{GSO}_{2,2}\left(\mA\right)$) to $G\left(\mA\right)$.

Then we have
\begin{equation}
\label{GO22 to GSp4}
W_{\psi_{U_G}}(\Theta_\psi(\varphi, \phi)) = \int_{N_0(\mA) \backslash \mathrm{O}_{2,2}(\mA)} \phi(g^{-1}(y_{-2}, y_{-1}+y_1)) W_{\psi_{2,2}}(\sigma(g) \varphi) \, dg
\end{equation}
where $N_0$ denotes the unipotent subgroup
\[
N_0 = \left\{ \begin{pmatrix}1&x&-x&x^2\\ 0&1&0&-x\\ 0&0&1&x\\ 0&0&0&1 \end{pmatrix}\right\},
\]
which is the stabilizer of $y_{-2}$ and $y_{-1}+y_1$.

Similarly
we have 
\begin{equation}
\label{4.2 Liu2}
W_{\psi_{U_G}}(\theta_\psi(\varphi, \phi)) = \int_{N_0(\mA) \backslash \mathrm{SO}_{2,2}(\mA)} \phi(g^{-1}(y_{-2}, y_{-1}+y_1)) W_{\psi_{2,2}}(\sigma(g) \varphi) \, dg.
\end{equation}
\end{proposition}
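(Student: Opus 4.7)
The strategy is to emulate, in reverse, the pull-back computations already carried out in Section~3 (Propositions~\ref{pullback Bessel gsp} and \ref{pullback Bessel gsp innerD}): here I compute a Whittaker period on $G$ of a theta lift from $\mathrm{GO}_{2,2}$ rather than a Bessel period on a similitude orthogonal/unitary group of a theta lift from $G$. First I unfold
\[
W_{\psi_{U_G}}(\Theta_\psi(\varphi, \phi)) = \int_{U_G(F) \backslash U_G(\mA)} \int_{\mathrm{O}_{2,2}(F) \backslash \mathrm{O}_{2,2}(\mA)} \theta^\phi(ug, h_1 h) \varphi(h_1 h)\, \psi_{U_G}^{-1}(u)\, dh_1\, du
\]
for $g, h$ with matching similitudes, and expand the theta kernel as a sum over $Z_+(F) = (X_+ \otimes Y)(F)$, indexing terms by pairs $(a_1, a_2) \in Y(F)^2$.

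Next I filter $U_G$ via the Siegel unipotent $N \triangleleft U_G$ of symmetric matrices, whose quotient is the upper unipotent $N_2$ in the $\mathrm{GL}_2$-Levi. Using the formulas \eqref{weil act 1}--\eqref{weil act 2} for the Weil representation, the element $\begin{pmatrix} 1_2 & X \\ 0 & 1_2 \end{pmatrix} \in N$ acts on $\phi(a_1, a_2)$ by multiplication by a character linear in the entries of $X$ against the Gram matrix $(\langle a_i, a_j \rangle)_{i,j}$; integrating against the restriction of $\psi_{U_G}^{-1}$ to $N$ and invoking Fourier inversion (exactly as in the passage from \eqref{bessel u0 u1 u2} to Lemma~\ref{pull comp 2}) forces the Gram matrix of $(a_1, a_2)$ to be prescribed. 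A parallel Fourier analysis on the $N_2$-part, whose Weil action is linear in $\langle a_1, a_2 \rangle$ only, forces one further linear condition. By Witt's theorem on the hyperbolic space $Y$, the pairs $(a_1, a_2) \in Y(F)^2$ satisfying these conditions form a single $\mathrm{O}_{2,2}(F)$-orbit; the representative may be taken to be $v_0 := (y_{-2}, y_{-1}+y_1)$, and a direct computation shows that its stabilizer in $\mathrm{O}_{2,2}$ is precisely $N_0$. Collapsing $\sum_{\gamma \in N_0(F) \backslash \mathrm{O}_{2,2}(F)}$ into the integration over $\mathrm{O}_{2,2}(F) \backslash \mathrm{O}_{2,2}(\mA)$ yields an integration over $N_0(\mA) \backslash \mathrm{O}_{2,2}(\mA)$, and the residual $u$-integration on the orthogonal side transforms into a Whittaker integral on $N_{2,2}(\mA)$, giving the right-hand side of \eqref{GO22 to GSp4}.

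The main obstacle is the verification that the character on $N_{2,2}$ that emerges from the residual $u$-integration, after transport through the Weil action evaluated at $v_0$, is precisely $\psi_{2,2}$ as defined in the statement; this amounts to a book-keeping calculation of $n^{-1} v_0 - v_0$ for $n \in N_{2,2}$ and matching it to the additive characters produced by the Weil formulae, including tracking the $|\lambda(h)|$-factor in the similitude extension of $\omega_\psi$ used to pass from $\mathrm{Sp}_2 \times \mathrm{O}_{2,2}$ to the $R$-model. This is exactly the kind of calculation performed in \cite[Proposition~3.3]{Mo} and in the proof of Proposition~\ref{pullback Bessel gsp}, and can be carried out in the same style. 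Finally, \eqref{4.2 Liu2} follows from \eqref{GO22 to GSp4} by restricting the entire calculation to $\mathrm{GSO}_{2,2}$: since $N_0$ is a connected unipotent subgroup contained in $\mathrm{SO}_{2,2}$ and the $\mathrm{O}_{2,2}$-orbit of $v_0$ coincides with its $\mathrm{SO}_{2,2}$-orbit (both being free $N_0$-orbits within the conditions imposed), the identical argument restricted to the identity component produces \eqref{4.2 Liu2}.
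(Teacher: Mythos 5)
Your overall skeleton (unfold the theta kernel, Fourier-analyze along the Siegel unipotent $N$ to pin down the Gram matrix of $(a_1,a_2)$, apply Witt's theorem, collapse to an integral over $N_0(\mA)\backslash\mathrm{O}_{2,2}(\mA)$) is the same as the paper's, but the step where you dispose of the $N_2$-part is wrong, and it is precisely the step where the orthogonal Whittaker period has to be produced. In the polarization $Z_\pm=X_\pm\otimes Y$ used here, the Levi element $m(u)$, $u=\left(\begin{smallmatrix}1&x\\0&1\end{smallmatrix}\right)$, acts through \eqref{weil act 1} with $B=0$: it sends $\phi(a_1,a_2)$ to $\phi(a_1,xa_1+a_2)$, i.e.\ it acts geometrically on the argument and \emph{not} by multiplication by a character ``linear in $\langle a_1,a_2\rangle$''. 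Consequently there is no ``parallel Fourier analysis on the $N_2$-part'' and no further linear condition: all three Gram-matrix conditions already come from the $N$-integration, and if one more condition were imposed the remaining data could not assemble into $W_{\psi_{2,2}}$, which is an integral over the two-dimensional group $N_{2,2}$ against a nontrivial character. The actual mechanism in the paper is different: at the representative $v_0=(y_{-2},y_{-1}+y_1)$ the action of $m(u)$ coincides with that of an explicit orthogonal element $m_0(u)$, so the residual $N_2$-integration, weighted by $\psi_{U_G}(m(u))^{-1}=\psi(-a)$, is transported to the orthogonal side, and it is the combination of this one-parameter integral with the leftover $N_0(F)\backslash N_0(\mA)$-integration (which arises from the orbit collapse and carries no character) that reconstitutes the full $N_{2,2}(F)\backslash N_{2,2}(\mA)$-integral against $\psi_{2,2}$, via the explicit factorization of a general element of $N_{2,2}$ into an $m_0$-part and an $N_0$-part. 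Calling this ``book-keeping about which character appears'' understates it: with your step as written there is no two-parameter unipotent integral left on either side, so the right-hand side of \eqref{GO22 to GSp4} is never reached.

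Two further points. First, the pairs $(a_1,a_2)$ with the prescribed Gram matrix do \emph{not} form a single $\mathrm{O}_{2,2}(F)$-orbit: the degenerate pairs with $a_1=0$ also satisfy the conditions, and these terms must be shown to vanish using the cuspidality of $\varphi$ (this is the reduction to linearly independent pairs, as in \cite[Lemma~1]{Fu}); your proposal never invokes cuspidality, and without it the orbit collapse is unjustified. Second, in the last paragraph your parenthetical justification for \eqref{4.2 Liu2} is internally inconsistent: if the stabilizer of $v_0$ in $\mathrm{O}_{2,2}$ were exactly the connected group $N_0\subset\mathrm{SO}_{2,2}$, the $\mathrm{O}_{2,2}$-orbit would be strictly larger than the $\mathrm{SO}_{2,2}$-orbit, not equal to it; the correct route is simply to redo the same unfolding with the $\mathrm{SO}_{2,2}$-theta lift, as the paper indicates.
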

%
%
%
\begin{proof}
Since the proofs are similar, we prove 
only \eqref{GO22 to GSp4}.
From the definition of the theta lift, we may write 
\begin{multline*}
\int_{N(F) \backslash N(\mA)} \Theta_\psi(\varphi, \phi)
\left(ug\right) \psi_{U_G}(u)^{-1} \,du
\\
= \int_{\mathrm{O}_{2,2}(F) \backslash \mathrm{O}_{2,2}(\mA)} \sum_{(a_1, a_2) \in \mathcal{X}} \omega_{\psi}(g, h)
\,\phi(a_1, a_2) \varphi(h) \, dh
\end{multline*}
where
\[
\mathcal{X} = \left\{ (a_1, a_2) \in Y(F)^2 : \begin{pmatrix} ( a_1, a_1) & ( a_1, a_2 ) \\ ( a_2, a_1)&( a_2, a_2 ) \end{pmatrix}
= \begin{pmatrix} 0&0\\ 0&1\end{pmatrix}\right\}.
\]
Then as in \cite[Lemma~1]{Fu}, only $\left(a_1,a_2\right)\in 
\mathcal X$
such that $a_1$ and $a_2$ are linearly independent contributes in
 the above sum.
Thus, by Witt's theorem, we may rewrite the above integral as
\begin{align*}
 &\int_{\mathrm{O}_{2,2}(F) \backslash \mathrm{O}_{2,2}(\mA)} \sum_{ \gamma \in N_0(F) \backslash \mathrm{O}_{2,2}(F)} \omega_{\psi}(g, h)
 \phi( \gamma^{-1} y_{-2}, \gamma^{-1} (y_{-1}+y_1))
 \, \varphi(h) \, dh
 \\
=&  \int_{\mathrm{O}_{2,2}(F) \backslash \mathrm{O}_{2,2}(\mA)} \sum_{ \gamma \in N_0(F) \backslash \mathrm{O}_{2,2}(F)} \omega_{\psi}(g, \gamma h)
 \phi(y_{-2}, y_{-1}+y_1) 
 \,\varphi(h) \, dh
 \\
 =
  &\int_{N_0(F) \backslash \mathrm{O}_{2,2}(\mA)} \omega_{\psi}(g, h)
 \phi( y_{-2}, y_{-1}+y_1)
 \, \varphi(h) \, dh
 \\
 =
 &\int_{N_0(\mA) \backslash \mathrm{O}_{2,2}(\mA)}  \int_{N_0(F) \backslash N_0(\mA)} \omega_{\psi}(g, h)
 \phi( y_{-2}, y_{-1}+y_1)
 \, \varphi(n h) \, dn \, dh.
\end{align*}
Thus by \eqref{d: U_G} we have
\begin{multline}\label{e: whittaker 1}
W_{\psi_{U_G}}(\Theta_\psi\left(\varphi,\phi\right)) 
= \int_{N_0(\mA) \backslash \mathrm{O}_{2,2}(\mA)}  \int_{N_2(F) \backslash N_2(\mA)} \int_{N_0(F) \backslash N_0(\mA)} 
\\
\omega_{\psi}(m(u)g, h)
 \phi( y_{-2}, y_{-1}+y_1) \varphi(nh) 
  \psi_{U_G}\left(m(u)\right)^{-1} \, dh \, du.
\end{multline}
Here we have
\[
\omega_{\psi}(m(u)g, h)
 \phi( y_{-2}, y_{-1}+y_1) =\omega_{\psi}(g, m_0(u)h)
 \phi( y_{-2}, y_{-1}+y_1) 
 \]
 where
 $m_0(u) =
 \begin{pmatrix}1&\frac{a}{2}&\frac{a}{2}&\frac{a^2}{4}\\ 0&1&0&-\frac{a}{2}\\ 0&0&1&-\frac{a}{2}\\ 0&0&0&1 \end{pmatrix}$
 for $u=\begin{pmatrix}1&a\\0&1\end{pmatrix}$,
 since $\psi_{U_G}(m(u))^{-1} = \psi(-a)$.
By noting the decomposition
\[
 \begin{pmatrix} 1&x&y&-xy\\ 0&1&0&-y\\ 0&0&1&-x\\0&0&0&1\end{pmatrix} 
=
\begin{pmatrix}1&\frac{x+y}{2}&\frac{x+y}{2}&\frac{(x+y)^2}{4}\\ 0&1&0&-\frac{x+y}{2}\\ 0&0&1&-\frac{x+y}{2}\\ 0&0&0&1 \end{pmatrix}
\begin{pmatrix}1&\frac{x-y}{2}&-\frac{x-y}{2}&\frac{(x-y)^2}{4}\\ 0&1&0&-\frac{x-y}{2}\\ 0&0&1&-\frac{x-y}{2}\\ 0&0&0&1 \end{pmatrix},
\]
the required identity \eqref{GO22 to GSp4}
 follows from \eqref{e: whittaker 1}.
\end{proof}
Recall the exact sequence 
\[
1 \rightarrow \mathrm{GSO}_{2,2} \rightarrow \mathrm{GO}_{2,2} \rightarrow \mu_2 \rightarrow 1.
\]
Hence we have 
\[
\Theta_\psi(\varphi, \phi)(g) = \int_{\mu_2(F) \backslash \mu_2(\mA)} \theta_\psi(\varphi^\varepsilon: \phi^\varepsilon)(g) \, d\varepsilon
\]
where $\varphi^\varepsilon = \sigma(\varepsilon)\varphi$ and $\phi^\varepsilon =\omega_\psi(\varepsilon) \phi$. 
Thus we have 
\[
\left|W_{\psi_{U_G}}(\Theta_\psi(\varphi, \phi))\right|^2 = \int_{\mu_2(F) \backslash \mu_2(\mA)} 
\mathbb{W}_{\psi_{U_G}}(\theta_\psi(\varphi^\varepsilon, \phi^\varepsilon)) \, d\varepsilon
\]
where
\[
\mathbb{W}_{\psi_{U_G}}(\theta_\psi(\varphi^\varepsilon, \phi^\varepsilon))
= \int_{\mu_2(F) \backslash \mu_2(\mA)} W_{\psi_{U_G}}(\theta_\psi(\varphi^\varepsilon, \phi^\varepsilon))\,
\overline{W_{\psi_{U_G}}(\theta_\psi(\varphi, \phi))} \, d\varepsilon.
\]
%
%
%
%
%
\subsection{Lapid-Mao formula}
Let us recall the Lapid-Mao formula in the $\mathrm{GL}_2$ case.
Let $(\tau, V_\tau)$ denote an irreducible cuspidal unitary automorphic representation of $\mathrm{GL}_2(\mA)$.
Then for $f \in V_\tau$, its Whittaker period is defined  by 
\[
W_2(f) = \int_{F \backslash \mA} f \begin{pmatrix}1&x\\ 0&1 \end{pmatrix} \psi(-x) \, dx
\]
with the Tamagawa measure $dx = \prod dx_v$.
Let $v$ be a place of $F$. For $f_v \in \tau_v$ and $\widetilde{f}_v \in \overline{\tau}_v$, by \cite{Liu2} (see also \cite[Section~2]{LM} ), 
we may define 
\[
\mathcal{W}_2(f_v, \widetilde{f}_v) = \int_{F}^{st} \mathcal{B}_{\tau_v} (\tau_v(x_v)f_v, \widetilde{f}_v) \psi_v(-x_v) \, dx_v.
\]
Put 
\[
\mathcal{W}_2^\natural(f_v, \widetilde{f}_v) =\frac{L(1, \tau_v, \mathrm{Ad})}{\zeta_{F_v}(2)}\mathcal{W}_2(f_v, \widetilde{f}_v)
\]
which is equal to $1$ at almost all places $v$ by \cite[Proposition~2.14]{LM}.
Let us define
\[
\langle f, f \rangle = \int_{\mA^\times \mathrm{GL}_2(F) \backslash \mathrm{GL}_2(\mA)} |f(g)|^2 \,dg
\]
where $dg$ is the Tamagawa measure.
We note that 
$\mathrm{Vol}\left(\mA^\times \mathrm{GL}_2(F)\backslash \mathrm{GL}_2(\mA),
dg\right)=2$.
Further, let us take a 
local $\mathrm{GL}_2(F_v)$-invariant pairing $\langle \,,\, \rangle_v$ on $\tau_v \times \tau_v$ such that $\langle f, f \rangle =\prod \langle f_v, f_v \rangle_v$. Then by \cite[Theorem~4.1]{LM}, we have
\begin{equation}
\label{LM GL2}
|W_2(f)|^2 =\frac{1}{2} \cdot \frac{\zeta_F(2)}{L(1, \tau, \mathrm{Ad})}\, \prod  \mathcal{W}_2^\natural(f_v, \overline{f}_v).
\end{equation}
for a factorizable vector $f = \otimes f_v \in V_\tau$.
%
%
%
%
\subsection{Local pull-back computation}
We fix a place $v$ of $F$ which will be suppressed from the notation in this section.
Further, we simply write $X(F)$ by $X$ for any object $X$
defined over $F$.
Let $\sigma$ be an irreducible tempered representation of $\mathrm{GO}_{2,2}$
such that its big theta lift $\Theta(\sigma)$ to $H$ is non-zero.
Because of the Howe duality proved by Howe~\cite{Ho1}, 
Waldspurger~\cite{Wa} and Gan-Takeda~\cite{GT}, combined
with Roberts~\cite{Rob},
$\Theta(\sigma)$ has a unique irreducible quotient, which we denote by $\pi$.
Put $R= \{(g, h) \in G \times \mathrm{GO}_{2,2} : \lambda(g)=\nu(h)\}.$ Then we have a 
unique $R$-equivariant map
\[
\theta : \omega_{\psi} \otimes \sigma  \rightarrow \pi.
\] 
Let $\mathcal{B}_\omega : \omega_\psi \otimes \overline{\omega_\psi} \rightarrow \mC$ be the 
canonical bilinear pairing defined by 
\[
\mathcal{B}_\omega(\phi, \tilde{\phi})=\int_{V^2} \phi(x) \widetilde{\phi}(x) \, dx.
\]
By \cite[Lemma~5.6]{GI0}, the pairing 
$\mathcal{Z} : (\sigma \otimes \overline{\sigma}) \otimes (\omega_\psi \otimes \overline{\omega_\psi}) \rightarrow \mC$, defined as
\[
\mathcal{Z}(\varphi, \widetilde{\varphi}, \phi, \widetilde{\phi})
=\frac{\zeta_F(2) \zeta_F(4)}{L(1, \sigma, \mathrm{std})} \int_{\mathrm{O}_{2,2}} \mathcal{B}_\omega(\omega_\psi(h)\phi, \widetilde{\phi}) 
\langle \sigma(h)\varphi, \widetilde{\varphi} \rangle \, dh,
\]
which converges absolutely by \cite[Lemma~3.19]{Liu2}, gives a pairing $\mathcal{B}_\pi : \pi \otimes \overline{\pi} \rightarrow \mC$
by 
\[
\mathcal{B}_\pi(\theta(\varphi, \phi), \theta(\widetilde{\varphi}, \widetilde{\phi}))=\mathcal{Z}(\varphi, \widetilde{\varphi}, \phi, \widetilde{\phi}).
\]
\begin{proposition}
We write $y_0=(y_{-2}, y_{-1}+y_1)$. For any $u \in N_{2}$,
\begin{multline*}
\left(\frac{\zeta_F(2) \zeta_F(4)}{L(1, \sigma, \mathrm{std})} \right)^{-1}
\int_{N_H}^{st} \mathcal{B}_\pi (\pi(n m(u)) \theta(\varphi, \phi),\theta(\widetilde{\varphi}, \widetilde{\phi}) ) \psi_{U_H}(n)^{-1} \, dn
\\
=
\int_{\mathrm{O}_{2,2}} \int_{N_0 \backslash \mathrm{SO}_{2,2}}
\left( \omega_{\psi}(g, m(u))\phi \right)(y_0) \overline{\widetilde{\phi}(h^{-1} \cdot y_0)} \langle \sigma(g) \varphi, \sigma(h) \widetilde{\varphi} \rangle \, dg \, dh.
\end{multline*}
\end{proposition}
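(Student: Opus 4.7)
The plan is to proceed in direct parallel with the local pull-back computation carried out for Proposition~\ref{main identity whittaker}, viewing the present statement as the dual-pair $(G,\mathrm{GO}_{2,2})$ analogue of the identity~\eqref{e:main local pullback bessel}. First I would unfold the definition of $\mathcal{B}_\pi$: substituting the defining formula $\mathcal{B}_\pi(\theta(\varphi,\phi),\theta(\widetilde\varphi,\widetilde\phi))=\mathcal{Z}(\varphi,\widetilde\varphi,\phi,\widetilde\phi)$ turns the left-hand side into an iterated integral
\[
\int_{N_H}^{\mathrm{st}}\int_{\mathrm{O}_{2,2}}\int_{V^2} \bigl(\omega_\psi(nm(u),h)\phi\bigr)(x)\,\overline{\widetilde\phi(x)}\,\langle\sigma(h)\varphi,\widetilde\varphi\rangle\,\psi_{U_G}(n)^{-1}\,dx\,dh\,dn
\]
up to the prefactor $\zeta_F(2)\zeta_F(4)/L(1,\sigma,\mathrm{std})$. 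The action of $\omega_\psi$ via the $N_H$-variable is essentially a Fourier-type character in the variables $x=(a_1,a_2)\in Y^2$, with phase depending on the Gram matrix $\bigl(\langle a_i,a_j\rangle\bigr)$; this is the local version of the mechanism used in the proof of Proposition~\ref{GRS identity}.

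Second, I would stratify $V^2$ according to the value of the Gram matrix, exactly as in the proof of Proposition~\ref{pullback Bessel gsp} (cf.\ Lemma~\ref{meas decomp 1}). The only stratum that survives after performing the stable integration and Fourier inversion in the variables giving the phase is the one where the Gram matrix equals $\bigl(\begin{smallmatrix}0&0\\0&1\end{smallmatrix}\bigr)$, by Witt's theorem a single $\mathrm{O}_{2,2}$-orbit with representative $y_0=(y_{-2},y_{-1}+y_1)$ and stabilizer $N_0$. Parametrizing this orbit as $\mathrm{O}_{2,2}/N_0$ and changing variables $x\mapsto g^{-1}\cdot y_0$ should convert the $V^2$-integration into an integration over $N_0\backslash\mathrm{O}_{2,2}$. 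Once the $\mathrm{SO}_{2,2}$-part is separated from the $\mu_2$-component of $\mathrm{O}_{2,2}$, the stabilizer calculation for $y_0$ (whose action lies in $\mathrm{SO}_{2,2}$) will replace $\mathrm{O}_{2,2}$ by $\mathrm{SO}_{2,2}$ in the outer integration, matching the right-hand side.

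Third, one must justify the interchange of stable integration with the $\mathrm{O}_{2,2}$- and $V^2$-integrals. In the non-archimedean case this follows by showing, as in Lemma~\ref{l: integral formula} and \cite[Lemma~3.19]{Liu2}, that the inner integrals are compactly supported after a suitable reduction, so the stable integral coincides with the ordinary integral over a large enough compact-open subgroup and the Fourier inversion can be applied literally. In the archimedean case, one has to replicate the partial Fourier transform argument of~\ref{ss: local pull-back computation}, using Remark~\ref{archi lem L1} to ensure convergence and continuity of the relevant distributions on $N_H/N_{H,-\infty}$. The main obstacle, and the most delicate point, will be exactly this regularization issue: verifying that the stable integral over $N_H$ can legitimately be moved past the $\mathrm{O}_{2,2}$- and Schwartz-space integrations, so that the formal Fourier inversion that collapses the orbit sum to the single orbit of $y_0$ is analytically valid. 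Once this is handled, rearranging the factors $(\omega_\psi(g,m(u))\phi)(y_0)$ and $\overline{\widetilde\phi(h^{-1}\cdot y_0)}$ and recombining the integrand yields exactly the right-hand side claimed in the proposition.
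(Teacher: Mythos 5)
Your outline is correct and is essentially the paper's own approach: the paper states this proposition without a written proof, relying on exactly the argument you describe --- unfold $\mathcal{B}_\pi$ through the doubling-type pairing $\mathcal{Z}$, use the stable $N$-integral and Fourier inversion (Lemma~\ref{l: integral formula} in the non-archimedean case, the partial Fourier transform regularization of Liu in the archimedean case) to localize the Schwartz variable to the single Witt orbit of $y_0$, and then telescope, just as is carried out in detail for Proposition~\ref{main identity whittaker} and for \eqref{e:main local pullback bessel}. One bookkeeping caution: the stabilizer of $y_0$ in $\mathrm{O}_{2,2}$ has a second, determinant $-1$, component, so $N_0$ is only its intersection with $\mathrm{SO}_{2,2}$; accordingly it is the orbit-parametrizing ($h$-)integral that becomes $N_0\backslash\mathrm{SO}_{2,2}$, while the doubling ($g$-)integral stays over all of $\mathrm{O}_{2,2}$, rather than the ``outer integration'' being replaced as you phrased it.
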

Let us define 
\[
\mathcal{W}_{\psi_{U_H}}(f_1, f_2))
= \int_{U_H}^{st} \mathcal{B}_\pi(\pi(u)f_1, f_2) \psi_{U_H}^{-1}(u) \, du.
\]
Take the measure $dh_0 =2 dh|_{\mathrm{SO}_{2,2}}$. Then 
\begin{multline*}
\left(\frac{\zeta_F(2) \zeta_F(4)}{L(1, \sigma, \mathrm{std})} \right)^{-1}
\mathcal{W}_{\psi_{U_H}}(\theta(\varphi, \phi), \theta(\widetilde{\varphi}, \widetilde{\phi}))
\\
=\int_{N_2}^{st} \int_{\mathrm{O}_{2,2}} \int_{N_0 \backslash \mathrm{SO}_{2,2}}
\left( \omega_{\psi}(g, m(u))\phi \right)(y_0) \overline{\widetilde{\phi}(h^{-1} \cdot y_0)} 
\\
\times\langle \sigma(g) \varphi, \sigma(h) \widetilde{\varphi} \rangle  dg \, dh \,
du.
\end{multline*}
By an argument  similar to the one for  \cite[Section~3.4.2]{FM2} and \cite[Section~5.4]{FM3}, we see that this is equal to
\[
\int_{N_0 \backslash \mathrm{O}_{2,2}} \int_{N_0 \backslash \mathrm{SO}_{2,2}} \int_{N_{2,2}}^{st}
\left( \omega_{\psi}(g, m(u))\phi \right)(y_0) \overline{\widetilde{\phi}(h^{-1} \cdot y_0)} \langle \sigma(g) \varphi, \sigma(h) \widetilde{\varphi} \rangle  dg \, dh \,
du.
\]
Further, it is equal to
\begin{multline}
\label{4.6 Liu2}
\sum_{\varepsilon=\pm1} \int_{N_0 \backslash \mathrm{SO}_{2,2}} \int_{N_0 \backslash \mathrm{SO}_{2,2}} \int_{N_{2,2}}^{st}
\left( \omega_{\psi}(g, m(u))\phi^\varepsilon \right)(y_0) \overline{\widetilde{\phi}(h^{-1} \cdot y_0)}
\\
\times
 \langle \sigma(g) \varphi^\varepsilon, \sigma(h) \widetilde{\varphi} \rangle \, dg \, dh \,
du\\
=\sum_{\varepsilon=\pm1} \int_{N_0 \backslash \mathrm{SO}_{2,2}} \int_{N_0 \backslash \mathrm{SO}_{2,2}} \phi^\varepsilon(g^{-1} \cdot y_0)
\widetilde{\phi}(h^{-1} \cdot y_0)
\mathcal{W}_{2,2}(\sigma(g)\varphi^\varepsilon, \sigma(h)\widetilde{\varphi}) \, dg \, dh
\end{multline}
where we define 
\[
\mathcal{W}_{2,2}(\varphi_1, \varphi_2): = \int_{N_{2,2}}^{st} \langle \sigma(u)\varphi_1, \varphi_2 \rangle \psi_{2,2}^{-1}(u) \,du
\quad\text{for}\quad
 \varphi_i \in V_\sigma.
\]
Let us introduce a measure
$
d^\prime h = \zeta_F(2)^2 dh
$.
Then we get 
\begin{multline*}
\mathcal{W}_{\psi_{U_H}}^\natural(\theta(\varphi, \phi), \theta(\widetilde{\varphi}, \widetilde{\phi}))= 
\sum_{\varepsilon=\pm1} \int_{N_0 \backslash \mathrm{SO}_{2,2}} \int_{N_0 \backslash \mathrm{SO}_{2,2}} \phi^\varepsilon(g^{-1} \cdot y_0)
\widetilde{\phi}(h^{-1} \cdot y_0)
\\
\times
\mathcal{W}_{2,2}^\natural(\sigma(g)\varphi^\varepsilon, \sigma(h)\widetilde{\varphi}) \, dg \, d^\prime h.
\end{multline*}
Here
\[
\mathcal{W}_{2,2}^\natural(\sigma(g)\varphi^\varepsilon, \sigma(h)\widetilde{\varphi})
= \frac{L(1, \sigma_1, \mathrm{Ad}) L(1, \sigma_2, \mathrm{Ad})}{\zeta_F(2)^2}  \mathcal{W}_{2,2}(\sigma(g)\varphi^\varepsilon, \sigma(h)\widetilde{\varphi}).
\]
%
%
%
%
\subsection{Proof of Theorem~\ref{gsp whittaker}}
Let $(\sigma, V_{\sigma})$ be an irreducible cuspidal automorphic representation of the group $\mathrm{GO}_{2,2}(\mA)$.
Suppose that $\sigma$ is induced by the representation $\sigma_1 \boxtimes \sigma_2$ of 
$\mathrm{GL}_2(\mA) \times \mathrm{GL}_2(\mA)$.
For $f = f_1 \otimes f_2 \in V_{\sigma_1} \otimes V_{\sigma_2}$,
we have
\[
W_{U_H}(f) =  \int_{F \backslash \mA} f_1 \left( \begin{pmatrix}1&x\\ &1 \end{pmatrix}h_1\right) \psi(-x) \, dx
\int_{F \backslash \mA} f_2 \left( \begin{pmatrix}1&x\\ &1 \end{pmatrix} h_2\right) \psi(-x) \, dx
\]
for $h=(h_1, h_2) \in \mathrm{SO}_{2,2}(\mA)$.
Moreover, for any place $v$ of $F$, we have
\[
\mathcal{W}_{2,2}^\natural(\varphi_v, \widetilde{\varphi}_v) 
= \mathcal{W}_2^\natural(\varphi_{1,v}, \widetilde{\varphi}_{1,v})\mathcal{W}_2^\natural(\varphi_{2,v}, \widetilde{\varphi}_{2,v})
\]
with $\varphi_v = (\varphi_{1,v}, \varphi_{2,v})$ and $\widetilde{\varphi}_v = (\widetilde{\varphi}_{1,v}, \widetilde{\varphi}_{2,v})$.
Then by \eqref{4.2 Liu2} and the 
Lapid-Mao  formula \eqref{LM GL2}, we obtain
\begin{multline*}
\mathbb{W}_{\psi_{U_H}}(\theta_\psi(\varphi^\varepsilon, \phi^\varepsilon)) =
\frac{1}{4} \frac{\zeta_F(2)^2}{L(1, \sigma_1, \mathrm{Ad})L(1, \sigma_2, \mathrm{Ad})}
\\
\times
 \int_{\mu_2(F) \backslash \mu_2(\mA)} 
 \prod_v 
\int \int_{(N_{0}(F_v) \backslash \mathrm{SO}_{2,2})^2}  
\left(\prod_{\alpha=1, 2} \mathcal{W}_2^\natural((\sigma(g_v)\varphi_{v}^\varepsilon)_\alpha, (\overline{\sigma}(h_v)\overline{\varphi}_{v})_\alpha) \right)
\\
\times
\phi_{v}^\varepsilon (g_v^{-1} \cdot y_0)\overline{\phi}_{v} (h_v^{-1} \cdot y_0) \, dg \, dh
\\
=\frac{1}{4} \frac{\zeta_F(2)^2}{L(1, \sigma_1, \mathrm{Ad})L(1, \sigma_2, \mathrm{Ad})}
\int_{\mu_2(F) \backslash \mu_2(\mA)} \prod_v 
\int \int_{(N_{0}(F_v) \backslash \mathrm{SO}_{2,2})^2}  
\\
\mathcal{W}_{2,2}^\natural(\sigma(g_v)\varphi_v, \overline{\sigma}_v(h_v)\overline{\varphi}_v) 
\phi_{v}^\varepsilon (g_v^{-1} \cdot y_0)\overline{\phi}_{v} (h_v^{-1} \cdot y_0) \, dg \, dh.
\end{multline*}
By \eqref{4.6 Liu2}, this is equal to
\[
\frac{1}{4} \frac{\zeta_F(2) \zeta_F(4)}{L(1, \sigma_1, \mathrm{Ad})L(1, \sigma_2, \mathrm{Ad})}\prod \mathcal{W}_{\psi_{U_H}}^\natural(\theta(\varphi_v, \phi_v), \theta(\overline{\varphi}_v, \overline{\phi}_v)),
\]
and thus this completes our proof of Theorem~\ref{gsp whittaker}.
%
%
%
%
%
%
%
%
%
%
%
%
%
%
%
%
%
%
%
%
\section{Explicit computation of local Bessel periods at the
real place}
\label{s:e comp}
The goal of this appendix  is to 
compute explicitly the local Bessel periods at the
real place and to 
complete our proof of Theorem~\ref{t: vector valued boecherer}.
In this section, we use the same notation as in
Section~\ref{GBC}.

For a newform $\varPhi\in S_\varrho\left(\Gamma_0\left(N\right)\right)$ in Theorem~\ref{t: vector valued boecherer},
we define a scalar valued automorphic form $\phi_{\varPhi,S}$
on $ G\left(\mathbb A\right)$ by
\begin{equation}
\label{def vector adelize}
\phi_{\varPhi,S}\left(g\right)=
\left(\varphi_\varPhi\left(g\right),
Q_{S,\varrho}\right)_{2r}
\quad\text{for $g\in G\left(\mathbb A\right)$},
\end{equation}
where $\varphi_\varPhi$ is the 
adelization of $\varPhi$ given by \eqref{e: vector valued}
and $Q_{S,\varrho}$  by \eqref{e: def of Q}.
We note that by the argument in \cite[3.2]{DPSS}, $\phi_{\varPhi,S}$ is a factorizable vector
$\phi_{\varPhi, S}=\otimes_v\,\phi_{\varPhi, S, v}$.
For a place $v$ of $\mathbb Q$, we define $J_v$ by
\begin{equation}\label{e: j infty}
J_v= \frac{\alpha_v^{\natural}
\left(\phi_{\varPhi,S,v}, \phi_{\varPhi,S,v}\right)}{
\langle \phi_{\varPhi, S,v}, \phi_{\varPhi, S,v} \rangle_v}.
\end{equation}
It is clear that $J_v$ remains invariant under
replacing $\phi_{\varPhi, S,v}$ by
its non-zero scalar multiple.
Further, we put
\begin{equation}\label{e: def of c}
\mathcal C= C_\xi\cdot
\frac{\zeta_\mQ\left(2\right)\, \zeta_\mQ\left(4\right)}{L(1, \chi_{E})}
\end{equation}
with the Haar measure constant $C_\xi$ defined by \eqref{C_{S_D}}.
Then the following identity holds.
%
%
%
\begin{theorem}
\label{arch comp}
\begin{equation}
\label{e:arch comp}
C\left(Q_{S,\varrho}\right)
\mathcal C J_\infty=\frac{2^{4k+6r-1}e^{-4\pi\,\mathrm{tr}\left(S\right)}}{
D_E}.
\end{equation}
Recall that $C\left(Q_{S,\varrho}\right)$ is defined by
\eqref{e: norm constant} for $v^\prime=Q_{S,\varrho}$.
\end{theorem}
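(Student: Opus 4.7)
The plan is to reduce the identity \eqref{e:arch comp} to an explicit computation of the local Bessel period at the real place on the specific vector $\phi_{\varPhi,S,\infty}$, using the description of $\pi(\varPhi)_\infty$ as the holomorphic (limit of) discrete series representation of $G(\mathbb R)$ with minimal $K$-type prescribed by $\kappa=(2r+k,k)$. Concretely, from \eqref{def vector adelize} and \eqref{e: vector valued} the vector $\phi_{\varPhi,S,\infty}$ is, up to a non-zero scalar which cancels out in $J_\infty$, the standard lowest weight vector of the minimal $K_\infty$-type paired against the polynomial $Q_{S,\varrho}$. Thus the matrix coefficient $g\mapsto (\pi(\varPhi)_\infty(g)\phi_{\varPhi,S,\infty},\phi_{\varPhi,S,\infty})_{\pi(\varPhi)_\infty}$ admits an explicit closed form in terms of the scalar weight analogue via the projector $\pi_\varrho$ defined in \eqref{e: representation part}; this is the vector-valued analogue of the archimedean matrix coefficient used in \cite{DPSS} and \cite{HY}.

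First I would compute the unnormalized local Bessel period $\alpha_{\Lambda_\infty,\psi_{S,\infty}}(\phi_{\varPhi,S,\infty},\phi_{\varPhi,S,\infty})$ by regularizing the defining integral \eqref{local archimedean Bessel} as a Fourier transform. Writing a typical $t\in T_S(\mathbb R)/\mathbb R^\times$ in the Iwasawa decomposition and inserting the explicit matrix coefficient, the $N(\mathbb R)$-integral becomes a Gaussian whose Fourier transform at $\psi_S$ yields a factor of $e^{-4\pi\operatorname{tr}(S)}$; the residual $T_S^1(\mathbb R)$-integral collapses, because $\Lambda_\infty$ is trivial on the maximal compact of $T_S(\mathbb R)$, to the inner product of $Q_{S,\varrho}$ against itself evaluated through the Schur-orthogonality identity already used in the proof of Lemma B.2. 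The main technical point is that the dependence on $\varrho$ must factor out as $\langle Q_{S,\varrho},Q_{S,\varrho}\rangle_\varrho/\dim V_\varrho$ times a power of $\det S = D_E/4$, which, together with the factor $(\det S)^{-(2r+k)}$ built into $Q_{S,\varrho}$, accounts for the $D_E^{-1}$ on the right hand side of \eqref{e:arch comp}.

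Next I would compute the local inner product $\langle \phi_{\varPhi,S,\infty},\phi_{\varPhi,S,\infty}\rangle_\infty$ by the same Schur-orthogonality device. The ratio $J_\infty$ then involves the quotient of the two Bessel computations divided by the archimedean $L$-factors listed in Remark \ref{r: finite part}; the gamma factors combine, via the Legendre duplication formula, to the precise product $2^{4k+6r-1}$ appearing in \eqref{e:arch comp}. The archimedean Haar measure constant $C_\xi$ at the real place, encoded inside $\mathcal C$ in \eqref{e: def of c}, is compatible with the Tamagawa normalization of $dt$ on $T_\xi(\mathbb R)$ chosen in \ref{sss: adelization}, so its contribution to $\mathcal C$ cancels the normalization factor $\mathrm{Vol}(\mathbb R^\times\backslash T_S(\mathbb R),dt_\infty)=1$ fixed there.

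The hard part will be the careful bookkeeping of constants, in particular showing that the vector-valued twist introduced by the projector $\pi_\varrho$ and the pairing $(\,\,,\,\,)_{2r}$ produces exactly the factor $\langle Q_{S,\varrho},Q_{S,\varrho}\rangle_\varrho/\dim V_\varrho$, which is cancelled against $C(Q_{S,\varrho})$ on the left hand side of \eqref{e:arch comp} via \eqref{e: norm constant}. Once this cancellation is verified, what remains on both sides is the purely scalar archimedean computation, reducing to the Moriyama-type integral already known in the scalar weight case of \cite{DPSS} and \cite{HY}, and the identity \eqref{e:arch comp} follows. The rest of the proof of Theorem \ref{t: vector valued boecherer} then proceeds by combining \eqref{e:arch comp} with the global identity of Theorem \ref{ref ggp}, the relation \eqref{e: scalar vs vector}, the constant $C(Q_{S,\varrho})$ from Lemma \ref{l: norm constant}, and the known non-archimedean local Bessel computations at places dividing $N$ and at the unramified finite places.
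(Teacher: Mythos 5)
Your route is genuinely different from the paper's. The paper never evaluates the archimedean Bessel integral directly: it observes that $C\left(Q_{S,\varrho}\right)\mathcal C J_\infty$ depends only on $\pi\left(\varPhi\right)_\infty$ and $\phi_{\varPhi,S,\infty}$, and then computes it by comparing two global formulas for a conveniently chosen vector valued Yoshida lift with the same archimedean component. One side is the explicit Bessel-period formula \eqref{e: formula 1}, assembled from Lemma~\ref{l: bessel for Yoshida} (Hsieh--Namikawa), the Martin--Whitehouse toric period formula (Lemma~\ref{l: MW}), and the inner product comparisons of Lemma~\ref{l: inner product change} and Proposition~\ref{p: inner product classical}; the other side is \eqref{e: formula 2}, coming from Liu's proven refined formula for Yoshida lifts and the non-archimedean computations of \cite{DPSS}, with the auxiliary local factor $J_q$ pinned down as $q^{2m}$ by running the same comparison for a scalar valued Yoshida lift, where $J_\infty$ is already known (Remark~\ref{scalar rem}). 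Solving for the archimedean contribution gives \eqref{e:arch comp} with no new archimedean analysis; Lemma~\ref{l: simultaneous non-vanishing} and the choice of $\Lambda_0$ guarantee that the comparison is non-vacuous. What this buys is precisely the avoidance of the computation you propose, uniformly in $r$ and down to $k=2$.

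The gap in your proposal is that the hard content of the theorem is asserted rather than proved. You claim that the matrix coefficient of $\phi_{\varPhi,S,\infty}$ "admits an explicit closed form in terms of the scalar weight analogue via the projector $\pi_\varrho$," that the $T_S^1\left(\mathbb R\right)$-integral "collapses" by Schur orthogonality, and that after cancelling a factor $\langle Q_{S,\varrho},Q_{S,\varrho}\rangle_\varrho\slash\dim V_\varrho$ against $C\left(Q_{S,\varrho}\right)$ "what remains on both sides is the purely scalar archimedean computation" of \cite{DPSS} and \cite{HY}. None of this is established, and it is exactly the statement \eqref{e:arch comp} in disguise: the vector $\phi_{\varPhi,S,\infty}$ is the pairing of a $V_\varrho$-valued lowest weight vector against $Q_{S,\varrho}$, its matrix coefficient along $T_S\left(\mathbb R\right)N\left(\mathbb R\right)$ involves $\varrho\left(t\right)$ acting nontrivially on the minimal $K$-type (not merely through a multiplicative constant), and the interaction of this with the regularized $N$-integral and with the archimedean $L$-factors is what produces the specific constant $2^{4k+6r-1}D_E^{-1}$; the scalar results cannot simply be quoted. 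You also do not address convergence and regularization at $k=2$, where $\pi\left(\varPhi\right)_\infty$ is not a holomorphic discrete series (this is why \cite{DPSS} require $k\ge 3$ and why \cite{HY} resort to Shimura's confluent hypergeometric functions), whereas the paper's comparison argument covers this case automatically. Finally, a small inaccuracy: $C_\xi$ in \eqref{C_{S_D}} is a single global Haar-measure constant relating the Tamagawa measure to the chosen product of local measures, not an "archimedean Haar measure constant at the real place," so the cancellation you invoke there needs to be argued globally, as in \eqref{e: def of c}. A direct archimedean computation may well be feasible, but as written your outline leaves the decisive step unproved.
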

%
\begin{Remark}
\label{scalar rem}
In the scalar valued case, i.e. $r=0$, the explicit computation of 
$J_\infty$ is done in  Dickson~et al.~\cite[3.5]{DPSS} using 
the explicit formula for  matrix coefficients 
when $k \geq 3$.
Meanwhile Hsieh and Yamana~\cite[Proposition~5.7]{HY}
compute $J_ \infty$ in a different way when $k \geq 2$, 
based on Shimura's work on confluent hypergeometric functions.
\end{Remark}
%
%

We note that the left hand side of \eqref{e:arch comp}
depends only on the archimedean representation
$\pi\left(\varPhi\right)_\infty$
and the vector $\phi_{\varPhi, S,\infty}$.
Thus our strategy is to first obtain an explicit formula \eqref{e: formula 1}
for
the Bessel periods of vector valued Yoshida lifts
by
combining the results in Hsieh and Namikawa~\cite{HN1,HN2}, Chida and Hsieh~\cite{CH}, Martin and
Whitehouse~\cite{MW},
and, then to evaluate $C\left(Q_{S,\varrho}\right)
\mathcal C J_\infty$ by singling out the real place contribution,
comparing \eqref{e: formula 1} with
\eqref{e: main identity}.
%
%
%
%
%
%
%
\subsection{Explicit formula for Bessel periods of Yoshida lifts}
%
%
%
%
For a prime number $p$, let
\[
\Gamma_0^{(1)}\left(p\right)=
\left\{\begin{pmatrix}a&b\\c&d\end{pmatrix}\in
\mathrm{SL}_2\left(\mathbb Z\right):
c\equiv 0\pmod{p}\right\}
\]
and  $S_k\left(\Gamma_0^{(1)}\left(p\right)\right)$
 the space of cusp forms of weight $k$ with respect to 
$\Gamma_0^{(1)}\left(p\right)$.

In order to insure what follows  to be non-vacuous, 
first we shall prove the following technical lemma.
%
\begin{lemma}\label{l: simultaneous non-vanishing}
Let $k_1$ and $k_2$ be integers with $k_1 \geq k_2\ge0$.
Then there is a constant $N=N(k_1, k_2, E) \in \mR$ such that 
for any prime $p > N$,
there exist  distinct normalized newforms
$f_i\in S_{2k_i+2}\left(\Gamma_0^{(1)}\left(p\right)\right)$ 
for $i=1,2$ satisfying the condition:
\begin{equation}\label{e: eigenvalue condition}
\text{the Atkin-Lehner eigenvalues of $f_i$ at $p$ for
$i=1,2$ coincide.}
\end{equation}
\end{lemma}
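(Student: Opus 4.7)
The plan is to reduce the lemma to a dimension count for the relevant newform spaces, decomposed under the Atkin--Lehner involution at $p$. First I would invoke the standard fact that for a prime $p$ and $k \geq 0$, the space $S^{\mathrm{new}}_{2k+2}(\Gamma_0^{(1)}(p))$ of elliptic newforms decomposes as $S^{\mathrm{new},+}_{2k+2}(\Gamma_0^{(1)}(p)) \oplus S^{\mathrm{new},-}_{2k+2}(\Gamma_0^{(1)}(p))$ into the $\pm 1$-eigenspaces of the Atkin--Lehner involution $W_p$, and that, by explicit dimension formulas (e.g.\ specialising Martin's/Shimura's formula), both of these eigenspace dimensions satisfy
\[
\dim S^{\mathrm{new},\pm}_{2k+2}(\Gamma_0^{(1)}(p)) = \frac{(2k+1)(p-1)}{24} + O_k(1)
\]
as $p \to \infty$. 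In particular, there exists $M_k$ so that each eigenspace has dimension at least $2$ for every $p > M_k$.

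Once that estimate is in hand, I would settle the lemma by a direct case analysis. If $k_1 \neq k_2$, choose a common sign $\varepsilon \in \{\pm 1\}$ and pick any normalised newforms $f_i \in S^{\mathrm{new},\varepsilon}_{2k_i+2}(\Gamma_0^{(1)}(p))$; since $f_1$ and $f_2$ belong to spaces of different weights they are automatically distinct, and their Atkin--Lehner eigenvalues both equal $\varepsilon$, so \eqref{e: eigenvalue condition} holds. If $k_1 = k_2 = k$, the estimate above guarantees that $S^{\mathrm{new},\varepsilon}_{2k+2}(\Gamma_0^{(1)}(p))$ contains at least two linearly independent Hecke eigenforms for $p > M_k$, and taking any two of them as $f_1, f_2$ finishes the argument.

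The role of the imaginary quadratic field $E$ in the constant $N=N(k_1,k_2,E)$ is only to absorb finitely many auxiliary arithmetic constraints that the applications in \S\ref{GBC} may impose, most naturally requiring $p \nmid D_E$ or prescribing a splitting behaviour of $p$ in $E/\mQ$. Any such constraint either excludes only finitely many primes or, via the Chebotarev density theorem, leaves a set of primes of positive density, so combining it with $p > M_{\max(k_1,k_2)}$ still yields infinitely many admissible $p$; one simply takes $N(k_1,k_2,E)$ to majorise all these local obstructions together with $M_{\max(k_1,k_2)}$. The argument is essentially a dimension count, and the only mild subtlety, which is not a genuine obstacle, is keeping track of the $O_k(1)$ error term in the Atkin--Lehner eigenspace dimension formula uniformly enough to produce an effective threshold, something the classical explicit formulas already provide.
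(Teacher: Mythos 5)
Your argument is correct in substance, but it takes a genuinely different route from the paper. You prove the lemma by a pure counting argument: decompose $S^{\mathrm{new}}_{2k+2}(\Gamma_0^{(1)}(p))$ into Atkin--Lehner eigenspaces and note that each eigenspace has dimension growing linearly in $p$, so for $p$ large you can pick two distinct newforms with the same eigenvalue (automatically distinct when $k_1\neq k_2$, and by dimension $\ge 2$ of a single eigenspace when $k_1=k_2$). The paper instead controls the Atkin--Lehner eigenvalue at $p$ through the global root number: it splits into parity cases for $k_1,k_2 \bmod 2$, uses the archimedean sign $(-1)^{k_i+1}$ to translate ``equal local $\varepsilon$-factors at $p$'' into a condition on global root numbers, and then invokes Iwaniec--Luo--Sarnak (equal root numbers in the equal-parity case) and Michel--Ramakrishnan / Ramakrishnan--Rogawski (nonvanishing of $L(1/2,\pi_1)L(1/2,\pi_1\times\chi_E)$, forcing root number $+1$, in the opposite-parity case). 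That analytic route is what makes the paper's constant depend on $E$; your approach yields the stronger statement that $N$ may be taken to depend only on $k_1,k_2$, and it avoids the nonvanishing inputs entirely, at the cost of quoting a trace-formula-type dimension computation for the Fricke involution.

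Two small corrections to your write-up, neither fatal. First, the error term in the dimension of the Atkin--Lehner eigenspaces is not $O_k(1)$: the trace of $W_p$ on $S_{2k+2}(\Gamma_0^{(1)}(p))$ is expressed through the class numbers $h(-p)$, $h(-4p)$, so the deviation from half the total dimension is of size $O_k(p^{1/2+\epsilon})$; since this is still $o(p)$, your conclusion that both eigenspaces have dimension at least $2$ for all $p>M_k$ stands, but you should quote the eigenspace dimension (or Fricke-trace) formula accurately. Second, your closing speculation about the role of $E$ is off target: the lemma's conclusion is asserted for \emph{all} primes $p>N$, with no congruence or splitting condition on $p$ (the inertness condition \eqref{e: inert condition} is imposed separately in the application), and the $E$-dependence in the paper comes solely from its use of $\chi_E$-twisted nonvanishing theorems in its proof. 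Since your proof establishes the statement with $N$ independent of $E$, this misreading does not affect the validity of your argument.
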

%
%
%
\begin{proof}
We divide into the following two cases:
\begin{subequations}\label{e: parities}
\begin{equation}\label{e: case 1}
k_1\equiv k_2\pmod{2};
\end{equation}
\begin{equation}\label{e: case 3}
k_1+1\equiv k_2\equiv 0\pmod{2}.
\end{equation}
\end{subequations}

Suppose that \eqref{e: case 1} holds.
Then by Iwaniec, Luo and Sarnak~\cite[Corollary 2.14]{ILS},
there is a constant $N(k_1, k_2)$ such that, for any
 prime $p > N(k_1, k_2)$,
there exist distinct normalized newforms $f_i\in
S_{2k_i+2}\left(\Gamma_0^{(1)}\left(p\right)\right)$ for $i=1,2$
such that
\[
\varepsilon\left(1\slash 2,\pi_1\right)=\varepsilon\left(1\slash 2,\pi_2\right)
\]
where $\pi_i$ denotes the automorphic representation of 
$\mathrm{GL}_2\left(\mA\right)$ corresponding to 
$f_i$ for $i=1,2$.
Since $\pi_i$ is unramified at all prime numbers different from $p$,
we have
\[
\left(-1\right)^{k_1+1}\cdot\varepsilon_p\left(1\slash 2,\pi_1\right)=
\left(-1\right)^{k_2+1}\cdot\varepsilon_p\left(1\slash 2,\pi_2\right).
\]
Hence 
$\varepsilon_p\left(1\slash 2,\pi_1\right)=\varepsilon_p\left(1\slash 2,\pi_2\right)$
by \eqref{e: case 1}.
Then by the relationship between the local $\varepsilon$-factor at $p$
and the Atkin-Lehner eigenvalue at $p$ (e.g. \cite[4.4]{HN2}),
we see that \eqref{e: eigenvalue condition} holds.

Suppose that \eqref{e: case 3} holds. 
Then by Michel and Ramakrishnan~\cite[Theorem~3]{MR} or Ramakrishnan and Rogawski~\cite[Corollary~B]{RR},
there exists a constant $N_1=N_1(k_1, E)$ such that for any
 prime $p > N_1$, there exists a normalized newform $f_1\in
S_{2k_1+2}\left(\Gamma_0^{(1)}\left(p\right)\right)$
such that
\[
L\left(1\slash 2,\pi_1\right)L\left(1\slash 2,\pi_1\times\chi_E\right)\ne 0.
\]
In particular, $\varepsilon\left(1\slash 2,\pi_1\right) =1$, and thus as in the previous case, we have
\[
\left(-1\right)^{k_1+1}\cdot\varepsilon_p\left(1\slash 2,\pi_1\right) = 1.
\]
Moreover, by \cite[Corollary 2.14]{ILS}, there exists a constant $N_2 = N_2(k_2)$ such that 
for any prime $p > N_2$, there exists a  normalized newform
$f_2\in
S_{2k_2+2}\left(\Gamma_0^{(1)}\left(p\right)\right)$
such that
\[
\varepsilon\left(1\slash 2,\pi_2\right) =-1.
\]
Then by taking the constant $N$ to be  $\mathrm{max}(N_1, N_2)$,
the condition
\eqref{e: eigenvalue condition} holds  by the same argument as above.
\end{proof}
%
%
%
%
\subsubsection{Vector valued Yoshida lift}
As for the Yoshida lifting, we refer the details to 
our main references
Hsieh and Namikawa~\cite{HN1,HN2}.

Let $k_1$ and $k_2$ be integers with $k_1\ge k_2\ge 0$.
Then 
by Lemma~\ref{l: simultaneous non-vanishing},
we may take a prime number $p$ satisfying the condition:
\begin{equation}\label{e: inert condition}
\text{$p$ is odd, and inert and unramified in $E$}
\end{equation}
and may take distinct normalized newforms
$f_i\in
S_{2k_i+2}\left(\Gamma_0^{(1)}\left(p\right)\right)$ ($i=1,2$)
satisfying the condition \eqref{e: eigenvalue condition}.

For a non-negative integer $r$, we denote by 
$\left(\tau_r,\mathcal W_r\right)$
the  representation 
$\left(\varrho,V_\varrho\right)$ of $\gl_2\left(\mathbb C\right)$
where $\varrho=\varrho_{\left(r,-r\right)}$, i.e.
$\tau_r=\mathrm{Sym}^{2r}\otimes \det^{-r}$.
We note that the action of the center  of $\gl_2\left(\mathbb C\right)$
on $\mathcal W_r$ by $\tau_r$ is trivial
and the pairing $\left(\, ,\,\right)_{2r}$ is
$\gl_2\left(\mathbb C\right)$-invariant by \eqref{e: equivariance}.
Let $p$ be a prime number and
$D=D_{p,\infty}$ the unique division quaternion algebra over $\mathbb Q$
which ramifies precisely at $p$ and $\infty$.
Let $\mathcal O_D$ be the maximal order of $D$
specified as in \cite[3.2]{HN1}
and we put $\hat{\mathcal O}_D=\mathcal O_D\otimes_{\mathbb Z}
\hat{\mathbb Z}$.
%
%
%
\begin{Definition}
$\mathcal A_r\left(D^\times\left(\mathbb A\right),
\hat{\mathcal O}_D\right)$,
the space of automorphic forms of weight $r$
and level $\hat{\mathcal O}_D$ on $D^\times\left(\mathbb A\right)$
is a space of functions $\mathbf{g}:D^\times\left(\mathbb A\right)
\to \mathcal W_r$
satisfying
\[
\mathbf{g}\left(z\gamma h u\right)=\tau_r\left(h_\infty\right)^{-1}
\mathbf{g}\left(h_f\right)
\]
for $z\in \mathbb A^\times$,
$\gamma\in D^\times\left(\mathbb Q\right)$,
$u\in \hat{\mathcal O}_D^\times$ and $h=\left(h_\infty, h_f\right)
\in D^\times\left(\mathbb R\right)\times D^\times\left(\mathbb A_f\right)$.
\end{Definition}
%
%
%
%
For $i=1,2$, let $\pi_i$ be the irreducible cuspidal automorphic representation
of $\gl_2\left(\mathbb A\right)$ corresponding to $f_i$.
Let $\pi^D_i$ be the Jacquet-Langlands transfer of $\pi_i$
to $D^\times\left(\mathbb A\right)$.
We denote by 
$\mathcal A_{k_i}\left(D^\times\left(\mathbb A\right),
\hat{\mathcal O}_D\right)\left[\pi_i^D\right]$
the $\pi_i^D$-isotypic subspace of $\mathcal A_{k_i}\left(D^\times\left(\mathbb A\right),
\hat{\mathcal O}_D\right)$.
Then $\mathcal A_{k_i}\left(D^\times\left(\mathbb A\right),
\hat{\mathcal O}_D\right)\left[\pi_i^D\right]$ has a subspace of newforms, 
which is one dimensional.
Let us take newforms $\mathbf{f}_i\in
\mathcal A_{k_i}\left(D^\times\left(\mathbb A\right),
\hat{\mathcal O}_D\right)\left[\pi_i^D\right]$ for $i=1,2$
and fix.
Then to the pair $\mathbf{f}=\left(\mathbf{f}_1,\mathbf{f}_2\right)$,
Hsieh and Namikawa~\cite[3.7]{HN1}
associate the \emph{Yoshida lift} $\theta_{\mathbf{f}}$,
a $V_\varrho$-valued cuspidal automorphic form on 
$ G\left(\mathbb A\right)$
where $\varrho=\varrho_\kappa$ with
\[
\kappa=\left(k_1+k_2+2, k_1-k_2+2\right)\in\mathbb L.
\]
%
The \emph{classical Yoshida lift}
$\theta^\ast_{\mathbf{f}}\in S_\varrho\left(\Gamma_0\left(p\right)\right)$
is also attached to $\mathbf{f}$ in \cite[3.7]{HN1}
so that
$\theta_{\mathbf{f}}$ is obtained from $\theta^\ast_{\mathbf{f}}$
by the adelization procedure in \eqref{e: vector valued}.
%
\subsubsection{Bessel periods of Yoshida lifts}
Let $\phi_{\mathbf{f},S}$ denote
  a scalar valued automorphic form attached
  to  $\theta^\ast_{\mathbf{f}}$ as in \eqref{def vector adelize}.
Hsieh and Namikawa evaluated the Bessel periods
of $\phi_{\mathbf{f},S}$ 
in \cite{HN1}.

First we remark that by \cite[Theorem~5.3]{HN1}, 
for any sufficiently large  prime number $q$ which is different from
$p$, 
we may take a character $\Lambda_0$ of $\mathbb A_E^\times$ satisfying:
\begin{subequations}\label{e: Lambda}
\begin{equation}\label{e: Lambda 1}
L\left(1\slash 2,\pi_1\otimes\mathcal{AI} \left(\Lambda_0 \right)\right)
L\left(1\slash 2,\pi_2\otimes\mathcal{AI} \left(\Lambda_0^{-1} \right)\right)\ne 0;
\end{equation}
\begin{equation}\label{e: Lambda 2}
\text{the conductor of $\Lambda_0$ is  $q^m\mathcal O_E$
where  $m>0$};
\end{equation}
\begin{equation}\label{e: Lambda 3}
\text{$\Lambda_0\mid_{\mathbb A^\times}$ is trivial};
\end{equation}
\begin{equation}\label{e: Lambda 4}
\text{$\Lambda_{0,\infty}$ is trivial.}
\end{equation}
\end{subequations}
 
 Then \cite[Proposition~4.7]{HN1} yields the following
 formula.
 %
\begin{lemma}\label{l: bessel for Yoshida}
 We have
 \begin{equation}\label{e: Bessel 1}
 B_{S, \Lambda_0, \psi}\left(\phi_{\mathbf{f},S}\right)=
 q^{2m}\cdot\left(-2\sqrt{-1}\,\right)^{k_1+k_2}\cdot
 e^{-2\pi\,\mathrm{Tr}\left(S\right)}\cdot
 \prod_{i=1}^2\,
 P\left(\mathbf{f}_i, \Lambda_0^{\alpha_i},1_2\right)
 \end{equation}
 where $\alpha_i=\left(-1\right)^{i+1}$ and
 \[
 P\left(\mathbf{f}_i, \Lambda_0^{\alpha_i},1_2\right)=
 \int_{E^\times \mathbb A^\times\backslash \mathbb A_E^\times}
 \left(\left(XY\right)^{k_i}, \mathbf{f}_i\left(t\right)\right)_{2k_i}
 \cdot
 \Lambda_0^{\alpha_i}\left(t\right)\, dt.
 \]
 \end{lemma}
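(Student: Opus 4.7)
My plan is to deduce Lemma~\ref{l: bessel for Yoshida} directly from the Bessel period formula of Hsieh--Namikawa~\cite[Proposition~4.7]{HN1}, so that the substantive work is a careful matching of normalizations, test vectors, and measures between their setup and ours. The identity we want to prove is a purely ``unfolding'' statement that expresses a period on $G(\mathbb A)$ as a product of toric periods on $D^\times(\mathbb A)$, and once the conventions are aligned, the precise shape of \eqref{e: Bessel 1} is dictated by their proposition.

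The first step is to recall the see-saw underlying the Yoshida lift. The form $\theta_{\mathbf f}$ is constructed from $(\mathbf f_1,\mathbf f_2)$ by integration against an explicit theta kernel for the dual pair $(G, \mathrm{GO}(D))$, together with the accidental isomorphism $\mathrm{PGSO}(D)\simeq (D^\times\times D^\times)/\Delta(\mathbb G_m)$. Substituting this theta-kernel expression for $\phi_{\mathbf f,S}$ into $B_{S,\Lambda_0,\psi}$ and interchanging the order of integration, the $N$-integration unfolds via the standard Weil-representation formula to select the $S$-th Fourier datum, and the $T_S$-integration factors through the diagonal embedding $T_S\simeq E^\times\hookrightarrow D^\times\times D^\times$. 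This produces the product $P(\mathbf f_1,\Lambda_0,1_2)\,P(\mathbf f_2,\Lambda_0^{-1},1_2)$, which is the content of the right-hand side of \eqref{e: Bessel 1} up to explicit volume factors.

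The second step is the bookkeeping of the three constants appearing on the right of \eqref{e: Bessel 1}. The factor $e^{-2\pi\,\mathrm{Tr}(S)}$ comes from evaluating the archimedean Gaussian in the theta kernel at the base point $Y=1_2$, exactly as in the Fourier expansion \eqref{e: Fourier}. The factor $q^{2m}$ arises as a local volume contribution at the prime $q$: the conductor condition \eqref{e: Lambda 2} forces the relevant local Schwartz-Bruhat function to be supported on a lattice of index $q^{2m}$, and the toric measure normalization produces $q^{2m}$ when one passes between level structures. Finally, the factor $(-2\sqrt{-1})^{k_1+k_2}$ is the archimedean constant obtained by pairing the polynomial part of the archimedean Schwartz function against $Q_{S,\varrho}$ defined in \eqref{e: def of Q}, and then extracting the highest-weight vectors $(XY)^{k_i}$ in $\mathcal W_{k_i}$ via the invariant pairing $(\,,\,)_{2k_i}$ from \eqref{e: bilinear form}.

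The main obstacle will be verifying the archimedean compatibility, namely that our adelization convention \eqref{e: vector valued} with similitude twist $\nu(g_\infty)^{k+r}$ and our choice of $Q_{S,\varrho}$ match the polynomial Schwartz vector and scalarization used in \cite{HN1} to produce exactly the sign and power of $2\sqrt{-1}$. Once this is done, the remaining checks are straightforward: at primes different from $p$ and $q$ all data are spherical and the local integrals are $1$; at $p$, both $\mathbf f_i$ and $\theta^\ast_{\mathbf f}$ are new of the appropriate level and the local pairing of new vectors is again $1$; at $q$, the conductor condition \eqref{e: Lambda 2} combined with \eqref{e: Lambda 3} forces the local toric integral to pick up the factor $q^{2m}$ already accounted for. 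Assembling these local contributions with the archimedean constant yields exactly \eqref{e: Bessel 1}.
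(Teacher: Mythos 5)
Your proposal is correct and takes essentially the same route as the paper, which proves the lemma simply by invoking \cite[Proposition~4.7]{HN1}; your unfolding sketch and normalization bookkeeping are exactly the content of that cited result together with the conventions already fixed in Section~\ref{GBC}.
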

 %
 From \eqref{e: Bessel 1}, we have
 \begin{equation}\label{e: Bessel 2}
 \left| B_{S, \Lambda_0, \psi}\left(\phi_{\mathbf{f},S}\right)\right|^2=
 q^{4m}\cdot 2^{2(k_1+k_2)}\cdot
 e^{-4\pi\,\mathrm{tr}\left(S\right)}\cdot
 \prod_{i=1}^2
 \,
 \left|P\left(\mathbf{f}_i, \Lambda_0^{\alpha_i},1_2\right)\right|^2.
 \end{equation}
 Since $p$ is odd and inert in $E$, we may evaluate
 the right hand side of \eqref{e: Bessel 2} by
 Martin and Whitehouse~\cite{MW}.
 Namely  the following formula holds by
 \cite[Theorem~4.1]{MW}.
 %
 \begin{lemma}\label{l: MW}
 We have
\begin{multline}\label{e: Bessel 3}
\frac{\left| P(\mathbf{f}_i, \Lambda_0^{\alpha_i}, 1_2) \right|^2}{
\left\|\phi_{\mathbf{f}_i} \right\|^2} =\frac{1}{4}\cdot  \frac{\xi(2)}{\zeta_{\mQ_p}(2)}\cdot
\frac{L\left(1 \slash 2, \pi_i \otimes \mathcal{AI}\left(\Lambda_0^{\alpha_i}\right)\right)}{L\left(1, \pi_i, \mathrm{Ad}\right)} \cdot \left(1+p^{-1}\right)^{-1}
\\
\times
\frac{\Gamma\left(2k_i+2\right)}{ 2\,q^m\, \pi \,D_E^{1\slash 2} \,\,
\Gamma\left(k_i+1\right)^2}
\end{multline}
where $\xi(s)$ denotes the complete Riemann zeta function,
$\phi_{\mathbf{f}_i}$  the scalar valued automorphic form on $D^\times\left(\mathbb A
\right)$ defined by
\[
\phi_{\mathbf{f}_i}\left(h\right)=
\left(\left(XY\right)^{k_i},\mathbf{f}_i\left(h\right)\right)_{2k_i}
\quad\text{for $h\in D^\times\left(\mathbb A\right)$}
\]
and
\[
\left\|\phi_{\mathbf{f}_i} \right\|^2=
\int_{\mathbb A^\times D^\times\left(\mathbb Q\right)\backslash
D^\times\left(\mathbb A\right)}
\left|\phi_{\mathbf{f}_i}\left(h\right) \right|^2\, dh.
\]
Here $dh$ is the Tamagawa measure on $\mA^\times \backslash D^\times\left(\mathbb A\right)$,
and thus 
\[
\mathrm{Vol}(\mathbb A^\times D^\times\left(\mathbb Q, \right)\backslash
D^\times\left(\mathbb A\right),dh)=2.
\]
 \end{lemma}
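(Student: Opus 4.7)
The plan is to apply \cite[Theorem~4.1]{MW} directly to the toric period $P(\mathbf{f}_i,\Lambda_0^{\alpha_i},1_2)$ and then compute the resulting local factors place by place. First I would verify that our setting fits the hypotheses of that theorem: the representation $\pi_i^D$ is the Jacquet--Langlands transfer to $D^\times(\mathbb{A})$ of the newform $\pi_i$ of $\mathrm{GL}_2(\mathbb{A})$, the quadratic extension $E/\mathbb{Q}$ embeds in $D$ since $p$ is inert in $E$ by \eqref{e: inert condition}, the character $\Lambda_0^{\alpha_i}$ is trivial on $\mathbb{A}^\times$ by \eqref{e: Lambda 3}, and the test vectors $\mathbf{f}_i(1_2)$ are Gross--Prasad test vectors of level $\hat{\mathcal O}_D$. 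Applying \cite[Theorem~4.1]{MW} then expresses $|P(\mathbf{f}_i,\Lambda_0^{\alpha_i},1_2)|^2/\|\phi_{\mathbf{f}_i}\|^2$ as a product of the global central $L$-ratio
\[
\frac{\zeta_\mathbb{Q}(2)\,L(1/2,\pi_i\otimes\mathcal{AI}(\Lambda_0^{\alpha_i}))}{L(1,\chi_E)\,L(1,\pi_i,\mathrm{Ad})}
\]
and a product of local integrals $\alpha_v^\natural$, each of which equals $1$ outside the finite set $\{\infty,p,q\}$.

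Next I would compute each of the three non-trivial local factors. At the archimedean place the representation $\pi_{i,\infty}^D$ is the finite-dimensional representation $\tau_{k_i}$ of $D^\times(\mathbb{R})$, the quadratic extension is $E_\infty = \mathbb{C}$, and $\Lambda_{0,\infty}$ is trivial by \eqref{e: Lambda 4}; the archimedean local integral is the standard one evaluated for instance in \cite[\S3]{MW} or by the explicit matrix coefficient formula, yielding $\Gamma(2k_i+2)/(2\pi\,\Gamma(k_i+1)^2)$ up to a harmless normalization constant, which together with the factor $D_E^{-1/2}$ coming from $L(1,\chi_E)^{-1}$ and the normalization of measures absorbs into the stated archimedean contribution. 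At $p$, both $D_p$ and $E_p$ are the ramified/inert local objects, $\pi_{i,p}$ is the Steinberg representation (so $\pi_{i,p}^D$ is the trivial representation of $D_p^\times$), and $\Lambda_{0,p}$ is unramified; the local integral reduces to a straightforward volume computation that produces the factor $\xi(2)\,\zeta_{\mathbb{Q}_p}(2)^{-1}(1+p^{-1})^{-1}$ after comparison with the ratio of local $L$-factors in Martin--Whitehouse's normalization. At $q$, the data at $\pi_{i,q}$ is unramified while $\Lambda_{0,q}$ has conductor $q^m$ by \eqref{e: Lambda 2}, and the relevant local integral for a new-vector against a ramified toric character of conductor $q^m$ gives exactly $q^{-m}$ (this is the standard epsilon-dichotomy / Tunnell--Saito computation in the split/inert unramified case).

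Combining these three local contributions with the global $L$-ratio from Martin--Whitehouse and simplifying, using $L(1,\chi_E) = 2\pi\,D_E^{-1/2}\cdot(\text{finite factors})$ implicit in the archimedean normalization, yields precisely the right-hand side of \eqref{e: Bessel 3}, including the overall $\tfrac14$ (which arises from the measure-theoretic discrepancy between the Tamagawa measure on $\mathbb{A}^\times\backslash D^\times(\mathbb{A})$ of total volume $2$ and the measure on $E^\times\mathbb{A}^\times\backslash\mathbb{A}_E^\times$ appearing in the definition of $P(\mathbf{f}_i,\Lambda_0^{\alpha_i},1_2)$).

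The main obstacle will be the bookkeeping of normalizations: Martin--Whitehouse use a particular choice of Haar measures and of the $L$-factor denominators, while in our setup the measures and $L$-factors are adapted to the conventions of \cite{HN1,HN2}. Most of the effort of the proof lies in reconciling these normalizations at $\infty$ and at $p$ so that the archimedean $\Gamma$-factors, the $(1+p^{-1})^{-1}$ term, and the coefficient $\tfrac14$ appear as stated; the $q$-adic factor is essentially immediate and the remaining places are automatic from the unramified computation.
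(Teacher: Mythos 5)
Your proposal is correct and follows essentially the same route as the paper: the paper's proof of Lemma~\ref{l: MW} is simply a direct application of \cite[Theorem~4.1]{MW} (valid since $p$ is odd and inert in $E$), with the factor $\frac{1}{4}$ explained, as you say, by the discrepancy between the Tamagawa-measure normalization used here and the measures in \cite{MW}. The only cosmetic difference is that \cite[Theorem~4.1]{MW} is already a fully explicit formula, so the place-by-place evaluations at $\infty$, $p$ and $q$ that you sketch are not separate computations but are contained in that theorem; your remaining work is exactly the normalization bookkeeping you identify.
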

 \begin{Remark}
 The factor $\frac{1}{4}$ in \eqref{e: Bessel 3}
 originates from the difference of measures between the one used
  here and the one in \cite{MW}.
 \end{Remark}
 %
 %
 %
 In order to utilize the explicit inner product formula for vector valued
 Yoshida lifts in Hsieh and Namikawa~\cite{HN2}, we need the following lemma.
 %
 \begin{lemma}\label{l: inner product change}
 Let us define an inner product $\langle\mathbf{f}_i,\mathbf{f}_i\rangle$
 for $i=1,2$ by
 \begin{equation}
 \label{finite sum PI}
 \langle\mathbf{f}_i,\mathbf{f}_i\rangle=
 \sum_{a}\,
 \langle\mathbf{f}_i\left(a\right),
 \mathbf{f}_i\left(a\right)\rangle_{\tau_{k_i}}
 \cdot
 \frac{1}{\# \,\Gamma_a}
 \end{equation}
  where 
  $\left<\,,\,\right>_{\tau_{k_i}}$ is defined by 
  \eqref{e: def of hermitian},
  $a$ runs over  double coset representatives of
 $D^\times\left(\mathbb Q\right)\backslash
 D^\times\left(\mathbb A_f\right)\slash
 \hat{\mathcal O}_D^\times$ and
 $\Gamma_a=\left(a\,\hat{\mathcal O}_D^\times \, a^{-1}\cap D^\times\left(\mathbb Q
 \right)\right)\slash \left\{\pm 1\right\}$.
 
 Then for $i=1,2$, we have
 \begin{equation}\label{e: little inner product}
 \left\|\phi_{\mathbf{f}_i}\right\|^2=
 2^3\cdot 3\cdot p^{-1}\left(1-p^{-1}\right)^{-1}\cdot
 \frac{\Gamma\left(k_i+1\right)^2}{\Gamma\left(2k_i+1\right)}
 \cdot\frac{1}{\left(2k_i+1\right)^2}\cdot
 \langle\mathbf{f}_i,\mathbf{f}_i\rangle.
 \end{equation}
 \end{lemma}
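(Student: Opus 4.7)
The plan is to unfold the adelic integral defining $\|\phi_{\mathbf{f}_i}\|^2$ against the right-invariance of $\mathbf{f}_i$ under $\hat{\mathcal{O}}_D^\times$ and its $K_\infty$-equivariance, and to match the result against the explicit finite sum \eqref{finite sum PI}. First, since $D$ is definite at infinity and $\mathbf{f}_i$ has trivial central character, I would decompose
\[
D^\times(\mathbb{A}) = \bigsqcup_a D^\times(\mathbb{Q}) \cdot \bigl(K_\infty \cdot a \cdot \hat{\mathcal{O}}_D^\times \mathbb{A}^\times\bigr),
\]
where $a$ runs over representatives of the (finite) double coset space $D^\times(\mathbb{Q})\backslash D^\times(\mathbb{A}_f) / \hat{\mathcal{O}}_D^\times$ and $K_\infty\subset D^\times(\mathbb{R})$ is a maximal compact subgroup; each double coset is overcounted by the finite group $\Gamma_a$. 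This folds the Tamagawa integral into
\[
\|\phi_{\mathbf{f}_i}\|^2 = \mathrm{Vol}_{\mathrm{Tam}}(\hat{\mathcal{O}}_D^\times) \,\sum_a \frac{1}{\#\Gamma_a} \int_{K_\infty/\{\pm 1\}} \bigl|\phi_{\mathbf{f}_i}(k_\infty a)\bigr|^2 \, dk_\infty.
\]

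Second, I would evaluate the inner $K_\infty$-integral via Schur orthogonality. Using $\mathbf{f}_i(k_\infty a) = \tau_{k_i}(k_\infty)^{-1}\mathbf{f}_i(a)$ and the $K_\infty$-unitarity of $\tau_{k_i}$ with respect to $\langle\cdot,\cdot\rangle_{\tau_{k_i}}$ recorded in \eqref{e: invariant inner product}, the integrand becomes the squared modulus of a matrix coefficient. A direct computation using the pairing \eqref{e: bilinear form} and the Riesz identification from \eqref{e: def of hermitian} shows that the linear functional $v\mapsto ((XY)^{k_i},v)_{2k_i}$ corresponds to the vector $v_L = (-1)^{k_i}(XY)^{k_i}$, and that $\langle v_L,v_L\rangle_{\tau_{k_i}} = \binom{2k_i}{k_i} = \Gamma(2k_i+1)/\Gamma(k_i+1)^2$, where one uses $\tau_{k_i}(w_0)(XY)^{k_i} = (-1)^{k_i}(XY)^{k_i}$. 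Schur orthogonality on $K_\infty/\{\pm 1\}$ (which acts through the $(2k_i+1)$-dimensional irrep $\tau_{k_i}$) then reduces the inner integral to $\langle \mathbf{f}_i(a),\mathbf{f}_i(a)\rangle_{\tau_{k_i}}$ divided by $\dim\tau_{k_i} = 2k_i+1$ and multiplied by $\|v_L\|^2_{\tau_{k_i}}$ and the archimedean Tamagawa volume of $K_\infty/\{\pm 1\}$.

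Third, I would pin down the global constant $\mathrm{Vol}_{\mathrm{Tam}}(\hat{\mathcal{O}}_D^\times)\cdot \mathrm{Vol}_{\mathrm{Tam}}(K_\infty/\{\pm 1\})$ by two independent computations of $\mathrm{Vol}_{\mathrm{Tam}}(\mathbb{A}^\times D^\times(\mathbb{Q})\backslash D^\times(\mathbb{A})) = 2$. On the one hand this is the Tamagawa number of $PD^\times$; on the other, via the decomposition above it equals the product of local volumes times $\sum_a 1/\#\Gamma_a$. Eichler's mass formula for the definite quaternion algebra $D = D_{p,\infty}$ of prime discriminant $p$ gives $\sum_a 1/\#\Gamma_a = (p-1)/12$, so the product of local volumes equals $24/(p-1) = 24\,p^{-1}(1-p^{-1})^{-1}$, producing the factor $2^3\cdot 3\cdot p^{-1}(1-p^{-1})^{-1}$ in the statement.

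Assembling Steps 1--3 with the definition \eqref{finite sum PI} of $\langle \mathbf{f}_i,\mathbf{f}_i\rangle$ then yields \eqref{e: little inner product}. The main obstacle, in my view, is the careful reconciliation of normalizations: at the ramified prime $p$ the local Tamagawa volume of $\mathcal{O}_{D,p}^\times$ is not given by the standard unramified recipe, and one must confirm that the Schur factor $(\dim\tau_{k_i})^{-1}$, the pairing-norm $\|v_L\|^2_{\tau_{k_i}}$, and Hsieh--Namikawa's normalization of the newform $\mathbf{f}_i$ in the Jacquet--Langlands transfer conspire to reproduce the precise $k_i$-dependent factor $\Gamma(k_i+1)^2 / [\Gamma(2k_i+1)(2k_i+1)^2]$ as stated.
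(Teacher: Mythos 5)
Your outline follows the same route as the paper's proof: reduce $\left\|\phi_{\mathbf{f}_i}\right\|^2$ by Schur orthogonality in the archimedean variable, then convert the remaining adelic integral of $\langle\mathbf{f}_i(h),\mathbf{f}_i(h)\rangle_{\tau_{k_i}}$ into the finite class sum \eqref{finite sum PI}. Your Step 3 is sound and is in fact a clean way to recover the level constant: the Tamagawa number $2$ together with the Eichler mass $\sum_a 1/\#\,\Gamma_a=(p-1)/12$ gives exactly $24/(p-1)=2^3\cdot 3\cdot p^{-1}(1-p^{-1})^{-1}$, which is what the paper extracts from \cite[(3.10)]{CH} (with the corrected factor of $2$ noted in Remark~\ref{2 power miss CH}); in particular your worry about the local Tamagawa volume at the ramified prime $p$ is moot, since your bookkeeping never needs the individual local volumes.

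The genuine gap is the $k_i$-dependent constant, which is the entire content of the lemma, and your proposal does not establish it. With the values you assert --- $\langle v_L,v_L\rangle_{\tau_{k_i}}=\binom{2k_i}{k_i}$ and a single Schur factor $(\dim\tau_{k_i})^{-1}=(2k_i+1)^{-1}$ --- your three steps combine to $\left\|\phi_{\mathbf{f}_i}\right\|^2=\frac{24}{p-1}\cdot\frac{1}{2k_i+1}\binom{2k_i}{k_i}\,\langle\mathbf{f}_i,\mathbf{f}_i\rangle$, which differs from \eqref{e: little inner product} by the factor $\binom{2k_i}{k_i}^{2}(2k_i+1)$; you acknowledge this in your last paragraph but defer it to an unspecified ``reconciliation of normalizations,'' and nothing in your argument produces either the reciprocal binomial factor or the second power of $(2k_i+1)$. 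In the paper these come from two specific inputs that your route omits: first, the self-pairing is evaluated in the normalization actually used in Hsieh--Namikawa, namely $((XY)^{k_i},(XY)^{k_i})_{2k_i}=(-1)^{k_i}\binom{2k_i}{k_i}^{-1}$, which is the reciprocal of what the displayed formula \eqref{e: bilinear form} literally yields, so the convention must be pinned down before any constant computed from it can be trusted; second, the paper routes through the bilinear pairing $(\mathbf{f}_i(h),\overline{\mathbf{f}_i(h)})_{2k_i}$ and invokes \cite[Lemma~6]{HN1} to pass to the hermitian pairing, and it is this comparison that supplies the additional factor $(-1)^{k_i}(2k_i+1)^{-1}$, with \cite[(3.10)]{CH} then handling the class-sum conversion. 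Unless you either cite these pairing comparisons or re-derive them (carefully fixing the normalization of $(\,,\,)_{2k_i}$ and of the newform $\mathbf{f}_i$), your argument proves a formula of the correct shape but with an unverified, and as written different, constant, so it does not prove the lemma.
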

 %
 \begin{proof}
 Since $\left\|\phi_{\mathbf{f}_i}\right\|^2=\left\|\pi_i^D\left(h_\infty\right)
 \phi_{\mathbf{f}_i}\right\|^2$ for $h_\infty\in D^\times\left(\mathbb R\right)$,
 we have
\begin{multline*}
\left\|\phi_{\mathbf{f}_i}\right\|^2
=\frac{1}{
\mathrm{Vol}\left(\mathbb R^\times\backslash D^\times\left(\mathbb R\right),
dh_\infty\right)}
\\
\times
\int_{\mathbb R^\times\backslash D^\times\left(\mathbb R\right)}
\int_{\mathbb A^\times D^\times\left(\mathbb Q\right)
\backslash D^\times\left(\mathbb A\right)}
\left|\phi_{\mathbf{f}_i}\left(hh_\infty\right)\right|^2 dh\,dh_\infty.
\end{multline*}
By interchanging the order of integration, we have
\begin{multline*}
\left\|\phi_{\mathbf{f}_i}\right\|^2
=
\frac{1}{
\mathrm{Vol}\left(\mathbb R^\times\backslash D^\times\left(\mathbb R\right),
dh_\infty\right)}
\\
\times
\int_{\mathbb A^\times D^\times\left(\mathbb Q\right)
\backslash D^\times\left(\mathbb A\right)}
\int_{\mathbb R^\times\backslash D^\times\left(\mathbb R\right)}
\left|\phi_{\mathbf{f}_i}\left(hh_\infty\right)\right|^2 dh_\infty\,dh.
\end{multline*}
Here the Schur orthogonality implies
\begin{multline*}
\frac{1}{
\mathrm{Vol}\left(\mathbb R^\times\backslash D^\times\left(\mathbb R\right),
dh_\infty\right)}
\int_{\mathbb R^\times\backslash D^\times\left(\mathbb R\right)}
\left|
\left(\left(XY\right)^{k_i},\mathbf{f}_i\left(hh_\infty\right)\right)_{2k_i}
\right|^2
\,dh_\infty
\\
=d_i^{-1}
\cdot
\left(\left(XY\right)^{k_i},\left(XY\right)^{k_i}\right)_{2k_i}
\cdot
\left(\mathbf{f}_i\left(h\right),\overline{\mathbf{f}_i\left(h\right)}\right)_{2k_i}
\end{multline*}
where $d_i=\dim \mathrm{Sym}^{2k_i}=2k_i+1$
and 
$
\left(\left(XY\right)^{k_i},\left(XY\right)^{k_i}\right)_{2k_i}
=\left(-1\right)^{k_i}
\begin{pmatrix}2k_i\\k_i\end{pmatrix}^{-1}$.
Hence
\[
\left\| \phi_{\mathbf{f}_i} \right\|^2
=\begin{pmatrix}2k_i\\ k_i \end{pmatrix}^{-1} (2k_i+1)^{-1}  
\int_{\mA^\times D^\times(\mQ) \backslash D^\times(\mA)}
\left(\mathbf{f}_i\left(h\right), \overline{\mathbf{f}_i\left(h\right)} \right)_{2k_i}
 \, dh.
\]
By \cite[Lemma~6]{HN1}, we have
\begin{multline}
\label{PI difference}
 \int_{\mA^\times D^\times(\mQ) \backslash D^\times(\mA)}
\left( \mathbf{f}_i\left(h\right), \overline{\mathbf{f}_i\left(h\right)} \right)_{2k_i}
 \, dh
 \\
=\frac{(-1)^{k_i}}{2k_i+1}
\int_{\mA^\times D^\times(\mQ) \backslash D^\times(\mA)}
 \langle\mathbf{f}_i\left(h\right), \mathbf{f}_i\left(h\right)\rangle_{\tau_{k_i}}
 \, dh.
\end{multline}
Finally by Chida and Hsieh~\cite[(3.10)]{CH} with 
the following Remark~\ref{2 power miss CH},
we obtain \eqref{e: little inner product}.
 \end{proof}
 %
 %
 %
 %
 %
 %
 \begin{Remark}
 \label{2 power miss CH}
 In \cite{CH}, the Eichler mass formula
 is used to express the right hand side of \eqref{PI difference}
 in terms of the inner product defined by \eqref{finite sum PI}.
 There is a typo in the Eichler mass formula
 in \cite[p.103]{CH}. 
 The right hand side of the formula quoted there should
 be multiplied by $2$.
  \end{Remark}
 %
 %
 %
 Let us recall the inner product formula for $\theta^\ast_{\mathbf{f}}$
 by Hsieh and Namikawa~\cite[Theorem~A]{HN2}.
 %
 \begin{proposition}\label{p: inner product classical}
 We have
 \begin{multline}\label{e: inner product classical}
 \frac{
 \langle\theta_{\mathbf{f}}^\ast, \theta_{\mathbf{f}}^\ast\rangle_{
 \varrho
 }}{
 \langle\mathbf{f}_1,\mathbf{f}_1\rangle\langle\mathbf{f}_2,\mathbf{f}_2\rangle}
 \\
 =L\left(1,\pi_1\times\pi_2\right)\cdot 
 \frac{2^{-\left(2k_1+6\right)}}{\left(2k_1+1\right)\left(2k_2+1\right)}
 \cdot \frac{1}{p^2\left(1+p^{-1}\right)\left(1+p^{-2}\right)}.
 \end{multline}
 Here $\langle\theta_{\mathbf{f}}^\ast, 
 \theta_{\mathbf{f}}^\ast\rangle_{
  \varrho
 }$
 is given by
 \[
\langle \theta_{\mathbf{f}}^\ast, \theta_{\mathbf{f}}^\ast \rangle_{
\varrho
}
= 
\frac{1}{[\mathrm{Sp}_2(\mZ) : \Gamma_0\left(p\right)]} \int_{\Gamma_0\left(p\right) \backslash \mathfrak{H}_2} 
\langle\theta_{\mathbf{f}}^\ast(Z), \theta_{\mathbf{f}}^\ast(Z)
\rangle_{\varrho}
 \left(\det Y\right)^{k_1-k_2-1}
 \, dX\, dY
\]
with $\varrho=\varrho_\kappa$
where $\kappa=\left(k_1+k_2+2,k_1-k_2+2\right)$.
 \end{proposition}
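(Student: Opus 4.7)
The plan is to derive the classical inner product formula \eqref{e: inner product classical} from the adelic inner product formula for Yoshida lifts established in Hsieh--Namikawa~\cite[Theorem~A]{HN2}. The main content of \cite[Theorem~A]{HN2} is an identity of the shape
\[
\frac{\langle \theta_{\mathbf{f}}, \theta_{\mathbf{f}} \rangle_{\mathrm{ad}}}{\langle \mathbf{f}_1, \mathbf{f}_1 \rangle \langle \mathbf{f}_2, \mathbf{f}_2 \rangle}
= L(1, \pi_1 \times \pi_2)\cdot C(k_1,k_2,p),
\]
where the left hand side is the adelic Petersson norm of the adelic Yoshida lift $\theta_{\mathbf{f}}$ (computed against a suitable $\mathbb{G}(\mathbb{A})$-invariant hermitian pairing on $V_\varrho$, taken with the Tamagawa measure), and $C(k_1,k_2,p)$ is an explicit archimedean--$p$-adic constant extracted from the local Rallis inner product formula at $\infty$ and at $p$, together with the standard volume factors arising from the global theta integral. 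So the first step is simply to record the precise form of that adelic identity and the resulting value of $C(k_1,k_2,p)$.

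Next, I would convert the adelic Petersson norm $\langle \theta_{\mathbf{f}}, \theta_{\mathbf{f}} \rangle_{\mathrm{ad}}$ into the classical Petersson norm $\langle \theta_{\mathbf{f}}^\ast, \theta_{\mathbf{f}}^\ast \rangle_\varrho$ defined in the statement. This is a bookkeeping step entirely parallel to Lemma~\ref{l: norm constant}: using the adelization map of \ref{sss: adelization} with the congruence subgroup $\Gamma_0(p)$ in place of $\mathrm{Sp}_2(\mathbb{Z})$, one obtains
\[
\langle \theta_{\mathbf{f}}, \theta_{\mathbf{f}} \rangle_{\mathrm{ad}}
= \frac{\mathrm{Vol}(Z_G(\mathbb{A})G(\mathbb{Q})\backslash G(\mathbb{A}))}{\mathrm{Vol}(\Gamma_0(p)\backslash \mathfrak{H}_2)} \cdot \frac{\langle v_0, v_0 \rangle_\varrho}{\dim V_\varrho} \cdot \langle \theta_{\mathbf{f}}^\ast, \theta_{\mathbf{f}}^\ast \rangle_\varrho^{\rm{unnorm}},
\]
where $v_0$ is the highest weight vector used to define $\theta_{\mathbf{f}}^\ast$ in \cite[3.7]{HN1} and $\langle\cdot,\cdot\rangle_\varrho^{\rm{unnorm}}$ denotes the unnormalized integral over $\Gamma_0(p)\backslash \mathfrak{H}_2$. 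The normalization factor $[\mathrm{Sp}_2(\mathbb{Z}):\Gamma_0(p)]^{-1}$ in the definition of $\langle \theta_{\mathbf{f}}^\ast, \theta_{\mathbf{f}}^\ast \rangle_\varrho$ then converts $\mathrm{Vol}(\Gamma_0(p)\backslash \mathfrak{H}_2)$ to $\mathrm{Vol}(\mathrm{Sp}_2(\mathbb{Z})\backslash \mathfrak{H}_2)$, so the index $[\mathrm{Sp}_2(\mathbb{Z}):\Gamma_0(p)] = p+1$ does not appear explicitly in the final answer. On the $D^\times$ side, the adelic pairing $\langle \mathbf{f}_i, \mathbf{f}_i \rangle_{\mathrm{ad}}$ (Tamagawa-measure integral against the hermitian pairing $\langle\cdot,\cdot\rangle_{\tau_{k_i}}$) is converted to the level-$\hat{\mathcal{O}}_D$ sum in \eqref{finite sum PI} via the Eichler mass formula, exactly as in Chida--Hsieh~\cite[(3.10)]{CH}, where one must take the corrected factor of $2$ noted in Remark~\ref{2 power miss CH}.

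Finally, I would multiply the explicit archimedean and $p$-adic constants supplied by \cite[Theorem~A]{HN2}, which contain the factors $(2k_1+1)(2k_2+1)$ from the local archimedean doubling integrals, the factor $p^2(1+p^{-1})(1+p^{-2})$ from the local $p$-adic integrals encoding the joint behaviour of $\pi_{1,p}^D\otimes \pi_{2,p}^D$ at the ramified prime, and the power $2^{-(2k_1+6)}$ coming from the normalization of the archimedean hermitian inner product \eqref{e: def of hermitian} together with the measure constants of \ref{ss: measures}. Assembling these pieces and clearing the measure constants between the adelic and classical sides produces \eqref{e: inner product classical}. The main obstacle, and the only non-routine part of the argument, is the careful bookkeeping of normalizations: three different inner product conventions (the adelic pairing of \cite{HN2}, the classical Petersson product $\langle\cdot,\cdot\rangle_\varrho$, and the finite sum \eqref{finite sum PI} for $\mathbf{f}_i$) must be aligned simultaneously, and any discrepancy of a factor of $2$ between the present conventions and those of \cite{HN2}, \cite{HN1}, \cite{CH} would propagate into the final constant. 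The rest is a mechanical extraction of constants.
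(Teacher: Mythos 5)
The paper offers no proof of this proposition at all: it is introduced with ``Let us recall the inner product formula for $\theta^\ast_{\mathbf{f}}$ by Hsieh and Namikawa~\cite[Theorem~A]{HN2}'' and is simply a restatement of that theorem in the paper's normalizations, with the conversion between the adelic norm $\|\phi_{\mathbf{f}_i}\|^2$ and the finite sum $\langle\mathbf{f}_i,\mathbf{f}_i\rangle$ handled separately in Lemma~\ref{l: inner product change}. Your proposal ultimately rests on exactly the same citation, so in substance it is the same approach; the difference is that you interpose an adelic-to-classical translation layer. Two caveats on that layer. First, \cite[Theorem~A]{HN2} is already formulated classically — it gives the Petersson norm of the classical vector-valued Yoshida lift against the level-$\hat{\mathcal O}_D$ norms of the $\mathbf{f}_i$ — so most of the bookkeeping you describe (passing through an adelic Petersson norm, re-running the Eichler mass formula conversion of Chida--Hsieh as in Lemma~\ref{l: inner product change}) is not needed for this statement and risks double-counting normalizations that the cited theorem has already fixed. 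Second, the only content the proposition carries beyond ``a formula of Hsieh--Namikawa type holds'' is the explicit constant $2^{-(2k_1+6)}\left((2k_1+1)(2k_2+1)\right)^{-1}\left(p^2(1+p^{-1})(1+p^{-2})\right)^{-1}$, and your write-up never actually pins it down: you assert where each factor ``comes from'' ($(2k_i+1)$ from archimedean doubling integrals, $2^{-(2k_1+6)}$ from the pairing \eqref{e: def of hermitian}, etc.) without carrying out the computation. Since the paper itself treats the statement as a quotation, this is acceptable as a proof strategy, but as written your argument would not independently certify the constant; if the intent were a genuine re-derivation rather than a citation, that unverified constant-matching is where the gap would lie.
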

 %
 %
 %
 Thus by combining \eqref{e: Bessel 2}, \eqref{e: Bessel 3},
 \eqref{e: little inner product} and \eqref{e: inner product classical},
 we have
 \begin{multline}
\label{e: formula 1}
\frac{\left|B_{S, \Lambda_0, \psi}\left(\phi_{\mathbf{f},S}\right)\right|^2}{
\langle\theta_{\mathbf{f}}^\ast, \theta_{\mathbf{f}}^\ast \rangle_{
\varrho
}}
= \frac{2^{4k_1 + 2k_2+ 5} e^{-4\pi \,\mathrm{tr}\left(S\right) } }{D_E} 
\cdot
2\left(1+p^{-1}\right)\left(1+p^{-2}\right) \cdot q^{2m}
\\
\times
\frac{L\left(1 \slash 2, \pi_1 \otimes \mathcal{AI}\left(\Lambda_0\right)\right) 
L\left(1 \slash 2, \pi_2 \otimes \mathcal{AI}\left(\Lambda_0^{-1}\right)\right) }{
L\left(1, \pi_1, \mathrm{Ad}\right) L\left(1, \pi_2, \mathrm{Ad}\right) 
L\left(1, \pi_1 \times \pi_2\right)}.
\end{multline}
Here we note that the both sides of \eqref{e: formula 1}
are non-zero due to  the conditions
\eqref{e: eigenvalue condition}
and \eqref{e: Lambda}.

%
%
%
%
\subsection{Proof of Theorem~\ref{arch comp}}
Since  the  Ichino-Ikeda type formula has been proved for Yoshida lifts
by Liu
\cite[Theorem~4.3]{Liu2},
the computations in Dickson et al.~\cite{DPSS} implies
\begin{multline}
\label{e: formula 2}
\frac{\left|B_{S,\Lambda_0, \psi}\left(\phi_{\mathbf{f},S}\right)\right|^2}{
\langle\phi_{\mathbf{f},S},\phi_{\mathbf{f},S}
 \rangle
}
= \frac{\mathcal CJ_\infty}{2^2} 
\cdot
2\left(1+p^{-1}\right)\left(1+p^{-2}\right) \cdot J_q 
\\
\times
\frac{L\left(1 \slash 2, \pi_1 \otimes \mathcal{AI}\left(\Lambda_0\right)\right) 
L\left(1 \slash 2, \pi_2 \otimes \mathcal{AI}\left(\Lambda_0^{-1}\right)\right) }{
L\left(1, \pi_1, \mathrm{Ad}\right) L\left(1, \pi_2, \mathrm{Ad}\right) 
L\left(1, \pi_1 \times \pi_2\right)}.
\end{multline}
Thus in order to evaluate $J_\infty$, we need to determine $J_q$.

Here we use a scalar valued Yoshida lift to evaluate $J_q$.
First we recall that \eqref{e:arch comp} holds in the scalar valued case, i.e.
when $k_2=0$, 
as we noted in Remark~\ref{scalar rem}.
By Lemma~\ref{l: simultaneous non-vanishing},
when $q$ is large enough, there also exist
distinct normalized newforms $f_1^\prime\in
S_{2k_1+2}\left(\Gamma_0^{(1)}\left(p\right)\right)$
and $f_2^\prime\in S_{2}\left(\Gamma_0^{(1)}\left(p\right)\right)$
satisfying the condition \eqref{e: eigenvalue condition},
and, a character $\lambda_0^\prime$ of $\mathbb A_E^\times$
satisfying the conditions \eqref{e: Lambda} for $\pi_i^\prime$
$\left(i=1,2\right)$ where $\pi_i^\prime$
is the automorphic representation of 
$\mathrm{GL}_2\left(\mA\right)$.
Define $\mathbf{f}^\prime$ similarly for $\pi_1^\prime$
and $\pi_2^\prime$.
Since \eqref{e:arch comp} is valid in the scalar valued case,
we have
\begin{multline*}
\frac{\left|B_{S, \Lambda_0, \psi}\left(\phi_{\mathbf{f}^\prime,S}\right)\right|^2}{
\langle\phi_{\mathbf{f}^\prime,S},\phi_{\mathbf{f}^\prime,S}
 \rangle
}
=
\frac{2^{4k_1+5}e^{-4\pi\,\mathrm{tr}\left(S\right)}}{
D_E}
\cdot
C\left(Q_{S,\varrho_{\left(k_1, k_1\right)}}\right)^{-1}
 \\
\cdot
2\left(1+p^{-1}\right)\left(1+p^{-2}\right) \cdot J_q \cdot
\frac{L\left(1 \slash 2, \pi_1^\prime
 \otimes \mathcal{AI}\left(\Lambda_0^\prime\right)\right) 
L\left(1 \slash 2, \pi_2^\prime \otimes \mathcal{AI}\left(\Lambda_0^{\prime\, -1}\right)\right) }{
L\left(1, \pi_1^\prime, \mathrm{Ad}\right) L\left(1, \pi_2^\prime, \mathrm{Ad}\right) 
L\left(1, \pi_1^\prime \times \pi_2^\prime\right)}.
\end{multline*}
We note that $J_q$ here is the same as the one in
\eqref{e: formula 2}.
Then by comparing the formula above with 
\eqref{e: formula 1} for $\mathbf{f}^\prime$ and $\Lambda_0^\prime$,
we have $J_q=q^{2m}$.

Finally
 by comparing \eqref{e: formula 1} with 
 \eqref{e: formula 2} substituting  $J_q=q^{2m}$,
 we have
 \begin{equation}\label{e: j_infty}
 C\left(Q_{S,\varrho}\right)\mathcal CJ_\infty=\frac{2^{4k_1+2k_2+7}e^{-4\pi\,
\mathrm{tr}\left(S\right)}}{D_E}
\end{equation}
in the general case.

For $\varPhi$ in Theorem~\ref{t: vector valued boecherer},
a scalar valued automorphic form $\phi_{\varPhi,S}$
defined by
\[
\label{phi_Phi S}
\phi_{\varPhi,S}\left(g\right)=
\left(\varphi_\varPhi\left(g\right),
Q_{S,\varrho}\right)_{2r}
\quad\text{for $g\in G\left(\mathbb A\right)$}
\]
is factorizable, i.e. 
$\phi_{\varPhi, S}=\otimes_v\,\phi_{\varPhi, S,v}$.
Let us choose
$k_1$ and $k_2$ so that
\[
\left(2r+k,k\right)=\left(k_1+k_2+2,k_1-k_2+2\right),
\quad
\text{i.e.}\quad
k_1 =r+k-2,  \,\, k_2=r.
\]
Then for $\phi_{\mathbf{f},S}=\otimes_v
\,\phi_{\mathbf{f},S,v}$ in \eqref{e: formula 2},
the archimedean factor $\phi_{\mathbf{f},S,\infty}$
is a non-zero scalar multiple of $\phi_{\varPhi, S,\infty}$.
Thus \eqref{e:arch comp} follows from
\eqref{e: j_infty}.
\qed
%
%
%
%
\subsection{Proof of Theorem~\ref{t: vector valued boecherer}}
Let us complete our proof of Theorem~\ref{t: vector valued boecherer}.
%
By Theorem~\ref{ref ggp}, we have
\begin{equation}\label{e: final1}
\frac{\left| B_{S,\Lambda, \psi}\left(\phi_{\varPhi,S}\right)\right|^2}{
\langle \phi_{\varPhi,S},\phi_{\varPhi,S}\rangle}
=\frac{\mathcal CJ_\infty}{2^{c-3}}
\cdot
\frac{L\left(1\slash 2,\pi\left(\varPhi\right) \times \mathcal{AI} \left(\Lambda \right)\right)}{
L\left(1,\pi\left(\varPhi\right),\mathrm{Ad}\right)}
\cdot \prod_{p | N} J_p
\end{equation}
where $c$ is as stated in Theorem~\ref{t: vector valued boecherer}.
By \eqref{e: scalar vs vector} and \eqref{e: scalar bs1},
we have
\[
B_{S,\psi, \Lambda}\left(\phi_{\varPhi,S}\right)=2 \cdot e^{-2\pi\,\mathrm{tr}\left(S\right)}
\cdot
\mathcal B_\Lambda \left(\varPhi;E\right).
\]
Since $\langle \phi_{\varPhi,S},\phi_{\varPhi,S}\rangle
=C\left(Q_{S,\varrho}\right)\cdot
\langle \varPhi,\varPhi\rangle_\varrho$
by Lemma~\ref{l: norm constant}, we have
\begin{equation}\label{e: final2}
\frac{\left| \mathcal B_\Lambda \left(\varPhi;E\right)\right|^2}{
\langle \varPhi,\varPhi\rangle_\varrho}=
\frac{\left| B_{S,\Lambda, \psi}\left(\phi_{\varPhi,S}\right)\right|^2}{
\langle \phi_{\varPhi,S},\phi_{\varPhi,S}\rangle}
\cdot 2^{-2}e^{4\pi\,\mathrm{tr}\left(S\right)}
C\left(Q_{S,\varrho}\right).
\end{equation}
Thus by combining
\eqref{e: final1}, \eqref{e: final2} and \eqref{e:arch comp}, the
identity \eqref{e: vector valued boecherer} holds.
%
%
%
%
%
%
%
%
%
%
%
%
%
%
%
%
%
%
%
%
\section{Meromorphic continuation of $L$-functions for $\mathrm{SO}(5) \times \mathrm{SO}(2)$}\label{appendix c}
As we remarked in Remark~\ref{L-fct def rem}, 
here we show the meromorphic continuation of 
$L^S(s, \pi \times \mathcal{AI}(\Lambda))$
in Theorem~\ref{ggp SO},
when $\mathcal{AI} \left(\Lambda \right)$ is cuspidal and $S$ is a sufficiently large finite 
set of places of $F$ containing all archimedean places.
The following theorem  clearly suffices.
%
%
%
\begin{theorem}
\label{thm mero}
Let $\pi$ (resp. $\tau$) be an irreducible unitary cuspidal automorphic representation $\pi$ of $G_D(\mA)$ (resp. $\mathrm{GL}_2(\mA)$) 
with a trivial central character.
Then $L^S(s, \pi \times \tau)$ has a meromorphic continuation to $\mC$ and it is holomorphic at $s= \frac{1}{2}$
for a sufficiently large finite set $S$ of places of $F$ containing all archimedean places.
\end{theorem}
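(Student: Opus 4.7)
The plan is to transfer $\pi$ to an automorphic representation $\Pi$ of $\mathrm{GL}_4$ and thereby realize $L^S(s,\pi\times\tau)$ as a Rankin-Selberg convolution on $\mathrm{GL}_4\times\mathrm{GL}_2$ at almost all places. Meromorphic continuation and holomorphy at $s=\frac{1}{2}$ will then follow from classical work of Jacquet-Piatetski-Shapiro-Shalika and Jacquet-Shalika.

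First I would construct a weak functorial lift $\Pi$ of $\pi$. When $G_D\simeq G$, such a $\Pi$ is provided directly by Arthur's endoscopic classification \cite{Ar} (see also \cite{CFK}): it is an isobaric sum $\Pi=\Pi_1\boxplus\cdots\boxplus\Pi_r$ of unitary cuspidal automorphic representations of $\mathrm{GL}_{n_i}(\mA)$ whose local parameters match those of $\pi$ at all unramified places. When $D$ is non-split, I would instead follow the theta-correspondence strategy of Section~4. By \cite{JSLi}, the cuspidal representation $\pi$ carries a nonzero $(\xi,\Lambda',\psi)$-Bessel period for some $\xi\in D^-(F)$ with $F(\xi)\simeq E$ a quadratic extension of $F$. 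An argument parallel to Lemma~\ref{nonzero lemma (1)} then shows that the theta lift to $\mathrm{GSU}_{3,D}(\mA)$ of the Bessel-carrying constituent of $\pi|_{G_D(\mA)^+}$ is nonzero, irreducible and cuspidal. Via the accidental isomorphism $\mathrm{PGSU}_{3,D}\simeq\mathrm{PGU}_{4,\varepsilon}$ and Mok's endoscopic classification of unitary groups \cite{Mok}, this theta lift admits a base change $\Pi$ on $\mathrm{GL}_4(\mA_E)$; by an argument analogous to Lemma~\ref{compo number} combined with \cite{AC}, one descends $\Pi$ to an isobaric sum of unitary cuspidal automorphic representations of $\mathrm{GL}_4(\mA)$ whose local parameters agree with those of $\pi$ at almost every place.

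With $\Pi=\Pi_1\boxplus\cdots\boxplus\Pi_r$ in hand, the matching of local Langlands parameters at $v\notin S$ gives
\[
L^S(s,\pi\times\tau)=\prod_{i=1}^r L^S(s,\Pi_i\times\tau),
\]
where the right-hand side is a product of Rankin-Selberg $L$-functions on $\mathrm{GL}_{n_i}\times\mathrm{GL}_2$ in the sense of Jacquet, Piatetski-Shapiro and Shalika. Meromorphic continuation of each factor to the whole complex plane is then classical. For holomorphy at $s=\frac{1}{2}$, I would invoke the Jacquet-Shalika non-vanishing theorem: the Rankin-Selberg convolution of two unitary cuspidal representations on $\mathrm{GL}_m\times\mathrm{GL}_n$ is holomorphic throughout the critical strip $0<\Re(s)<1$, admitting at most simple poles on the boundary $\Re(s)\in\{0,1\}$ in the self-contragredient case. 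In particular, $s=\frac{1}{2}$ is always a point of holomorphy.

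The principal obstacle lies in the construction of $\Pi$ when $D$ is non-split, performed without any genericity or temperedness assumption on $\pi$. One must verify non-vanishing and cuspidality of the theta lift to $\mathrm{GSU}_{3,D}$ uniformly, carry out the Rallis-tower analysis to rule out a theta lift that drops to $\mathrm{GSU}_{1,D}$, and descend the base change from $\mathrm{GL}_4(\mA_E)$ to $\mathrm{GL}_4(\mA)$ in all cases, invoking \cite{Ishimoto} or related transfer results for inner forms where required. Once these technical issues are settled, the remaining meromorphic continuation and holomorphy at $s=\frac{1}{2}$ reduce cleanly to the standard theory of Rankin-Selberg $L$-functions on $\mathrm{GL}_n$.
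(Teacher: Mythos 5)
Your route differs fundamentally from the paper's, and it has a genuine gap exactly at the point you yourself flag as ``the principal obstacle''. In the split case your argument (Arthur's lift plus Rankin--Selberg theory) is essentially what the paper does in one line; but in the non-split case the paper does \emph{not} construct a transfer to $\mathrm{GL}_4$ at all, and for good reason. The cuspidality of the theta lift to $\mathrm{GSU}_{3,D}$ (Lemma~\ref{nonzero lemma (1)}) is proved only under the genericity hypothesis~\eqref{genericity}: genericity of $\pi_w$ at some finite place is what kills the lower step $\mathrm{GSU}_{1,D}$ of the Rallis tower. Theorem~\ref{thm mero}, however, is asserted for an arbitrary irreducible unitary cuspidal $\pi$ on $G_D(\mA)$, with no genericity or temperedness assumption, and the endoscopic classification for the inner form $\mathbb G_D$ is available only for generic parameters (cf.\ \cite{Ishimoto}); so for a possibly CAP $\pi$ on non-split $G_D$ there is no construction of $\Pi$ to appeal to, and your first step cannot be carried out in the required generality. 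Moreover, even granting a weak lift attached to a non-generic parameter, your final step breaks: the isobaric constituents would then involve shifts such as $\mu\,|\cdot|^{\pm\frac{1}{2}}$ with $\mu$ cuspidal on $\mathrm{GL}_2$, so the relevant factors have the shape $L^S\left(s\pm\frac{1}{2},\mu\times\tau\right)$, and $L^S\left(s+\frac{1}{2},\mu\times\tau\right)$ acquires a pole at $s=\frac{1}{2}$ when $\tau\simeq\tilde{\mu}$. The Jacquet--Shalika holomorphy you invoke applies to pairs of \emph{unitary cuspidal} representations only, so holomorphy at $s=\frac{1}{2}$ does not follow factor by factor for non-tempered $\pi$, and one would need an additional (delicate) analysis of cancellations between poles and zeros of partial $L$-functions.

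The paper sidesteps all of this. By \cite{JSLi}, every cuspidal $\pi$ on non-split $G_D$ has some $(\xi,\Lambda,\psi)$-Bessel period, and this is the only global input needed for the integral representation of $L(s,\pi\times\tau)$ from \cite{Mo3}, in which the Bessel period is paired against a Siegel Eisenstein series on $\mathrm{GU}_{3,3}$ and a Whittaker function of $\tau$. Meromorphic continuation and the location of the poles of that Eisenstein series are given by \cite{Tan}, the non-archimedean local zeta integrals at places in $S$ are handled by \cite[Lemma~5.1]{Mo3}, and the new ingredient of the appendix is the archimedean statement (Lemma~\ref{meromorphy lemma}): for any $s_0$ one can choose a section supported on the open orbit $P_3\theta_0 G_0$ together with Bessel and Whittaker vectors (using the Kirillov model for $\mathrm{GL}_2(\mR)$ and $\mathrm{GL}_2(\mC)$) so that the local zeta integral is meromorphic, holomorphic and non-vanishing at $s_0$. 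This yields the theorem for \emph{all} unitary cuspidal $\pi$, which is precisely the generality the functoriality route cannot presently reach; to repair your argument you would either have to prove the missing transfer for non-generic $\pi$ on the inner form or restrict the theorem to generic/tempered $\pi$, neither of which matches the statement.
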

%
%
%
When $D$ is split, then $G_D\simeq G$ and
the theorem follows from Arthur~\cite{Ar}.
Hence from now on we assume that $D$ is non-split.

By \cite{JSLi}, for some $\xi$ and $\Lambda$,
$\pi$ has the $\left(\xi, \Lambda, \psi\right)$-Bessel period.
Thus we may use the the integral representation 
of the $L$-function for $G_D\times \mathrm{GL}_2$ introduced in \cite{Mo3}.
Then the meromorphic continuation of the Siegel Eisenstein series
on $\mathrm{GU}_{3,3}$, which is used in the integral
representation  is known
by the main theorem of Tan~\cite{Tan}
(see also \cite[Proposition~3.6.2]{PSS}).
Hence by the standard argument,
our theorem is reduced to the analysis of the local zeta integrals.
Meanwhile the non-archimedean local integrals are already
studied in
\cite[Lemma~5.1]{Mo3}.
Hence it suffices for us to investigate the archimedean ones.
Since the case when $E_v$ is a quadratic extension
field of $F_v$ is similar to, and indeed simpler than,
 the split case, here we only consider the split case.
%
%
%

Let us briefly recall our local zeta integral (see \cite[(28)]{Mo3}).
Let $v$ be an archimedean place of $F$.
Since we consider the split case, $D_v$ is split
and we may assume that $G_{D}\left(F_v\right)=G\left(F_v\right)
=\mathrm{GSp}_2\left(F_v\right)$
and $\xi=\begin{pmatrix}1&0\\0&-1\end{pmatrix}$.
Then we have
\[
T_\xi\left(F_v\right) = 
\left\{g\in\mathrm{GL}_2\left(F\right)\mid
{}^tg\xi g=\det\left(g\right)\xi \right\}
=
\left\{ \begin{pmatrix} x&y\\ y&x\end{pmatrix} \in \mathrm{GL}_2(F) \right\}.
\]

In what follows, we omit the subscript $v$ from any object
in order to simplify the notation.
Let $\Lambda$ be a unitary character of $F^\times$. Then we regard $\Lambda$ as a character of $T_\xi(F)$ by 
\[
\Lambda  \begin{pmatrix} x&y\\ y&x\end{pmatrix} = \Lambda \left(\frac{x+y}{x-y} \right)
\quad\text{for $\begin{pmatrix} x&y\\ y&x\end{pmatrix}
\in T_S\left(F\right)$}.
\]
For a non-trivial
character $\psi$ of $F$, let $\mathcal{B}_{\xi, \Lambda, \psi}(\pi)$ denote the $(\xi, \Lambda, \psi)$-Bessel model of $\pi$, 
i.e. the space 
of functions $B:G(F)\to \mathbb C$ such that 
\[
B(tug) = \Lambda(t) \psi_\xi(u) B(g) \quad 
\text{for $t \in T_\xi(F)$, $u \in N\left(F\right)$
and $g\in G\left(F\right)$},
\]
which affords $\pi$ by the right regular representation.
Let $\mathcal{W}(\tau)$ denote the Whittaker model of $\pi$, i.e. 
the space of functions $W:\mathrm{GL}_2(F)\to\mathbb C$ such that
\[
W\left( \begin{pmatrix} 1&x\\ 0&1\end{pmatrix}g\right) = \psi(- x) W(g)\quad
\text{for $x\in F$ and $g\in \mathrm{GL}_2\left(F\right)$},
\]
which affords $\tau$ by the right translation.
Let $G_0\left(F\right)= \mathrm{GL}_2(F)  \times G(F)$
and we regard $G$ as a subgroup of $\mathrm{GL}_6(F)$ by the embedding 
\[
\iota : G_0  \ni
 \left(\begin{pmatrix}a&b\\ c&d\end{pmatrix}, \begin{pmatrix}A&B\\ C&C\end{pmatrix}  \right)
 \hookrightarrow
 \begin{pmatrix}a&0&b&0\\ 0&A&0&B\\ c&0&d&0\\ 0&C&0&D\end{pmatrix} \in \mathrm{GL}_6(F).
\]
Let us define a  subgroup $H_0$ of $G_0$ by 
\[
H_0(F) = \left\{ \nu(h)\left(\begin{pmatrix}1&\mathrm{tr}(\xi X)\\0 &1 \end{pmatrix}, \begin{pmatrix}h&0\\ 0&\det h \cdot {}^{t}h^{-1} \end{pmatrix}
\begin{pmatrix}1_2&X\\ 0&1_2 \end{pmatrix}  \right) 
\mid X={}^{t}X,  h \in T_\xi\left(F\right)\right\}
\]
where
\[
\nu(h) = x-y \quad \text{for
$ h= \begin{pmatrix} x&y\\y&x\end{pmatrix}\in T_\xi\left(F\right)$}.
\]
Let $P_3$ be the maximal parabolic subgroup of $\mathrm{GL}_6$ defined by
\[
P_3= \left\{ \begin{pmatrix} h_1&X\\0&h_2\end{pmatrix} 
: h_1,h_2\in\mathrm{GL}_3
\right\}.
\]
Then we consider a principal series representation
\[
I(\Lambda, s) =\left\{ f_s : \mathrm{GL}_6(F) \rightarrow \mC 
\mid
 f_s \left( \left(\begin{smallmatrix} h_1&X\\0&h_2\end{smallmatrix}
 \right) h \right) =  \Lambda\left(\frac{\det h_1}{\det h_2}\right)
\left|\frac{\det h_1}{\det h_2} \right|^{3s+\frac{3}{2}}f_s(h) \right\}.
\]
For $f_s \in I\left(\Lambda, s\right) $,
$B \in \mathcal{B}_{\xi, \Lambda, \psi}\left(\pi\right)$ and $W \in \mathcal{W}\left(\tau\right)$, our local zeta integral $Z(f_s, B, W)$ 
is given by
\[
Z(f_s, B, W) = \int_{Z_0\left(F\right)H_0\left(F\right) 
\backslash G_0\left(F\right)} f_s\left(\theta_0 \,
\iota\left(g_1, g_2\right)\right) B(g_2)W(g_1) \, dg_1\,dg_2
\]
where $Z_0$ denote the center of $G_0$ and 
\[
\theta_{0}
=
\begin{pmatrix}
0&0&0&0&0&-1\\
0&1&0&0&0&0\\
1&0&0&0&0&0\\
1&-1&1&0&0&0\\
0&0&0&0&1&-1\\
0&0&0&1&0&-1
\end{pmatrix}.
\]
As explained above, Theorem~\ref{thm mero}
follows by the standard argument
if we prove the following lemma.
%
%
%
\begin{lemma}\label{meromorphy lemma}
Let $s_0$ be an arbitrary point in  $\mC$.
Then we may choose
$f_s, B$ and $W$ so that $Z(f_s, B, W)$ has a meromorphic continuation to $\mC$
and is holomorphic and non-zero at $s=s_0$.
\end{lemma}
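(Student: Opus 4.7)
The plan is to exploit the flexibility of archimedean test vectors to reduce $Z(f_s,B,W)$ to a Mellin transform of a Schwartz function, which automatically yields meromorphic continuation, and then to use a bump-function argument for non-vanishing at $s_0$. First I would carry out an Iwasawa-type unfolding of the domain $Z_0(F) H_0(F) \backslash G_0(F)$. The Bruhat decomposition of $\mathrm{GL}_6(F)$ with respect to $P_3$ shows that $P_3(F)\theta_0 \iota(g_1,g_2)$ lies in a big cell for generic $(g_1,g_2)$, so that, parametrizing this open dense set by an appropriate torus $A \subset G_0$ together with a maximal compact $K_0$, the integral takes the shape
\begin{equation*}
Z(f_s,B,W) = \int_{A \times K_0} \Phi_{f_s,B,W}(a,k)\, \delta(a)^{s+c}\, da\, dk
\end{equation*}
for a constant $c$ absorbing the relevant modular characters, where $\Phi_{f_s,B,W}$ inherits its analytic behavior from the asymptotics of $W$ (rapid decay by Wallach's theory) and of $B$ (moderate growth controlled by the local Bessel expansion, cf.\ Liu~\cite[Section~3]{Liu2}), combined with the chosen section $f_s$.

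Second, I would show that for a suitable \emph{good section} $f_s$ in the sense of Piatetski-Shapiro--Rallis---for instance, a section whose restriction to representatives of the big cell is an arbitrary Schwartz function multiplied by the character $\Lambda(\det)|\det|^{3s+3/2}$ on the Levi factors of $P_3$---the map $(a,k) \mapsto \Phi_{f_s,B,W}(a,k)$ is Schwartz in the $A$-variable. Then $Z(f_s,B,W)$ becomes a Mellin transform of a Schwartz function on $A$ and hence extends to an entire function of $s$. A general standard section can subsequently be written as a finite sum of good sections multiplied by explicit archimedean $\Gamma$-type factors, all of which are meromorphic; this yields the meromorphic continuation of $Z(f_s,B,W)$ in general.

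Third, for non-vanishing at $s=s_0$, I would invoke the Dixmier--Malliavin theorem together with a bump-function argument. Dixmier--Malliavin allows me to arrange that both $B$ and $W$ take prescribed smooth values on arbitrarily small neighborhoods of chosen base points; combined with the freedom to take the Schwartz factor in $f_s$ to be a narrow nonnegative bump concentrated where $B$ and $W$ are nonzero and of controlled phase, the integral at $s=s_0$ reduces to an integral of a nonnegative compactly supported bump, which is automatically nonzero.

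The main obstacle I expect is the explicit parametrization of the big cell of $P_3 \backslash \mathrm{GL}_6(F) / \iota(G_0(F))$ under the specific Weyl element $\theta_0$, together with the verification that the modular character contributions combine cleanly so that $\delta(a)^{s+c}$ is the only character twist of $a$ that remains, leaving the rest of the integrand Schwartz in $a$. Equally delicate is matching the moderate-growth behavior of the archimedean Bessel function $B$---whose asymptotics along the split torus in $T_\xi$ are more intricate than those of Whittaker functions---against the decay of $f_s$ along the walls of the big cell, in order to justify absolute convergence in some right half-plane before passing to the good-section reduction that produces the meromorphic continuation.
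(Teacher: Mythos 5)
Your overall strategy (sections supported on the open cell $P_3(F)\theta_0 G_0(F)$, unfolding via the orbit structure, bump choices for non-vanishing) is the same in spirit as the paper's, but there is a genuine gap at the crucial point. The stabilizer of the open orbit is strictly larger than $H_0$: the computation of $\theta_0 M(F)\theta_0^{-1}\cap P_3(F)$ shows that the one-parameter torus $A(F)=\{\mathrm{diag}(\lambda\cdot 1_3,1_3)\}$ survives, and along the corresponding direction $\lambda\in F^\times$ the section is forced by left $P_3$-equivariance to transform by $\Lambda(\lambda)|\lambda|^{3s-9/2}$. Consequently no choice of ``good section'' can localize the integrand in that variable, and after all the other coordinates are absorbed into freely chosen bumps the zeta integral still contains the full one-variable integral \eqref{mero proof e:last}, i.e.\ $\int_{F^\times}\Lambda(\lambda)|\lambda|^{3s-9/2}B\bigl(\mathrm{diag}(\lambda 1_2,1_2)\bigr)W\bigl(\mathrm{diag}(\lambda,1)\bigr)\,d^\times\lambda$, taken over all of $F^\times$. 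Your claim that the integrand can be made Schwartz in the torus variable and that the resulting Mellin transform is entire is therefore not available (and is in any case false as stated: the Mellin transform of a Schwartz function is Tate's local zeta integral, which has poles); near $\lambda=0$ the profile is governed by the asymptotics of $B$ and $W$, not by the section, so holomorphy at an arbitrary prescribed $s_0$ does not follow.

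The missing idea is how to control this residual $\lambda$-integral. Dixmier--Malliavin only expresses smooth vectors as $\pi(f)v$ with $f\in C_c^\infty$; it does not let you prescribe $W$ (or $B$) along the entire ray, nor does it give compact support of the torus restriction. The paper's proof instead invokes the archimedean Kirillov model for $\mathrm{GL}_2$ (Jacquet~\cite{Jac} over $\mR$, Kemarsky~\cite{Kem} over $\mC$): every $\phi\in C_c^\infty(F^\times)$ arises as $\lambda\mapsto W\bigl(\mathrm{diag}(\lambda,1)\bigr)$ for some $W$ in the Whittaker model. With such a $W$ the integral \eqref{mero proof e:last} is an integral of a compactly supported smooth function on $F^\times$, hence entire in $s$, and choosing $\phi$ to be a narrow bump near a point where $B\bigl(\mathrm{diag}(\lambda 1_2,1_2)\bigr)\neq 0$ (such a point exists after right-translating $B$) makes it nonzero at $s_0$. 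Without this input, or an explicit asymptotic analysis of $B$ and $W$ near $0$ and $\infty$ substituting for it, your argument does not establish holomorphy and non-vanishing at an arbitrary $s_0$.
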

%
%
%
\begin{proof}
For $\varphi \in C_c^\infty(\mathrm{GL}_6(F))$, we may 
define $P_s[\varphi]  \in I(\Lambda, s)$ by
\begin{multline*}
P_s[\varphi](h) = \int_{\mathrm{GL}_3(F)} \int_{\mathrm{GL}_3(F)} \int_{\mathrm{Mat}_{3 \times 3}(F)} 
\varphi \left(\begin{pmatrix}h_1 &0\\ 0&h_2 \end{pmatrix} 
\begin{pmatrix}1_3 &X\\ &1_3 \end{pmatrix} h \right) 
\\
\times
\left|\frac{\det h_1}{\det h_2} \right|^{-3s+\frac{3}{2}}
\Lambda\left(\frac{\det h_1}{\det h_2}\right)^{-1}
  \, dh_1 \, dh_2 \, dX.
\end{multline*}
In what follows we construct  $\varphi$ of a special form,
whose support is contained 
in the open double coset $P_3\left(F\right) \theta_0 G_0\left(F\right)$
in $\mathrm{GL}_6\left(F\right)$.

Let $B_0$ be the group of upper triangular matrices in $\mathrm{GL}_2$,
and, $P_0$  the mirabolic subgroup of $\mathrm{GL}_2$, i.e.
\[
P_0\left(F\right) = \left\{  \begin{pmatrix} a&b\\ 0&1\end{pmatrix} 
\mid a \in F^\times, \, b \in F\right\}.
\]
We define a subgroup $M_0$ of $G$ by
\[
M_0\left(F\right)=\left\{  \begin{pmatrix}h&0\\0& \lambda \cdot {}^{t}h^{-1} \end{pmatrix} \mid \lambda \in F^\times, h \in B_0\left(F\right) \right\}
\]
and $M = \iota\left(P_0, M_0\right)$.
Then by the Iwasawa decomposition for $G_0\left(F\right)$
and the inclusion
\begin{equation}
\label{intersection}
H_0\left(F\right) \subset G_0\left(F\right)
 \cap \theta_0^{-1}P_3\left(F\right)\theta_0  ,
\end{equation}
we have
\[
P_3\left(F\right) \theta_0 G_0\left(F\right) = 
P_3\left(F\right) \theta_0 M\left(F\right) K_0
\]
where $K_0$ is a maximal compact subgroup of $G_0\left(F\right)$.
We take $K_0=\iota\left(K_1,K_2\right)$
where $K_1$ (resp. $K_2$) is a maximal compact
subgroup of $\mathrm{GL}_2\left(F\right)$
(resp. $G\left(F\right)$).
By direct computations, we see that 
\[
\begin{cases}
\theta_0 \,N\left(F\right)\, \theta_0^{-1} \cap P_3\left(F\right) =\{ 1_6 \};
\\
\theta_0 \,M\left(F\right)\, \theta_0^{-1} \cap P_3\left(F\right)
 =\theta_0 \,A\left(F\right)\, \theta_0^{-1};
 \\
 \theta_0 \,K_0\,\theta_0^{-1} \cap P_3\left(F\right) =\{ 1_6 \},
 \end{cases}
\]
where
\[
A\left(F\right) =\left\{ \begin{pmatrix}a\cdot 1_3 &\\&1_3 \end{pmatrix} : a \in F^\times \right\}.
\]
Let us define subgroups $T_0$, $N_0$ of $G_0$ by
\begin{align*}
T_0\left(F\right)& = \left\{ \iota\left( \begin{pmatrix}a&\\ &1 \end{pmatrix}, \begin{pmatrix}x&&&\\ &y&&\\&&\lambda x^{-1}&\\ &&&\lambda y^{-1} \end{pmatrix} \right)
: x, y, \lambda \in F^\times 
\right\};
\\
N_0\left(F\right) &= \left\{ \iota \left( \begin{pmatrix}1&x\\ &1 \end{pmatrix}, \begin{pmatrix}1&y&&\\ &1&&\\&&1&\\ &&-y&1 \end{pmatrix}\right) : x, y \in F\right\}.
\end{align*}

Then for
$\varphi_1 \in C_c^\infty\left(N_0\left(F\right)\right)$, $\varphi_2 \in C_c^\infty\left(T_0\left(F\right)\right)$, $\varphi_3, \varphi_4 \in C_c^\infty\left(\mathrm{GL}_3\left(F\right)\right)$, 
$\varphi_5  \in C_c^\infty\left(\mathrm{Mat}_{3 \times 3}\left(F\right)
\right)$ and $\varphi_6 \in C_c^\infty(K_0)$, 
we may construct  $\varphi^\prime\in C_c^\infty(\mathrm{GL}_6(F))$, whose support is contained in 
$P_3\left(F\right) \theta_0 G_0\left(F\right)$, by
\begin{multline*}
\varphi^\prime \left(\begin{pmatrix}h_1&0\\ 0&h_2 \end{pmatrix}\begin{pmatrix}1_3&X\\ 0&1_3 \end{pmatrix} \theta_0\, n_0\, t_0\, k \right)
\\
=\varphi_6(k) \varphi_3(h_1) \varphi_4(h_2) \varphi_5(X)  \varphi_1(n_0 ) \int_{A\left(F\right)} \varphi_2(t_0\, a) \, d^\times a
\end{multline*}
where  $n_0 \in N_0\left(F\right)$, $t_0 \in T_0\left(F\right)$ 
and $k \in K_0$.

Then the local zeta integral
$Z(P_s[\varphi^\prime], B, W)$ is written as
\begin{multline*}
Z(P_s[\varphi^\prime], B, W)=\int \varphi^\prime \left(\begin{pmatrix}h_1 &0\\ 0&h_2 \end{pmatrix} \begin{pmatrix}1_3 &X\\ &1_3 \end{pmatrix} 
 \iota(n_{0,1}, n_{0,2})\iota(t_{0,1}, t_{0,2}) \iota(k_1, k_2) \right) 
\\
\times
\left|\frac{\det h_1}{\det h_2} \right|^{-3s+\frac{3}{2}}
\Lambda\left(\frac{\det h_1}{\det h_2}\right) ^{-1}
W(n_{0,1} t_{0,1} k_1) B(n_{0,2} t_{0, 2}k_2)
\, dh_1 \, dh_2 \, dX \, dn_0 \, dt_0\, dk
\\
=
\int \varphi_6(\iota(k_1, k_2)) \varphi_3(h_1) \varphi_4(h_2) \varphi_5(X)  \varphi_1(n_0 ) \varphi_2(t_0a)
\left|\frac{\det h_1}{\det h_2} \right|^{-3s+\frac{3}{2}}
\Lambda\left(\frac{\det h_1}{\det h_2}\right) ^{-1} 
\\
\qquad\qquad\times W(n_{0,1} t_{0,1} k_1) B(n_{0,2} t_{0, 2}k_2) 
 \, d^\times a \, dh_1 \, dh_2 \, dX \, dn_0 \, dt_0\, dk
\\
=
\int \varphi_6(\iota(k_1, k_2)) \varphi_3(h_1) \varphi_4(h_2) \varphi_5(X)  \varphi_1(n_0 ) \varphi_2(t_0)
\left|\frac{\det h_1}{\det h_2} \right|^{-3s+\frac{3}{2}}
\Lambda\left(\frac{\det h_1}{\det h_2}\right) ^{-1}  
\\
\times \Lambda(\lambda)
|\lambda|^{3s-\frac{9}{2}}\,W \left(n_{0,1} \begin{pmatrix}\lambda&0\\ 0&1 \end{pmatrix} t_{0,1} k_1 \right) B \left(n_{0,2} \begin{pmatrix} \lambda \cdot 1_2&0\\ 0&1_2 \end{pmatrix} t_{0, 2}k_2 \right) 
 \\
  d^\times \lambda \, dh_1 \, dh_2 \, dX \, dn_0 \, dt_0\, dk
\end{multline*}
where we write $n_0 =  \iota(n_{0,1}, n_{0,2}) \in N_0\left(F\right)$, $t_0 =  \iota(t_{0,1}, t_{0,2}) \in T_0\left(F\right)$ and 
$k = \iota(k_1, k_2) \in K_0$. 
Since we may vary $\varphi_i$ ($1\le i\le 6$), our assertion
in Lemma~\ref{meromorphy lemma} follows from the 
same assertion for
the integral
\begin{equation}
\label{mero proof e:last}
\int_{F^\times} \Lambda(\lambda)|\lambda|^{3s-\frac{9}{2}}\,B \begin{pmatrix}\lambda \cdot 1_2&0\\0 &1_2 \end{pmatrix}
\, W \begin{pmatrix}\lambda&0\\ 0&1 \end{pmatrix} d^\times \lambda.
\end{equation}

For any $\phi \in C_c^\infty(F^\times)$,
there exists $W_\phi\in W\left(\tau\right)$
such that $W_\phi\begin{pmatrix}a&0\\0&1\end{pmatrix}=
\phi\left(a\right)$
by the theory of Kirillov model for $\mathrm{GL}_2(\mR)$ by Jacquet~\cite[Proposition~5]{Jac} and 
for $\mathrm{GL}_2(\mC)$ by Kemarsky~\cite[Theorem~1]{Kem}.
Thus our assertion clearly holds for the integral
\eqref{mero proof e:last}.
\end{proof}
%
%
%
%
%
%
%
%
%
%

%
%
%
%
%
%
%
%
%
%
%
%
%
%
%
%
%
%
%
%
\begin{theindex}
\item $M_D, N_D$  \pageref{d:N_D}
\item $G_D$ \pageref{e: G_D}
\item $G$ \pageref{gsp}
\item $G_D^1$ \pageref{G_D^1}
\item $G^1$ \pageref{G^1}
   \item $T_{\xi}$ \pageref{e: T_xi}
   \item $G_D^+$ \pageref{e: G^+_D}
   \item $B_{\xi, \Lambda, \psi}$ \pageref{e: dependency}
   \item $\mathcal{AI}(\Lambda)$ \pageref{e: partial non-vanishing}
   \item $\mathcal{B}_\Lambda\left(\varPhi , E\right) $ \pageref{mathcal B Phi E}
   \item $J_m$ \pageref{d: J_m}
         \item $\mathrm{GO}_{n+2, n}, \mathrm{GSO}_{n+2, n}$ \pageref{d: GO_n+2,n}
   \item $\mathrm{GU}_{3, D}, \mathrm{GSU}_{3, D}$ \pageref{def of gu_3,D}
   \item $\mathrm{GU}_{4, \varepsilon}$ \pageref{e: gu(2,2) or gu(3,1)}
   \item $\Phi_D$ \pageref{acc isom1}
   \item $\Phi$ \pageref{acc isom2}
   \item $M$, $N$ \pageref{d:N}
   \item $T_S$ \pageref{T_S}
      \item $B_{S, \Lambda, \psi}$ \pageref{Beesel def gsp}
   \item $M_{3, D}$ $N_{3, D}$ \pageref{d:N3,D}
      \item $M_{X, D}$ \pageref{M_X D}
      \item $\mathcal{B}_{X, \chi, \psi}$ \pageref{Besse def gsud}
   \item $\mathcal{B}_{X, \chi, \psi}^D$ \pageref{Besse def gsud}
   \item $M_{4,2}$ $N_{4,2}$ \pageref{d:N4,2}
\item $M_{X}$ \pageref{M_X}
      \item $\alpha_{\chi, \psi_N}(\phi, \phi^\prime)$ \pageref{e: local integral 1}
         \item Type I-A, Type I-B \pageref{type}
      \item $W^{\psi_U}$ \pageref{W U}
   \item $\mathcal{W}^{\psi_U}$ \pageref{mathcal W psi U}
   \item $\mathcal{W}^\circ_{G, v}, \mathcal{W}_{G, v}$ \pageref{mathcal W_G}
      \item $\mathcal{L}_v^\circ(\phi_v, f_v), \mathcal{L}_v(\phi_v, f_v)$ \pageref{mathcal L}
         \item $W_{\psi_{U_G}}$ \pageref{W U_G}
      \item $\phi_{\Phi, S}$ \pageref{phi_Phi S}
\end{theindex}
%
%
%
%
%
%
%
%
%
%
%
\end{document}